\newcommand{\vsp}{\vskip 0.2in}
\newlength{\whitecirclewidth}
\newlength{\whitecircleheight}
\newlength{\blackcirclewidth}
\newlength{\blackcircleheight}
\newlength{\corssandcircleheight}
\newlength{\corssandcirclewidth}
\newlength{\nodesize}
\newcommand{\whitecirclesymbol}{\makebox[\whitecirclewidth]{\Large$\circ$}}
\newcommand{\blackcirclesymbol}{\makebox[\blackcirclewidth]{\Large$\bullet$}}
\newcommand{\crossandcirclesymbol}{\makebox[\corssandcirclewidth]{\large$\otimes$}}
\newcommand{\wnode}{*-<\nodesize>{\whitecirclesymbol}}
\newcommand{\single}{\ar@{-}}
\newcommand{\rdouble}{\ar@2{->}}
\newcommand{\ldouble}{\ar@2{<-}}
\newcommand{\rtriple}{\ar@3{->}}
\newcommand{\ltriple}{\ar@3{<-}}
\newcommand{\abovewnode}[1]%
{
*-<\nodesize>{\raisebox{0pt}[\whitecircleheight][0pt]{$\overset{\rlap{$\displaystyle
#1$}}{\whitecirclesymbol}$}}}
\newcommand{\belowwnode}[1]%
{
*-<\nodesize>{\raisebox{1pt}[\whitecircleheight][0pt]{$\underset{\rlap{$\displaystyle
#1$}}{\whitecirclesymbol}$}}}
\newcommand{\abovebnode}[1]%
{
*-<\nodesize>{\raisebox{0pt}[\whitecircleheight][0pt]{$\overset{\rlap{$\displaystyle
#1$}}{\blackcirclesymbol}$}}}
\newcommand{\belowbnode}[1]%
{
*-<\nodesize>{\raisebox{0pt}[\whitecircleheight][0pt]{$\underset{\rlap{$\displaystyle
#1$}}{\blackcirclesymbol}$}}}
\newcommand{\abovecnode}[1]%
{
*-<\nodesize>{\raisebox{0pt}[\whitecircleheight][0pt]{$\overset{\rlap{$\displaystyle
#1$}}{\crossandcirclesymbol}$}}}
\newcommand{\belowcnode}[1]%
{
*-<\nodesize>{\raisebox{1pt}[\whitecircleheight][0pt]{$\underset{\rlap{$\displaystyle
#1$}}{\crossandcirclesymbol}$}}}
\newcommand{\gb}{\beta}
\newcommand{\ga}{\alpha}
\newcommand{\gk}{\kappa}
\newcommand{\gl}{\lambda}
\newcommand{\gd}{\delta}
\newcommand{\gD}{\Delta}
\renewcommand{\ge}{\epsilon}
\newcommand{\gt}{\theta}
\newcommand{\gs}{\sigma}
\newcommand{\fb}{{\mathfrak b}}
\newcommand{\fg}{{\mathfrak g}}
\newcommand{\fh}{{\mathfrak h}}
\newcommand{\fl}{{\mathfrak l}}
\newcommand{\fn}{{\mathfrak n}}
\newcommand{\fq}{{\mathfrak q}}
\newcommand{\fu}{{\mathfrak u}}
\newcommand{\fz}{{\mathfrak z}}
\newcommand{\f}{\mathfrak}
\newcommand{\eV}{\EuScript{V}}
\newcommand{\eX}{\EuScript{X}}
\newcommand{\cga}{\alpha^\vee}
\newcommand{\cge}{\epsilon^\vee}
\newcommand{\cmu}{\mu^\vee}
\newcommand{\nbar}{\bar{n}}                 %opposite Iwasawa
\newcommand{\R}{\mathbb{R}}          
\newcommand{\C}{\mathbb{C}}          
\newcommand{\Z}{\mathbb{Z}}
\newtheorem{Thm}[equation]{Theorem}
\newtheorem{Lem}[equation]{Lemma}
\newtheorem{Cor}[equation]{Corollary}
\newtheorem{Prop}[equation]{Proposition}
\newtheorem{Example}[equation]{Example}
\newtheorem{Def}[equation]{Definition}
\newtheorem{Rem}[equation]{Remark}
\numberwithin{equation}{section}
\newcommand{\be}{\begin{equation}}
\newcommand{\beu}{\begin{equation*}}
\newcommand{\acts}{ {\raisebox{1pt} {$\scriptstyle \bullet$} } }
\newcommand{\ad}{\text{ad}}
\newcommand{\Ad}{\text{Ad}}
\newcommand{\Cal}{\mathcal}
\newcommand{\Hom}{\text{Hom}}
\newcommand{\IP}[2]{\langle#1 , #2\rangle}     %inner product.
\newcommand{\flg}{\frak{l}_{\gamma}}
\newcommand{\flng}{\frak{l}_{n\gamma}}
\newcommand{\Lg}{L_\gamma}
\newcommand{\Lng}{L_{n\gamma}}
\newcommand{\cfn}{\frak{z}(\frak{n})}
\newcommand{\xig}{\xi_\gamma}
\newcommand{\xing}{\xi_{n\gamma}}
\newcommand{\geg}{\epsilon_\gamma}
\newcommand{\geng}{\epsilon_{n\gamma}}
\newcommand{\Ht}{\text{ht}}
\newcommand{\cxig}{\xig^\vee}
\newcommand{\ttau}{\tilde{\tau}}
\newcommand{\Sym}{\text{Sym}}
\newcommand{\btau}{\bar{\tau}}
\newcommand{\spi}{\pi_s}
\newcommand{\vep}{\varepsilon}
\begin{document}

\bibliographystyle{amsplain}

\baselineskip=16pt

\title[Conformally Invariant Systems of Differential Operators]
{Special Values for Conformally Invariant Systems 
Associated to Maximal Parabolics of Quasi-Heisenberg Type}

\author{Toshihisa Kubo}
\address{Graduate School of Mathematical Sciences, 
The University of Tokyo,
 3-8-1 Komaba, Meguro-ku, Tokyo 153-8914, Japan}
\email{toskubo@ms.u-tokyo.ac.jp}

\subjclass[2010]{Primary 22E46; Secondary 17B10, 22E47}
\keywords{conformally invariant systems, intertwining differential operator, real flag manifold}

%%%%%%%%%%%%%%%%%%%%%%%%%%%%%%%%%%%%%%%%%%
\maketitle
%%%%%%%%%%%%%%%%%%%%%%%%%%%%%%%%%%%%%%%%%%

\begin{abstract}
In this paper we construct conformally invariant systems of
first order and second order differential operators
associated to a homogeneous line bundle $\Cal{L}_{s} \to G_0/Q_0$ 
with $Q_0$ a maximal parabolic subgroup of quasi-Heisenberg type.
This generalizes the results by Barchini, Kable, and Zierau.
To do so we use techniques different from ones used by them.
\end{abstract}

%%%%%%%%%%%%%%%%%%%%%%%%%%%%%%%%%%%%%%%%%%
%\tableofcontents

%%%%%%%%%%%%%%%%%%%%%%%%%%%%%%%%%%%%%%%%%%
 \section{Introduction}\label{chap:intro}

%%%%%%%%%%%%%%%%%%%%%%%%%%%%%%%%%%%%%%%%%%

The main work of this paper concerns systems of differential operators
that are equivariant under an action of a Lie algebra.
We call such systems \emph{conformally invariant}.
To explain the meaning of the equivariance condition, 
suppose that $\eV \to M$ is a vector bundle over a smooth manifold $M$
and $\fg$ is a Lie algebra of first-order differential operators that act on 
sections of $\eV$. A linearly independent list $D_1, \ldots, D_n$ of linear
differential operators on sections of $\eV$ is called a 
\emph{conformally invariant system} 
if, for each $X \in \fg$, there are smooth functions $C_{ij}^X(m)$ on $M$
so that, for all $1\leq i \leq n$, and sections $f$ of $\eV$, we have
\begin{equation}\label{Eqn:CIS1}
\big( [X, D_i]\acts f \big)(m)= \sum^n_{j=1}C_{ji}^X(m)(D_j \acts f)(m),
\end{equation}
where $[X, D_j] = XD_j - D_j X$. 
(See Definition \ref{Def1.1.3} for the precise definition.)
Here, the dot $\acts$ denotes the action of
differential operators on smooth functions.

%A typical example for a conformally invariant system
%of one differential operator is the wave operator 
%$\square = \frac{\partial^2}{\partial x_1^2}+\frac{\partial^2}{\partial x_2^2}
%+\frac{\partial^2}{\partial x_3^2} - \frac{\partial^2}{\partial x_4^2}$ 
%on the Minkowski space $\R^{3,1}$. 
%If $X$ is an element of $\fg = \f{so}(4,2)$ acting as a 
%first-order differential operators on sections of an appropriate line bundle
%over $\R^{3,1}$ then there is a smooth function $C^X$ on $\R^{3,1}$ so that 
%\begin{equation*}\label{EqnWave}
%[X, \square] = C^X\square.
%\end{equation*}

An important consequence of the definition (\ref{Eqn:CIS1}) is that
the common kernel of the operators in the conformally invariant system $D_1, \ldots, D_n$
is invariant under a Lie algebra action. The representation theoretic
question of understanding the common kernel as a $\fg$-module 
is an open question (except for a small number of very special examples).

The notion of conformally invariant systems generalizes 
that of quasi-invariant differential operators introduced by Kostant in \cite{Kostant75}
and is related to work of Huang (\cite{Huang93}).
It is also compatible with the definition given by Ehrenpreis in \cite{Ehrenpreis88}.
Conformally invariant systems are explicitly or implicitly presented
in the work of %Barbasch-Sahi-Speh (\cite{BSS90}), 
Davidson-Enright-Stanke (\cite{DES91}),
%Enright-Wallach (\cite{EW97}), % Gross-Wallach (\cite{GW96}),
Kobayashi-\O rsted (\cite{KO03}),
%Hunziker-Sepanski-Stanke (\cite{HSS12}), 
Wallach (\cite{Wallach79}), among others.
They are also related to the project of Dobrev on 
constructing intertwining differential operators. (See for example
\cite{Dobrev-a12} and \cite{Dobrev88}.)
Much of the published work is for the case that $M=G/Q$ with 
$Q=LN$, $N$ abelian.
The systematic study of conformally invariant systems started with the work of 
Barchini-Kable-Zierau in \cite{BKZ08} and \cite{BKZ09} and is continued in
\cite{Kable}, \cite{Kable11}, \cite{Kable12A}, \cite{Kable12C}, \cite{Kable12B}, 
\cite{KuboThesis2}, and \cite{Kubo11}.

Although the theory of conformally invariant systems can be viewed as 
a geometric-analytic theory, it is closely related to algebraic objects
such as generalized Verma modules. It has been shown in \cite{BKZ09}
that a conformally invariant system yields a homomorphism between
certain generalized Verma modules. 
The classification of non-standard homomorphisms 
between generalized Verma modules is an open problem.
In \cite{KuboThesis2}, it is determined whether or not
the homomorphisms between the generalized Verma modules
that arise from certain conformally invariant systems are standard.

The main goal of this paper is to build systems of differential operators 
that satisfy the condition (\ref{Eqn:CIS1}),
when $M$ is a homogeneous manifold $G/Q$ with $Q$ 
in a certain class of maximal parabolic subgroups.
This is to construct systems 
$D_1, \ldots, D_n$ acting on sections of bundles $\eV_s \to G/Q$ over $G/Q$
in a systematic manner and to determine the bundles $\eV_s$ 
on which the systems are conformally invariant.
The method that we use is different from one used 
by Barchini-Kable-Zierau in \cite{BKZ08}.
%The systems that we build yield explicit homomorphisms between appropriate 
%generalized Verma modules. We show that most of these homomorphisms 
%are non-standard.

%%%%%%%%%%%%%%%%%%%%%%%%%%%%%%%%%%%%%%%%%%

To describe our work more precisely,
let $G$ be a complex, simple, connected, simply-connected Lie group
with Lie algebra $\fg$. It is known that $\fg$ has a $\Z$-grading
$\fg = \bigoplus_{j=-r}^r \fg(j)$ so that 
$\fq = \fg(0) \oplus \bigoplus_{j>0} \fg(j) = \fl \oplus \fn$ is a parabolic
subalgebra of $\fg$.
Let $Q = N_G(\fq) = LN$. For a real form $\fg_0$ of $\fg$,
define $G_0$ to be an analytic subgroup of $G$ with Lie algebra $\fg$.
Set $Q_0 = N_{G_0}(\fq)$. Our manifold is $M=G_0/Q_0$ and we consider
a line bundle $\Cal{L}_{s} \to G_0/Q_0$ for each $s\in \C$.
By the Bruhat theory, that  $G_0/Q_0$ admits an open dense
submanifold $\bar{N}_0Q_0/Q_0$. We restrict our bundle to this submanifold.
The systems that we study are on smooth sections of the restricted bundle.

To build systems of differential operators we observe that 
$L$ acts by the adjoint representation on $\fg(1)$ with a unique open orbit. 
This makes $\fg(1)$ a prehomogeneous vector space. Our construction
is based on the invariant theory of a prehomogeneous vector space. 
It is natural to associate $L$-equivariant polynomial maps called \emph{covariant maps} 
to the prehomogeneous vector space $(L, \Ad, \fg(1))$.
To define our systems of differential operators,
%for $0\leq k \leq 2r$, 
we use covariant maps 
$\tau_k: \fg(1) \to \fg(-r+k) \otimes \fg(r)$ 
that are associated to $\fg(1)$.
(See Definition \ref{Def6.3}.)
Each $\tau_k$ can be thought of as giving the symbols of the differential operators
that we study.

Let
$\fg(-r+k) \otimes \fg(r) = V_1 \oplus \cdots \oplus V_m$
be the irreducible decomposition of $\fg(-r+k) \otimes \fg(r)$
as an $L$-module.
Covariant map $\tau_k$ induces an $L$-equivariant linear map
$\ttau_k|_{V^*_j} :V^*_j \to \Cal{P}^k(\fg(1)))$ with
$V^*_j$ the dual of an irreducible constituent $V_j$ of $\fg(-r+k) \otimes \fg(r)$
and $\Cal{P}^k(\fg(1))$ the space of homogeneous polynomials on $\fg(1)$ of degree $k$.
We define differential operators from $\ttau_k|_{V^*_j}(Y^*)$.
For $Y^* \in V^*_j$, let $\Omega_k(Y^*)$ denote
the $k$-th order differential operators that are constructed from
$\ttau_k|_{V^*_j}(Y^*)$.
We say that a list of differential operators $D_1, \ldots, D_n$ 
is the \emph{$\Omega_k|_{V^*_j}$ system} 
if it is equivalent (see Definition \ref{Def2.1.4}) 
to a list of differential operators
$\Omega_k(Y^*_{1}), \ldots, \Omega_k(Y^*_{n})$,
%\begin{equation}\label{EqnOmega}
%\Omega_k(Y^*_{1}), \ldots, \Omega_k(Y^*_{n}),
%\end{equation}
where $\{Y^*_1, \ldots, Y^*_n\}$ is a basis for $V^*_j$.
By construction the $\Omega_k|_{V^*_j}$ system consists of 
$\dim_\C(V_j)$ operators.

The conformal invariance of the $\Omega_k|_{V^*_j}$ system
depends on the complex parameter $s$ for the line bundle $\Cal{L}_{s}$. 
Then we say that the $\Omega_k|_{V^*_j}$ system has \emph{special value $s_0$} 
if the system is conformally invariant on the line bundle $\Cal{L}_{s_0}$.
The special values for the case that $\dim([\fn,\fn])=1$ 
for $\fq = \fl \oplus \fn$
are studied by Barchini-Kable-Zierau in \cite{BKZ08} and \cite{BKZ09}, and myself in
\cite{Kubo2} and 
\cite{Kubo11}.

In this paper we consider a more general case; namely,
$\fq=\fl \oplus \fn$ is a maximal parabolic subalgebra and $\fn$ satisfies the condition that 
$[\fn, [\fn, \fn]] = 0$ and $\dim_\C([\fn, \fn]) >1$.
We call such maximal parabolic subalgebras \emph{quasi-Heisenberg type}.
In this case we have $r=2$ in (\ref{EqnTau}).
Therefore the $\Omega_k$ systems for $k\geq 5$ are zero.
We determine the special values of 
the $\Omega_1$ system and $\Omega_2$ systems 
associated to the parabolic subalgebras under consideration.
%for the parabolic subalgebras 
%Our main results are Theorem \ref{Thm8.1.2}
%and Theorem \ref{Thm8.3.1}, where the special values of
%the $\Omega_1$ system and $\Omega_2$ systems 
%for the parabolic subalgebras are determined.

We may want to remark that, 
although the special value of $s$ for the $\Omega_1$ system is easily found 
by computing the bracket $[X, \Omega_1(Y^*_i)]$,
it is not easy to find the special values for the $\Omega_2$ systems
by a direct computation. (See Section 5 of \cite{BKZ08}.)
In this paper, to find the special values for the $\Omega_2$ systems,
we use two reduction techniques. These techniques significantly 
reduce the amount of computations. 
(See Proposition \ref{Prop7.2.3} and Proposition \ref{Prop7.1.2}.)

%%%%%%%%%%%%%%%%%%%%%%%%%%%%%%%%%%%%%%%%%% 
This paper consists of seven sections including this introduction and one appendix.
We now outline the contents of the rest of this paper.
In Section \ref{chap:CIS}, we first recall the definition of conformally invariant systems 
of differential operators and then collect some useful formulas.
In Section \ref{SS721},
the construction of the $\Omega_k$ systems is given precisely.
To construct the $\Omega_1$ system and 
$\Omega_2$ systems for maximal parabolic subalgebra $\fq$ 
of quasi-Heisenberg type, 
we study such a parabolic subalgebra $\fq$ and
the associated 2-grading 
$\fg=\bigoplus_{j=-2}^2 \fg(j)$ in Section \ref{chap:TwoStep}.

%%%%%%%%%%%%%%%%%%%%%%%%%%%%%%%%%%%%%%%%%% 
We construct the $\Omega_1$ system
and find its special value in Section \ref{chap:O1}. 
In this section we also fix normalizations for root vectors. 
The normalizations play an important role
to construct the system. 
We show that the special value $s_1$ for the $\Omega_1$ system 
is $s_1=0$. This is done in Theorem \ref{Thm8.1.2}.

%%%%%%%%%%%%%%%%%%%%%%%%%%%%%%%%%%%%%%%%%% 
To build the $\Omega_2$ systems, we need to find the irreducible constituents
$V^*$ of $\fl^* \otimes \fz(\fn)^*$ so that $\ttau_2|_{V^*} \neq 0$. 
In Section \ref{chap:Sp}, we show preliminary
results to find such irreducible constituents.
First we decompose $\fl \otimes \fz(\fn)$
into the direct sum of the irreducible constituents.
By using the decomposition results,
we then determine the candidates of the irreducible constituents $V^*$ 
so that $\ttau_2|_{V^*} \neq 0$. 
%We call such constituents \emph{special}.  
%In Section \ref{SS540} we define the special constituents.
%We then classify such constituents in Section \ref{SS541}.
%In Section \ref{SS542} we collect the technical results on
%the special constituents, which are used to find the special 
%values for the $\Omega_2$ systems.
%%%%%%%%%%%%%%%%%%%%%%%%%%%%%%%%%%%%%%%%%%
We build the $\Omega_2$ systems and 
find their special values in Section \ref{chap:O2}. 
The special values are determined in Theorem \ref{Thm8.3.1}.

%%%%%%%%%%%%%%%%%%%%%%%%%%%%%%%%%%%%%%%%%%
Finally, in Appendix \ref{chap:Data}, we summarize 
the miscellaneous useful data for the parabolic subalgebras 
under consideration.

%such as the roots for $\fl$, $\fg(1)$, and $\fz(\fn)$.
%In Appendix \ref{chap:Roots} we include the lists of the positive roots for the
%exceptional algebras.

%%%%%%%%%%%%%%%%%%%%%%%%%%%%%%%%%%%%%%%%%%
%\vskip 0.1in
%\noindent \textbf{Acknowledgments.}
%This work is part of author's Ph.D. thesis at Oklahoma State University.
%The author would like to thank his advisor, Leticia Barchini, 
%for her generous guidance. 
%He would also like to thank Anthony Kable and  
%Roger Zierau for their valuable comments on this work.

\section{Conformally Invariant Systems}
\label{chap:CIS}

In this section we recall from \cite{BKZ09} the definition of
a conformally invariant system of differential operators.
We also collect some properties of such a system of differential operators.

%%%%%%%%%%%%%%%%%%%%%%%%%%%%%%%%%%%%%%%%%
%\vskip 0.1in
\subsection{Conformally invariant systems}\label{SS11}
%The aim of this section is to introduce the definition of
%conformally invariant systems. 

Let $\fg_0$ be a real Lie algebra and 
$\eX(M)$ be the space of smooth vector fields on 
a smooth manifold $M$.

\begin{Def}\emph{\cite[page 790]{BKZ09}}\label{Def1.1.1}
A smooth manifold $M$ is called 
a \textbf{$\fg_0$-manifold} if there is an $\R$-linear map 
$\pi_M: \fg_0 \to C^\infty(M) \oplus \eX(M)$ so that 
for all $X, Y \in \fg_0$,
\begin{equation*}
\pi_M([X,Y]) = [\pi_M(X), \pi_M(Y)].
\end{equation*}
\end{Def}

For each $X \in \fg_0$, we write $\pi_M(X) = \pi_0(X) + \pi_1(X)$
with $\pi_0(X) \in C^\infty(M)$ and $\pi_1(X) \in \eX(M)$.
We denote by $\mathbb{D}(\eV)$ the space of differential operators
on smooth sections of $\eV$.

\begin{Def}\emph{\cite[page 791]{BKZ09}}\label{Def1.1.2}
Let $M$ be a $\fg_0$-manifold. 
A vector bundle $\eV \to M$ is called 
a \textbf{$\fg_0$-bundle} if there is an $\R$-linear map 
$\pi_{\eV} : \fg_0 \to \mathbb{D}(\eV)$ 
that satisfies the following properties:
\begin{enumerate}
\item[\emph{(B1)}] We have
$\pi_\eV([X,Y]) = [\pi_\eV(X), \pi_\eV(Y)]$ for all $X, Y \in \fg_0$.  
\item[\emph{(B2)}] In $\mathbb{D}(\eV)$, 
$[ \pi_{\eV}(X), f] = \pi_1(X) \acts f$ for all $X \in \fg_0$ and $f \in C^{\infty}(M)$.
\end{enumerate}
\end{Def}

%Now we introduce the definition of conformally invariant systems.

\begin{Def}\emph{\cite[page 791]{BKZ09}}\label{Def1.1.3}
Let $\eV \to M$ be a $\fg_0$-bundle.
A \textbf{conformally invariant system} on $\eV$ with respect to $\pi_{\eV}$
is a list of differential operators 
$D_1, \ldots, D_m \in \mathbb{D}(\eV)$
so that the following two conditions hold:
\begin{enumerate}
\item[\emph{(S1)}] At each point $p \in M$, 
the list $ D_1, \ldots, D_m$ is linearly independent.% over $\C$. 
\item[\emph{(S2)}] For each $X\in \fg_0$, there is a matrix $C(X)$ in 
$M_{m \times m}(C^{\infty}(M))$ so that in $\mathbb{D}(\eV)$,
\begin{equation*}
[\pi_\eV(X), D_i] = \sum_{j=1}^m C_{ji}(X) D_j.
\end{equation*}
\end{enumerate}
The map $C: \fg_0 \to M_{m \times m}(C^{\infty}(M))$ is called
the \textbf{structure operator} of the conformally invariant system.
\end{Def}

If $\fg$ is the complexification of $\fg_0$ then $\fg$-manifolds and 
$\fg$-bundles are defined by extending the $\fg_0$-action $\C$-linearly.
In p. 792 in \cite{BKZ09}, the equivalence of two conformally invariant systems
are defined. For later convenience we apply the same definition to any systems
of differential operators. (See Definition \ref{Def2.3.5}.)

\begin{Def}%\emph{\cite[page 792]{BKZ09}}
\label{Def2.1.4}
We say that two systems of differential operators 
(not necessarily conformally invariant)
$D_1, \ldots, D_n$ and $D_1', \ldots, D_n'$ 
in $\mathbb{D}(\eV)$
are \textbf{equivalent} if there is a matrix 
$A \in GL(n, C^{\infty}(M))$ so that, for $1\leq i \leq n$,
\begin{equation*}
D'_i = \sum_{j=1}^n A_{ji}D_j.
\end{equation*}
\end{Def}

\begin{Def}\emph{\cite[page 793]{BKZ09}}\label{Def2.1.5}
A conformally invariant system $D_1, \ldots, D_n$ is called 
\textbf{reducible} if there are an equivalent conformally invariant system
$D_1', \ldots, D_n'$ and an $m<n$ such that the system $D_1', \ldots, D_m'$
is conformally invariant. Otherwise we say that $D_1, \ldots, D_n$ is 
\textbf{irreducible}.
\end{Def}

%%%%%%%%%%%%%%%%%%%%%%%%%%%%%%%%%%%%%%%%%
%\{A Specialization on a $\fg$-manifold and $\fg$-bundle}\label{SS71}
%In Section \ref{SS71} and Section \ref{SSMB}, 
%Now we shall specializations on a smooth manifold $M$
%and a vector bundle $\Cal{V} \to M$, as in Section 5 of \cite{BKZ09}.

%\vsp

We now specialize the $\fg$-manifold and $\fg$-bundle that we will work with.
Let $G$ be a complex, simple, connected, simply-connected Lie group
with Lie algebra $\fg$. Such $G$ contains a maximal connected solvable
subgroup $B$. Write $\fb =\fh \oplus \fu$ for its Lie algebra with $\fh$ 
the Cartan subalgebra and $\fu$ the nilpotent subalgebra. 
Let $\fq \supset \fb$ be a parabolic subalgebra of $\fg$. 
We define $Q = N_G(\fq)$, a parabolic subgroup of $G$.
%It follows from Section 8.4 of \cite{Springer08} that $Q$ is connected. 
Write $Q = LN$ for the Levi decomposition of $Q$.
with $L$ the Levi subgroup and $N$ the nilpotent subgroup.
%It is known, see Corollary 7.11 of \cite{Knapp02}, that 
%the Levi subgroup $L$ is the commuting product
%$L = Z(L)^\circ L_{ss}$,
%where $Z(L)^\circ$ is the identity component of the center of $L$ and 
%$L_{ss}$ is the semisimple part of $L$.

Let $\fg_0$ be a real form of $\fg$ in which 
the complex parabolic subalgebra $\fq$ has a real form $\fq_0$,
and let $G_0$ be the analytic subgroup of $G$ with Lie algebra $\fg_0$. 
Define $Q_0 = N_{G_0}(\fq) \subset Q$, and write $Q_0=L_0N_0$.
We will work with $M=G_0/Q_0$ for a class of maximal parabolic subgroup 
$Q_0$ that will be specified in Section \ref{chap:TwoStep}.

Next, we need to specify a vector bundle $\Cal{V}$ on $M$. 
To this end let $\gD = \gD(\fg,\fh)$ be the set of roots of $\fg$ with respect to $\fh$.
Let $\gD^+$ be the positive system attached to $\fb$ and 
denote by $\Pi$ the set of simple roots.
For each subset $S \subset \Pi$, let $\fq_S$ be the 
corresponding standard parabolic subalgebra. 
Write $\fq_S = \fl_S \oplus \fn_S$ with Levi factor 
$\fl_S = \fh \oplus  \bigoplus_{\ga \in \gD_S}\fg_\ga$ and 
nilpotent radical $\fn_S = \bigoplus_{\ga \in \gD^+ \backslash \gD_S} \fg_\ga$,
where $\gD_S = \{ \ga \in \gD \; | \; \ga \in \textrm{span}(\Pi \backslash S) \}$.
%\begin{Thm}\label{Thm2.1.3}
%There exists a one-to-one correspondence between parabolic subalgebras $\fq$ 
%containing $\fb$ and the subsets $S$ of the set of simple roots $\Pi$.
%\end{Thm}
%\begin{equation*}\label{Eqn2.2.2}
%\fl_S = \fh \oplus  \bigoplus_{\ga \in \gD_S}\fg_\ga \quad \text{and} \quad 
%\fn_S = \bigoplus_{\ga \in \gD^+ \backslash \gD_S} \fg_\ga,
%\end{equation*}  
If $Q_0$ is a maximal parabolic then 
there exists a unique simple root $\ga_\fq \in \Pi$ so that 
$\fq = \fq_{\{\ga_\fq\}}$. Let $\gl_\fq$ be the fundamental weight of $\ga_\fq$.
The weight $\gl_\fq$ is orthogonal to any roots $\ga$ 
with $\fg_\ga \subset [\fl, \fl]$. Hence it exponentiates to 
a character $\chi_\fq$ of $L$. As $\chi_\fq$ takes real values
on $L_0$, for $s \in \C$, character $\chi^{s} =|\chi_\fq|^{s}$
is well-defined on $L_0$. 
Let $\C_{\chi^{s}}$ be the one-dimensional representation of $L_0$
with character $\chi^{s}$. 
The representation $\chi^{s}$ is extended to a representation of $Q_0$
by making it trivial on $N_0$. Then it deduces a line bundle $\Cal{L}_{s}$
on $M = G_0/Q_0$ with fiber $\C_{\chi^{s}}$.

The group $G_0$ acts on the space
\begin{align*}
&C^\infty_{\chi}(G_0/Q_0, \C_{\chi^{s}})\\
&= \{ F \in C^\infty(G_0, \C_{\chi^{s}}) \; |\;
\text{$F(gq) = \chi^{s}(q^{-1})F(g)$ for all $q \in Q_0$ and $g \in G_0$} \}
\end{align*}
\noindent by left translation.
The action $\pi_s$ of $\fg_0$ on $C^\infty_\chi(G_0/Q_0, \C_{\chi^{s}})$
arising from this action is given by
\beu
(\pi_s(Y) \acts F)(g) = \frac{d}{dt}F(\exp(-tY)g)\big|_{t=0}
\end{equation*}

\noindent for $Y \in \fg_0$.
This action is extended $\C$-linearly
to $\fg$ and then naturally to the universal enveloping algebra $\Cal{U}(\fg)$.
We use the same symbols for the extended actions.

Let $\bar{N}_0$ be the unipotent subgroup opposite to $N_0$.
By the Bruhat theory, the subset $\bar{N}_0Q_0$ 
is open and dense in $G_0$. Then
the restriction map 
$C^\infty_{\chi}(G_0/Q_0, \C_{\chi^{s}}) \to C^\infty(\bar N_0, \C_{\chi^{s}})$
is an injection,
where $C^\infty(\bar{N}_0,\C_{\chi^{s}})$
is the space of the smooth
functions from $\bar{N}_0$ to $\C_{\chi^{s}}$. 
Then,
for $u \in \Cal{U}(\fg)$ and $F \in C^\infty_{\chi}(G_0/Q_0, \C_{\chi^{s}})$,
we let $f = F|_{\bar N_0}$ and define the action 
of $\Cal{U}(\fg)$ on the image of the restriction map by 
\begin{equation}\label{Eqn223}
\pi_s(u) \acts f = \big(\pi_s(u) \acts F\big)|_{\bar N_0}.
\end{equation} 
The line bundle $\Cal{L}_{s} \to G_0/Q_0$ restricted to
$\bar{N}_0$ is the trivial bundle 
$\bar{N}_0 \times \C_{\chi^{s}} \to \bar{N}_0$.
By slight abuse of notation, we refer to the trivial bundle over $\bar{N}_0$
as $\Cal{L}_{s}$.
Then in practice our manifold $M$ will be $M=\bar{N}_0$ 
and our vector bundle will be the trivial bundle.

%%%%%%%%%%%%%%%%%%%%%%%%%%%%%%%%%%%%%%%%%
%\subsection{A $\fg$-manifold $\bar{N}_0$ and $\fg$-bundle $\Cal{L}_{s}$}\label{SSMB}

\vsp

Now we show that, with the action $\pi_s$,
the group $\bar{N}_0$ and the trivial bundle $\Cal{L}_{s}$ 
are a $\fg$-manifold and $\fg$-bundle, respectively.
Let $\bar\fn$ and $\fq$ be the complexifications of 
the Lie algebras of $\bar N_0$ and $Q_0$, respectively;
we have the direct sum $\fg = \bar \fn \oplus \fq$.
For $Y \in \fg$, write $Y = Y_{\bar \fn} + Y_{\fq}$ for the decomposition 
of $Y$ in this direct sum.
Similarly, write the Bruhat decomposition of $g \in \bar N_0 Q_0$ as 
$g= \mathbf{\bar n}(g)\mathbf{q}(g)$ with $\mathbf{\bar n}(g) \in \bar N_0$ and 
$\mathbf{q}(g) \in Q_0$. For $Y \in \fg_0$, we have
\begin{equation}\label{Eqn7.2.1}
Y_{\bar \fn} = \frac{d}{dt} \mathbf{\bar n}(\exp(tY)) \big|_{t=0},
\end{equation}
and a similar equality holds for $Y_{\fq}$.
Define a right action
$R$ of $\Cal{U}(\bar \fn)$ on $C^\infty(\bar N_0, \C_{\chi^{s}})$ by
\begin{equation}\label{Eqn7.2.0}
\big( R(X) \acts f \big)(\nbar) = \frac{d}{dt} f\big( \nbar \exp(tX) \big) \big|_{t=0}
\end{equation}
\noindent for $X \in \bar\fn_0$ and $f \in C^\infty(\bar N_0 ,\C_{\chi^{s}})$. 
Observe that, by definition, the differential $d\chi$ of $\chi$ is $d\chi = \gl_\fq$. 
A direct computation then shows that, 
for $Y \in \fg$ and $f$ in the image of the restriction map 
$C^\infty_{\chi}(G_0/Q_0, \C_{\chi^{s}}) \to C^\infty(\bar N_0, \C_{\chi^{s}})$,
we have
\begin{equation}\label{Eqn7.2.2}
\big(\pi_s(Y) \acts f \big)(\nbar) = s\gl_\fq \big( (\Ad(\nbar^{-1})Y)_\fq \big) f(\nbar) 
-\big( R \big( (\Ad(\nbar^{-1})Y)_{\bar \fn} \big) \acts f \big)(\nbar).
\end{equation}
Observe that (\ref{Eqn7.2.2}) implies that the representation $\pi_s$ extends 
to a representation of $\Cal{U}(\fg)$ on 
the whole space $C^\infty(\bar{N}_0 , \C_{\chi^{s}})$. 
Moreover, it also shows that for all $Y \in \fg$,
the linear map $\pi_s(Y)$ is in $C^\infty(\bar{N}_0) \oplus \eX(\bar{N}_0)$.
Therefore, with this linear map $\pi_s$, $\bar{N}_0$ is a $\fg$-manifold.

Next, we show that 
the linear map $\pi_s$ gives $\Cal{L}_{s}$ the structure of a $\fg$-bundle.
As $\pi_s$ is a representation of $\fg$, the condition (B1) of 
Definition \ref{Def1.1.2} is trivial. Thus it suffices to show that 
the condition (B2) holds.
Since $\Cal{L}_{s}$ is the trivial bundle of $\bar{N}_0$ with fiber $\C_{\chi^{s}}$,
the space of smooth sections of $\Cal{L}_{s}$
is identified with $C^\infty(\bar{N}_0 , \C_{\chi^{s}})$. 
The following proposition establishes the condition (B2) in our situation.

\begin{Prop}\label{Prop7.2.2}
In $\mathbb{D}(\Cal{L}_{s})$, 
for $Y \in \fg$ and $f \in C^\infty(\bar{N}_0)$,
we have
\begin{equation*}
\big( [\pi_s(Y),f] \big)(\nbar) 
=-\big( R \big( (\emph{\Ad}(\nbar^{-1})Y)_{\bar \fn} \big) \acts f \big)(\nbar).
\end{equation*}
%In particular, the trivial bundle $\Cal{L}_{s}$ with $\pi_s$ is a $\fg$-bundle.
\end{Prop}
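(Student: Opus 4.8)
The plan is to read off from formula (\ref{Eqn7.2.2}) the decomposition of $\pi_s(Y)$ into its order-zero and first-order parts and then compute the commutator with the multiplication operator $f$ term by term. Using that $\Cal{L}_s$ is trivial of rank one, I identify sections with $C^\infty(\bar N_0)$, so that (\ref{Eqn7.2.2}) defines $\pi_s(Y)$ on the whole space. Write
\[
\pi_s(Y) = M_Y + V_Y,
\]
where $M_Y$ is multiplication by the smooth function $\nbar \mapsto s\gl_\fq\big((\Ad(\nbar^{-1})Y)_\fq\big)$ and $V_Y$ is the first-order operator $\nbar \mapsto -R\big((\Ad(\nbar^{-1})Y)_{\bar\fn}\big)$ occurring in (\ref{Eqn7.2.2}). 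Since the commutator is bilinear, $[\pi_s(Y), f] = [M_Y, f] + [V_Y, f]$.

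First I would dispose of the order-zero part: $M_Y$ and $f$ are both multiplication operators on $C^\infty(\bar N_0)$, and multiplication operators commute, so $[M_Y, f] = 0$. This is where the $s$-dependent term in (\ref{Eqn7.2.2}) drops out, consistent with (B2) being insensitive to the line-bundle parameter.

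Next I would show that $[V_Y, f] = V_Y(f)$ as a multiplication operator, i.e. that $V_Y$ obeys the Leibniz rule. For this the central point is that $V_Y$ is a genuine vector field, a first-order operator with vanishing order-zero term. The subtlety, and the main obstacle, is that the Lie algebra element $(\Ad(\nbar^{-1})Y)_{\bar\fn}$ fed into $R$ itself varies with the base point $\nbar$, so one must check that this does not introduce a zeroth-order contribution. Unwinding the definition (\ref{Eqn7.2.0}), at a fixed point $\nbar$ the value $\big(R(X_\nbar)\acts g\big)(\nbar)$ with $X_\nbar := (\Ad(\nbar^{-1})Y)_{\bar\fn}$ is obtained by differentiating $g(\nbar\exp(tX_\nbar))$ in $t$ at $t=0$ with $X_\nbar$ held fixed, so the parameter $t$ enters only through the group argument. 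Hence the assignment $g \mapsto \big(R(X_\nbar)\acts g\big)(\nbar)$ is a derivation, that is, $V_Y$ is a vector field, and the product rule gives
\[
\big(R(X_\nbar)\acts (fg)\big)(\nbar) = \big(R(X_\nbar)\acts f\big)(\nbar)\, g(\nbar) + f(\nbar)\,\big(R(X_\nbar)\acts g\big)(\nbar).
\]

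Finally I would assemble the pieces: applying $[V_Y, f]$ to an arbitrary $g \in C^\infty(\bar N_0)$ and using the product rule above, the cross term $f(\nbar)\big(R(X_\nbar)\acts g\big)(\nbar)$ cancels against the contribution of $f\,V_Y(g)$, leaving $\big([V_Y,f]\acts g\big)(\nbar) = -\big(R(X_\nbar)\acts f\big)(\nbar)\,g(\nbar)$. Combined with $[M_Y,f]=0$ this gives exactly the claimed identity in $\mathbb{D}(\Cal{L}_s)$. The only genuine work is the verification in the preceding paragraph that the point-dependence of $X_\nbar$ is harmless for the Leibniz step; everything else is formal manipulation of first-order operators.
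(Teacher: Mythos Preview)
Your proof is correct and follows exactly the approach the paper has in mind: the paper's own proof is a single sentence, ``This follows from the definition of $[\pi_s(Y),f]$ and formula (\ref{Eqn7.2.2}),'' and your argument is simply an explicit unpacking of that sentence, splitting $\pi_s(Y)$ into its multiplication and vector-field parts and applying the Leibniz rule. Your care in checking that the $\nbar$-dependence of $(\Ad(\nbar^{-1})Y)_{\bar\fn}$ does not spoil the derivation property is a helpful elaboration, but not a departure from the paper's method.
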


\begin{proof}
This follows from the definition of  $[\pi_s(Y),f]$ and formula (\ref{Eqn7.2.2}).
\end{proof}

%%%%%%%%%%%%%%%%%%%%%%%%%%%%%%%%%%%%%%%%%
%\vskip 0.1in
\subsection{Properties of conformally invariant systems}

In Section \ref{SS721} we are going to construct systems of differential operators
on $\Cal{L}_{s}$. The systems of operators will satisfy several properties
of conformally invariant systems.
For convenience 
we collect those properties from \cite{BKZ09} here.

We first define an action of $L_0$ on 
$\mathbb{D}(\Cal{L}_{s})$.
As on p. 805 of \cite{BKZ09}, 
we define an action of $L_0$ on 
$C^\infty(\bar{N}_0, \C_{\chi^{s}})$ by
\begin{equation*}
(l \cdot f)(\nbar) = \chi^{s}(l)f(l^{-1} \nbar l).
\end{equation*}
This action agrees with the action of $L_0$ by the left translation on the image
of the restriction map 
$C^\infty_\chi(G_0/Q_0, \C_{\chi^{s}}) \to C^\infty(\bar{N}_0, \C_{\chi^{s}})$.
In terms of this action we define an action of $L_0$ on 
$\mathbb{D}(\Cal{L}_{s})$ by
\begin{equation}\label{Eqn234}
(l \cdot D) \acts f = l \cdot (D \acts (l^{-1} \cdot f)).
\end{equation}

\begin{Def}\emph{\cite[page 806]{BKZ09}} \label{Def2.3.6}
A conformally invariant system $D_1, \ldots, D_m$
on $\Cal{L}_{s} \to \bar{N}_0$ is called \textbf{$L_0$-stable}
if there is a map $c: L_0 \to GL(n, C^{\infty}(\bar{N}_0))$ such that
\begin{equation*}
l \cdot D_i = \sum_{j=1}^m c(l)_{ji}D_j.
\end{equation*}
%where the action $l \cdot D_i$ is given in (\ref{Eqn234}) below.
\end{Def}

It is known that 
there exists a semisimple element $H_0 \in \fl$,
so that $\ad(H_0)$ has only integer eigenvalues on $\fg$ with
$\fg(1) \neq \{0\}$, $\fl = \fg(0)$, $\fn = \bigoplus_{j>0}\fg(j)$,
and $\bar \fn = \bigoplus_{j>0}\fg(-j)$, where $\fg(j)$ is the 
$j$-eigenspace of $\ad(H_0)$. 
(See for example \cite[Section X.3]{Knapp02})

 \begin{Def}\emph{\cite[page 804]{BKZ09}}
A conformally invariant system $D_1, \ldots, D_m$
is called \textbf{homogeneous} if $C(H_0)$ is a scalar matrix,
where $C$ is the structure operator of the conformally invariant system.
(See Definition \ref{Def1.1.3}.)
\end{Def}

\begin{Prop}\emph{\cite[Proposition 17]{BKZ09}}\label{Prop218}
Any irreducible conformally invariant system is homogeneous.
\end{Prop}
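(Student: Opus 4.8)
The plan is to prove the contrapositive: if the system $D_1,\dots,D_m$ is not homogeneous, then it is reducible in the sense of Definition \ref{Def2.1.5}. Throughout write $\Gamma = C(H_0)$ for the structure operator evaluated at the grading element $H_0$. The engine of the argument is a master identity obtained from the Jacobi identity. Fix a homogeneous $X \in \fg(j)$, so that $[H_0,X]=jX$ and hence $[\pi_s(H_0),\pi_s(X)] = j\,\pi_s(X)$. Expanding $[\pi_s(H_0),[\pi_s(X),D_i]]$ in two ways --- once by the Jacobi identity, and once by the defining relation (S2) of Definition \ref{Def1.1.3} together with the Leibniz rule $[\pi_s(H_0),gD] = (\pi_1(H_0)\acts g)D + g\,[\pi_s(H_0),D]$ supplied by (B2) of Definition \ref{Def1.1.2} and Proposition \ref{Prop7.2.2} --- and then comparing coefficients of the pointwise linearly independent operators $D_l$, I obtain
\begin{equation*}
\pi_1(H_0)\acts C(X) + [\,\Gamma,\,C(X)\,] = j\,C(X) + \pi_1(X)\acts\Gamma,
\end{equation*}
valid for every $X \in \fg(j)$, where $\pi_1(H_0)\acts C(X)$ means the vector field $\pi_1(H_0)$ applied to the entries of the matrix $C(X)$. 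If $\Gamma$ turns out to have constant (scalar) entries, the last term drops out and the identity reduces to $\pi_1(H_0)\acts C(X) = j\,C(X) - [\Gamma, C(X)]$, which is the form I will exploit.

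The decisive structural input is the description of $\pi_s(H_0)$ coming from (\ref{Eqn7.2.2}): one has $\pi_s(H_0) = s\,\gl_\fq(H_0)\cdot\mathrm{id} + \mathcal{E}$, where $\mathcal{E} = \pi_1(H_0)$ is the Euler vector field on $\bar{N}_0$ generated by the dilation $\bar n \mapsto \exp(-tH_0)\,\bar n\,\exp(tH_0)$ arising from $\ad(H_0)$ on $\bar{\fn}$. Thus $\Gamma$ records the infinitesimal conjugation $\ad(\mathcal{E})$ of this dilation group on the $D_i$, and $\ad(\mathcal{E})$ is a \emph{semisimple} derivation of $\mathbb{D}(\Cal{L}_s)$ with integer eigenvalues --- the homogeneity degrees --- on operators with polynomial coefficients, because $\ad(H_0)$ is semisimple with integer eigenvalues and the grading $\fg = \bigoplus_{j=-r}^{r}\fg(j)$ is finite. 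The first substantive step is therefore to pass, within the equivalence class of Definition \ref{Def2.1.4}, to a basis of $\ad(\mathcal{E})$-homogeneous representatives $D_i$ of degrees $d_i$; this forces $\Gamma = s\,\gl_\fq(H_0)\cdot\mathrm{id} + \mathrm{diag}(d_1,\dots,d_m)$ to be a constant diagonal matrix, so that $\spn_\C\{D_1,\dots,D_m\}$ is $\ad(\mathcal{E})$-stable. In these terms, the system is homogeneous exactly when all $d_i$ coincide, i.e. when $\Gamma$ is scalar.

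It remains to show that unequal degrees force reducibility. Decompose $\C^m = \bigoplus_\mu W_\mu$ into the $\Gamma$-eigenspaces and feed the block decomposition into the reduced master identity: the $(\mu',\mu)$ block satisfies $\pi_1(H_0)\acts C(X)^{\mu',\mu} = \bigl(j - (\mu'-\mu)\bigr)C(X)^{\mu',\mu}$, so every nonzero block is $\mathcal{E}$-homogeneous of degree $j-(\mu'-\mu)$. Since the coefficient functions are polynomial and $\mathcal{E}$ has nonnegative spectrum on them, while $j$ is bounded by the finite grading, these degree constraints pin down which blocks of $C(X)$ can be nonzero as $X$ ranges over $\fl=\fg(0)$, $\fn$, and $\bar{\fn}$; assembling them produces a proper nonzero $\C$-subspace whose $C^\infty(\bar N_0)$-span is preserved by $[\pi_s(X),\,\cdot\,]$ for all $X$. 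A constant change of basis $A \in GL(m,\C)\subset GL(m,C^\infty(\bar N_0))$ adapted to this subspace then yields an equivalent system with a proper conformally invariant initial segment, so the system is reducible by Definition \ref{Def2.1.5}, completing the contrapositive. The hard part is precisely this last extraction: because the grading is two-sided, the raising operators $\pi_s(\fn)$ and lowering operators $\pi_s(\bar{\fn})$ shift the $\mathcal{E}$-degree in opposite directions, so no single monotone ordering of the eigenvalues is automatically $\Gamma$-invariant. Closing this gap is where I expect to need the detailed description in (\ref{Eqn7.2.2}) of how $\pi_s(\fn)$ and $\pi_s(\bar{\fn})$ act on polynomial coefficients, together with the existence of homogeneous representatives established in the previous step.
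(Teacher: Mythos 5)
First, a point of reference: the paper does not prove this proposition at all --- it is quoted verbatim as \cite[Proposition 17]{BKZ09} --- so there is no in-paper argument to compare against, and your proposal has to stand on its own.

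Your master identity is correct and is a sensible engine: expanding $[\pi_s(H_0),[\pi_s(X),D_i]]$ both ways and matching coefficients of the pointwise independent $D_l$ does give $\pi_1(H_0)\acts C(X)+[\Gamma,C(X)]=jC(X)+\pi_1(X)\acts\Gamma$. But the proof as written has two genuine gaps. The first is the reduction to a constant diagonal $\Gamma$. The hypothesis only says that the $C^\infty(\bar N_0)$-span of the $D_i$ is stable under $\ad(\pi_s(H_0))$, with $\Gamma$ a matrix of \emph{functions}; the operators $D_i$ in Definition \ref{Def1.1.3} are arbitrary elements of $\mathbb{D}(\Cal{L}_s)$ and need not have polynomial coefficients, so $\ad(\Cal{E})$ is not a priori semisimple on the space they span, and a matrix over $C^\infty(\bar N_0)$ cannot simply be diagonalized within $GL(m,C^\infty(\bar N_0))$ while preserving pointwise linear independence of the new list. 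Passing ``within the equivalence class to homogeneous representatives'' is exactly the kind of statement that needs a proof here, not an appeal to semisimplicity of $\ad(H_0)$ on $\fg$. (A small side remark: since $s\gl_\fq(H_0)$ is a constant, it commutes with every $D_i$ and contributes nothing to $\Gamma=C(H_0)$; the shift you write into $\Gamma$ is spurious, though harmless.)

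The second and decisive gap is one you flag yourself. Granting constant diagonal $\Gamma$ with eigenvalues $d_1,\dots,d_m$ not all equal, the block relation $\pi_1(H_0)\acts C(X)^{\mu',\mu}=(j-(\mu'-\mu))C(X)^{\mu',\mu}$ together with nonnegativity of the $\Cal{E}$-spectrum only shows that $X\in\bar\fn\oplus\fl$ cannot raise the eigenvalue, while $X\in\fg(j)$ with $j>0$ may raise it by up to $j$. Consequently no eigenspace filtration $\bigoplus_{\mu\le\mu_0}W_\mu$ with $\mu_0<\mu_{\max}$ is visibly stable under all of $C(\fg)$, and the ``proper nonzero $\C$-subspace preserved by $[\pi_s(X),\cdot\,]$ for all $X$'' is asserted but never produced. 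Since reducibility in Definition \ref{Def2.1.5} requires exhibiting such an invariant initial segment, this is not a technicality but the entire content of the proposition; your own closing sentences concede that this step remains open. As it stands the argument establishes a useful bookkeeping identity but does not prove the statement; to close it you would need either a genuinely new idea for extracting the invariant subspace (e.g.\ exploiting positivity of the degrees of the coefficient polynomials to bound how far $\fn$ can raise, combined with the structure of $C$ restricted to the isotropy subalgebra at $e$), or simply to follow the argument of \cite[Proposition 17]{BKZ09}.
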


Define 
\beu
\mathbb{D}(\Cal{L}_{s})^{\bar \fn}
=\{ D \in \mathbb{D}(\Cal{L}_{s}) \; | \; [\pi_s(X), D] = 0 
\text{ for all $X \in \bar \fn$} \}.
\end{equation*}

\noindent Observe that 
 in the sense of 
\cite[page 796]{BKZ09},
the $\fg$-manifold $\bar{N}_0$ 
is \emph{straight}
with respect to the subalgebra $\bar{\fn}$ of $\fg$
(\cite[page 799]{BKZ09}). 
Then we state the definition of \emph{straight} conformally invariant systems
specialized to the present situation. (For the general definition
see p. 797 of \cite{BKZ09}.)

\begin{Def}%\emph{\cite[page 797]{BKZ09}}
We say that a conformally invariant system $D_1, \ldots, D_m$ 
is \textbf{straight} if  
$D_j \in \mathbb{D}(\Cal{L}_{s})^{\bar \fn}$ for $j=1,\ldots, m$.
\end{Def}

In general, to show that a given list $D_1, \ldots, D_m$ of differential operators
on $\bar{N}_0$ is a conformally invariant system, 
we need check (S2) of Definition \ref{Def1.1.3}
at each point of $\bar{N}_0$.
Proposition \ref{Prop7.2.3} below shows that in the case $D_1,\ldots, D_m$
in $\mathbb{D}(\Cal{L}_{s})^{\bar \fn}$, 
it suffices to check the condition only at the identity $e$.

\begin{Prop}\emph{\cite[Proposition 13]{BKZ09}}\label{Prop7.2.3}
Let $D_1, \ldots, D_m$ be a list of operators in $\mathbb{D}(\Cal{L}_{s})^{\bar \fn}$.
Suppose that the list is linearly independent at $e$ and that there is a map
$b: \fg \to \f{gl}(m,\C)$ such that
\begin{equation*}
\big([\pi_s(Y), D_i] \acts f\big)(e) = \sum_{j=1}^m b(Y)_{ji}(D_j \acts f)(e)
\end{equation*} 
\noindent for all $Y \in \fg, \; f \in C^\infty(\bar{N}_0, \C_{\chi^{s}})$, and $1\leq i \leq m$. 
Then $D_1, \ldots, D_m$ is a conformally invariant system on $\Cal{L}_{s}$.
The structure operator of the system is given by 
$C(Y)(\nbar) = b(\emph{\Ad}(\nbar^{-1})Y)$ for all $\nbar \in \bar{N}_0$ and $Y \in \fg$.
\end{Prop}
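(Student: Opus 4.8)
The plan is to reduce the verification of (S2) at an arbitrary point $\nbar\in\bar N_0$ to the hypothesis at the identity $e$, by transporting along the left translation action of $\bar N_0$ on itself and using that each $D_i$, lying in $\mathbb{D}(\Cal{L}_{s})^{\bar\fn}$, is annihilated by $\pi_s(\bar\fn)$. For $\bar m\in\bar N_0$ let $\ell_s(\bar m)$ denote the operator $(\ell_s(\bar m)f)(\nbar)=f(\bar m^{-1}\nbar)$ on $C^\infty(\bar N_0,\C_{\chi^{s}})$. First I would record that this really is the restriction of left translation on $C^\infty_\chi(G_0/Q_0,\C_{\chi^{s}})$ \emph{without} a $\chi^{s}$-twist: since $\bar N_0$ is a group, left translation by $\bar m$ preserves the open Bruhat cell $\bar N_0Q_0$ and leaves its $Q_0$-component $\mathbf{q}(g)$ unchanged, so no factor of $\chi^{s}$ is produced.

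Next I would establish two facts. (i) For $X\in\bar\fn$ the formula (\ref{Eqn7.2.2}) simplifies, because $\bar\fn$ is $\Ad(\bar N_0)$-stable forces $(\Ad(\nbar^{-1})X)_\fq=0$; one then checks $\pi_s(X)$ is precisely the infinitesimal generator of $t\mapsto\ell_s(\exp tX)$. Differentiating $\ell_s(\exp tX)\,D_i\,\ell_s(\exp tX)^{-1}$ and using $[\pi_s(X),D_i]=0$ shows this is constant in $t$, hence, since $\bar N_0$ is connected with surjective exponential, $\ell_s(\bar m)\,D_i=D_i\,\ell_s(\bar m)$ for all $\bar m\in\bar N_0$. (ii) The conjugation relation $\ell_s(\nbar^{-1})\,\pi_s(Y)\,\ell_s(\nbar)=\pi_s(\Ad(\nbar^{-1})Y)$ holds for all $Y\in\fg$; this is the differential of $\Pi_s(g)\pi_s(Y)\Pi_s(g)^{-1}=\pi_s(\Ad(g)Y)$ for the left-translation representation, and it can be verified directly on all of $C^\infty(\bar N_0,\C_{\chi^{s}})$ from (\ref{Eqn7.2.2}), using only that left and right translations commute and that $\Ad((\nbar\,\bar m)^{-1})=\Ad(\bar m^{-1})\Ad(\nbar^{-1})$.

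With these in hand the computation is formal. Fixing $\nbar,Y,f,i$, I write $(\,\cdot\,)(\nbar)=(\ell_s(\nbar^{-1})\,\cdot\,)(e)$, conjugate the commutator $[\pi_s(Y),D_i]$ by $\ell_s(\nbar^{-1})$, and apply (i) and (ii) to turn it into $[\pi_s(\Ad(\nbar^{-1})Y),D_i]$ acting on $\ell_s(\nbar^{-1})f$. The hypothesis at $e$, applied with $Y$ replaced by $\Ad(\nbar^{-1})Y$ and $f$ by $\ell_s(\nbar^{-1})f$, together with one more use of (i) to pull $D_j$ back through $\ell_s(\nbar^{-1})$, gives
\[
\big([\pi_s(Y),D_i]\acts f\big)(\nbar)=\sum_{j=1}^m b\big(\Ad(\nbar^{-1})Y\big)_{ji}\,\big(D_j\acts f\big)(\nbar),
\]
which is exactly (S2) with the asserted structure operator $C(Y)(\nbar)=b(\Ad(\nbar^{-1})Y)$, manifestly smooth in $\nbar$. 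For (S1) the same transport works: a relation $\sum_i c_i(D_i\acts f)(\nbar)=0$ holding for all $f$ becomes $\sum_i c_i(D_i\acts g)(e)=0$ for all $g$, since $g=\ell_s(\nbar^{-1})f$ ranges over all of $C^\infty(\bar N_0,\C_{\chi^{s}})$; linear independence at $e$ then forces $c_i=0$, giving independence at $\nbar$.

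The hard part is not the manipulation of commutators but the bookkeeping of facts (i) and (ii): one must be sure that left translation by $\bar N_0$ restricts to the \emph{untwisted} action $\ell_s$ on $C^\infty(\bar N_0,\C_{\chi^{s}})$ and that $\pi_s|_{\bar\fn}$ integrates to exactly this action, for it is precisely these two points that make $D_i$ commute with $\ell_s$ and make the conjugation relation hold on the nose. Both are consequences of the Bruhat decomposition and formula (\ref{Eqn7.2.2}); once they are in place the remainder is a routine translation argument.
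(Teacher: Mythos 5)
Your proof is correct. Note that the paper itself gives no proof of this proposition --- it is quoted verbatim from \cite[Proposition 13]{BKZ09} --- so there is nothing internal to compare against; your argument is precisely the standard transport-to-the-identity argument behind that result: integrate $\pi_s|_{\bar\fn}$ to the untwisted left-translation action of $\bar N_0$ (valid because left translation by $\bar N_0$ preserves the Bruhat cell and leaves the $Q_0$-component fixed), use $D_i \in \mathbb{D}(\Cal{L}_{s})^{\bar\fn}$ together with surjectivity of $\exp$ on the unipotent group $\bar N_0$ to commute the $D_i$ past these translations, and conjugate the identity-point hypothesis by $\ell_s(\nbar^{-1})$ using $\ell_s(\nbar^{-1})\pi_s(Y)\ell_s(\nbar)=\pi_s(\Ad(\nbar^{-1})Y)$. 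The two points you flag as requiring care --- that the restricted action is untwisted and that the conjugation identity holds on all of $C^\infty(\bar N_0,\C_{\chi^{s}})$ via (\ref{Eqn7.2.2}) --- are indeed the only delicate steps, and both check out.
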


\vsp

%%%%%%%%%%%%%%%%%%%%%%%%%%%%%%%%%%%%%%%%%
%\vskip 0.1in
%\subsection{Useful formulas}\label{SSFormula}

To end this section we are going to give 
two formulas that will make certain arguments simple
in Section \ref{chap:O1} and Section \ref{chap:O2}.

\begin{Prop}\label{Prop7.2.5}
Let $Y \in \fg$ and $f \in C^\infty(\bar{N_0} , \C_{\chi^{s}})$.
For $X, X_1, X_2 \in \bar{\fn}$, we have
\begin{equation*}
\big( [\pi_s(Y), R(X)] \acts f \big)(\nbar)\\
=\big( R([(\emph{\Ad}(\nbar^{-1})Y)_{\fq}, X]_{\bar{\fn}}) \acts f \big)(\nbar)
-s\gl_\fq\big([\emph{\Ad}(\nbar^{-1})Y, X]_{\fq}\big)f(\nbar)
\end{equation*}
\noindent and
\begin{align*}
&\big( [\pi_s(Y), R(X_1)R(X_2)]\acts f\big)(\nbar)\\
&= \big(R([(\emph{\Ad}(\nbar^{-1})Y)_\fq,X_1]_{\bar\fn})R(X_2)\acts f\big)(\nbar)
+\big(R(X_1)R([(\emph{\Ad}(\nbar^{-1})Y)_\fq, X_2]_{\bar\fn})\acts f\big)(\nbar)\\
&\quad +\big( R([[\emph{\Ad}(\nbar^{-1})Y, X_1]_\fq,X_2]_{\bar\fn})\acts f\big)(\nbar)
-s\gl_\fq([\emph{\Ad}(\nbar^{-1})Y, X_1]_\fq)(R(X_2)\acts f)(\nbar)\\
&\quad -s\gl_\fq([\emph{\Ad}(\bar{n}^{-1})Y, X_2]_\fq)(R(X_1)\acts f)(\bar{n})
-s\gl_\fq([[\emph{\Ad}(\bar{n}^{-1})Y, X_1],X_2]_\fq)f(\bar{n}).
\end{align*}
\end{Prop}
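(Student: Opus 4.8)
The plan is to deduce both identities from a single infinitesimal computation and then to propagate it through the Leibniz rule. Throughout the proof write $\Phi_Y(\nbar)=\Ad(\nbar^{-1})Y$, so that (\ref{Eqn7.2.2}) reads $(\pi_s(Y)\acts f)(\nbar)=s\gl_\fq((\Phi_Y)_\fq)f(\nbar)-(R((\Phi_Y)_{\bar\fn})\acts f)(\nbar)$; here the coefficient $(\Phi_Y(\nbar))_{\bar\fn}\in\bar\fn$ of $R$ is frozen at the point of evaluation. The one fact I need is that for a \emph{fixed} $X\in\bar\fn$ one has $R(X)\acts\Phi_Y=[\Phi_Y,X]$ as $\fg$-valued functions: differentiating $\Ad((\nbar\exp(tX))^{-1})Y=\Ad(\exp(-tX))\Phi_Y(\nbar)$ at $t=0$ gives $-[X,\Phi_Y]$. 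Since the projections onto $\fq$ and $\bar\fn$ are constant linear maps, this yields $R(X)\acts(\Phi_Y)_\fq=[\Phi_Y,X]_\fq$ and $R(X)\acts(\Phi_Y)_{\bar\fn}=[\Phi_Y,X]_{\bar\fn}$. I will also use freely that $\bar\fn$ is a subalgebra, so that $R(X)$ obeys the Leibniz rule and $[R(X),R(X')]=R([X,X'])$ for $X,X'\in\bar\fn$.

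For the first identity I would expand $[\pi_s(Y),R(X)]\acts f=\pi_s(Y)\acts(R(X)\acts f)-R(X)\acts(\pi_s(Y)\acts f)$ using (\ref{Eqn7.2.2}) on both terms. The first term contributes $s\gl_\fq((\Phi_Y)_\fq)(R(X)\acts f)-R((\Phi_Y)_{\bar\fn})R(X)\acts f$. In the second term $R(X)$ acts as a derivation on each factor of (\ref{Eqn7.2.2}): the scalar coefficient differentiates, by the key lemma, to $s\gl_\fq([\Phi_Y,X]_\fq)f$, while differentiating $R((\Phi_Y)_{\bar\fn})\acts f$ produces the coefficient-derivative term $R([\Phi_Y,X]_{\bar\fn})\acts f$, the composed term $R((\Phi_Y)_{\bar\fn})R(X)\acts f$, and a commutator term $R([X,(\Phi_Y)_{\bar\fn}])\acts f$ coming from $[R(X),R(X')]=R([X,X'])$. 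Upon subtraction the two $s\gl_\fq((\Phi_Y)_\fq)(R(X)\acts f)$ terms cancel, the two $R((\Phi_Y)_{\bar\fn})R(X)\acts f$ terms cancel, and the surviving bracket terms collapse through $[\Phi_Y,X]_{\bar\fn}+[X,(\Phi_Y)_{\bar\fn}]=[(\Phi_Y)_\fq,X]_{\bar\fn}$ (valid because $[(\Phi_Y)_{\bar\fn},X]\in\bar\fn$), leaving precisely $R([(\Phi_Y)_\fq,X]_{\bar\fn})\acts f-s\gl_\fq([\Phi_Y,X]_\fq)f$, as asserted.

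For the second identity I would avoid a fresh expansion and instead invoke the associative identity $[\pi_s(Y),R(X_1)R(X_2)]=[\pi_s(Y),R(X_1)]R(X_2)+R(X_1)[\pi_s(Y),R(X_2)]$, substituting the first identity into each commutator. The summand $[\pi_s(Y),R(X_1)]R(X_2)$ immediately gives $R([(\Phi_Y)_\fq,X_1]_{\bar\fn})R(X_2)\acts f$ and $-s\gl_\fq([\Phi_Y,X_1]_\fq)(R(X_2)\acts f)$. In the summand $R(X_1)[\pi_s(Y),R(X_2)]$ the outer derivation $R(X_1)$ (fixed coefficient) acts by Leibniz on the two functions produced by the first identity: the frozen parts give $R(X_1)R([(\Phi_Y)_\fq,X_2]_{\bar\fn})\acts f$ and $-s\gl_\fq([\Phi_Y,X_2]_\fq)(R(X_1)\acts f)$, whereas differentiating the two $\nbar$-dependent coefficients via the key lemma produces $R([[\Phi_Y,X_1]_\fq,X_2]_{\bar\fn})\acts f$ and $-s\gl_\fq([[\Phi_Y,X_1],X_2]_\fq)f$. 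Assembling the six contributions reproduces the stated right-hand side.

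The only delicate point, and the one where care is required, is to keep straight the two roles of the symbol $R$. With a fixed argument in $\bar\fn$ it is a genuine vector field, subject to the Leibniz rule and to $[R(X),R(X')]=R([X,X'])$; with the $\nbar$-dependent argument $(\Phi_Y)_{\bar\fn}$, or a bracket built from it, its coefficient must be regarded as frozen at the evaluation point, and the differentiation of that coefficient is exactly what the key lemma controls. Throughout, the repeated use of $\bar\fn$ being a subalgebra is what forces the stray bracket terms back into $\bar\fn$, so that the $\bar\fn$-projections behave transparently and the bookkeeping closes.
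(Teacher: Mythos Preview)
Your argument is correct and is essentially the explicit version of what the paper leaves as ``a direct computation \ldots\ using (\ref{Eqn7.2.2}).'' The only organisational difference is that for the second formula you feed the first identity through the derivation identity $[\pi_s(Y),R(X_1)R(X_2)]=[\pi_s(Y),R(X_1)]R(X_2)+R(X_1)[\pi_s(Y),R(X_2)]$ and then differentiate the $\nbar$-dependent coefficients via your key lemma $R(X)\acts\Phi_Y=[\Phi_Y,X]$, whereas the paper's one-line proof presumably just expands both terms from scratch; the two computations are equivalent and your bookkeeping (frozen versus variable coefficients, and the collapse $[\Phi_Y,X]_{\bar\fn}+[X,(\Phi_Y)_{\bar\fn}]=[(\Phi_Y)_\fq,X]_{\bar\fn}$) is handled correctly.
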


\begin{proof}
These formulas follow from 
a direct computation on the left hand side of each equation
using (\ref{Eqn7.2.2}).
\end{proof}

%\begin{Prop}\label{Prop7.2.6}
%For $Y \in \fg$, $X_1, X_2 \in \bar{\fn}$, and $f \in C^\infty(\bar{N}_0, \C_{\chi^{s}})$,
%we have
%\begin{align*}
%&\big( [\pi_s(Y), R(X_1)R(X_2)]\acts f\big)(\nbar)=\\
%& \big(R([(\emph{\Ad}(\nbar^{-1})Y)_\fq,X_1]_{\bar\fn})R(X_2)\acts f\big)(\nbar)
%+\big(R(X_1)R([(\emph{\Ad}(\nbar^{-1})Y)_\fq, X_2]_{\bar\fn})\acts f\big)(\nbar)\\
%& +\big( R([[\emph{\Ad}(\nbar^{-1})Y, X_1]_\fq,X_2]_{\bar\fn})\acts f\big)(\nbar)
%-s\gl_\fq([\emph{\Ad}(\nbar^{-1})Y, X_1]_\fq)(R(X_2)\acts f)(\nbar)\\
%& -s\gl_\fq([\emph{\Ad}(\bar{n}^{-1})Y, X_2]_\fq)(R(X_1)\acts f)(\bar{n})
%-s\gl_\fq([[\emph{\Ad}(\bar{n}^{-1})Y, X_1],X_2]_\fq)f(\bar{n}).
%\end{align*}
%\end{Prop}

%\begin{proof}
%This also follows from a direct computation from the left hand side
%of the proposed equality.
%\end{proof}

%%%%%%%%%%%%%%%%%%%%%%%%%%%%%%%%%%%%%%%%%
\section{The $\Omega_k$ Systems}\label{SS721}

The purpose of this section is to construct systems of 
$k$-th order differential operators
in $\mathbb{D}(\Cal{L}_{s})^{\bar \fn}$
in a systematic manner.
We shall call the systems of operators $\Omega_k$ systems.

%%%%%%%%%%%%%%%%%%%%%%%%%%%%%%%%%%%%%%%%%

\subsection{Construction of the $\Omega_k$ systems}
\label{SS7211}

Let $\fg = \bigoplus_{j=-r}^r \fg(j)$ on $\fg$ be a 
$\Z$-grading on $\fg$ with $\fg(1) \neq 0$.
By construction, $\fq = \fg(0) \oplus \bigoplus_{j>0} \fg(j)$
is a parabolic subalgebra. Take $L$ to be the analytic subgroup 
of $G$ with Lie algebra $\fg(0)$.
Observe that, by Vinberg's Theorem (\cite[Theorem 10.19]{Knapp02}),
the triple $(L, \Ad, \fg(1))$ is a prehomogeneous vector space, that is, 
$L$ has an open orbit in $\fg(1)$. 
%In the theory of prehomogeneous vector spaces, it is natural to 
%associate equivariant polynomial maps 
%called \emph{covariant maps} to a prehomogeneous 
%vector space.
To define our systems of differential operators,
we use covariant maps ($L$-equivariant polynomial maps),
which we denote by $\tau_k$,
associated to prehomogeneous vector space $(L, \Ad, \fg(1))$.
These maps can be thought to give symbols of a class of differential operators
that we will study.
We would like to acknowledge that the construction of $\tau_k$ as in this paper
was suggested by Anthony Kable.

\begin{Def}\label{Def6.3}
Let $\fg = \bigoplus_{j=-r}^r \fg(j)$ be a graded complex simple Lie algebra
with $\fg(1) \neq 0$.
Then, for $1\leq k \leq 2r$, the map $\tau_k$ on $\fg(1)$ is defined by 
\begin{align*}\label{Eqn25}
\tau_k: \fg(1) &\to \fg(-r+k) \otimes \fg(r) \\
X &\mapsto \frac{1}{k!} \big( \emph{\ad}(X)^k\otimes \emph{Id} \big) \omega \nonumber
\end{align*}
with
\begin{equation*}
\omega = \sum_{\gamma_j \in \gD(\fg(r))}
X_{\gamma_j}^* \otimes X_{\gamma_j} \in \fg(-r) \otimes \fg(r),
\end{equation*}
where $\gD(\fg(r))$ is the set of roots $\ga$ so that $\fg_\ga \subset \fg(r)$,
and where $X_{\gamma_j}^*$ are the elements in $\fg(-r)$ 
dual to $X_{\gamma_j}$ with respect to the Killing form;
namely,
$X^*_{\gamma_j}(X_{\gamma_t}) 
:= \gk(X^*_{\gamma_j}, X_{\gamma_t})
=\gd_{j,t}$
with $\gd_{i,t}$ the Kronecker delta.
\end{Def}

%Here, we mean by $\ad(X)^k \omega$ that 
%$X$ acts on the tensor product diagonally via the action $\ad(\cdot)^k$. 
%Observe that since $X \in \fg(1)$ and $[\fg(1), \fg(r)]=0$, 
%we have $\ad(X)^k X_{\gamma_j} = 0$ for all $\gamma_j \in \gD(\fg(r))$.
%Therefore, $\ad(X)^k \omega = 
%\sum_{\gamma_j} \ad(X)^k(X_{-\gamma_j}) \otimes X_{\gamma_j}$.

%When $\fg(1)$ and $\fg(-r+k)\otimes \fg(r)$ are viewed as 
%affine varieties, the maps $\tau_k$ are indeed morphisms of varieties. 
We shall check in Lemma \ref{Lem6.5} 
that these maps are indeed $L$-equivariant.
%This will show that $\tau_k$ satisfy the definition of covariant maps.
Observe that, by the standard argument, the element $\omega$
is independent of a choice of a basis for $\fg(r)$ and the dual basis
for $\fg(-r)$.

\begin{Lem}\label{Cor6.5}
Let $\fg = \bigoplus_{j=-r}^r \fg(j)$ be a graded complex simple Lie algebra
with $\fg(1) \neq 0$ and $G$ be a complex analytic group with Lie algebra $\fg$.
If $L$ is the analytic subgroup of $G$ with Lie algebra $\fg(0)$ 
and $\omega$ is as in Definition \ref{Def6.3} then, for all $l \in L$,
\beu
(\emph{\Ad}(l) \otimes \emph{\Ad}(l)) \omega = \omega.
\end{equation*}
\end{Lem}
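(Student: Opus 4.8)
The plan is to exploit the fact that $\omega$ is the canonical element attached to the Killing-form pairing between $\fg(-r)$ and $\fg(r)$, together with two standard invariance properties of $\Ad(l)$. The two ingredients I would isolate first are: (i) for every $l \in L$ the operator $\Ad(l)$ preserves each graded subspace $\fg(j)$, and in particular $\Ad(l)\fg(r) = \fg(r)$ and $\Ad(l)\fg(-r) = \fg(-r)$; and (ii) the Killing form $\gk$ is $\Ad$-invariant, i.e. $\gk(\Ad(l)A, \Ad(l)B) = \gk(A,B)$ for all $A, B \in \fg$. Property (ii) is standard. For (i), since the Lie algebra of $L$ is $\fg(0)$ and $[\fg(0), \fg(j)] \subseteq \fg(j)$, the subspace $\fg(j)$ is $\ad(\fg(0))$-stable; because $L$ is connected, $\Ad(l)$ then stabilizes $\fg(j)$.

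Granting these, the argument is short. Fix $l \in L$. Since $\Ad(l)$ maps the basis $\{X_{\gamma_j}\}$ of $\fg(r)$ to another basis $\{\Ad(l)X_{\gamma_j}\}$ of $\fg(r)$, it suffices to identify the $\gk$-dual basis in $\fg(-r)$ of this new basis. Using (ii),
\[
\gk\big(\Ad(l)X^*_{\gamma_j}, \Ad(l)X_{\gamma_t}\big) = \gk\big(X^*_{\gamma_j}, X_{\gamma_t}\big) = \gd_{j,t},
\]
so $\{\Ad(l)X^*_{\gamma_j}\}$ is exactly the dual basis to $\{\Ad(l)X_{\gamma_j}\}$. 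Therefore the canonical element built from the basis $\{\Ad(l)X_{\gamma_j}\}$ and its dual is $\sum_j \Ad(l)X^*_{\gamma_j} \otimes \Ad(l)X_{\gamma_j} = (\Ad(l)\otimes\Ad(l))\omega$. By the basis-independence of $\omega$ noted above, this canonical element equals $\omega$, which gives $(\Ad(l)\otimes\Ad(l))\omega = \omega$.

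I do not expect a serious obstacle here; the only point requiring care is (i), the $\Ad(L)$-stability of $\fg(\pm r)$, which rests on the connectedness of $L$ and on the grading element $H_0 \in \fg(0)$. An alternative, fully equivalent route avoids the basis-independence appeal: one verifies the infinitesimal statement $(\ad(Z)\otimes \mathrm{Id} + \mathrm{Id} \otimes \ad(Z))\omega = 0$ for every $Z \in \fg(0)$. Expanding both terms in the bases and writing $[Z, X_{\gamma_j}] = \sum_t d_{jt} X_{\gamma_t}$ and $[Z, X^*_{\gamma_j}] = \sum_t c_{jt} X^*_{\gamma_t}$, the infinitesimal invariance $\gk([Z,A],B) = -\gk(A,[Z,B])$ gives $c_{jt} = -d_{tj}$, and after relabeling the summation indices the two contributions cancel. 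Since $L$ is connected, exponentiating this identity yields the asserted $\Ad(l)$-invariance.
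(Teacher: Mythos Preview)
Your proof is correct and follows essentially the same approach as the paper: you show that $\{\Ad(l)X_{\gamma_j}\}$ is a basis for $\fg(r)$ with Killing-dual basis $\{\Ad(l)X^*_{\gamma_j}\}$ in $\fg(-r)$, and then invoke the basis-independence of $\omega$. The paper's proof is terser, omitting the justification of $\Ad(L)$-stability of $\fg(\pm r)$ and the explicit $\gk$-invariance computation that you supply; your added infinitesimal alternative is a nice bonus but not needed.
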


\begin{proof}
If $g \in L$ then $\{\Ad(l)X_{\gamma_j} \; | \; \gamma_j \in \gD(\fg(r))\}$
forms a basis for $\fg(r)$. It also holds that
$\{\Ad(l)X^*_{\gamma_j}\; |\; \gamma_j \in \gD(\fg(r))\}$
is the dual basis for $\fg(-r)$ with respect to the Killing form.
Now the assertion follows from the property that $\omega$ is 
independent of a choice of a basis for $\fg(r)$ and the dual basis for 
$\fg(-r)$.
\end{proof}

%Now we show that $\tau_k$ are $L$-equivariant.

\begin{Lem}\label{Lem6.5}
Let $\fg = \bigoplus_{j=-r}^r \fg(j)$, $G$, and $L$ be as in Lemma \ref{Cor6.5}.
For all $l \in L$, $X \in \fg(1)$, and for $0 \leq k \leq 2r$, we have 
\begin{equation*}\label{Eqn6.6}
\tau_k(\emph{\Ad}(l)X) = (\emph{\Ad}(l)\otimes \emph{\Ad}(l))\tau_k(X).
\end{equation*}
\end{Lem}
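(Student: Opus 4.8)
The plan is to prove the $L$-equivariance of $\tau_k$ by a direct computation, reducing it to the $\Ad(l)$-invariance of $\omega$ established in Lemma \ref{Cor6.5} together with the standard fact that $\Ad(l)$ intertwines the operators $\ad(X)$ and $\ad(\Ad(l)X)$. First I would fix $l \in L$ and $X \in \fg(1)$, and recall the conjugation identity $\ad(\Ad(l)X) = \Ad(l)\,\ad(X)\,\Ad(l)^{-1}$, which is immediate from the fact that $\Ad(l)$ is a Lie algebra automorphism. Iterating this $k$ times gives
\begin{equation*}
\ad(\Ad(l)X)^k = \Ad(l)\,\ad(X)^k\,\Ad(l)^{-1}.
\end{equation*}

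Next I would unwind the definition of $\tau_k(\Ad(l)X)$ from Definition \ref{Def6.3} and substitute this identity:
\begin{equation*}
\tau_k(\Ad(l)X) = \frac{1}{k!}\big(\ad(\Ad(l)X)^k \otimes \Id\big)\omega
= \frac{1}{k!}\big(\Ad(l)\,\ad(X)^k\,\Ad(l)^{-1} \otimes \Id\big)\omega.
\end{equation*}
The key move is then to insert $\Ad(l)^{-1}$ on the second tensor factor at no cost, using Lemma \ref{Cor6.5} in the form $\omega = (\Ad(l)^{-1}\otimes \Ad(l)^{-1})\omega$. This lets me rewrite the expression so that the ``bare'' $\Ad(l)^{-1}$ in the first slot is absorbed against the $\Ad(l)^{-1}$ coming from $\omega$, while the outer $\Ad(l)\otimes\Ad(l)$ factors out. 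Concretely, I would factor
\begin{equation*}
\big(\Ad(l)\,\ad(X)^k\,\Ad(l)^{-1}\otimes \Id\big)\omega
= \big(\Ad(l)\otimes \Ad(l)\big)\big(\ad(X)^k \otimes \Id\big)\big(\Ad(l)^{-1}\otimes \Ad(l)^{-1}\big)\omega,
\end{equation*}
and then apply Lemma \ref{Cor6.5} to the rightmost factor, which collapses $(\Ad(l)^{-1}\otimes\Ad(l)^{-1})\omega$ back to $\omega$. Dividing by $k!$ identifies the remaining $(\ad(X)^k\otimes\Id)\omega/k!$ as $\tau_k(X)$, yielding $\tau_k(\Ad(l)X) = (\Ad(l)\otimes\Ad(l))\tau_k(X)$.

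The only point requiring slight care is checking that $\tau_k$ genuinely lands in $\fg(-r+k)\otimes\fg(r)$, so that the factored $\Ad(l)\otimes\Ad(l)$ is acting on the correct space and the bookkeeping is legitimate: since $\omega \in \fg(-r)\otimes\fg(r)$ and $\ad(X)$ raises the grading by $1$ as $X \in \fg(1)$, the operator $\ad(X)^k$ sends $\fg(-r)$ into $\fg(-r+k)$, so the target space is as claimed and $\Ad(l)$ preserves each graded piece $\fg(j)$ because $l \in L$ has Lie algebra $\fg(0)$. I do not anticipate any genuine obstacle here; the proof is essentially the standard naturality argument for equivariant polynomial maps, and the substantive input (invariance of $\omega$) has already been isolated in Lemma \ref{Cor6.5}. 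The one thing I would state explicitly is that the conjugation identity for $\ad$ and the invariance of $\omega$ combine cleanly precisely because the tensor factors transform by the same representation $\Ad(l)$.
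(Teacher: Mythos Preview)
Your proof is correct and follows essentially the same route as the paper: both use the conjugation identity $\ad(\Ad(l)X)^k = \Ad(l)\,\ad(X)^k\,\Ad(l)^{-1}$, factor out $\Ad(l)\otimes\Ad(l)$, and then invoke Lemma~\ref{Cor6.5} to collapse $(\Ad(l)^{-1}\otimes\Ad(l)^{-1})\omega$ back to $\omega$. The only cosmetic difference is that the paper carries the basis expansion $\omega = \sum X^*_{\gamma_j}\otimes X_{\gamma_j}$ through the computation explicitly, whereas you work at the level of operators on the tensor product; the logic is identical.
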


\begin{proof}
For $l \in L$, we have
\begin{align*}
\tau_k(\Ad(l)X) %&= \frac{1}{k!} \ad(\Ad(l)(X))^k \omega \\
&= \frac{1}{k!} \sum_{\gamma_j \in \gD(\fz(\fn))} 
\ad(\Ad(l)(X))^k(X^*_{\gamma_j}) \otimes X_{\gamma_j}\\
&=\frac{1}{k!} \sum_{\gamma_j \in \gD(\fz(\fn))} 
\Ad(l) \big( \ad(X)^k(\Ad(l^{-1})X^*_{\gamma_j}) \big )\otimes X_{\gamma_j}\\
%&=\frac{1}{k!} \sum_{\gamma_j \in \gD(\fz(\fn))} 
%\Ad(l) \big[ \ad(X)^k(\Ad(g^{-1})X_{-\gamma_j}) \big ]\otimes \Ad(l)\Ad(g^{-1})(X_{\gamma_j})\\
&= (\Ad(l)\otimes \Ad(l)) \bigg( \frac{1}{k!} \sum_{\gamma_j \in \gD(\fz(\fn))} 
\ad(X)^k(\Ad(l^{-1})X^*_{\gamma_j}) \otimes \Ad(l^{-1})(X_{\gamma_j}) \bigg)\\
&= (\Ad(l)\otimes \Ad(l)) \bigg( \frac{1}{k!}\big(\ad(X)^k\otimes \text{Id}\big) \omega \bigg)\\
&= (\Ad(l)\otimes \Ad(l)) \tau_k(X).
\end{align*}
\noindent Note that Lemma \ref{Cor6.5} is applied from line four to line five.
\end{proof}

\vsp

Now we are going to build the systems of differential operators in 
$\mathbb{D}(\Cal{L}_{s})^{\bar{\fn}}$ that we study. 
Observe that, as
$\tau_k: \fg(1) \to \fg(-r+k)\otimes \fg(r) =: W$
are $L$-equivariant polynomial maps of degree $k$,
the maps $\tau_k$ can be thought of as elements in 
$(\Cal{P}^k(\fg(1)) \otimes W)^{L}$, where $\Cal{P}^k(\fg(1))$
denotes the space of homogeneous polynomials on $\fg(1)$ of degree $k$.
Then the isomorphism 
$(\Cal{P}^k(\fg(1)) \otimes W)^{L} \cong  \Hom_{L}(W^*, \Cal{P}^k(\fg(1)))$
yields the $L$-intertwining operators $\ttau_k$, that are given by
\begin{equation}\label{EqnTau}
\ttau_k(Y^*)(X) = Y^*(\tau_k(X)),
\end{equation}
where $W^*$ is the dual module of $W$ with respect to the Killing form.
For each $Y^*\in W^*$, we have 
$\ttau_k(Y^*) \in \Cal{P}^k(\fg(1)) \cong \Sym^k(\fg(-1))$.
We define differential operators in $\mathbb{D}(\Cal{L}_{s})^{\bar{\fn}}$
from $\ttau_k(Y^*)$. This is done as follows.
Let $\gs: \Sym^k(\fg(-1)) \to \Cal{U}(\bar \fn)$ be the symmetrization operator.
Identify $\Cal{U}(\bar{\fn})$ with $\mathbb{D}(\Cal{L}_{s})^{\bar\fn}$ 
by making $\bar \fn$ act on $C^\infty(\bar{N}_0, \C_{\chi^{s}})$ 
via right differentiation $R$. 
Then we have a composition of linear maps
\begin{equation*}
W^* \stackrel{\ttau_k}{\to} \Cal{P}^k(\fg(1))
\cong \Sym^k(\fg(-1)) \stackrel{\gs}{\hookrightarrow} \Cal{U}(\bar \fn)
\stackrel{R}{\to} \mathbb{D}(\Cal{L}_{s})^{\bar \fn}.
\end{equation*}
For $Y^* \in W^*$, we define
a differential operator $\Omega_k(Y^*) \in \mathbb{D}(\Cal{L}_{s})^{\bar\fn}$
by 
\begin{equation*}\label{Eqn:Omega}
\Omega_k(Y^*) = R \circ \gs \circ \ttau_k(Y^*).
\end{equation*}

As we will work with irreducible systems we need to be a little more 
careful with our construction; in particular, 
we need to take an irreducible constituent of $\fg(-r+k)^* \otimes \fg(r)^*$.
Let
%\begin{equation*}
$\fg(-r+k) \otimes \fg(r) = V_1 \oplus \cdots \oplus V_m$
%\end{equation*}
be the irreducible decomposition of $\fg(-r+k) \otimes \fg(r)$
as an $L$-module, and let
%\begin{equation*}
$\fg(-r+k)^* \otimes \fg(r)^* = V^*_1 \oplus \cdots \oplus V^*_m$
%\end{equation*}
be the corresponding irreducible decomposition of $\fg(-r+k)^* \otimes \fg(r)^*$,
where $\fg(j)^*$ are the dual spaces of
$\fg(j)$ with respect to the Killing form.
For each irreducible constituent $V^*_j$ of $\fg(-r+k)^* \otimes \fg(r)^*$,
there exists an $L$-intertwining operator 
$\ttau_k|_{V^*_j} \in \Hom_L(V^*_j, \Cal{P}^k(\fg(1)))$ given
as in (\ref{EqnTau}). Then we define a
linear operator 
$\Omega_k|_{V^*_j}: V_j^* \to \mathbb{D}(\Cal{L}_{s})^{\bar \fn}$ by
\begin{equation*}
\Omega_k|_{V^*_j} = R \circ \gs \circ \ttau_k|_{V^*_j}.
\end{equation*}
Since, for $Y^* \in V_j^*$, we have 
$\Omega_k|_{V^*_j}(Y^*) = \Omega_k(Y^*)$ as a differential operator, 
we simply write $\Omega_k(Y^*)$ for the differential operator
arising from $Y^* \in V^*_j$.

\begin{Def}\label{Def2.3.5}
%Let $\fg=\bigoplus_{j=-r}^r \fg(j)$ and $\fq = \bigoplus_{j = 0}^r\fg(j)$
%be as in Definition \ref{Def6.3}.
If $V^*$ is an irreducible constituent of $\fg(-r+k)^* \otimes \fg(r)^*$
so that $\ttau_k|_{V^*} \not \equiv 0$
then a list of differential operators $D_1, \ldots, D_n \in 
\mathbb{D}(\Cal{L}_{s})^{\bar \fn}$
is called the \textbf{$\Omega_k|_{V^*}$ system} 
if it is equivalent (see Definition \ref{Def2.1.4}) 
to a list of differential operators
\begin{equation}
\Omega_k(Y^*_{1}), \ldots, \Omega_k(Y^*_{n}),
\end{equation}
where $\{Y^*_1, \ldots, Y^*_n\}$ is a basis for $V^*$.
\end{Def}

We also simply refer each $\Omega_k|_{W^*}$ system
to an $\Omega_k$ system.
We want to remark that the construction of 
the $\Omega_k$ systems might require additional modification
to secure the conformal invariance.
See Section 6 in \cite{BKZ08} and Section 3 in \cite{Kubo11} 
for the modification for the $\Omega_3$ systems
of the parabolic subalgebra of Heisenberg type.

\vsp

It is important to notice that 
it is not necessary for the $\Omega_k$ systems to be conformally 
invariant; their conformal invariance strongly depends on the complex
parameter $s$ for the line bundle $\Cal{L}_{s}$. 
So, we give the following definition.

\begin{Def}\label{Def2.3.5}
Let $V^*$ be an irreducible constituent of $\fg(-r+k)^* \otimes \fg(r)^*$.
Then we say that the $\Omega_k|_{V^*}$ system has \textbf{special value $s_0$} if
the system is conformally invariant on the line bundle $\Cal{L}_{s_0}$.
\end{Def}

Note that, as the opposite parabolic $\bar{Q}_0 = L_0 \bar{N}_0$ 
is chosen in \cite{BKZ08}, our special values $s_0$ are of the form 
$s_0 = -s'_0$, where $s'_0$ are the special values shown in \cite{BKZ08}.

\vsp

Observe that the linear operator
$\Omega_k|_{V^*}: V^* \to \mathbb{D}(\Cal{L}_{s})^{\bar \fn}$
is an $L_0$-intertwining operator with respect to the action
given in (\ref{Eqn234}); in particular,
the $\Omega_k|_{V^*}$ system is $L_0$-stable (see Definition \ref{Def2.3.6}).
Indeed, one can check that we have $l \cdot R(u) = R(\Ad(l)u)$ for $l \in L_0$
and $u \in \Cal{U}(\bar \fn)$. This action stabilizes the subspace
$\mathbb{D}(\Cal{L}_{s})^{\bar \fn}$. With the adjoint action of $L_0$ on
$\Cal{U}(\bar \fn)$, the linear isomorphism
$\Cal{U}(\bar \fn) \stackrel{R}{\to} \mathbb{D}(\Cal{L}_{s})^{\bar \fn}$
is then $L_0$-equvariant. 
It is clear that each map in 
$V^* \stackrel{\ttau_k|_{V^*}}{\to} \Cal{P}^k(\fg(1))
\cong \Sym^k(\fg(-1)) \stackrel{\gs}{\hookrightarrow} \Cal{U}(\bar \fn)$
is $L_0$-equivariant with respect to
the natural actions of $L_0$ on each space, which are induced by the 
adjoint action of $L_0$ on $\fg$.
Therefore, with the $L_0$-action (\ref{Eqn234}), 
the operator $\Omega_k|_{V^*}: V^* \to \mathbb{D}(\Cal{L}_{s})^{\bar \fn}$
is an $L_0$-intertwining operator.
Now we summarize some properties of the $\Omega_k|_{V^*}$ system.

\begin{Rem}\label{Rem2.3.6}
It follows from the definition and observation above
that  the $\Omega_k|_{V^*}$ system satisfies the following properties:
\begin{enumerate}
\item[(1)] The $\Omega_k|_{V^*}$ system
satisfies the condition (S1) of Definition \ref{Def1.1.3}.
\item[(2)] When the $\Omega_k|_{V^*}$ system is conformally invariant
then it is an irreducible, straight, and $L_0$-stable system.
By Proposition \ref{Prop218}, it is also a homogeneous system.
\end{enumerate}
\end{Rem}

%%%%%%%%%%%%%%%%%%%%%%%%%%%%%%%%%%%%%%%%%

\subsection{Computations involving the $\Omega_k$ systems}\label{SSLem}

We are going to show two technical lemmas
that will be used in Section \ref{chap:O2}.
For $D \in \mathbb{D}(\Cal{L}_{s})$, we denote by $D_{\nbar}$
the linear functional $f \mapsto (D\acts f)(\nbar)$ for 
$f \in C^\infty(\bar{N}_0, \C_{\chi^{s}})$.
A simple observation shows that 
$(D_1D_2)_{\nbar} = (D_1)_{\nbar}D_2$ for $D_1, D_2 \in \mathbb{D}(\Cal{L}_{s})$;
in particular, if $(D_1)_{\nbar} = 0$ then $[D_1, D_2]_{\nbar} = -(D_2)_{\nbar}D_1$.

\begin{Lem}\label{Lem832}
Suppose that $V^*$ is an irreducible constituent of 
$\fg(-r+k)^* \otimes \fg(r)^*$.
Let $X_1, X_2 \in \fg$ and $Y^*_1, \ldots, Y^*_n \in V^*$.
If $\spi(X_1)_e=0$ and if  we have
$[\pi_s(X_i), \Omega_k(Y^*_t)]_e \in  
\emph{\text{span}}_\C\{\Omega_k(Y^*_j)_e \; | \; j=1,\ldots n\}$ 
for $i=1,2$ then
\begin{equation}
\big [\spi(X_1), [\spi(X_2), \Omega_k(Y^*_t)] \big]_e \in 
\emph{\text{span}}_\C\{\Omega_k(Y^*_1)_e, 
\ldots, \Omega_k(Y^*_n)_e\}.
\end{equation}
\end{Lem}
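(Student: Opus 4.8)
The plan is to argue entirely at the point $e$ using the two evaluation identities recorded immediately before the statement, namely $(D_1 D_2)_{\nbar} = (D_1)_{\nbar} D_2$ and its consequence that $(D_1)_{\nbar}=0$ forces $[D_1,D_2]_{\nbar} = -(D_2)_{\nbar}D_1$. I deliberately avoid the Jacobi identity here: expanding $[\pi_s(X_1),[\pi_s(X_2),\Omega_k(Y^*_t)]]$ that way produces the term $[\pi_s([X_1,X_2]),\Omega_k(Y^*_t)]$, over which the hypotheses give no control, so the direct functional computation at $e$ is the cleaner route.

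First I would set $E = [\pi_s(X_2),\Omega_k(Y^*_t)]$ and use the assumption $\pi_s(X_1)_e = 0$ together with the second evaluation identity to obtain, as linear functionals on $C^\infty(\bar{N}_0,\C_{\chi^{s}})$,
\begin{equation*}
[\pi_s(X_1),E]_e = -(E)_e\,\pi_s(X_1).
\end{equation*}
By the hypothesis for $i=2$ I may expand $(E)_e = \sum_{j=1}^n c_j\,\Omega_k(Y^*_j)_e$ with $c_j \in \C$, so that $[\pi_s(X_1),E]_e = -\sum_{j=1}^n c_j\,\Omega_k(Y^*_j)_e\,\pi_s(X_1)$.

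The key step is then to rewrite each summand. Using $(D_1D_2)_e = (D_1)_eD_2$, I have $\Omega_k(Y^*_j)_e\,\pi_s(X_1) = (\Omega_k(Y^*_j)\pi_s(X_1))_e$; writing $\Omega_k(Y^*_j)\pi_s(X_1) = \pi_s(X_1)\Omega_k(Y^*_j) - [\pi_s(X_1),\Omega_k(Y^*_j)]$ and noting that $(\pi_s(X_1)\Omega_k(Y^*_j))_e = (\pi_s(X_1))_e\,\Omega_k(Y^*_j) = 0$ since $\pi_s(X_1)_e = 0$, this collapses to $\Omega_k(Y^*_j)_e\,\pi_s(X_1) = -[\pi_s(X_1),\Omega_k(Y^*_j)]_e$. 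Applying the hypothesis for $i=1$ to each basis element $Y^*_j$, each $[\pi_s(X_1),\Omega_k(Y^*_j)]_e$ lies in $\text{span}_\C\{\Omega_k(Y^*_1)_e,\ldots,\Omega_k(Y^*_n)_e\}$. Substituting back yields $[\pi_s(X_1),[\pi_s(X_2),\Omega_k(Y^*_t)]]_e = \sum_{j=1}^n c_j[\pi_s(X_1),\Omega_k(Y^*_j)]_e$, which is in the required span, completing the argument.

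There is no genuine analytic difficulty; the only point that needs care is the bookkeeping of the composite $(E)_e\,\pi_s(X_1)$ as a functional and the recognition that the hypothesis for $i=1$ is needed not merely for the single index $t$ but for every basis index $j$ appearing in the expansion of $(E)_e$. This is precisely why the hypothesis should be read as holding for all basis vectors $Y^*_t$, not just one; with that reading the two evaluation identities do all the work.
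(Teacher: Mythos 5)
Your argument is correct and is essentially the paper's own proof: both expand the outer commutator directly (no Jacobi identity is used in the paper either), kill the term $\pi_s(X_1)[\pi_s(X_2),\Omega_k(Y^*_t)]$ at $e$ via $\pi_s(X_1)_e=0$, expand $[\pi_s(X_2),\Omega_k(Y^*_t)]_e$ by the $i=2$ hypothesis, and convert each $\Omega_k(Y^*_j)_e\,\pi_s(X_1)$ back into $-[\pi_s(X_1),\Omega_k(Y^*_j)]_e$ before invoking the $i=1$ hypothesis. Your closing remark that the $i=1$ hypothesis must be applied to every index $j$ in the expansion, not only to $t$, is a correct reading that the paper uses implicitly as well.
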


\begin{proof}
Observe that $[\spi(X_1), [\spi(X_2), \Omega_k(Y^*_t)]]$ is
\begin{equation}\label{Eqn7.1.5}
\pi_s(X_1)[\spi(X_2), \Omega_k(Y^*_t)] - [\spi(X_2), \Omega_k(Y^*_t)]\pi_s(X_1).
\end{equation}
Since, by assumption, we have $\pi_s(X_1)_e =0$, 
the first term is zero at $e$. 
By assumption, the bracket $[\spi(X_2), \Omega_k(Y^*_t)]_e$
is a linear combination of $\Omega_k(Y^*_1)_e, \ldots, \Omega_k(Y^*_n)_e$ over $\C$. So it may be written as
%\begin{equation*}
$[\spi(X_2), \Omega_k(Y^*_t)]_e 
= \sum_{j=1}^na_{jt}\Omega_k(Y^*_j)_e$
%\end{equation*}
with $a_{jt} \in \C$.
Then, the second term in (\ref{Eqn7.1.5}) evaluates to 
$-\sum_{j=1}^na_{jt}\Omega_k(Y^*_j)_e\pi_s(X_1)$
at the identity $e$.
%\begin{equation*}
%-\sum_{j=1}^na_{jt}\Omega_k(Y^*_j)_e\pi_s(X_1).
%\end{equation*}
Since $(\pi_s(X_1)\Omega_k(Y_j^*))_e 
= \pi_s(X_1)_e\Omega_k(Y_j^*) = 0$, we obtain
\begin{align*}
[\spi(X_1), [\spi(X_2), \Omega_k(Y^*_t)]]_e
&=-\sum_{j=1}^na_{jt}\Omega_k(Y^*_j)_e\pi_s(X_1)\\
&=-\sum_{j=1}^na_{jt}[\pi_s(X_1), \Omega_k(Y^*_j)]_e.
\end{align*}
Now the proposed result follows from the assumption that 
$[\spi(X_1), \Omega_k(Y^*_t)]_e$ is a linear combination of 
$\Omega_k(Y^*_j)_e$ over $\C$.
\end{proof}

If $\gD^+(\fl)$ is the set of positive roots in $\fl$ then 
we set
\begin{equation*}
\fu_\fl = \bigoplus_{\gD^+(\fl)}\fg_\ga \text{ and }
\bar{\fu}_\fl = \bigoplus_{\gD^+(\fl)}\fg_{-\ga}.
\end{equation*}

\begin{Lem}\label{Lem831}
Suppose that $\fg(1)$ is irreducible and that 
$V^*$ is an irreducible constituent of $\fg(-r+k)^* \otimes \fg(r)^*$.
Let $X_h$ be a highest weight vector for $\fg(1)$ and 
$Y^*_l$ be a lowest weight vector for $V^*$.
If 
\begin{equation*}\label{Eqn:Omega_k1}
[\pi_s(X_h), \Omega_k(Y^*_l)]_e \in 
\emph{\text{span}}_\C\{\Omega_k(Y^*_1)_e, 
\ldots, \Omega_k(Y^*_n)_e\}
\end{equation*}
with $\{Y^*_1, \ldots, Y^*_n \}$ a basis for $V^*$
then, for any $X \in \fg(1)$ and $Y^* \in V^*$, 
\begin{equation*}
\big [\spi(X), \Omega_k(Y^*)]_e \in 
\emph{\text{span}}_\C\{\Omega_k(Y^*_1)_e, 
\ldots, \Omega_k(Y^*_n)_e\}.
\end{equation*}
\end{Lem}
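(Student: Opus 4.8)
The plan is to propagate the span condition
\[
[\pi_s(X), \Omega_k(Y^*)]_e \in \text{span}_\C\{\Omega_k(Y^*_1)_e, \ldots, \Omega_k(Y^*_n)_e\}
\]
from the single pair $(X_h, Y^*_l)$ to all of $\fg(1)\times V^*$, exploiting the $\fl$-module structure of $\fg(1)$ and $V^*$ together with the Jacobi identity. Write $\Cal{S} = \text{span}_\C\{\Omega_k(Y^*_j)_e\} = \Omega_k(V^*)_e$, and call $X\in\fg(1)$ \emph{good} if $[\pi_s(X), \Omega_k(Y^*)]_e\in\Cal{S}$ for every $Y^*\in V^*$. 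The goal is to show that every $X\in\fg(1)$ is good.

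Two preliminary facts drive the argument. First, for any root vector $W$ of $\fl$ one has $\gl_\fq(W)=0$, since $\gl_\fq$ annihilates the root vectors of $\fl$; hence by (\ref{Eqn7.2.2}), $\pi_s(W)_e=0$. Likewise $\pi_s(X)_e=0$ for every $X\in\fg(1)\subset\fn$, because $\gl_\fq$ vanishes on $\fn$. Second, because $\Omega_k|_{V^*}$ is $L_0$-intertwining for the action (\ref{Eqn234}) (the discussion preceding Remark \ref{Rem2.3.6}), differentiating the equivariance $\Omega_k(l\cdot Y^*)=l\cdot\Omega_k(Y^*)$ at $l=e$, and using that the differential of (\ref{Eqn234}) is $D\mapsto[\pi_s(W),D]$, yields the operator identity
\[
[\pi_s(W), \Omega_k(Y^*)] = \Omega_k(W\cdot Y^*)\qquad(W\in\fl,\ Y^*\in V^*),
\]
where $W\cdot Y^*$ is the $\fl$-action on $V^*$. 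In particular $[\pi_s(W), \Omega_k(Y^*)]_e\in\Cal{S}$ for \emph{all} $Y^*$; that is, every root vector $W$ of $\fl$ is automatically good against the entire basis of $V^*$, and satisfies $\pi_s(W)_e=0$. This is precisely the asymmetric input that Lemma \ref{Lem832} demands of its first slot.

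First I would show that $X_h$ is good. Since $V^*$ is $\fl$-irreducible with lowest weight vector $Y^*_l$, it is spanned by the vectors obtained from $Y^*_l$ by successively applying raising operators $W\in\fu_\fl$, so I induct on the length of such words, the base case being the hypothesis $[\pi_s(X_h), \Omega_k(Y^*_l)]_e\in\Cal{S}$. Suppose $[\pi_s(X_h), \Omega_k(Z)]_e\in\Cal{S}$ and let $W\in\fu_\fl$. Rewriting $\Omega_k(W\cdot Z)=[\pi_s(W),\Omega_k(Z)]$ via the operator identity and expanding by Jacobi,
\[
[\pi_s(X_h), \Omega_k(W\cdot Z)]_e = -[\pi_s(\ad(W)X_h), \Omega_k(Z)]_e + [\pi_s(W), [\pi_s(X_h), \Omega_k(Z)]]_e.
\]
Because $X_h$ is a highest weight vector for $\fg(1)$ and $W\in\fu_\fl$, we have $\ad(W)X_h=0$, so the first term vanishes. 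The second lies in $\Cal{S}$ by Lemma \ref{Lem832} with $X_1=W$ and $X_2=X_h$: the requirements $\pi_s(W)_e=0$ and $[\pi_s(W),\Omega_k(Y^*_j)]_e\in\Cal{S}$ (all $j$) hold by the preliminaries, while $[\pi_s(X_h),\Omega_k(Z)]_e\in\Cal{S}$ is the inductive hypothesis. Thus $[\pi_s(X_h),\Omega_k(W\cdot Z)]_e\in\Cal{S}$, and by linearity $X_h$ is good.

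Finally I would propagate goodness across $\fg(1)$. As $\fg(1)$ is $\fl$-irreducible with highest weight vector $X_h$, every element is a combination of iterated brackets of $X_h$ with lowering operators $W\in\bar{\fu}_\fl$, so it suffices to show that if $X$ is good then so is $\ad(W)X$. Writing $\ad(W)X=[W,X]$ and using $\pi_s([W,X])=[\pi_s(W),\pi_s(X)]$, the Jacobi identity gives, for $Y^*\in V^*$,
\[
[\pi_s(\ad(W)X), \Omega_k(Y^*)]_e = [\pi_s(W), [\pi_s(X), \Omega_k(Y^*)]]_e - [\pi_s(X), [\pi_s(W), \Omega_k(Y^*)]]_e.
\]
The first term lies in $\Cal{S}$ by Lemma \ref{Lem832} with $X_1=W$, $X_2=X$ (now $X$ good supplies the $X_2$-hypothesis); the second rewrites via the operator identity as $[\pi_s(X),\Omega_k(W\cdot Y^*)]_e$, which is in $\Cal{S}$ since $X$ is good and $W\cdot Y^*\in V^*$. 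Hence $\ad(W)X$ is good, and the induction yields the assertion. The one delicate point — and the reason the two inductions must be run in this order — is the asymmetry of Lemma \ref{Lem832}: its first slot must be good against all of $V^*$, whereas the second need only be good against the vector at hand. The preliminary operator identity is exactly what makes the $\fl$-root vector $W$ eligible for that first slot at every stage, so that in the opening phase, where $X_h$ is not yet known to be good against all of $V^*$, the demanding role is carried by $W$ rather than by $X_h$; establishing that identity as a genuine operator identity (not merely an equality at $e$) is the step I expect to warrant the most care.
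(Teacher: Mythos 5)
Your proposal is correct and uses exactly the paper's ingredients — the cyclic generation of $\fg(1)$ and $V^*$ from $X_h$ and $Y^*_l$, the Jacobi identity, the $\fl$-equivariance identity $[\pi_s(W),\Omega_k(Y^*)]=\Omega_k(W\cdot Y^*)$, and Lemma \ref{Lem832} with its asymmetric hypotheses. The only difference is that you run the two inductions in the mirror order (first over $V^*$ with $X_h$ fixed, killing a Jacobi term via $\ad(\fu_\fl)X_h=0$, then over $\fg(1)$), whereas the paper first fixes $Y^*_l$ and ranges over $\fg(1)$, killing the corresponding term via $\ad(\bar\fu_\fl)Y^*_l=0$; this is an immaterial transposition of the same argument.
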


\begin{proof}
Set 
$E =\text{span}_{\C}\{\Omega_k(Y^*_1)_e, \ldots, \Omega_k(Y^*_n)_e\}$.
We first show that for each $X\in \fg(1)$,
\begin{equation}\label{Eqn2.6.3}
[\spi(X), \Omega_k(Y^*_l)]_e \in E.
\end{equation}
Observe that since $(L, \fg(1))$ is assumed to be irreducible,
the $L$-module $\fg(1)$ is given by $\fg(1) = \Cal{U}(\bar{\fu}_\fl)X_h$.
Then, as $\pi_s$ is linear on $\fg(1)$, it suffices to show that 
(\ref{Eqn2.6.3}) holds when $X = \bar{u}_k \cdot X_h$ with $\bar{u}_k$ 
a monomial in $\Cal{U}(\bar{\fu}_\fl)$.
This is done by induction on the order of $\bar{u}_k$.
Indeed, the proof is clear once we show that (\ref{Eqn2.6.3})
holds for $X=\bar{Z}\cdot X_h =[\bar{Z}, X_h]$ with $\bar{Z} \in \bar\fu_\fl$.

By the Jocobi identity, 
the commutator $[\spi([\bar{Z}, X_h]), \Omega_k(Y^*_l)]$ is 
\begin{align}\label{Eqn239}
&[\spi([\bar{Z}, X_h]), \Omega_k(Y^*_l)] \nonumber\\
&=[\spi(\bar{Z}), [\spi(X_h), \Omega_k(Y^*_l)]]
-[\spi(X_h),[\spi(\bar{Z}), \Omega_k(Y^*_l)]].
\end{align}
By the $\fl$-equivariance of the operator
$\Omega_k: V^* \to \mathbb{D}(\Cal{L}_{s})^{\bar \fn}$,
it follows that 
\begin{equation*}
[\pi_s(\bar{Z}), \Omega_k(Y^*_l)] = \Omega_k([\bar{Z}, Y^*_l]).
\end{equation*}
Since $\bar{Z} \in \bar{\fu}_\fl$ and $Y^*_l$ is a lowest weight vector,
we have $\Omega_k([\bar{Z}, Y^*_l]) =0$, and so is the second term
of the right hand side of (\ref{Eqn239}). Thus we have 
\begin{equation}\label{Eqn2.6.5}
[\spi([\bar{Z}, X_h]), \Omega_k(Y^*_l)]_e
=[\spi(\bar{Z}), [\spi(X_h), \Omega_k(Y^*_l)]]_e.
\end{equation}
Now, by hypotheses and the $\fl$-equivariance of $\Omega_k$, it follows that 
\begin{equation*}
[\spi(X_h), \Omega_k(Y^*_l)]_e, [\spi(\bar{Z}), \Omega_k(Y^*_l)]_e \in E.
\end{equation*}
As $\bar{Z} \in \bar{\fu}_\fl$,
by (\ref{Eqn7.2.2}), we have $\spi(\bar{Z})_e = 0$.
Thus it follows from Lemma \ref{Lem832} that we have
$[\spi(\bar{Z}), [\spi(X_h), \Omega_k(Y^*_l)]]_e \in E$.
Therefore, $[\spi([\bar{Z}, X_h]), \Omega_k(Y^*_l)]_e \in E$
by (\ref{Eqn2.6.5}).

Next we show that for any $X \in \fg(1)$ and $Y^* \in V^*$,
\begin{equation}\label{Eqn2.6.4}
[\spi(X), \Omega_k(Y^*)]_e \in E.
\end{equation}
Once again since $V^*$ is irreducible, it is given by 
$V^* = \Cal{U}(\fu_\fl)Y^*_l$. As before, it is enough to
show that (\ref{Eqn2.6.4}) holds for 
$Y^* = Z \cdot Y^*_l$ with $Z \in \fu_\fl$.
Since 
$\Omega_k(Z\cdot Y^*_l) = [\spi(Z), \Omega_k(Y^*_l)]$, 
by the Jacobi identity, the commutator 
$[\spi(X), \Omega_k(Z\cdot Y_l^*)]$ is 
\begin{align}\label{Eqn2.6.7}
&[\spi(X), \Omega_k(Z\cdot Y_l^*)] \nonumber \\
&=[\pi_s(Z), [\pi_s(X), \Omega_k(Y^*_l)]]
-[[\pi_s(Z), \pi_s(X)], \Omega_k(Y^*_l)]. 
\end{align}
We showed above that $[\pi_s(X), \Omega_k(Y^*_l)]_e \in E$.
Since we have $\pi_s(Z)_e = 0$ and $[\pi_s(Z), \Omega_k(Y^*_l)]_e\in E$,
by Lemma \ref{Lem832}, the first term of the right hand side of (\ref{Eqn2.6.7})
satisfies
\begin{equation*}
[\pi_s(Z), [\pi_s(X), \Omega_k(Y^*_l)]]_e \in E.
\end{equation*}
Moreover, as $[\pi_s(Z), \pi_s(X)] = \pi_s([Z, X])$ with $[Z,X] \in \fg(1)$,
by what we have shown above, the second term satisfies
\begin{equation*}
[[\pi_s(Z), \pi_s(X)], \Omega_k(Y^*_l)]_e \in E.
\end{equation*}
Hence, 
$[\spi(X), \Omega_k(Z\cdot Y_l^*)]_e  \in E$. 
\end{proof}

\section
{Parabolic Subalgebras and $\Z$-gradings}\label{chap:TwoStep}

It has been observed in Subsection \ref{SS7211}
that the $\Z$-grading $\fg = \bigoplus_{j=-r}^r\fg(j)$ on $\fg$ 
and parabolic subalgebra $\fq$ play a role to construct
the $\Omega_k$ systems.
In this section we observe these in detail for $\fq$ a
maximal parabolic subalgebra of quasi-Heisenberg type.
The $\Omega_1$ system and $\Omega_2$ systems of those 
parabolic subalgebras will be 
constructed in Section \ref{chap:O1} and Section \ref{chap:O2}, respectively.

%%%%%%%%%%%%%%%%%%%%%%%%%%%%%%%%%%%%%

\subsection{Parabolic subalgebras of $k$-step nilpotent type}\label{SS23}

Let $\frak{r}$ be any nonzero Lie algebra. 
Put $\frak{r}_0 = \frak{r}$, $\frak{r}_1 = [\frak{r},\frak{r}]$, 
and $\frak{r}_k = [\frak{r},\frak{r}_{k-1}]$ for $k \in \Z_{> 0}$. 
We call $\frak{r}_k$ the \textbf{$k$-th step} of $\frak{r}$ for $k \in \Z_{\geq 0}$.
The Lie algebra $\frak{r}$ is called \textbf{nilpotent} if $\frak{r}_k =  0$ for some $k$,
and it is called \textbf{$k$-step nilpotent} if $\frak{r}_{k-1} \neq 0$ and $\frak{r}_k = 0$.  
In particular, if $[\frak{r},\frak{r}] = 0$ then $\frak{r}$ is called \textbf{abelian}, 
and if $\dim([\frak{r},\frak{r}]) = 1$ 
then $\frak{r}$ is called \textbf{Heisenberg}.
If $[\frak{r},[\frak{r}, \frak{r}]] = 0$ and $\dim([\frak{r},\frak{r}]) > 1$ then
we call $\frak{r}$ \textbf{quasi-Heisenberg}.
Note that $\frak{r}$ is Heisenberg if and only if its center $\fz(\frak{r})$
is one-dimensional.
If the nilpotent radical $\fn$ of a parabolic subalgebra $\fq = \fl \oplus \fn$ is
$k$-step nilpotent (resp. abelian, Heisenberg, or quasi-Heisenberg) 
then we say that $\fq$ is of \textbf{$k$-step nilpotent} 
(resp. \textbf{abelian}, \textbf{Heisenberg}, or \textbf{quasi-Heisenberg}) \textbf{type}.

To build the $\Omega_1$ system and $\Omega_2$ systems
of a maximal parabolic subalgebra $\fq$ of quasi-Heisenberg type,  
it is convenient to classify the parabolic subalgebras $\fq$ of $k$-step nilpotent type
by the subsets of simple roots. 
If $\gb = \sum_{\ga \in \Pi}m_{\ga} \ga \in 
\sum_{\ga \in \Pi}\Z \ga$
then we say that $|m_{\ga}|$ are 
the \emph{multiplicities} of $\ga$ in $\gb$.

\begin{Prop} \label{Prop2.1.10}
Let $\fg$ be a complex simple Lie algebra with highest root $\gamma$,
and $\fq_S = \fl \oplus \fn$ be the parabolic subalgebra of $\fg$ 
that is parametrized by $S$ 
with $S= \{\ga_{i_1}, \ldots, \ga_{i_r} \} \subset \Pi$.
Then $\fn$ is $k$-step nilpotent 
if and only if 
$k = m_{i_1} + m_{i_2} + \cdots + m_{i_r}$, 
where $m_{i_j}$ are the multiplicities of $\ga_{i_j}$ in $\gamma$.
\end{Prop}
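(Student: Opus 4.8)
The plan is to understand how the $k$-step nilpotency of $\fn$ is encoded in the root-space decomposition and then to translate this directly into the multiplicity statement about the highest root $\gamma$. Recall that $\fn_S = \bigoplus_{\ga \in \gD^+ \setminus \gD_S} \fg_\ga$, where $\gD_S = \{\ga \in \gD \mid \ga \in \mathrm{span}(\Pi \setminus S)\}$. Thus a positive root $\ga = \sum_{\gb \in \Pi} n_\gb \gb$ lies in $\fn_S$ precisely when at least one of the coefficients $n_{\ga_{i_1}}, \ldots, n_{\ga_{i_r}}$ (the coefficients on the simple roots in $S$) is nonzero. The first step I would carry out is to introduce the \emph{$S$-height} of a root, namely $\Ht_S(\ga) := \sum_{j=1}^r n_{\ga_{i_j}}$, the sum of the coefficients of $\ga$ on the simple roots in $S$. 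Since all coefficients of a positive root are nonnegative, $\fg_\ga \subset \fn_S$ iff $\Ht_S(\ga) \geq 1$.

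The second step is to observe that the bracket respects $S$-height additively: if $\fg_\ga \subset \fg$ and $\fg_\gb \subset \fg$ with $[\fg_\ga, \fg_\gb] \neq 0$, then $[\fg_\ga, \fg_\gb] \subset \fg_{\ga+\gb}$ and $\Ht_S(\ga+\gb) = \Ht_S(\ga) + \Ht_S(\gb)$. Consequently, the $m$-th step $\fn_{S,m} = [\fn_S, [\fn_S, \cdots]]$ (with $\fn_S$ appearing $m+1$ times, in the indexing convention $\frak{r}_0 = \frak{r}$) is contained in $\bigoplus_{\Ht_S(\ga) \geq m+1} \fg_\ga$. More precisely, I would prove by induction on $m$ that $\fn_{S,m}$ equals $\bigoplus_{\Ht_S(\ga) \geq m+1} \fg_\ga$. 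The containment $\subseteq$ is immediate from additivity of $\Ht_S$; the reverse containment requires showing that every root space $\fg_\ga$ with $\Ht_S(\ga) = m+1$ is actually reached by iterated brackets of root spaces of lower $S$-height, which follows from the standard fact that any positive root can be written as a sum of simple roots in such a way that every partial sum is again a root (so one can build up $\ga$ one simple root at a time, at each stage landing in a genuine root space).

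The third step is to convert this into a statement about the maximal $S$-height. From the inductive description, $\fn$ is $k$-step nilpotent, meaning $\fn_{S,k-1} \neq 0$ and $\fn_{S,k} = 0$, exactly when there exists a root $\ga \in \gD^+ \setminus \gD_S$ with $\Ht_S(\ga) = k$ and none with $\Ht_S(\ga) = k+1$; that is, $k = \max\{\Ht_S(\ga) \mid \ga \in \gD^+\}$. Finally, I would invoke the well-known fact that the highest root $\gamma$ maximizes every simple-root coefficient simultaneously over all positive roots: for each simple root $\gb$, the coefficient of $\gb$ in any positive root is at most its coefficient in $\gamma$. This immediately gives $\Ht_S(\ga) \leq \Ht_S(\gamma) = m_{i_1} + \cdots + m_{i_r}$ for every positive root $\ga$, with equality attained at $\gamma$ itself. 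Hence $k = m_{i_1} + \cdots + m_{i_r}$, as claimed.

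The main obstacle I expect is the reverse containment in the inductive step of part two, i.e., verifying that $\bigoplus_{\Ht_S(\ga) \geq m+1} \fg_\ga$ is genuinely generated by brackets and not merely contains the image. This is where the combinatorial structure of root systems enters in an essential way, via the ``chain'' property that a positive root can be constructed by successively adding simple roots while remaining in the root system. The domination property of the highest root in the final step is standard and can be cited (e.g., from the theory of root systems), so the real content is the clean identification of the lower central series filtration of $\fn$ with the $S$-height filtration.
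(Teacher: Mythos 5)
The paper does not actually prove this proposition: it declares the fact well known and refers to Section 3.1 of the author's thesis, so your argument can only be measured on its own terms. What you propose is the standard argument, and it is essentially correct: filter $\fn_S$ by the $S$-height $\Ht_S(\ga)=\sum_j n_{\ga_{i_j}}$, identify the lower central series of $\fn_S$ with this filtration, and then use the fact that the highest root dominates every positive root coefficientwise, so that $\max_{\ga\in\gD^+}\Ht_S(\ga)=\Ht_S(\gamma)=m_{i_1}+\cdots+m_{i_r}$.

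The one step you should tighten is exactly the one you flag, namely the reverse containment $\bigoplus_{\Ht_S(\ga)\ge m+1}\fg_\ga\subseteq\fn_{S,m}$. Building $\ga$ up one simple root at a time along a chain $\sigma_1,\dots,\sigma_h=\ga$ of roots expresses $\fg_\ga$ as an iterated bracket $[\fg_{\gb_h},[\dots,[\fg_{\gb_2},\fg_{\gb_1}]\dots]]$, but the simple roots $\gb_j\notin S$ contribute factors $\fg_{\gb_j}\subseteq\fl_S$ rather than $\fn_S$, so this expression is not on its face an element of the $(m+1)$-fold bracket of $\fn_S$ with itself. The missing observation is that each term $\fn_{S,m}$ of the lower central series is an ideal of the whole parabolic $\fq_S$ (equivalently, is $\ad(\fl_S)$-stable); this follows by an easy induction from the Jacobi identity together with $[\fl_S,\fn_S]\subseteq\fn_S$. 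With that in hand the containment closes by induction on the ordinary height of $\ga$: write $\ga=\sigma_{h-1}+\gb_h$ with $\gb_h$ simple and $\sigma_{h-1}$ a root. If $\gb_h\in S$ then $\Ht_S(\sigma_{h-1})\ge m$, so $\fg_{\sigma_{h-1}}\subseteq\fn_{S,m-1}$ by the induction on $m$ and $\fg_\ga=[\fg_{\gb_h},\fg_{\sigma_{h-1}}]\subseteq[\fn_S,\fn_{S,m-1}]=\fn_{S,m}$; if $\gb_h\notin S$ then $\Ht_S(\sigma_{h-1})=\Ht_S(\ga)\ge m+1$, so $\fg_{\sigma_{h-1}}\subseteq\fn_{S,m}$ by the height induction and $\fg_\ga\subseteq[\fl_S,\fn_{S,m}]\subseteq\fn_{S,m}$. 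Everything else in your proposal (the additivity of $\Ht_S$ under brackets, the forward containment, the translation of $k$-step nilpotency into $k=\max\Ht_S$, and the coefficientwise maximality of $\gamma$) is correct as stated.
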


\begin{proof}
As this is a well-known fact, we omit a proof.
(For a proof, see for instance Section 3.1 of \cite{KuboThesis}.)
\end{proof}

%\vsp

The following observation would be useful when we consider 
parabolic subalgebras of $k$-step nilpotent type.
First, observe that, by the one-to-one correspondence between
the standard parabolic subalgebras $\fq_S$ and 
the subsets $S \subset \Pi$,
we can associate subdiagrams of Dynkin diagrams
to parabolic subalgebras $\fq_S$. 
The subdiagrams that associates to $\fq_S$
are obtained by deleting the nodes of the Dynkin diagram of $\fg$ that
correspond to the simple roots in $S$,
and the edges in incident on them.
We call such subdiagrams \textbf{deleted Dynkin diagrams}.
With the multiplicities of simple roots in the highest root of $\fg$ in hand,
by Proposition \ref{Prop2.1.10},
we then see the number of steps of nilradical $\fn$ of $\fq_S$
from the deleted Dynkin diagram. 
Table \ref{TMult} shows the multiplicities the simple roots 
in the highest root $\gamma$.
We use the Bourbaki conventions \cite{Bourbaki08} 
for the labels of the simple roots. 

\begin{table}[h]
\caption{Highest Roots}
\begin{center}
\begin{tabular}{cc} 
\hline
Type        & Highest root \\
\hline
$A_n$ &$\ga_1 + \ga_2 + \cdots + \ga_n$\\
$B_n$ &$\ga_1 + 2\ga_2 + 2\ga_3 + \cdots + 2\ga_n$\\
$C_n$ &$2\ga_1 + 2\ga_2  + \cdots + 2\ga_{n-1} +  1\ga_n$\\
$D_n$ &$\ga_1 + 2\ga_2 + 2\ga_3 + \cdots + 2\ga_{n-2} + \ga_{n-1} + \ga_n$\\
$E_6$ &$\ga_1 + 2\ga_2 + 2\ga_3 +  3\ga_4 + 2\ga_5 + \ga_6$\\
$E_7$ &$2\ga_1 + 2\ga_2 + 3\ga_3 +  4\ga_4 + 3\ga_5 + 2\ga_6+\ga_7$\\
$E_8$ &$2\ga_1 + 3\ga_2 + 4\ga_3 +  6\ga_4 + 5\ga_5 + 4\ga_6+3\ga_7+2\ga_8$\\
$F_4$ &$2\ga_1 + 3\ga_2 + 4\ga_3 +  2\ga_4$\\
$G_2$ &$3\ga_1 + 2\ga_2$\\
\hline
\end{tabular} \label{TMult}
\end{center}
\end{table}

Example \ref{Exam2.1.4} below describes the deleted Dynkin diagram
of a given parabolic $\fq_S$ and how we read the diagram.
For simplicity, we depict deleted Dynkin diagrams by crossing out the deleted nodes.

\begin{Example}\label{Exam2.1.4}
Let $\fg = \f{sl}(6, \C)$. 
The Dynkin diagram is
\begin{equation*}
\xymatrix{
\belowwnode{\ga_1}\single[r]&\belowwnode{\ga_2}\single[r]&\belowwnode{\ga_3}\single[r]
&\belowwnode{\ga_4}\single[r]&\belowwnode{\ga_5} } \hskip 0.05in
\end{equation*}
%\vspace{5pt}

\noindent 
Choose $S = \{ \ga_2, \ga_4 \} \subset \Pi$. 
Then the deleted Dynkin diagram of parabolic subalgebra $\fq_S$
corresponding to the subset $S$ is 
\begin{equation*}
\xymatrix{
\belowwnode{\ga_1}\single[r]&\belowcnode{\ga_2}\single[r]&\belowwnode{\ga_3}\single[r]
&\belowcnode{\ga_4}\single[r]&\belowwnode{\ga_5} } \hskip 0.05in .
\end{equation*}
Moreover, by Table \ref{TMult},
the multiplicity of each simple root in the highest root of $\fg$ is 1.
Thus, $\fq_S$ is a parabolic subalgebra of two-step nilpotent type.
%\vspace{5pt}
\end{Example}

By the above observation
we often refer to parabolic subalgebras $\fq_S$ by their corresponding
subset $S$ of simple roots. To this end, we are going to define classification 
types of parabolics $\fq_S$.
In Definition \ref{Def3.1.4} below, we mean by classification type $\Cal{T}$ of $\fg$ 
type $A_n$, $B_n$, $C_n$, $D_n$, $E_6$, $E_7$, $E_8$, $F_4$, or $G_2$.

\begin{Def}\label{Def3.1.4}
If $\fg$ is a complex simple Lie algebra of classification type $\Cal{T}$
and if $S$ is a subset of $\Pi$ of simple roots then
we say that a parabolic subalgebra $\fq_S$ of $\fg$ is of \textbf{type $\Cal{T}(S)$},
or \textbf{type $\Cal{T}(i_1, \ldots, i_k)$} if $S = \{\ga_{i_1},\ldots, \ga_{i_k} \}$.
\end{Def}

For example, the parabolic subalgebra $\fq_S$ in Example \ref{Exam2.1.4}
is of type $A_5(2,4)$.
Any maximal parabolic subalgebra is of type $\Cal{T}(i)$
for some $\ga_i \in \Pi$. 

%%%%%%%%%%%%%%%%%%%%%%%%%%%%%%%%%%%%%

\subsection{Maximal parabolic subalgebra $\fq$ of quasi-Heisenberg type} 
\label{SS41}

Now we observe the $2$-grading 
$\fg = \bigoplus_{j=-2}^2 \fg(j)$ on $\fg$,
that is induced from a maximal parabolic 
subalgebra $\fq$ of quasi-Heisenberg type.

Assume that $\fg$ has rank greater than one and that
$\ga_\fq$ is a simple root, so that the parabolic 
subalgebra $\fq=\fq_{\{\ga_\fq\}} = \fl \oplus \fn$ 
parameterized by $\ga_\fq$ is a maximal parabolic subalgebra
of quasi-Heisenberg type, namely, $[\fn, [\fn, \fn]] = 0$ and 
 $\dim([\fn,\fn]) > 1$.
Let $\IP{\cdot}{\cdot}$ be the inner product induced on $\fh^*$
corresponding to the Killing form $\kappa$.
Write $||\ga||^2 = \IP{\ga}{\ga}$ for $\ga \in \gD$.
The coroot of $\ga$ is $\ga^{\vee} = 2\ga/\IP{\ga}{\ga}$.
%If $V$ is an $\ad(\fh)$-invariant subspace of $\fg$ with $V \neq \fg$
%then we denote by $\gD(V)$ the set of roots $\ga$ so that $\fg_\ga \subset V$.
%We write $\gD^+(V) = \gD^+ \cap \gD(V)$.

Recall from Section \ref{chap:CIS} that 
$\gl_\fq$ denotes the fundamental weight for $\ga_\fq$.
As $\gD(\fl) = \{\ga \in \gD \; | \; \ga \in \text{span}(\Pi \backslash\{\ga_\fq\})\}$
and $\gD(\fn) = \gD^+ \backslash \gD(\fl)$, we have
\beu
\IP{\gl_\fq}{\gb} 
\begin{cases}
= 0 &\text{ if $\gb \in \gD(\fl)$ }\\
> 0 & \text{ if $\gb \in \gD(\fn)$ }.
\end{cases}
\end{equation*}
Observe that if $H_{\gl_\fq} \in \fh$ is defined by
$\kappa(H, H_{\gl_\fq}) = \gl_\fq(H)$ for all $H \in \fh$
and if
\begin{equation}\label{Eqn2.1.14}
H_\fq = \frac{2}{||\ga_\fq||^2}H_{\gl_\fq}
\end{equation}
\noindent then $\gb(H_\fq)$ is the multiplicity of $\ga_\fq$ in $\gb$.
In particular, it follows from Proposition \ref{Prop2.1.10} that
for $\gb \in \gD^+$, $\gb(H_\fq)$ can only take the values of 
$0$, $1$, or $2$.
Therefore,
if $\fg(j)$ denotes the $j$-eigenspace of $\ad(H_\fq)$
then the action of $\ad(H_\fq)$ on $\fg$ induces a 2-grading
\begin{equation*}
\fg = \fg(-2) \oplus \fg(-1) \oplus \fg(0) \oplus \fg(1) \oplus \fg(2)
\end{equation*}
with parabolic subalgebra
\begin{equation*}\label{Eqn3.3.16}
\fq = \fg(0) \oplus \fg(1) \oplus \fg(2).
\end{equation*}
Here we have $\fl = \fg(0)$ and $\fn = \fg(1) \oplus \fg(2)$.
The subalgebra $\bar \fn$, 
the opposite of $\fn$, is given by
\begin{equation*} \label{Eqn3.3.18}
\bar \fn = \fg(-1) \oplus \fg(-2).
\end{equation*}

\vsp

%Observe that $L$ acts on each of the subspaces $\fg(j)$
%via the adjoint representation.
%We shall show that 
%$\fg(j)$ are irreducible $L$-modules for $j \neq 0$.
%Via the Killing form, the subspaces $\fg(-1)$ and $\fg(-2)$ 
%are dual to $\fg(1)$ and $\fg(2)$, respectively.
%Thus, we show that $\fg(1)$ and $\fg(2)$ are $L$-irreducible;
%hence, so are true for $\fg(-1)$ and $\fg(-2)$.
Let $\fl = \fz(\fl) \oplus [\fl,\fl]$ be the decomposition of $\fl$,
that corresponds to $L = Z(L)^\circ L_{ss}$ with $Z(L)^\circ$ the identity component 
of the center of $L$ and $L_{ss}$ the semisimple part of $L$.
We say that a weight $\nu \in \fh^*$ is a highest weight of 
a finite dimensional $L$-module $V$ if 
$\nu|_{\fh_{ss}}$ is a highest weight of $V$ as an $L_{ss}$-module, 
where $\fh_{ss} = \fh  \cap [\fl,\fl]$.
A lowest weight of a finite dimensional $L$-module is similarly defined.

\begin{Prop}\label{Prop3.3.1}
Let $\fq = \fg(0) \oplus \fg(1) \oplus \fg(2)$ be the maximal 
parabolic of quasi-Heisenberg type determined by $\ga_\fq$.
\begin{enumerate}
\item[(1)] The subspace $\fg(1)$ is the irreducible $L$-module with lowest weight $\ga_\fq$.
\item[(2)] The subspace $\fg(2)$ is the irreducible $L$-module with highest weight $\gamma$.
\item[(3)] We have $\fz(\fn) = \fg(2)$.
\end{enumerate}

\end{Prop}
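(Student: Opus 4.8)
The plan is to treat the three assertions in the order (2), (1), (3), since the identification of $\fg(2)$ is the cleanest and the centrality statement (3) will reduce to the irreducibility of $\fg(1)$ proved in (1). Throughout I will use that, by the definition of $H_\fq$ in (\ref{Eqn2.1.14}), a root space $\fg_\gb$ lies in $\fg(j)$ precisely when $\ga_\fq$ occurs with multiplicity $j$ in $\gb$, and that $\fl$ acts on each $\fg(j)$ by $\ad$. Consequently a root vector $X_\gb \in \fg(j)$ is an $\fl$-highest (resp. $\fl$-lowest) weight vector exactly when $\gb + \ga \notin \gD$ (resp. $\gb - \ga \notin \gD$) for every $\ga \in \gD^+(\fl)$, equivalently for every simple root $\ga \in \Pi \setminus \{\ga_\fq\}$. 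Since each $\fg(j)$ is a finite-dimensional, hence completely reducible, $\fl$-module, proving irreducibility amounts to showing that the space of $\fl$-highest weight vectors is one-dimensional.

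For (2), note first that quasi-Heisenberg type forces $\fn$ to be two-step nilpotent, so by Proposition \ref{Prop2.1.10} the highest root $\gamma$ has $\ga_\fq$-multiplicity $2$; thus $\fg_\gamma \subset \fg(2)$, and $X_\gamma$, being annihilated by every positive root vector of $\fg$, is in particular an $\fl$-highest weight vector of weight $\gamma$. The key point is uniqueness: if $X_\mu$ is any $\fl$-highest weight vector in $\fg(2)$, then $\mu + \ga \notin \gD$ for all $\ga \in \gD^+(\fl)$, while $\mu + \ga_\fq \notin \gD$ as well, because its $\ga_\fq$-multiplicity would be $3$, exceeding the maximal multiplicity $2$. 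Hence $X_\mu$ is annihilated by all simple positive root vectors of $\fg$, so it is a highest weight vector of the adjoint representation; by irreducibility of that representation $\mu = \gamma$. Therefore $\fg(2)$ has a one-dimensional space of $\fl$-highest weight vectors and is the irreducible $L$-module with highest weight $\gamma$.

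For (1), the root $\ga_\fq$ has $\ga_\fq$-multiplicity $1$, so $\fg_{\ga_\fq} \subset \fg(1)$; and for any $\ga \in \gD^+(\fl)$ the element $\ga_\fq - \ga$ has $\ga_\fq$-coefficient $+1$ and nonpositive, not all zero, coefficients elsewhere, so it is not a root. Thus $X_{\ga_\fq}$ is an $\fl$-lowest weight vector of weight $\ga_\fq$, and it remains to prove that $\fg(1)$ is irreducible. Here the clean argument used for (2) is unavailable: lowering a weight of $\ga_\fq$-multiplicity $1$ by $\ga_\fq$ produces multiplicity $0$, which is allowed, so an $\fl$-lowest weight vector of $\fg(1)$ need not be a lowest weight vector for $\fg$. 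I expect this to be the main obstacle. I would establish irreducibility by showing that $\gD(\fg(1))$ has a unique $\fl$-antidominant element, namely $\ga_\fq$; concretely, that the weights of $\fg(1)$ form a single connected string under addition of roots in $\gD^+(\fl)$, so that $\Cal{U}(\fu_\fl)X_{\ga_\fq} = \fg(1)$. This is the standard fact that, for a grading of a simple Lie algebra attached to a single simple root (a maximal parabolic), the pieces $\fg(\pm 1)$ are irreducible $\fl$-modules, and I would either invoke it or verify it by the root-string connectivity just described.

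Finally, (3) follows from (1). Since $[\fg(2), \fg(1)] \subseteq \fg(3) = 0$ and $[\fg(2), \fg(2)] \subseteq \fg(4) = 0$, the space $\fg(2)$ is central in $\fn = \fg(1) \oplus \fg(2)$, so $\fg(2) \subseteq \fz(\fn)$. Conversely, $\fz(\fn) \cap \fg(1)$ is the set of $X \in \fg(1)$ with $[X, \fg(1)] = 0$ (the bracket with $\fg(2)$ being automatically zero), and $\fl$ preserves this condition, so it is an $\fl$-submodule of $\fg(1)$. By (1) it is either $0$ or all of $\fg(1)$; the latter would give $[\fg(1), \fg(1)] = 0$, hence $[\fn, \fn] = [\fg(1),\fg(1)] = 0$, contradicting $\dim_\C([\fn,\fn]) > 1$. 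Thus $\fz(\fn) \cap \fg(1) = 0$, and writing any $Z \in \fz(\fn)$ as $Z = Z_1 + Z_2$ with $Z_i \in \fg(i)$ we get $Z_1 = Z - Z_2 \in \fz(\fn) \cap \fg(1) = 0$, so $\fz(\fn) = \fg(2)$.
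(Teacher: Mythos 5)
Your proof is correct, and for part (2) it takes a genuinely different route from the paper's. You show that any $\fl$-highest weight vector of $\fg(2)$ is in fact annihilated by \emph{every} simple root vector of $\fg$ --- by $\fl$-dominance for the simple roots of $\fl$, and for $\ga_\fq$ by the observation that $\mu+\ga_\fq$ would have $\ga_\fq$-multiplicity $3$, exceeding the bound $2$ coming from two-step nilpotency --- and then conclude $\mu=\gamma$ from the irreducibility of the adjoint representation. The paper instead writes $\fg=\Cal{U}(\bar\fn)\,\Cal{U}(\fl)X_\gamma$ and notes that applying $\bar\fn=\fg(-1)\oplus\fg(-2)$ to $\fg(2)$ only produces components of degree $\leq 1$, so the degree-$2$ piece must already equal $\Cal{U}(\fl)X_\gamma$. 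Your version is purely root-combinatorial and self-contained; the paper's is shorter once one grants the PBW-type generation statement, and does not need the multiplicity bound. For part (1), both you and the paper ultimately invoke the standard fact that $\fg(\pm1)$ is $\fl$-irreducible precisely for maximal parabolics (the paper cites Humphreys for the lowest weight and calls the irreducibility ``well known''); your sketch of verifying it via connectivity of $\gD(\fg(1))$ under $\gD^+(\fl)$-root strings is the right idea but is not carried out, which leaves you at the same level of rigour as the paper rather than below it. For part (3) your argument coincides with the paper's, and you helpfully make explicit the step the paper leaves implicit: that $\fz(\fn)\cap\fg(1)$ is an $L$-submodule of the irreducible module $\fg(1)$, and that it cannot be all of $\fg(1)$ since that would give $[\fn,\fn]=[\fg(1),\fg(1)]=0$, contradicting $\dim_\C([\fn,\fn])>1$.
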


\begin{proof}
Observe that, as $\Ad(L)$ preserves $\fg(j)$, to prove 
the assertions (1) and (2), it suffices to consider $\fg(1)$
and $\fg(2)$ as $\fl$-modules.
For the assertion (1),
the $\fl$-irreducibility of $\fg(1)$ just follows from 
a  well-known fact that, for $\fq = \fg(0) \oplus \bigoplus \fg(j)_{j>0}$ 
with $\fg(1) \neq 0$, $\fg(1)$ is $\fg(0)$-irreducible
if and only if $\fq$ is a maximal parabolic subalgebra.
The lowest weight of $\fg(1)$ follows from Corollary 10.2A of \cite{Hum72}.
For the assertion (2), it is clear that $\Cal{U}(\fg(0))X_\gamma \subset \fg(2)$.
On the other hand, as 
$\bar \fn = \fg(-1)\oplus \fg(-2)$, 
it follows that
$\Cal{U}(\bar \fn)\fg(2) \subset
\bigoplus_{j=-2}^{1}\fg(j)$. 
As $\fg = \bigoplus_{j=-2}^2 \fg(j)
= \Cal{U}(\bar \fn)(\Cal{U}(\fg(0))X_{\gamma})$, 
this shows that $\Cal{U}(\fg(0))X_\gamma \supset \fg(2)$.
To prove assertion (3), since $\fg(2) \subset \fz(\fn)$, 
it suffices to show the other inclusion.
If $X \in \fz(\fn)$ then, as $\fn = \fg(1) \oplus \fg(2)$, 
there exist $X_j \in \fg(j)$ for $j = 1, 2$ so that $X = X_1 + X_2$.
Since $X, X_2 \in \fz(\fn)$, for any $Y \in \fn$, we have
\begin{equation*}
[Y,X_1] = [Y,X_1] + [Y, X_2] =  [Y,X] = 0.
\end{equation*}
\noindent Thus $X_1 \in \fz(\fn)\cap \fg(1)$.
Now observe that the assertion (1) implies 
that $\fz(\fn) \cap \fg(1) = \{ 0 \}$.
Thus $X_1 = 0$ and so $X = X_2 \in \fg(2)$. 
\end{proof}

\vsp

Now, since $\fl= \fg(0)$, $\fg(2) = \fz(\fn)$ and $\fg(-2) = \fz(\bar\fn)$,
we write the 2-grading $\fg = \bigoplus_{j=-2}^2 \fg(j)$ as
\begin{equation}\label{Eqn4.1.6}
\fg = \fz(\bar \fn) \oplus \fg(-1) \oplus \fl \oplus \fg(1) \oplus \cfn
\end{equation}
\noindent with parabolic subalgebra 
\begin{equation}\label{Eqn4.1.7}
\fq = \fl \oplus \fg(1) \oplus \fz(\fn).
\end{equation}

%%%%%%%%%%%%%%%%%%%%%%%%%%%%%%%%%%%%%

\subsection
{The simple ideals $\flg$ and $\flng$}
\label{SS42}

We next observe the structure of the 
Levi subalgebra $\fl = \fz(\fl) \oplus [\fl,\fl]$.
The structure of $\fl$ will play a role in Section \ref{chap:Sp},
when we decompose $\fl \otimes \fz(\fn)$ into 
irreducible $L$-submodules.

We start with the center $\fz(\fl)$. The center $\fz(\fl)$ is of the form 
$\fz(\fl) = \bigcap_{\ga \in \Pi(\fl)}\ker(\ga)$.
Since $\fg$ has rank greater than one and since $\Pi(\fl) = \Pi \backslash \{\ga_\fq\}$,
the center $\fz(\fl)$ is non-zero and one-dimensional. 
It is clear from (\ref{Eqn2.1.14}) that $H_\fq$ is an element of $\fz(\fl)$. 
Therefore we have $\fz(\fl) = \C H_\fq$. 

Next we consider the structure of $[\fl,\fl]$.
%Observe that the Dynkin diagram of $\fg$ can be extended  by
%attaching the lowest root $-\gamma$ to the diagram.
Observe that if $\fg$ is not of type $A_n$ then there is exactly one simple root
that is not orthogonal to $\gamma$.
Let $\ga_\gamma$ denote the unique simple root. 
It is easy to see that $\fq_{\{\ga_\gamma\}}$ is the parabolic subalgebra 
of Heisenberg type of $\fg$; that is, the parabolic subalgebra with 
$\dim([\fn, \fn]) = 1$. Hence, if $\fq_{\{\ga_\fq\}}$ is a maximal
parabolic subalgebra of quasi-Heisenberg type 
then $\ga_\gamma \in \Pi(\fl) = \Pi \backslash \{\ga_\fq\}$.
If we delete the node corresponding to $\ga_\fq$ then we obtain
one, two, or three subgraphs with one subgraph containing $\ga_\gamma$.
This implies that the subalgebra $[\fl,\fl]$ is either simple or the direct sum of two or three 
simple ideals with only one simple ideal containing the root space 
$\fg_{\ga_\gamma}$ for $\ga_\gamma$.
The three subgraphs occur only when
$\fq$ is of type $D_n(n-2)$.
So, if $\fq$ is not of type $D_n(n-2)$ then 
there are at most two subgraphs. In this case
we denote by $\flg$ (resp. $\flng$) the simple ideal of $\fl$
whose subgraph in the deleted Dynkin diagram
contains (resp. does not contain) the node for  $\ga_\gamma$.
Thus the Levi subalgebra $\fl$ may decompose into
\begin{equation}\label{Eqn4.1.8}
\fl = \C H_\fq \oplus \flg \oplus \flng.
\end{equation}
Then, for the rest of this section,
we assume that $\fq$ is not of type $D_n(n-2)$,
so that the Levi subalgebra $\fl$ can be expressed as (\ref{Eqn4.1.8}).
Recall from Definition \ref{Def3.1.4} that
if $\fg$ is of type $\Cal{T}$ then we say that
the parabolic subalgebra $\fq$ determined by $\ga_i \in \Pi$ is of type $\Cal{T}(i)$.
Then the parabolic subalgebras $\fq$
under consideration are given as follows:

\begin{equation} \label{Eqn4.0.1}
B_n(i)\; (3 \leq i \leq n), \quad C_n(i) \; (2 \leq i \leq n-1), \quad D_n(i)\; (3\leq i \leq n-3),
\end{equation}
\noindent and 
\begin{equation}\label{Eqn4.0.2}
E_6(3),\; E_6(5),\; E_7(2),\; E_7(6),\; E_8(1), \;F_4(4).
\end{equation}
Note that, in type $A_n$,  
any maximal parabolic subalgebra is of abelian type,
and also that, in type $G_2$, the two maximal parabolic subalgebras
are of either  3-step type or Heisenberg type.

Write $\Pi(\flg) = \{ \ga \in \Pi \; | \; \ga \in \gD(\flg)\}$
and $\Pi(\flng) = \{ \ga \in \Pi \; | \; \ga \in \gD(\flng)\}$.
Example \ref{Exam4.1.1} below exhibits the subgraphs
for $\flg$ and $\flng$ of $\fq$ of type $B_5(3)$ with $\Pi(\flg)$ and $\Pi(\flng)$.
One can find those data in Appendix \ref{chap:Data} for each maximal parabolic subalgebra 
in (\ref{Eqn4.0.1}) or (\ref{Eqn4.0.2}).

\begin{Example}\label{Exam4.1.1}
Let $\fq$ be the parabolic subalgebra of type $B_5(3)$ with deleted Dynkin diagram
\begin{equation*}
\xymatrix{
\belowwnode{\ga_1}\single[r]&\belowwnode{\ga_2}\single[r]&\belowcnode{\ga_3}\single[r]
&\belowwnode{\ga_4}\rdouble[r]&\belowwnode{\ga_5} } \hskip 0.05in .
\end{equation*}
\vspace{5pt}
Observe that the unique simple root $\ga_\gamma$ that is not orthogonal to 
the highest root $\gamma$ is $\ga_\gamma = \ga_2$. 
Therefore, the subgraph for $\flg$ is 
\begin{equation*}
\xymatrix{
\belowwnode{\ga_1}\single[r]&\belowwnode{\ga_2} }
\end{equation*}
%\vspace{5pt}
and that for $\flng$ is
\begin{equation*}
\xymatrix{
\belowwnode{\ga_4}\rdouble[r]&\belowwnode{\ga_5} }
\end{equation*}
%\vspace{5pt}
with $\Pi(\flg) = \{ \ga_1, \ga_2 \}$ and $\Pi(\flng) = \{ \ga_4, \ga_5\}$.
\end{Example}

\begin{Rem}\label{Rem2.1.2}
As $\ga_\gamma$ is the unique simple root that is not orthogonal to $\gamma$,
we have 
$\IP{\gamma}{\ga_\gamma} > 0$ and $\IP{\gamma}{\ga} =0$
for any other simple roots $\ga$. In particular,
$\IP{\ga}{\gamma} = 0$ for all $\ga \in \Pi(\flng)$.
%Therefore, by Corollary \ref{Cor4.2.3},
%the simple ideal $\flng$ acts trivially on $\fz(\fn)=\fg(2)$. 
\end{Rem}

%%%%%%%%%%%%%%%%%%%%%%%%%%%%%%%%%%%%%

\subsection
{The highest weights for $\flg$, $\flng$, $\fg(1)$, and $\fz(\fn)$}
\label{SS44}

For the rest of this section we summarize technical lemmas on 
the $L$-highest weights for $\flg$, $\flng$, $\fg(1)$, and $\fz(\fn)$. 
These technical facts will be used in later computations.

Proposition \ref{Prop3.3.1} shows that
$\fz(\fn)$ has highest weight $\gamma$, which is the highest root of $\fg$.
We denote by $\xig$, $\xing$, and $\mu$
the highest weights for $\flg$, $\flng$, and $\fg(1)$, respectively. 
These highest weights are summarized in Appendix \ref{chap:Data}
for each of the parabolic subalgebras under consideration.
We remark that all these highest weights are indeed roots in $\gD^+$.
Observe that the highest weights $\xig$ and $\xing$ of $\flg$ and $\flng$,
respectively, are also the highest roots of $\flg$ and $\flng$ as simple algebras;
in particular, the multiplicities of $\ga \in \Pi(\flg)$ (resp. $\ga \in \Pi(\flng)$) 
in $\xig$ (resp. $\xing$) are all strictly positive.

\begin{Lem}\label{Lem4.3.1}
If $\ga_\fq$ is the simple root that determines $\fq = \fl \oplus \fg(1) \oplus \fz(\fn)$
then $\xig + \ga_\fq$ and $\xing + \ga_\fq$ are roots.
\end{Lem}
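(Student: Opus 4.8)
The plan is to reduce the claim to an elementary inner-product estimate and invoke the standard root-string criterion. Recall that for two roots $\gb, \ga \in \gD$ with $\gb \neq \pm\ga$, if $\IP{\gb}{\ga} < 0$ (equivalently $\IP{\gb}{\ga^\vee} < 0$) then $\gb + \ga \in \gD$: indeed the $\ga$-string through $\gb$ has the form $\gb - p\ga, \ldots, \gb + q\ga$ with $p - q = \IP{\gb}{\ga^\vee}$, so $\IP{\gb}{\ga^\vee} < 0$ forces $q \geq 1$. Taking $\gb = \xig$ (resp. $\gb = \xing$) and $\ga = \ga_\fq$, and noting that $\xig$ and $\xing$ are supported on $\Pi \backslash \{\ga_\fq\}$ so that neither equals $\pm\ga_\fq$, it suffices to prove $\IP{\xig}{\ga_\fq} < 0$ and $\IP{\xing}{\ga_\fq} < 0$.

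First I would expand $\xig$ in simple roots. Since $\xig$ is the highest root of the simple algebra $\flg$, it is a combination $\xig = \sum_{\ga \in \Pi(\flg)} c_\ga \ga$ with every $c_\ga > 0$, as recorded just before the statement. Because $\ga_\fq \notin \Pi(\flg)$, each $\ga \in \Pi(\flg)$ is a simple root distinct from $\ga_\fq$, hence $\IP{\ga}{\ga_\fq} \leq 0$; thus $\IP{\xig}{\ga_\fq} = \sum_{\ga \in \Pi(\flg)} c_\ga \IP{\ga}{\ga_\fq} \leq 0$. To upgrade this to a strict inequality I would show that at least one $\ga \in \Pi(\flg)$ is joined to $\ga_\fq$ in the Dynkin diagram, i.e. satisfies $\IP{\ga}{\ga_\fq} < 0$: the subgraph of $\flg$ is a connected component of the diagram obtained by deleting the node $\ga_\fq$, and since the Dynkin diagram of $\fg$ is connected, this component must be attached to the remainder of the diagram through $\ga_\fq$; hence some node of $\Pi(\flg)$ is adjacent to $\ga_\fq$. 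As the corresponding coefficient is $c_\ga > 0$, that term contributes strictly negatively while all others are $\leq 0$, giving $\IP{\xig}{\ga_\fq} < 0$, and therefore $\xig + \ga_\fq \in \gD$.

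The argument for $\xing$ is word-for-word the same, with $\flng$ and $\Pi(\flng)$ in place of $\flg$ and $\Pi(\flg)$, and it applies in every case in which $\flng$ is actually present (when deleting $\ga_\fq$ leaves only a single subgraph, as happens when $\ga_\fq$ is an end node, there is no $\flng$ and the corresponding assertion is vacuous). The only delicate point—the main obstacle, though a mild one—is guaranteeing that some node of $\Pi(\flg)$ (resp. $\Pi(\flng)$) is adjacent to $\ga_\fq$; this is precisely where connectedness of the Dynkin diagram is used, and it is what lets the single neighboring term control the sign of the inner product. Alternatively one could read off $\IP{\xig}{\ga_\fq} < 0$ from the explicit highest weights and adjacency data tabulated in Appendix \ref{chap:Data}, but the uniform argument above avoids any case-by-case checking.
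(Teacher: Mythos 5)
Your proof is correct and follows essentially the same route as the paper: both reduce the claim to showing $\IP{\xig}{\ga_\fq}<0$ (resp.\ $\IP{\xing}{\ga_\fq}<0$) and deduce this from the strict positivity of the simple-root multiplicities in the highest root of $\flg$ (resp.\ $\flng$) together with the existence of a node of $\Pi(\flg)$ (resp.\ $\Pi(\flng)$) adjacent to $\ga_\fq$. The only difference is cosmetic: you justify the adjacency by connectedness of the Dynkin diagram, while the paper cites an inspection of the deleted Dynkin diagrams.
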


\begin{proof}
We only prove that $\xig + \ga_\fq \in \gD$; the other assertion
that $\xing + \ga_\fq \in \gD$ can be proven similarly.
It suffices to show that $\IP{\xig}{\ga_\fq} < 0$, since both $\xig$ and $\ga_\fq$ are roots.
For $\ga \in \Pi$ we observe that $\IP{\ga}{\ga_\fq} < 0$ if $\ga$ is adjacent to $\ga_\fq$
in the Dynkin diagram and $\IP{\ga}{\ga_\fq} = 0$ otherwise. 
An observation on the deleted Dynkin diagrams shows
that there exists a unique simple root $\ga_k$ in $\Pi(\flg)$ that is adjacent to $\ga_\fq$.
Since $\xig$ is the highest root for $\flg$ as a simple algebra,
the multiplicity of $\ga_k$ in $\xig$ is strictly positive. 
Thus $\IP{\xig}{\ga_\fq}<0$.
\end{proof}

%\begin{Rem}\label{Rem4.3.2}
%Since $\ga_\fq \in \gD(\fg(1))$, 
%it follows from Lemma \ref{Lem4.3.1} that both $\flg$ and $\flng$ acts nontrivially on $\fg(1)$.
%\end{Rem}

\begin{Lem}\label{Lem4.3.3}
If $\xig$, $\xing$, $\mu$, and $\gamma$ are the highest weights
of $\flg$, $\flng$, $\fg(1)$, and $\fz(\fn)$, respectively,
then the following hold:
\begin{enumerate}
\item[(1)] $\gamma - \xig \in \gD$, but $\gamma - \xing \notin \gD$.
\item[(2)] $\gamma - \mu \in \gD$.
\item[(3)] $\mu-\xig, \mu-\xing \in \gD$.
\end{enumerate}
\end{Lem}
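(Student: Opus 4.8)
The plan is to reduce all three assertions to the standard root-string criterion: for $\ga,\gb\in\gD$ with $\ga\neq\gb$ one has $\ga-\gb\in\gD$ whenever $\IP{\ga}{\gb}>0$, while if $\IP{\ga}{\gb}=0$ then $\ga-\gb\in\gD$ is equivalent to $\ga+\gb\in\gD$. Throughout I will use that $\gamma$ is the highest root, so it is dominant ($\IP{\gamma}{\ga}\geq0$ for all $\ga\in\Pi$) and $\gamma+\gd\notin\gD$ for every $\gd\in\gD^+$, together with Remark \ref{Rem2.1.2}: $\ga_\gamma\in\Pi(\flg)$ is the unique simple root with $\IP{\gamma}{\ga_\gamma}>0$, and $\IP{\gamma}{\ga}=0$ for all $\ga\in\Pi(\flng)$. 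I will also use that $\fg(2)=[\fg(1),\fg(1)]$, which holds because $\fn$ is two-step, so $[\fn,\fn]=[\fg(1),\fg(1)]$ is a nonzero $\fl$-submodule of the irreducible module $\fg(2)=\cfn$ of Proposition \ref{Prop3.3.1}, hence all of it. For (1), since $\xig$ is the highest root of the simple ideal $\flg$, every simple root in $\Pi(\flg)$, in particular $\ga_\gamma$, occurs in $\xig$ with strictly positive coefficient; expanding $\IP{\gamma}{\xig}$ over $\Pi(\flg)$ and using that only $\ga_\gamma$ is non-orthogonal to $\gamma$ gives $\IP{\gamma}{\xig}>0$, and as $\gamma,\xig$ have $\ga_\fq$-multiplicities $2$ and $0$ they are not proportional, so $\gamma-\xig\in\gD$. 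The same expansion over $\Pi(\flng)$ gives $\IP{\gamma}{\xing}=0$; since $\gamma+\xing\notin\gD$, the $\xing$-string through $\gamma$ is trivial and $\gamma-\xing\notin\gD$.

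For (2) it suffices to show $\IP{\gamma}{\mu}>0$, which by dominance of $\gamma$ and Remark \ref{Rem2.1.2} amounts to $\ga_\gamma$ occurring in $\mu$. Writing the highest root vector $X_\gamma\in\fg(2)=[\fg(1),\fg(1)]$ as a sum of brackets $[\fg_{\gb_1},\fg_{\gb_2}]$ with $\gb_1+\gb_2=\gamma$ and $\gb_i\in\gD(\fg(1))$, each $\gb_i$ is a weight of $\fg(1)$, hence $\gb_i=\mu-\gd_i$ with $\gd_i$ a nonnegative combination of $\Pi(\fl)$, and dominance gives $\IP{\gamma}{\gb_i}\leq\IP{\gamma}{\mu}$. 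If $\IP{\gamma}{\mu}=0$ then both $\IP{\gamma}{\gb_i}\leq0$, contradicting $\IP{\gamma}{\gb_1}+\IP{\gamma}{\gb_2}=\IP{\gamma}{\gamma}>0$. Thus $\IP{\gamma}{\mu}>0$, and since $\mu$ and $\gamma$ have different $\ga_\fq$-multiplicities, $\gamma-\mu\in\gD$.

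Part (3) is where the real work lies. The plan is to show that $\flg$ and $\flng$ each act nontrivially on $\fg(1)$ and then invoke the general fact that, for a simple Lie algebra $\fs$ with highest root $\gt$ and a nonzero dominant weight $\gL$, the weight $\gL-\gt$ occurs in the irreducible module of highest weight $\gL$; indeed $\gt$ is a positive combination of all simple roots, so $\IP{\gL}{\gt}>0$, whence $\IP{\gL}{\gt^\vee}\geq1$ and the $\gt$-string descending from the highest weight already reaches $\gL-\gt$. To see that $\flng$ acts nontrivially, suppose $[\flng,\fg(1)]=0$; then $[\flng,\fg(2)]=[\flng,[\fg(1),\fg(1)]]=0$ by the Jacobi identity, while ad-invariance of the Killing form together with the nondegenerate pairing of $\fg(1)$ with $\fg(-1)$ forces $[\flng,\fg(-1)]=0$ and hence $[\flng,\fg(-2)]=0$; as $\flng$ is an ideal of $\fl=\fg(0)$, this would exhibit $\flng$ as a proper nonzero ideal of the simple algebra $\fg$, a contradiction, so $[\flng,\fg(1)]\neq0$. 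The identical argument gives $[\flg,\fg(1)]\neq0$. Since $\fg(1)$ is $\fl$-irreducible, as a module for $[\fl,\fl]=\flg\oplus\flng$ it is an outer tensor product of irreducibles, so nontriviality of each factor's action forces the restricted highest weights $\mu|_{\flg}$ and $\mu|_{\flng}$ to be nonzero. Applying the general fact to $\flg$ with $\gt=\xig$ and to $\flng$ with $\gt=\xing$ shows that $\mu-\xig$ and $\mu-\xing$ are weights of $\fg(1)$, hence roots.

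The main obstacle, as indicated, is part (3): the delicate steps are ruling out that $\flng$ acts on $\fn=\fg(1)\oplus\fg(2)$ only through $\fg(2)$ (handled by $\fg(2)=[\fg(1),\fg(1)]$ and the Killing-form duality above) and the uniform representation-theoretic fact about subtracting the highest root. I would also flag the degenerate configurations: in those cases of \eqref{Eqn4.0.1}--\eqref{Eqn4.0.2} for which deleting $\ga_\fq$ leaves a connected diagram (for instance $B_n(n)$, $E_7(2)$, $E_8(1)$, and $F_4(4)$) the ideal $\flng$ is absent, so the $\xing$-statements do not arise and only the $\xig$-parts are in force; in all remaining cases $[\fl,\fl]=\flg\oplus\flng$ and the argument applies verbatim, while the three-ideal case $D_n(n-2)$ is excluded by the standing hypothesis of this subsection.
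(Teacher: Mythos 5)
Your proof is correct. Part (1) coincides with the paper's argument: for $\gamma-\xing\notin\gD$ you use the trivial $\xing$-string through $\gamma$ forced by $\IP{\gamma}{\xing^\vee}=0$ and $\gamma+\xing\notin\gD$, and for $\gamma-\xig\in\gD$ you expand $\xig$ over $\Pi(\flg)$ and isolate the $\ga_\gamma$-term; this is exactly what the paper does. For parts (2) and (3) the paper only says the assertions ``can be shown similarly,'' which implicitly reduces them to the positivity of $\IP{\gamma}{\mu}$, $\IP{\mu}{\xig}$, and $\IP{\mu}{\xing}$ — positivity that, unlike in part (1), does not follow from dominance and the simple-root expansion alone without either consulting the tables in the appendix or supplying an extra argument. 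You supply that extra argument, and it is genuinely different in character: for (2) you use $\fz(\fn)=[\fg(1),\fg(1)]$ (correctly justified from irreducibility of $\fg(2)$ and $\dim[\fn,\fn]>1$) to produce $\gb_1+\gb_2=\gamma$ with $\gb_i\in\gD(\fg(1))$ and derive $\IP{\gamma}{\mu}>0$ by dominance; for (3) you rule out $\flg$ or $\flng$ annihilating $\fg(1)$ by observing that this would make it a proper nonzero ideal of the simple algebra $\fg$, and then descend along the $\xig$- (resp.\ $\xing$-) string from the highest weight. Your route is more structural and case-free, at the cost of invoking the outer-tensor-product decomposition and the weight-string fact; the paper's route is shorter on the page but leans on data that is only verified in the appendix. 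One cosmetic simplification: once you know $\IP{\mu}{\xig}>0$ (equivalently $\IP{\mu}{\xig^\vee}\geq 1$), you can conclude $\mu-\xig\in\gD$ directly from the root-string criterion for the two roots $\mu$ and $\xig$ of $\fg$, without passing through weights of the module.
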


\begin{proof}
To prove $\gamma-\xing \notin \gD$, 
observe that if $n$ and $m$ are the largest non-negative integers 
so that $\gamma -n \xing \in \gD$ and $\gamma+m \xing \in \gD$, respectively,
then $\IP{\gamma}{\xing^\vee} = n-m$.
Since $\IP{\gamma}{\ga^\vee} = 0$ for all $\ga \in \gD(\flng)$,
we have $\IP{\gamma}{\xing^\vee} = 0$ and so $n=m$. 
As $\xing \in \gD^+$ and $\gamma$ is the highest root, 
$\gamma + \xing \notin \gD$. Therefore, $n=m=0$,
which concludes that $\gamma -\xing$ is not a root.
To prove $\gamma-\xig \in \gD$,
it suffices to show that $\IP{\gamma}{\xig} > 0$,
since both $\gamma$ and $\xig$ are roots. 
Write $\xig$ in terms of simple roots in $\Pi(\flg)$.
Observe that each $\ga \in \Pi(\flg)$ has positive multiplicity
$m_\ga$ in $\xig$. As $\gamma$ is orthogonal to $\ga$ for any 
$\ga \in \Pi(\flg) \backslash \{\ga_\gamma\}$, we have
$\IP{\gamma}{\xig} = m_{\ga_\gamma}\IP{\gamma}{\ga_\gamma} > 0$.
The assertions (2) and (3) can be shown similarly.
\end{proof}

The following technical lemma will simplify arguments
concerning the long roots later.
When $\fg$ is simply laced, we regard any root as a long root.

\begin{Lem}\label{Lem3.1.3}
Suppose that $\ga \in \gD$ is a long root.
For any $\gb \in \gD$, the following hold.
\begin{enumerate}
\item[(1)] If $\gb - \ga \in \gD$ then $\IP{\gb}{\cga} =1$.
\item[(2)] If $\gb + \ga \in \gD$  then $\IP{\gb}{\cga} =-1$.
\item[(3)] If $\gb\pm \ga \in \gD$ then $\gb \mp \ga \notin \gD$.
\item[(4)] $\gb \pm 2\ga \notin \gD$.
\end{enumerate}
\end{Lem}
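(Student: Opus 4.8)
The plan is to reduce all four statements to a single structural fact about root strings: \emph{when $\ga$ is long, the $\ga$-string through any root has length at most two}. Recall that the $\ga$-string through $\gb$ is the unbroken sequence $\gb - p\ga, \ldots, \gb + q\ga$ of roots, where $p, q \geq 0$ are the largest integers with $\gb - p\ga, \gb + q\ga \in \gD$, and that $p - q = \IP{\gb}{\cga}$ (see \cite[Section 9.4]{Hum72}). Thus it suffices to show $p + q \leq 1$, since then reading off $p$ and $q$ from each hypothesis gives the conclusion at once.

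First I would record the elementary bound $\IP{\gb}{\cga} \in \{-1, 0, 1\}$ for every root $\gb \neq \pm\ga$. This follows from the standard identity $\IP{\gb}{\cga}\IP{\ga}{\cgb} = 4\cos^2\theta \in \{0,1,2,3\}$ for linearly independent roots, together with the fact that $\ga$ being long gives $||\ga||^2 \geq ||\gb||^2$ and hence $|\IP{\gb}{\cga}| \leq |\IP{\ga}{\cgb}|$; were $\IP{\gb}{\cga}$ equal to $\pm 2$, the product would be at least $4$, a contradiction.

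To obtain $p + q \leq 1$, I would apply this bound to the bottom root $\gb' = \gb - p\ga$ of the string. A direct computation using $\IP{\ga}{\cga} = 2$ gives $\IP{\gb'}{\cga} = (p-q) - 2p = -(p+q)$. As long as $\gb' \neq \pm\ga$ the previous bound forces $p + q \leq 1$; the two degenerate cases $\gb' = \pm\ga$ are handled separately, and in each the string collapses to a single root (because $2\ga \notin \gD$), so $p = q = 0$. This establishes the length bound uniformly, and the simply-laced case is covered since there $||\ga||^2 = ||\gb||^2$ for all roots.

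With the length bound in hand, each assertion is immediate. For (1), $\gb - \ga \in \gD$ means $p \geq 1$, whence $p = 1$, $q = 0$ and $\IP{\gb}{\cga} = p - q = 1$; part (2) is symmetric with $q \geq 1$. For (3), either hypothesis pins $(p,q)$ to $(1,0)$ or $(0,1)$, and in both cases the opposite shift is excluded. For (4), $\gb \pm 2\ga \in \gD$ would force $p \geq 2$ or $q \geq 2$, contradicting $p + q \leq 1$. The only point requiring care is the bookkeeping of the degenerate cases $\gb, \gb' \in \{\pm\ga\}$, but in every part the standing hypothesis (that one of $\gb \pm \ga$ or $\gb \pm 2\ga$ is a root) already rules out $\gb = \pm\ga$, since neither $2\ga$ nor $3\ga$ is a root; this is the sole subtlety, and it is routine.
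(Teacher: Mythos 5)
Your proof is correct and is exactly the standard root-string argument that the paper invokes without detail (its own ``proof'' reads only that the claims ``simply follow from the standard arguments using the structure theory of Lie algebras''), so there is nothing to compare beyond noting that you have supplied the details the author omitted. The one quibble is in part (4): your closing claim that the standing hypothesis rules out $\gb = \pm\ga$ fails for the combination $\gb = \ga$ with the minus sign, since $\ga - 2\ga = -\ga$ \emph{is} a root --- but this is a defect of the lemma's literal statement rather than of your argument, and the lemma is only ever applied in the paper with $\gb \neq \pm\ga$.
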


\begin{proof}
These simply follow from the standard arguments using
the structure theory of Lie algebras.
\end{proof}

\begin{Lem}\label{Lem4.3.4}
If $\xig$, $\xing$, $\mu$, and $\gamma$ are the highest weights
of $\flg$, $\flng$, $\fg(1)$, and $\fz(\fn)$, respectively,
then the following hold:
\begin{enumerate}
\item[(1)] $\gamma - \mu + \xing \in \gD$.
\item[(2)] $\gamma - \mu - \xing \notin \gD$.
\item[(3)] If $\xig$ is a long root then $\gamma - \mu \pm \xig \notin \gD$. 
\end{enumerate}
\end{Lem}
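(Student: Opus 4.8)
The plan is to reduce all three statements to the single computation of a Cartan integer $\IP{\gamma-\mu}{\xi^\vee}$ for $\xi\in\{\xig,\xing\}$, and then to read off membership in $\gD$ from elementary root–string arithmetic. The starting data are that $\gamma-\mu\in\gD$ by Lemma \ref{Lem4.3.3}(2), and that $\mu$, $\xig$, $\xing$ are all roots. Since $\gamma$ has $\ga_\fq$-multiplicity $2$ and $\mu$ has $\ga_\fq$-multiplicity $1$, the root $\gamma-\mu$ lies in $\fg(1)$; as $\xig,\xing\in\gD(\fl)$ have $\ga_\fq$-multiplicity $0$, each of $\gamma-\mu\pm\xig$ and $\gamma-\mu\pm\xing$ again has $\ga_\fq$-multiplicity $1$, hence is nonzero, so no degenerate sum-to-zero can occur in the string arguments below.

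For parts (1) and (2) I first evaluate $\IP{\gamma-\mu}{\cxing}$. By Remark \ref{Rem2.1.2} we have $\gamma\perp\ga$ for every $\ga\in\Pi(\flng)$, hence $\gamma\perp\xing$ and $\IP{\gamma}{\cxing}=0$. On the other hand $\mu-\xing\in\gD$ by Lemma \ref{Lem4.3.3}(3), while $\mu+\xing\notin\gD$ because $\mu$ is the highest weight of $\fg(1)$ and $\xing$ is a positive root of $\fl$; reading the $\xing$-string through $\mu$ then gives $\IP{\mu}{\cxing}=p$ with $p\geq 1$. Therefore $\IP{\gamma-\mu}{\cxing}=-p\leq -1$. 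Part (1) is immediate: a strictly negative Cartan integer between the two roots $\gamma-\mu$ and $\xing$ forces $\gamma-\mu+\xing\in\gD$. For part (2) I argue by contradiction: if $\delta:=\gamma-\mu-\xing\in\gD$, then, since $\IP{\xing}{\cxing}=2$, we get $\IP{\delta}{\cxing}=\IP{\gamma-\mu}{\cxing}-2=-p-2\leq -3$; but $\fg$ is not of type $G_2$, so every Cartan integer of $\fg$ lies in $\{0,\pm 1,\pm 2\}$, and this is impossible. Hence $\gamma-\mu-\xing\notin\gD$.

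For part (3), where $\xig$ is assumed long, I instead compute $\IP{\gamma-\mu}{\cxig}$ and show it equals $0$. Since $\gamma-\xig\in\gD$ (Lemma \ref{Lem4.3.3}(1)) and $\xig$ is long, Lemma \ref{Lem3.1.3}(1) gives $\IP{\gamma}{\cxig}=1$; likewise $\mu-\xig\in\gD$ (Lemma \ref{Lem4.3.3}(3)) together with Lemma \ref{Lem3.1.3}(1) gives $\IP{\mu}{\cxig}=1$. Thus $\IP{\gamma-\mu}{\cxig}=1-1=0$. Applying the contrapositives of Lemma \ref{Lem3.1.3}(1) and (2) to the long root $\xig$ and the root $\gamma-\mu$: membership $(\gamma-\mu)-\xig\in\gD$ would force the value $+1$, and $(\gamma-\mu)+\xig\in\gD$ would force $-1$, neither equal to $0$; therefore $\gamma-\mu\pm\xig\notin\gD$.

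The main obstacle is part (2). The clean Lemma \ref{Lem3.1.3} mechanism used for part (3) is unavailable here because $\xing$ need not be a long root of $\fg$ (for instance in type $B_n(n-1)$, where $\flng$ is the short $A_1$ generated by $\ga_n$ and $\xing=\ga_n$ is short); in such a case $\IP{\gamma-\mu}{\cxing}$ may equal $-2$ rather than $-1$, so one cannot simply quote a forced Cartan value of $1$. What rescues the argument uniformly is the global bound on Cartan integers away from type $G_2$, which is precisely why I phrase part (2) as a contradiction on $\IP{\delta}{\cxing}$ rather than as a direct count on the $\xing$-string. Finally, I would note that parts (1) and (2) are vacuous in the cases where $[\fl,\fl]$ is simple and $\flng$ does not occur (such as $E_7(2)$, $E_8(1)$, and $F_4(4)$), so the argument only needs to treat $\flng\neq 0$.
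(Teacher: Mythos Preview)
Your proof is correct. Parts (1) and (3) follow essentially the paper's approach: for (1) you show $\IP{\gamma-\mu}{\cxing}<0$ exactly as the paper does; for (3) you compute $\IP{\gamma-\mu}{\cxig}=0$ identically, and then conclude via the contrapositive of Lemma~\ref{Lem3.1.3}, whereas the paper instead writes $||\gamma-\mu\pm\xig||^2=||\gamma-\mu||^2+||\xig||^2$ and observes this exceeds the squared length of any root. These are equivalent endgames.

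The genuine difference is in part (2). The paper splits into two cases: when $\xing$ is long it invokes Lemma~\ref{Lem3.1.3}(3) (since $(\gamma-\mu)+\xing\in\gD$ forces $(\gamma-\mu)-\xing\notin\gD$ for a long root), and then handles the single exception $B_n(n-1)$, where $\xing=\varepsilon_n$ is short, by writing $\gamma-\mu-\xing=\varepsilon_2-2\varepsilon_n$ explicitly. Your argument is uniform: from $\IP{\gamma-\mu}{\cxing}=-p\leq -1$ you get $\IP{\gamma-\mu-\xing}{\cxing}\leq -3$, which is impossible for any root of a simple algebra not of type $G_2$. This avoids the case distinction entirely and is a cleaner route; the paper's approach, on the other hand, stays within the toolkit of Lemma~\ref{Lem3.1.3} already in use and does not need to invoke the global Cartan-integer bound or the exclusion of $G_2$.
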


\begin{proof}
Lemma \ref{Lem4.3.3} shows that $\gamma - \mu \in \gD$.
Then in order to prove (1), 
it is enough to show that $\IP{\xing}{\gamma-\mu} < 0$.
It follows from Remark \ref{Rem2.1.2} that 
$\IP{\xing}{\gamma} = 0$. On the other hand, 
we have $\IP{\xing}{\mu} > 0$ by the proof for (3) of Lemma \ref{Lem4.3.3}.
Therefore,
\beu
\IP{\xing}{\gamma - \mu} = \IP{\xing}{\gamma} - \IP{\xing}{\mu} < 0.
\end{equation*}

When $\xing$ is a long root of $\fg$, the assertion (2) follows from (1) and Lemma \ref{Lem3.1.3}.
The data in Appendix \ref{chap:Data} shows that
$\xing$ is a long root unless $\fq$ is of type $B_n(n-1)$.
If $\fq$ is of type $B_n(n-1)$ then we have 
$\gamma = \varepsilon_1 + \varepsilon_2$, $\mu = \varepsilon_1 + \varepsilon_n$, and $\xing = \varepsilon_n$.
Thus $\gamma -\mu - \xing  \notin \gD$.

To show (3), observe that, by Lemma \ref{Lem4.3.3},
we have $\gamma - \xig, \; \mu-\xig \in \gD$.
Since $\xig$ is assumed to be a long root,
it follows from Lemma \ref{Lem3.1.3} that 
$\IP{\gamma}{\cxig} = \IP{\mu}{\cxig} = 1$.
Therefore $\IP{\gamma - \mu}{\cxig} = 0$, which forces that 
\begin{equation}\label{Eqn3.3.25}
||\gamma-\mu\pm \xig||^2 = ||\gamma-\mu||^2 + ||\xig||^2.
\end{equation}
\noindent Since $\gamma - \mu$ is a root,
we have $||\gamma-\mu|| \neq 0$.
As $\xig$ is assumed to be a long root,
(\ref{Eqn3.3.25}) implies that $(\gamma - \mu) \pm \xig \notin \gD$.
\end{proof}

\begin{Rem}\label{Rem4.21}
Direct observation shows that $\xig$ is a long root, unless $\fq$ is of type $C_n(i)$.
If $\fq$ is of type $C_n(i)$ then the data in Appendix \ref{chap:Data} shows
$\gamma = 2\varepsilon_1$, $\mu = \varepsilon_1 + \varepsilon_{i+1}$, and $\xig = \varepsilon_1 - \varepsilon_i$.
Thus
$\gamma-\mu + \xig \notin \gD$, but
$\gamma-\mu - \xig \in \gD$.
\end{Rem}

\section{The $\Omega_1$ System}\label{chap:O1}

The aim of this section is to determine the complex parameter $s_1 \in \C$ for
the line bundle $\Cal{L}_{s}$ so that 
the $\Omega_1$ system of a maximal 
parabolic subalgebra $\fq$ of quasi-Heisenberg type 
is conformally invariant on $\Cal{L}_{s_1}$.
To do so, it is essential to set up convenient normalizations.

If $\ga , \gb \in \gD$ then define
\begin{align} \label{Eqn3.1.1}
p_{\ga, \gb} &= \max\{j \in \Z_{\geq 0} \; | \; \gb - j\ga \in \gD \} \text{ and } \nonumber \\
q_{\ga, \gb} &=\max\{ j \in \Z_{\geq 0} \; | \; \gb + j\ga \in \gD\}. %\nonumber
\end{align}
\noindent In particular, we have 
\begin{equation}\label{Eqn3.1.2} 
\IP{\gb}{\cga} = p_{\ga,\gb} - q_{\ga,\gb}.
\end{equation}
It is known that we can choose 
$X_\ga \in \fg_\ga$ and $H_\ga \in \fh$ for each $\ga \in \gD$
in such a way that the following conditions hold
(see for instance \cite[Sections III.4 and III.5]{Hel01}).  
The reader may want to notice that our normalizations are different from
those used in \cite{BKZ08}. 

\begin{enumerate}
\item[(H1)] For each $\ga \in \gD^+$,  
$\{X_\ga, X_{-\ga}, H_{\ga} \}$ is an $\f{sl}(2)$-triple;
in particular, we have $[X_\ga, X_{-\ga}] = H_\ga$.
%\beu
%[X_\ga, X_{-\ga}] = H_\ga.
%\end{equation*}
\item[(H2)] For each $\ga, \gb \in \gD^+$, $[H_\ga, X_\gb] = \gb(H_\ga)X_\gb$.
\item[(H3)] For $\ga \in \gD$ we have $\kappa(X_\ga, X_{-\ga})$ = 1.
\item[(H4)] For $\ga,\gb \in \gD$ we have $\gb(H_\ga) =  \IP{\ga}{\gb}$.
%\item[(H4)] For $\ga \in \gD$ and $H \in \fh$, $\kappa(H, H_\ga) = \ga(H)$;
%in particular, $\gb(H_\ga) =  \IP{\ga}{\gb}$ for $\gb \in \gD$.
\item[(H5)] For $\ga, \gb \in \gD$ with $\ga + \gb \neq 0$, 
there is a constant $N_{\ga,\gb}$ so that 
\begin{align*}
[X_\ga, X_\gb] &= N_{\ga,\gb} X_{\ga + \gb}  \quad \text{ if $\ga + \gb \in \gD$,}\\
N_{\ga, \gb} &= 0 \qquad \qquad \hskip 0.2in \text{if $\ga + \gb \notin \gD$}.
\end{align*}
\item[(H6)] If $\ga_1, \ga_2, \ga_3 \in \gD^+$ with $\ga_1 + \ga_2 + \ga_3 = 0$ then
\beu
N_{\ga_1,\ga_2} = N_{\ga_2,\ga_3}=N_{\ga_3,\ga_1}.
\end{equation*} 
\item[(H7)] If $\ga ,\gb \in \gD$ and $\ga + \gb \in \gD$ then
\beu
N_{\ga,\gb}N_{-\ga,-\gb} = -\frac{q_{\ga,\gb}(1+p_{\ga,\gb})}{2}||\ga||^2%\ga(H_\ga).
\end{equation*}
In particular, $N_{\ga, \gb}$ is non-zero if $\ga + \gb \in \gD$.
\end{enumerate}

We call the constants $N_{\ga,\gb}$ structure constants. Observe that,
by the normalization (H3), for all $\ga \in \gD$, the vector $X_{-\ga}$ is 
the dual vector for $X_{\ga}$ with respect to the Killing form. Therefore,
in this normalization, the element $\omega = \sum_{\gamma_j \in \gD(\fz(\fn))}
X_{\gamma_j}^* \otimes X_{\gamma_j} \in \fz(\bar \fn) \otimes \fz(\fn)$ is
\begin{equation*} 
\omega = \sum_{\gamma_j \in \gD(\fz(\fn)}
X_{-\gamma_j}\otimes X_{\gamma_j}
\end{equation*}
with $\{X_{-\ga}, H_\ga, X_\ga\}$ an $\f{sl}(2)$-triple.
(See Definition \ref{Def6.3} for $\omega$.)
\vsp

As we have observed in Subsection \ref{SS7211}, we use the covariant map $\tau_1$
and the associated $L$-intertwining operators $\ttau_1|_{V^*}$,
where $V^*$ are irreducible constituents of 
$\fg(-1)^* \otimes \fg(2)^* = \fg(-1)^* \otimes \fz(\fn)^*$.
By Definition \ref{Def6.3},
the covariant map $\tau_1$ is given by
\begin{align*}
\tau_1 : \fg(1) &\to \fg(-1) \otimes \fz(\fn)\\
X &\mapsto \big( \ad(X)\otimes \text{Id} \big) \omega
\end{align*}
with $\omega = \sum_{\gamma_j \in \gD(\fz(\fn))}X_{-\gamma_j} \otimes X_{\gamma_j}$.
It is clear that $\tau_1$ is not identically zero. Indeed, if $X = X_\mu$ with $\mu$ 
the highest weight for $\fg(1)$ then
\begin{equation*}
\tau_1(X_\mu) 
=\big( \ad(X_\mu) \otimes \text{Id} \big) \omega
=\sum_{\gD_\mu(\fz(\fn))}N_{\mu,-\gamma_j}X_{\mu-\gamma_j}\otimes X_{\gamma_j}
\end{equation*}
with $\gD_\mu(\fz(\fn)) = \{\gamma_j \in \gD(\fz(\fn))\; | \; \mu-\gamma_j \in \gD\}$.
By Lemma \ref{Lem4.3.3}, we have $\mu-\gamma \in \gD$ with $\gamma$ the highest weight 
for $\fz(\fn)$, so $\gD_\mu(\fz(\fn)) \neq \emptyset$.
Since the vectors $X_{\mu-\gamma_j} \otimes X_{\gamma_j}$ for 
$\gamma_j \in \gD_\mu(\fz(\fn))$ 
are linearly independent,
we have $\tau_1(X_\mu)\neq 0$.

For each irreducible constituent $V^*$ of $\fg(-1)^* \otimes \fz(\fn)^*$,
there exists an associated $L$-intertwining operator 
$\ttau_1|_{V^*} \in \Hom_L(V^* , \Cal{P}^1(\fg(1)))$ so that,
for all $Y^* \in V^*$,
\begin{equation*}\label{Eqn8.1.11}
\ttau_1|_{V^*}(Y^*)(X) = Y^*(\tau_1(X)).
\end{equation*} 
Observe that the duality for $V^*$ is defined with respect to the Killing form $\kappa$.
Moreover, via the Killing form $\kappa$,
we have $\fg(-1)^* \otimes \fz(\fn)^* \cong \fg(1) \otimes \fz(\bar \fn)$.
Thus, if $Y^* = X_{\ga}\otimes X_{-\gamma_t}$ with $\ga \in \gD(\fg(1))$ and 
$\gamma_t \in \gD(\fz(\fn))$ then $Y^*(\tau_1(X))$ is given by
\begin{equation}\label{Eqn8.1.1}
Y^*(\tau_1(X)) = \sum_{\gamma_j \in \gD(\fz(\fn))}
\kappa(X_\ga, \ad(X)X_{-\gamma_j})\kappa(X_{-\gamma_t}, X_{\gamma_j}),
\end{equation}
as $\tau_1(X) = \sum_{\gamma_j\in \gD(\fz(\fn))} 
\ad(X)X_{-\gamma_j} \otimes X_{\gamma_j}$.

\vsp

Now we wish to determine all the irreducible constituents $V^*$
of $\fg(1) \otimes \fz(\bar\fn)$, so that $\ttau_1|_{V^*}$ are not 
identically zero. Observe that $\Cal{P}^1(\fg(1)) \cong \Sym^1(\fg(-1)) = \fg(-1)$
and that $\fg(-1)$ is an irreducible $L$-module, as $\fq$ is a maximal parabolic
subalgebra.
Thus, if $\ttau_1|_{V^*}$ is not identically zero then $V^* \cong \fg(-1)$.
%as $\ttau_1|_{V^*} \in \Hom_L(V^*, \Cal{P}^1(\fg(1)))$.
Proposition \ref{Prop8.1.1} below shows that the converse also holds.
\begin{Prop}\label{Prop8.1.1}
Let $V^*$ be an irreducible constituent of $\fg(1) \otimes \fz(\bar \fn)$.
Then $\ttau_1|_{V^*}$ is not identically zero 
if and only if $V^* \cong \fg(-1)$.
\end{Prop}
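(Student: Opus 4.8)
The ``only if'' direction is exactly the remark preceding the statement: since $\fq$ is maximal, $\Cal{P}^1(\fg(1))\cong\fg(-1)$ is $L$-irreducible, so by Schur's lemma any $\ttau_1|_{V^*}\in\Hom_L(V^*,\Cal{P}^1(\fg(1)))$ with $V^*\not\cong\fg(-1)$ vanishes. The real content is the converse, and my plan is to deduce it from a surjectivity statement together with a multiplicity-one statement. First I would record that $\ttau_1$ is simply the transpose of $\tau_1$: the defining identity $\ttau_1(Y^*)(X)=Y^*(\tau_1(X))$ says $\ttau_1=\tau_1^{t}\colon W^*\to\fg(1)^*\cong\fg(-1)$, where $W=\fg(-1)\otimes\cfn$. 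Because $\fg(1)$ is $L$-irreducible and $\tau_1\not\equiv 0$ (indeed $\tau_1(X_\mu)\ne 0$, as shown above), the map $\tau_1$ is injective, hence $\ttau_1$ is surjective onto the irreducible module $\fg(-1)$. Combined with the ``only if'' direction, this shows $\ttau_1$ is nonzero exactly on the $\fg(-1)$-isotypic component of $W^*$, so it remains to see that this component is a single copy of $\fg(-1)$.

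Thus the key point is that $\fg(-1)$ occurs in $W^*=\fg(1)\otimes\fz(\bar\fn)$ with multiplicity one. Using $\fz(\bar\fn)\cong\cfn^*$ and $\fg(-1)^*\cong\fg(1)$, the standard tensor--hom adjunctions give
\[
\Hom_L(\fg(-1),\,\fg(1)\otimes\fz(\bar\fn))\;\cong\;\Hom_L(\fg(-1)\otimes\cfn,\,\fg(1))\;\cong\;\Hom_L(\cfn,\,\fg(1)\otimes\fg(1)),
\]
so this multiplicity equals the multiplicity of $\cfn=\fg(2)$ in $\fg(1)\otimes\fg(1)$. Since $\fq$ is of quasi-Heisenberg type we have $[\fg(1),\fg(1)]=\fg(2)$, so the bracket realizes $\fg(2)$ inside $\wedge^2\fg(1)$ and the multiplicity is at least one; the substance is the upper bound. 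I would obtain it by counting $\fl$-highest weight vectors of weight $\gamma$ in $\fg(1)\otimes\fg(1)$ and checking this space is one-dimensional, or, lacking a uniform argument, by reading the decomposition off the root data in Appendix \ref{chap:Data}. Granting multiplicity one, the constituent $V^*\cong\fg(-1)$ is unique and coincides with the $\fg(-1)$-isotypic component, so surjectivity of $\ttau_1$ forces $\ttau_1|_{V^*}\not\equiv 0$.

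Finally, the nonvanishing can be made explicit, which also locates the surjection: taking $Y^*=X_{\gamma-\mu}\otimes X_{-\gamma}\in W^*$ --- legitimate because $\gamma-\mu\in\gD(\fg(1))$ by Lemma \ref{Lem4.3.3}(2) --- and $X=X_\mu$, formula (\ref{Eqn8.1.1}) collapses to $\ttau_1(Y^*)(X_\mu)=\kappa(X_{\gamma-\mu},[X_\mu,X_{-\gamma}])=N_{\mu,-\gamma}$, which is nonzero by (H7). I expect the multiplicity-one statement to be the main obstacle: surjectivity of $\ttau_1$ is automatic from irreducibility, but ruling out a second copy of $\fg(-1)$ in $W^*$ --- equivalently a second copy of $\fg(2)$ in $\fg(1)\otimes\fg(1)$ --- is where the quasi-Heisenberg hypothesis genuinely enters and may require the case-by-case data.
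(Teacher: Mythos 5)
Your argument is correct in outline but takes a genuinely different route from the paper. The paper does not argue abstractly via surjectivity of the transpose and multiplicity one; instead it exhibits a concrete copy of $\fg(-1)$ inside $\fg(1)\otimes\fz(\bar\fn)$ as the image of the $L$-map $\btau_1(\bar X)=(\ad(\bar X)\otimes\text{Id})\bar\omega$, takes the lowest weight vector $Y^*_l=\tfrac{1}{c_\mu}\btau_1(X_{-\mu})=\tfrac{1}{c_\mu}\sum_{\gamma_t\in\gD_\mu(\fz(\fn))}N_{-\mu,\gamma_t}X_{\gamma_t-\mu}\otimes X_{-\gamma_t}$, and computes directly from (\ref{Eqn8.1.1}) that $\ttau_1|_{\fg(-1)}(Y^*_l)(X)=\kappa(X,X_{-\mu})\neq 0$ (the constant $c_\mu\neq 0$ by (H7)). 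This buys an explicit formula for the operators, namely $\Omega_1(Y^*_l)=R(X_{-\mu})$, which the paper needs immediately afterwards; your approach buys conceptual clarity about where the quasi-Heisenberg hypothesis enters. Note one subtlety in your final display: the vector $X_{\gamma-\mu}\otimes X_{-\gamma}$ has weight $-\mu$ but is a priori only one term of the lowest weight vector of the $\fg(-1)$-constituent (the full weight space of weight $-\mu$ in $W^*$ is spanned by all $X_{\gamma_t-\mu}\otimes X_{-\gamma_t}$), so your computation certifies $\ttau_1\not\equiv 0$ on $W^*$ but not by itself nonvanishing on the constituent; your isotypic/multiplicity argument is what carries that load.

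The one step you defer --- that $\fg(-1)$ occurs in $\fg(1)\otimes\fz(\bar\fn)$ with multiplicity one, equivalently that $\fg(2)$ occurs once in $\fg(1)\otimes\fg(1)$ --- is genuinely needed for your route and closes cleanly without case-by-case data: the multiplicity of the irreducible with highest weight $\gamma$ in $V(\mu)\otimes V(\mu)$ is bounded by the dimension of the $(\gamma-\mu)$-weight space of $\fg(1)$, and since $\gamma-\mu$ is a root (Lemma \ref{Lem4.3.3}) that space is one-dimensional. This is the same observation the paper invokes in Section \ref{chap:Sp} when it asserts that the decomposition of $\fg(1)\otimes\fg(1)$ is multiplicity free. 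It is worth observing that the clean ``if and only if'' phrasing of the proposition implicitly relies on this multiplicity-one fact in the paper's version too: if $\fg(-1)$ occurred twice, one could choose a complementary copy inside the isotypic component lying in $\ker\ttau_1$, and the statement would fail for that constituent. So your proposal, once the weight-space bound is inserted, is arguably the more complete justification of the statement as literally written.
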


\begin{proof}
First observe that $\fg(-1)$ is an irreducible constituent of $\fg(1) \otimes \fz(\bar \fn)$.
Indeed, since $\tau_1$ is linear, we have $\tau_1(\fg(1)) \cong \fg(1)$
as an $L$-module; in particular, $\fg(1)$ is an irreducible constituent of 
$\fg(-1) \otimes \fz(\fn)$. Therefore
$\fg(-1) \cong \fg(1)^*$ is an irreducible constituent of 
$\fg(1) \otimes \fz(\bar \fn)\cong (\fg(-1) \otimes \fz(\fn))^*$.

To prove $\ttau_1|_{\fg(-1)}$ is a non-zero map, it suffices to show that 
$\ttau_1|_{\fg(-1)}(Y^*) \neq 0$ for some $Y^* \in \fg(-1) \subset \fg(1) \otimes \fz(\bar \fn)$.
To do so, consider a map
\begin{align*}
\btau_1 : \fg(-1) &\to \fg(1) \otimes \fz(\bar \fn)\\
\bar{X} &\mapsto \big( \ad(\bar X) \otimes \text{Id} \big) \bar{\omega}
\end{align*}
with $\bar{\omega} =
\sum_{\gamma_t \in \gD(\fz(\fn))}X_{\gamma_t} \otimes X_{-\gamma_t}$.
This is a non-zero $L$-intertwining operator.
Thus $\btau_1(\fg(-1)) \cong \fg(-1)$ as an $L$-module,
and $\btau_1(X_{-\ga})$ is a weight vector with weight $-\ga$
for all $\ga \in \gD(\fg(1))$. 
As $\fg(1)$ has highest weight $\mu$, the lowest weight for $\fg(-1)$ is $-\mu$. 

Now we set
\begin{equation*}
c_\mu = \sum_{\gamma_t \in \gD_\mu(\fz(\fn))} 
N_{-\mu, \gamma_t}N_{\mu, -\gamma_t}
\end{equation*}
with $\gD_\mu(\fz(\fn)) = \{ \gamma_t \in \gD(\fz(\fn))\; | \; \gamma_t - \mu \in \gD\}$.
By Lemma \ref{Lem4.3.3}, it follows that
$\gamma - \mu \in \gD$; in particular, $\gD_\mu(\fz(\fn)) \neq \emptyset$. 
The normalization (H7) shows that 
$N_{-\mu, \gamma_t}N_{\mu, -\gamma_t} < 0 $ for all $\gamma_t \in \gD_\mu(\fz(\fn))$.
Therefore  $c_\mu \neq 0$. Then define $Y^*_l \in \fg(-1)$ by means of
\begin{equation*}
Y^*_l = \frac{1}{c_{\mu}}\btau_1(X_{-\mu})
=\frac{1}{c_{\mu}} 
\sum_{\gamma_t \in \gD_\mu(\fz(\fn))}N_{-\mu, \gamma_t}
X_{\gamma_t -\mu} \otimes X_{-\gamma_t}.
\end{equation*}
We claim that $\ttau_1|_{\fg(-1)}(Y^*_l)(X) \neq 0$.
By (\ref{Eqn8.1.1}), the polynomial $\ttau_1|_{\fg(-1)}(Y^*_l)(X)$ is
\begin{align*}%\label{Eqn8.1.2}
\ttau_1|_{\fg(-1)}(Y^*_l)(X) 
&= Y^*_l(\tau_1(X)) \nonumber \\
&=\frac{1}{c_\mu} \sum_{\substack{\gamma_t \in \gD_\mu(\fz(\fn)) \nonumber\\
\gamma_j \in \gD(\fz(\fn))}} N_{-\mu, \gamma_t}
\kappa(X_{\gamma_t-\mu}, \ad(X)X_{-\gamma_j})\kappa(X_{-\gamma_t}, X_{\gamma_j}) \nonumber\\
&=\frac{1}{c_\mu} \sum_{\gamma_t \in \gD_\mu(\fz(\fn))}
N_{-\mu, \gamma_t} \kappa(X_{\gamma_t-\mu}, \ad(X)X_{-\gamma_t}).
\end{align*}
Write $X = \sum_{\ga \in \gD(\fg(1))} \eta_\ga X_{\ga}$, where $\eta_\ga \in \fn^*$ is 
the coordinate dual to $X_\ga$ with respect to the Killing form $\kappa$. 
Then,
\begin{align}\label{Eqn7.1.31}
\ttau_1|_{\fg(-1)}(Y^*_l)(X)
&= \frac{1}{c_\mu} \sum_{\gamma_t \in \gD_\mu(\fz(\fn))}
N_{-\mu, \gamma_t} \kappa(X_{\gamma_t-\mu}, \ad(X)X_{-\gamma_t}) \nonumber\\
&= \frac{1}{c_\mu} \sum_{\substack{
\ga \in \gD(\fg(1)) \nonumber\\
\gamma_t \in \gD_\mu(\fz(\fn))}}
N_{-\mu, \gamma_t} \eta_\ga \kappa(X_{\gamma_t-\mu}, \ad(X_\ga)X_{-\gamma_t})\\
&= \frac{1}{c_\mu} \sum_{\gamma_t \in \gD_\mu(\fz(\fn))} 
N_{-\mu, \gamma_t}N_{\mu, -\gamma_t} \eta_\mu \nonumber\\
&= \eta_\mu \nonumber\\
&=\kappa(X, X_{-\mu}).
\end{align}
%Note that we apply $c_\mu = \sum_{\gamma_t \in \gD_\mu(\fz(\fn))} 
%N_{-\mu, \gamma_t}N_{\mu, -\gamma_t}$ from line three to line four.
Hence $\ttau_1|_{\fg(-1)}(Y^*_l)(X) \neq 0$.
\end{proof}

Since only $\fg(-1)$ contributes to the construction of the $\Omega_1$ systems,
we simply refer to the $\Omega_1$ system as the $\Omega_1|_{\fg(-1)}$ system.
As we observed in Subsection \ref{SS7211}, 
the operator $\Omega_1|_{\fg(-1)}: \fg(-1) \to \mathbb{D}(\Cal{L}_{s})^{\bar \fn}$ 
is obtained via the 
composition of maps
\begin{equation*}
\fg(-1)\stackrel{\ttau_1|_{\fg(-1)}}{\to} \Cal{P}^1(\fg(1))
\to \fg(-1) \stackrel{\gs}{\hookrightarrow} 
\Cal{U}(\bar \fn) \stackrel{R}{\to} \mathbb{D}(\Cal{L}_{s})^{\bar \fn}.
\end{equation*}
By (\ref{Eqn7.1.31}), we have
$\ttau_1|_{\fg(-1)}(Y^*_l)(X) = \kappa(X, X_{-\mu})$.
Therefore,
\begin{equation*}\label{Eqn8.1.4}
\Omega_1(Y^*_l) = R(X_{-\mu}).
\end{equation*}
Now, for all $\ga \in \gD(\fg(1))$, set 
\begin{equation*}
Y_{-\ga} = \btau_1(X_{-\ga}).
\end{equation*}
Then, as $Y^*_l = (1/c_{\mu}) \btau_1(X_{-\mu})$, we have
%\begin{equation*}
$\Omega_1(Y_{-\mu}) = c_\mu R(X_{-\mu})$.
%\end{equation*}
Since both $\Omega_1|_{\fg(-1)}$ and $\bar{\tau}_1$ are $L_0$-intertwining operators
and since $\fg(-1) = \Cal{U}(\fl) X_{-\mu}$, for any $\ga \in \gD(\fg(1))$, we obtain
\begin{equation}\label{Eqn8.1.5}
\Omega_1(Y_{-\ga}) = c_\ga R(X_{-\ga})
\end{equation}
with some constant $c_\ga$. 
Thus, if $\gD(\fg(1)) = \{\ga_1, \ldots, \ga_m\}$ then the $\Omega_1$ system is given by
\begin{equation*}
R(X_{-\ga_1}), \ldots, R(X_{-\ga_m}).
\end{equation*}

\begin{Thm}\label{Thm8.1.2}
Let $\fg$ be a complex simple Lie algebra, 
and let $\fq$ be a maximal parabolic subalgebra 
of quasi-Heisenberg type. 
Then the $\Omega_1$ system
is conformally invariant on $\Cal{L}_{s}$ if and only if $s=0$.
\end{Thm}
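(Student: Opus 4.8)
The plan is to invoke Proposition \ref{Prop7.2.3}, which for a straight list such as $R(X_{-\ga_1}), \ldots, R(X_{-\ga_m})$ reduces conformal invariance to checking the bracket condition of (S2) only at the identity $e$. Linear independence at $e$ is immediate because the $X_{-\ga_j} \in \fg(-1)$ are linearly independent (this is (S1) of Remark \ref{Rem2.3.6}). Thus everything rests on the single computation of $[\pi_s(Y), R(X_{-\ga})]_e$ for $Y \in \fg$ and $\ga \in \gD(\fg(1))$, which I would carry out using the first formula of Proposition \ref{Prop7.2.5}. Evaluating that formula at $\nbar = e$ gives
\begin{equation*}
\big([\pi_s(Y), R(X_{-\ga})] \acts f\big)(e) = \big(R([Y_\fq, X_{-\ga}]_{\bar\fn}) \acts f\big)(e) - s\gl_\fq\big([Y, X_{-\ga}]_\fq\big) f(e).
\end{equation*}

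First I would dispose of the first-order term. Since $X_{-\ga} \in \fg(-1)$ and $Y_\fq \in \fg(0) \oplus \fg(1) \oplus \fg(2)$, the components $[\fg(1), \fg(-1)] \subset \fg(0)$ and $[\fg(2), \fg(-1)] \subset \fg(1)$ both lie in $\fq$ and so vanish under projection to $\bar\fn$; only the $\fl = \fg(0)$ part of $Y$ survives, giving $[Y_\fq, X_{-\ga}]_{\bar\fn} = [Y_\fl, X_{-\ga}] \in \fg(-1)$. Hence $R([Y_\fq, X_{-\ga}]_{\bar\fn})$ is always a linear combination of the $R(X_{-\ga_j})$, independently of $s$. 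This settles the backward implication at once: when $s = 0$ the zeroth-order term vanishes, the bracket at $e$ lies in $\spn_\C\{R(X_{-\ga_j})_e\}$, and Proposition \ref{Prop7.2.3} delivers conformal invariance.

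For the forward implication I would use a separation-of-orders argument. If the system is conformally invariant on $\Cal{L}_s$, then (S2) forces $[\pi_s(Y), R(X_{-\ga})]_e$ to lie in the span of the first-order functionals $R(X_{-\ga_j})_e$. Subtracting the first-order term already identified, the leftover zeroth-order functional $f \mapsto -s\gl_\fq([Y, X_{-\ga}]_\fq)\,f(e)$ must also lie in that span; but evaluation at $e$ is linearly independent from the directional-derivative functionals $R(X_{-\ga_j})_e$, so its coefficient is forced to vanish, i.e. $s\,\gl_\fq([Y, X_{-\ga}]_\fq) = 0$ for all $Y \in \fg$ and all $\ga \in \gD(\fg(1))$. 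To conclude $s = 0$ I would specialize $Y = X_\ga$, so that $[X_\ga, X_{-\ga}]_\fq = H_\ga \in \fh$ by (H1), and observe that $\gl_\fq(H_\ga) = \IP{\gl_\fq}{\ga} > 0$ because $\ga \in \gD(\fn)$; hence $s = 0$.

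I expect the only real care needed to be the bookkeeping at $e$ in Proposition \ref{Prop7.2.5}---in particular verifying that precisely the $\fl$-component of $Y$ contributes to the first-order part---after which the linear independence of evaluation from first-order differentiation, together with the nonvanishing of $\gl_\fq(H_\ga)$, makes the isolation of the $s$-dependent term and the conclusion immediate.
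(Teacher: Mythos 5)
Your proposal is correct and follows essentially the same route as the paper: both rest on the first identity of Proposition \ref{Prop7.2.5}, the observation that the first-order term is always an admissible combination of the $R(X_{-\ga_j})$, and the conclusion that conformal invariance holds exactly when the zeroth-order term $-s\gl_\fq\big([\,\cdot\,, X_{-\ga_j}]_\fq\big)f$ vanishes. The paper states this more tersely at general $\nbar$ without explicitly invoking Proposition \ref{Prop7.2.3} or exhibiting a witness $Y$ with $\gl_\fq([Y,X_{-\ga}]_\fq)\neq 0$; your added details (the reduction to the identity and the choice $Y=X_\ga$ giving $\gl_\fq(H_\ga)=\IP{\gl_\fq}{\ga}>0$) simply make the same argument explicit.
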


\begin{proof}
By Remark \ref{Rem2.3.6},
we only need to show that 
the condition (S2) in Definition \ref{Def1.1.3} holds
if and only if $s=0$.
By Theorem \ref{Prop7.2.5}, 
for any $Y \in \fg$ and any $f \in C^\infty(\bar{N_0} , \C_{\chi^{s}})$,
we have
%and (\ref{Eqn8.1.5}), we have 
\begin{align*}
&\big( [\pi_s(Y), R(X_{-\ga_j})] \acts f \big)(\nbar)\\
&=\big( R([(\Ad(\nbar^{-1})Y)_{\fq}, X_{-\ga_j}]_{\bar{\fn}}) \acts f \big)(\nbar)
-s\gl_\fq\big([\Ad(\nbar^{-1})Y, X_{-\ga_j}]_{\fq}\big)f(\nbar).
\end{align*}
%\begin{align*}
%&\big( [\pi_s(Y), \Omega_1(Y_{-\ga_j})] \acts f \big)(\nbar)\\
%&=\big( \Omega_1([(\Ad(\nbar^{-1})Y)_{\fq}, Y_{-\ga_j}]_{\bar{\fn}}) \acts f \big)(\nbar)
%+s\gl_\fq\big([\Ad(\nbar^{-1})Y, Y_{-\ga_j}]_{\fq}\big)f(\nbar)
%\end{align*}
Hence, the condition (S2) holds if and only if $s=0$.
\end{proof}

\section
{Special Constituents of $\fl \otimes \cfn$} 
\label{chap:Sp}

Our next goal is to construct the $\Omega_2$ systems and to find their special values.
To do so, we need to detect the irreducible constituents $V^*$ of 
$\fl^* \otimes \fz(\fn)^*$ so that $\ttau_2|_{V^*}$ is not identically zero.
(See Subsection \ref{SS7211} for the general construction of the $\Omega_k$ systems.)
In this section we shall show preliminary results to find such irreducible constituents.

%%%%%%%%%%%%%%%%%%%%%%%%%%%%%%%%%%%%%

\subsection
%[Irreducible Decomposition]
{Irreducible decomposition of $\flg \otimes \cfn$}
\label{SS51}
%\label{SS52}

We continue with $\fq = \fl \oplus \fg(1) \oplus \fz(\fn)$ a maximal 
parabolic subalgebra of quasi-Heisenberg type
listed in (\ref{Eqn4.0.1}) or (\ref{Eqn4.0.2}), 
and $Q = LN = N_G(\fq)$. The Levi subgroup $L$ acts on 
$\fl \otimes \fz(\fn) \subset \fg \otimes \fg$ via the standard action
on the tensor product induced by 
the adjoint representation on $\fl$ and $\fz(\fn)$. 
As $L$ is complex reductive, this action is completely reducible.
Since $\fl = \fz(\fl) \oplus \flg \oplus \flng$ with $\fz(\fl) = \C H_\fq$,
we have
\begin{equation} \label{Eqn5.1.1}
\fl \otimes \fz(\fn) 
= \big( \C H_\fq \otimes \fz(\fn) \big) 
\oplus \big( \flg \otimes \fz(\fn) \big)
\oplus \big( \flng \otimes \fz(\fn) \big).
\end{equation}
It is clear that $\C H_\fq \otimes \fz(\fn) \cong \fz(\fn)=\fg(2)$ as an $L$-module.
Thus, by Corollary \ref{Prop3.3.1}, 
$\C H_\fq \otimes \fz(\fn)$ is $L$-irreducible. 
It is also easy to show that $\flng \otimes \fz(\fn)$ is $L$-irreducible. 
Let $\Lg$ (resp. $\Lng$) be the analytic subgroup of $L$
with Lie algebra $\flg$ (resp. $\flng$).
As in Subsection \ref{SS41},
we call a weight $\nu$ for a finite dimensional $L$-module $V$  
a highest weight for $V$ if the restriction $\nu|_{\fh_{ss}}$ onto 
$\fh_{ss}$ is a highest weight for $V$ as an $L_{ss}$-module.

\begin{Prop}\label{Lem5.3.1}
Suppose that $\flng \neq 0$.
If $\xing$ and $\gamma$ are the highest weights of $\flng$ and $\fz(\fn)$,
respectively, then $\flng \otimes \fz(\fn)$ is the irreducible $L$-module 
with highest weight $\xing + \gamma$.
\end{Prop}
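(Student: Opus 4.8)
The plan is to exploit the fact that, as a simple ideal of $\fl$, the subalgebra $\flng$ commutes with both the other semisimple ideal $\flg$ and the centre $\fz(\fl)=\C H_\fq$. Consequently $\Lng$ acts on $\flng$ by the adjoint representation, irreducibly and with highest weight $\xing$, while $\Lg$ and the connected centre act there trivially. Viewing $\flng\otimes\cfn$ as a module for $[\fl,\fl]=\flg\oplus\flng$, I would reduce the proposition to the standard fact that the outer tensor product over $\C$ of two finite-dimensional irreducibles (one for $\flg$, one for $\flng$) is irreducible. The whole content of the argument is therefore to show that, after stripping off the trivial $\flng$-action and the central scalar, $\cfn$ is an irreducible $\flg$-module.

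The heart of the proof is the claim that $\flng$ acts trivially on $\cfn$, i.e.\ $[\flng,\cfn]=0$. First I would verify this on the highest weight vector $X_\gamma$. For a positive root $\delta\in\gD^+(\flng)$ one has $\gamma+\delta\notin\gD$ since $\gamma$ is the highest root, so $[X_\delta,X_\gamma]=0$; by Remark \ref{Rem2.1.2} we have $\IP{\gamma}{\delta}=0$, hence $\IP{\gamma}{\delta^\vee}=0$, and since the $\delta$-string through $\gamma$ admits no upward extension it admits none downward either, giving $\gamma-\delta\notin\gD$ and $[X_{-\delta},X_\gamma]=0$; finally $\gamma$ annihilates the Cartan of $\flng$ because $\gamma(\delta^\vee)=\IP{\gamma}{\delta^\vee}=0$ for every $\delta\in\gD(\flng)$. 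Thus $X_\gamma$ spans a trivial $\flng$-submodule. To propagate this to all of $\cfn$, I would use $\cfn=\Cal{U}(\fl)X_\gamma$ together with the Poincar\'e--Birkhoff--Witt theorem for the commuting decomposition $\fl=\flng\oplus(\flg\oplus\C H_\fq)$ and $\flng\cdot X_\gamma=0$ to get $\cfn=\Cal{U}(\flg\oplus\C H_\fq)X_\gamma$; a one-line induction via the Jacobi identity then finishes it, since for $Y\in\flng$, $Z\in\flg\oplus\C H_\fq$, and $w$ with $[\flng,w]=0$ one has $[Y,[Z,w]]=[[Y,Z],w]+[Z,[Y,w]]=0$ because $[Y,Z]=0$.

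With $[\flng,\cfn]=0$ in hand and $H_\fq$ acting on $\cfn$ by the scalar $2$, any $\flg$-submodule of $\cfn$ is automatically $L$-stable, so the $L$-irreducibility of $\cfn$ from Proposition \ref{Prop3.3.1} forces $\cfn$ to be $\flg$-irreducible. Hence $\flng\otimes\cfn$ is the outer tensor product of the irreducible $\flng$-module $\flng$ (with $\flg$ trivial) and the irreducible $\flg$-module $\cfn$ (with $\flng$ trivial), twisted by the central character of $H_\fq$, and is therefore an irreducible $L$-module. Its highest weight is read off from the highest weight vector $X_{\xing}\otimes X_\gamma$, which is annihilated by every positive root vector of $\fl$ and has weight $\xing+\gamma$. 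The only delicate point is the triviality claim $[\flng,\cfn]=0$; everything after it is formal, and the orthogonality input from Remark \ref{Rem2.1.2} is exactly what makes that claim work.
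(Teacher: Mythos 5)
Your proposal is correct and follows essentially the same route as the paper: both arguments rest on the orthogonality $\IP{\ga}{\gamma}=0$ for $\ga\in\gD(\flng)$ from Remark \ref{Rem2.1.2} to show $\Lng$ acts trivially on $\fz(\fn)=\Cal{U}(\flg)X_\gamma$, and then identify $\flng\otimes\fz(\fn)$ with an outer tensor product of irreducibles for $\Lg\times\Lng$. You simply spell out in more detail (via the root-string and PBW/Jacobi induction) the step the paper compresses into one sentence.
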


\begin{proof}
First we observe that $\Lng$ acts trivially on $\fz(\fn)$. 
By Corollary \ref{Prop3.3.1}, we have 
$\fz(\fn) = \fg(2) = \Cal{U}([\fl,\fl])X_\gamma$.
By the observation made in Remark \ref{Rem2.1.2},
it follows that $\ga \perp \gamma$ for all $\ga \in \gD(\flng)$.
Thus $\fz(\fn) = \Cal{U}(\flg)X_\gamma$. Hence $\Lng$ acts trivially;
in particular, the irreducible $L$-module $\fz(\fn)$ is $\Lg$-irreducible.
On the other hand, it is clear that $\Lg$ acts on $\flng$ trivially.
Therefore the representation $(L, \Ad \otimes \Ad, \flng \otimes \fz(\fn))$ 
is equivalent to $(\Lg \times \Lng, \Ad \hat{\otimes} \Ad,  \flng \otimes \fz(\fn))$,
where $\hat{\otimes}$ denotes the outer tensor product.
Since $\flng$ and $\fz(\fn)$ have highest weight $\xing$ and $\gamma$, 
respectively, the lemma follows.
\end{proof}

Now we focus on the irreducible decomposition of $\flg \otimes \fz(\fn)$.
As noted in the proof for Proposition \ref{Lem5.3.1}, the subgroup $\Lng$
acts trivially on $\flg \otimes \fz(\fn)$.
Hence we study $\flg \otimes \fz(\fn)$ as an $\Lg$-module.
For $\gl \in \fh^*$ with $\IP{\gl}{\cga} \in \Z_{\geq 0}$ 
for all $\ga \in \Pi(\flg)$,
we will denote by 
$V(\gl)$ the irreducible constituent with highest weight 
$\gl|_{\fh_{\gamma}}$, where $\fh_{\gamma} = \fh \cap \flg$.
For classical algebras, we use the standard realization of the roots 
$\varepsilon_i$, the dual basis of the standard orthonormal basis for $\R^n$.
For exceptional algebras the Bourbaki conventions are used to label 
the simple roots.

\begin{Thm} \label{Thm5.1}
The $L$-module $\flg \otimes \fz(\fn)$ is reducible.
If $V(\gl)$ denotes the irreducible representation of $L$ 
with highest weight $\gl|_{\fh_\gamma}$
then the irreducible decomposition of $\flg \otimes \fz(\fn)$ is given as follows.
\end{Thm}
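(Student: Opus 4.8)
The plan is to reduce the decomposition to a computation inside the simple factor $\flg$. As established in the proof of Proposition~\ref{Lem5.3.1}, the subgroup $\Lng$ acts trivially on $\flg \otimes \cfn$, while the center $\fz(\fl) = \C H_\fq$ acts by the single scalar $2$ on the whole tensor product (it is $0$ on $\flg$ and $2$ on $\cfn = \fg(2)$). Hence the $L$-decomposition corresponds to the decomposition as an $\Lg$-module, and it suffices to decompose $\flg \otimes \cfn$, where $\flg$ is the adjoint representation of the simple Lie algebra $\flg$, with highest weight $\xig$, and where, by Proposition~\ref{Prop3.3.1} and Remark~\ref{Rem2.1.2}, $\cfn$ is the irreducible $\flg$-module of highest weight $\gamma|_{\fh_\gamma}$. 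Since $\gamma$ is orthogonal to every $\ga \in \Pi(\flg) \setminus \{\ga_\gamma\}$, this restricted weight is supported on the single node $\ga_\gamma$, so $\cfn$ is a small fundamental-type $\flg$-module that I would read off case by case from Appendix~\ref{chap:Data}.

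Before computing, two constituents are visible uniformly. The sum $\xig + \gamma$ is the unique maximal weight of the tensor product and occurs with multiplicity one, so the Cartan component $V(\xig + \gamma)$ always appears. Second, the adjoint action $\flg \otimes \cfn \to \cfn$, $X \otimes Z \mapsto [X,Z]$, is an $\Lg$-equivariant map that is nonzero: since $\IP{\gamma}{\ga_\gamma} > 0$ we have $\gamma - \ga_\gamma \in \gD$ with $\fg_{-\ga_\gamma} \subseteq \flg$, so $[\flg, \cfn] \neq 0$. As $\cfn$ is irreducible, this map is surjective and $V(\gamma) = \cfn$ is a constituent in every case.

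To obtain the remaining constituents and their multiplicities, I would note first that any highest weight of $\flg \otimes \cfn$ has the form $\gamma + \beta$ with $\beta$ a weight of the adjoint module $\flg$, i.e.\ a root of $\flg$ or $0$, and must be $\flg$-dominant. The root-combinatorial lemmas of Section~\ref{chap:TwoStep} --- Lemmas~\ref{Lem4.3.3} and \ref{Lem4.3.4}, together with Lemma~\ref{Lem3.1.3} and Remark~\ref{Rem4.21} --- record exactly which of $\gamma \pm \xig$ and the related weights are roots, and thus enumerate the surviving dominant candidates. I would then pin down each multiplicity by Klimyk's tensor-product formula with the adjoint factor carrying the weight multiplicities: the multiplicity of $V(\nu)$ equals
\begin{equation*}
\sum_{w \in W_\flg} \det(w)\, m_{\flg}\big(\nu + \gr - w(\gamma + \gr)\big),
\end{equation*}
where $\gr$ is half the sum of the positive roots of $\flg$ and $m_{\flg}(\beta) = \dim \flg_\beta$ is $1$ for each root $\beta$, $\rank(\flg)$ for $\beta = 0$, and $0$ otherwise. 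Completeness of the resulting list I would confirm by the dimension identity $\dim \flg \cdot \dim \cfn = \sum_j \dim V(\nu_j)$, evaluated with the Weyl dimension formula.

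The main obstacle is the case-by-case nature of the assertion: the families $B_n(i)$, $C_n(i)$, $D_n(i)$ and the exceptional entries $E_6(3)$, $E_6(5)$, $E_7(2)$, $E_7(6)$, $E_8(1)$, $F_4(4)$ each require their own enumeration, and the exceptional cases --- where neither $\flg$ nor $\cfn$ is a transparent classical module --- are where the Klimyk computation and the dimension check carry the most weight. A secondary subtlety is the long/short-root distinction in types $B_n$, $C_n$, and $F_4$, which decides whether weights such as $\gamma \pm \xig$ are roots; the ``long root'' hypotheses in Lemmas~\ref{Lem3.1.3} and \ref{Lem4.3.4} must be tracked so that the dominant candidates are enumerated without omission.
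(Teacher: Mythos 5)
Your proposal is correct and follows essentially the same route as the paper, whose proof of Theorem \ref{Thm5.1} is precisely an appeal to Klimyk's tensor-product formula applied to $\flg \otimes \fz(\fn)$ (with the case-by-case details deferred to the author's thesis). Your preliminary reductions — trivial $\Lng$-action, the scalar action of $\C H_\fq$, the Cartan component, and the surjective bracket map exhibiting $V(\gamma)$ — are all sound and in fact supply more uniform structure than the paper records, but the decisive step (enumerating the remaining constituents and multiplicities via Klimyk and a dimension check) is the same computation the paper invokes.
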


\noindent (1) $B_n(i), \; 3\leq i \leq n:$\\
$\begin{cases}
V( \xig + \gamma) \oplus 
V( \gamma) \oplus
V(\xig + (\varepsilon_1 + \varepsilon_3))  & \text{if $i = 3$}\\
V( \xig + \gamma) \oplus
V(\gamma) \oplus
V( \xig + (\varepsilon_1 + \varepsilon_i))\oplus
V( \xig + (\varepsilon_2 +\varepsilon_3) ) &\text{if $4 \leq i \leq n$}\\
\end{cases}$
\vskip 0.15in

\noindent (2) $C_n(i), \; 2\leq i \leq n-1:$\\
$\begin{cases}
V( \xig + \gamma) \oplus 
V( \gamma) \oplus
V(\xig + 2\varepsilon_2)  & \text{if $i = 2$}\\
V( \xig + \gamma) \oplus
V(\gamma) \oplus
V( \xig + (\varepsilon_2 + \varepsilon_i))\oplus
V( \xig + (\varepsilon_1 +\varepsilon_2) ) &\text{if $3 \leq i \leq n-1$}\\
\end{cases}$
\vskip 0.15in

\noindent (3) $D_n(i), \; 3\leq i \leq n-3:$\\
$\begin{cases}
V( \xig + \gamma) \oplus 
V( \gamma) \oplus
V(\xig + (\varepsilon_1 + \varepsilon_3))  & \text{if $i = 3$}\\
V( \xig + \gamma) \oplus
V(\gamma) \oplus
V( \xig + (\varepsilon_1 + \varepsilon_i))\oplus
V( \xig + (\varepsilon_2 +\varepsilon_3) ) &\text{if $4 \leq i \leq n-3$}\\
\end{cases}$

\vskip 0.15in

\noindent (4) All exceptional cases 
($E_6(3)$, $E_6(5)$, $E_7(2)$, $E_7(6)$, $E_8(1)$, $F_4(4)$): 
\beu
%\flg \otimes \fz(\fn) = 
V( \xig + \gamma) \oplus
V(\gamma) \oplus
V( \xig + \gamma_0),
\end{equation*}
\noindent where $\gamma_0$ is the following root contributing to $\fz(\fn)$:
%\vskip 0.1in
\begin{enumerate}
\item[] $E_6(3):$ $\gamma_0 = \ga_1 + \ga_2 + 2\ga_3 + 3\ga_4 + 2\ga_5 + \ga_6$
%\vskip 0.1in
\item[] $E_6(5):$ $\gamma_0 = \ga_1 + \ga_2 + 2\ga_3 + 3\ga_4 + 2\ga_5 + \ga_6$
%\vskip 0.1in
\item[] $E_7(2):$ $\gamma_0 = \ga_1 + 2\ga_2 + 3\ga_3 + 4\ga_4 + 3\ga_5 + 2\ga_6 + \ga_7$
%\vskip 0.1in 
\item[] $E_7(6):$ $\gamma_0 = \ga_1 + 2\ga_2 + 2\ga_3 + 4\ga_4 + 3\ga_5 + 2\ga_6 + \ga_7$
%\vskip 0.1in
\item[] $E_8(1):$ $\gamma_0 = 2\ga_1 + 3\ga_2 + 4\ga_3 + 6\ga_4 + 5\ga_5 + 4\ga_6 + 2\ga_7 + \ga_8$
%\vskip 0.1in
\item[] $F_4(4):$ $\gamma_0 = \ga_1 + 2\ga_2 + 4\ga_3 + 2\ga_4$.
\end{enumerate}
%\vskip 0.1in

\begin{proof}
To prove this theorem we just use the standard character formula 
due to Klimyk (\cite[Corollary]{Klimyk68})
for $\flg \otimes \fz(\fn)$. % as an $\flg$-module.
For the details, see Chapter 5 of \cite{KuboThesis}.
\end{proof}

%%%%%%%%%%%%%%%%%%%%%%%%%%%%%%%%%%%%%
%\vskip 0.1in
\subsection
{Special constituents}\label{SS540}
Given irreducible constituent $V(\nu)$ in $\fl \otimes \fz(\fn)$,
we build $L$-intertwining map
\begin{equation*}
\ttau_2|_{V(\nu)^*} \in \Hom_{L}(V(\nu)^*, \Cal{P}^2(\fg(1)))
\end{equation*}
with $V(\nu)^*$ the dual of $V(\nu)$ 
with respect to the Killing form $\kappa$.
From $\ttau_2|_{V(\nu)^*}$, we construct 
operator $\Omega_2|_{V(\nu)^*}: V(\nu)^* \to \mathbb{D}(\Cal{L}_{s})^{\bar \fn}$.
To do so, it is necessary to determine
which irreducible constituents $V(\nu)$ have the property 
that $\ttau_2|_{V(\nu)^*} \neq 0$.
Thus, next, by using the above decomposition results,
we shall determine such irreducible constituents.

First we observe the vector space isomorphism
$\Cal{P}^2(\fg(1)) \cong \Sym^2(\fg(1))^*$.
With the natural $L$-action on $\Cal{P}^2(\fg(1))$
and $\Sym^2(\fg(1))^*$, this vector space isomorphism is 
$L$-equivariant.  Thus, if $\ttau_2\big|_{V(\nu)^*}$ is 
a non-zero map
then $V(\nu)$ is an irreducible constituent of 
$\Sym^2(\fg(1)) \subset \fg(1) \otimes \fg(1)$;
in particular,
the weight $\nu$ is of the form $\nu=\mu+\ge$ for some $\ge \in \gD(\fg(1))$,
where $\mu$ is the highest weight of $\fg(1)$.

One can see from the decompositions in Theorem \ref{Thm5.1} that 
$V(\gamma)$ is an irreducible constituent of $\fl \otimes \fz(\fn)$
for any $\fq$ under consideration.
By Lemma \ref{Lem4.3.3}, we have
$\gamma = \mu + \ge$ for some $\ge \in \gD(\fg(1))$.
Now we claim that $\ttau_2|_{V(\gamma)^*}$ is identically zero.
It is well-known that
\begin{equation}\label{Eqn6.2.1}
\fg(1) \otimes \fg(1) = \Sym^2(\fg(1)) \oplus \wedge^2(\fg(1))
\end{equation}
as an $L$-module.
Since each weight space for $\fg(1)$ is one-dimensional
as weights for $\fg(1)$ are roots of $\fg$,
the $L$-module decomposition (\ref{Eqn6.2.1}) is multiplicity free.

\begin{Prop}\label{Prop3.3.20}
The $L$-module $V(\gamma)$ is an irreducible constituent of $\land^2(\fg(1))$.
\end{Prop}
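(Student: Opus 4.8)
The plan is to realize $V(\gamma)$ as a quotient of $\wedge^2(\fg(1))$ via the Lie bracket and then invoke complete reducibility; combined with the multiplicity-freeness of $\fg(1)\otimes\fg(1)$ noted just above the statement, this pins down the occurrence of $V(\gamma)$ in the exterior square rather than the symmetric square. First I would observe that the Lie bracket $[\cdot,\cdot]\colon \fg(1)\times \fg(1)\to \fg$ is bilinear and antisymmetric, so it induces a linear map $\wedge^2(\fg(1))\to \fg$, $X\wedge Y\mapsto [X,Y]$. Because the grading satisfies $[\fg(i),\fg(j)]\subseteq \fg(i+j)$ and $\fg(3)=0$ in the present two-step setting, the image of this map lies in $\fg(2)=\fz(\fn)$. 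Moreover it is $L$-equivariant: for $l\in L$ one has $\Ad(l)[X,Y]=[\Ad(l)X,\Ad(l)Y]$, so the bracket intertwines the tensor-product action on $\wedge^2(\fg(1))$ with the adjoint action on $\fz(\fn)$.

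Next I would show this map is surjective. Its image $[\fg(1),\fg(1)]$ is an $L$-submodule of $\fz(\fn)=\fg(2)$, which by Proposition \ref{Prop3.3.1}(2) is $L$-irreducible with highest weight $\gamma$. Since $\fq$ is of quasi-Heisenberg type, the nilradical $\fn=\fg(1)\oplus\fg(2)$ is two-step nilpotent, so $[\fn,\fn]\neq 0$; and because $[\fg(1),\fg(2)]\subseteq \fg(3)=0$ and $[\fg(2),\fg(2)]\subseteq \fg(4)=0$, one has $[\fn,\fn]=[\fg(1),\fg(1)]$. Hence $[\fg(1),\fg(1)]$ is a nonzero submodule of the irreducible module $\fg(2)$, which forces $[\fg(1),\fg(1)]=\fg(2)=V(\gamma)$.

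Finally, since $L$ is reductive, $\wedge^2(\fg(1))$ is completely reducible as an $L$-module, so the surjection $\wedge^2(\fg(1))\twoheadrightarrow V(\gamma)$ splits and $V(\gamma)$ occurs as a direct summand, hence as an irreducible constituent of $\wedge^2(\fg(1))$. The one point demanding care is the surjectivity of the bracket, i.e.\ the identification $[\fg(1),\fg(1)]=\fg(2)$: this is precisely where the quasi-Heisenberg hypothesis enters, ruling out the abelian case in which the bracket would vanish. Together with the multiplicity-freeness of the decomposition $\fg(1)\otimes\fg(1)=\Sym^2(\fg(1))\oplus\wedge^2(\fg(1))$ recorded in \eqref{Eqn6.2.1}, this shows that $V(\gamma)$ does \emph{not} occur in $\Sym^2(\fg(1))$, which is exactly the input needed to conclude $\ttau_2|_{V(\gamma)^*}\equiv 0$.
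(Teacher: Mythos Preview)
Your proof is correct and is arguably more natural than the paper's argument, but it proceeds by a genuinely different construction. The paper goes in the opposite direction: it builds an explicit $L$-equivariant map
\[
\varphi\colon \fz(\fn)\longrightarrow \wedge^2(\fg(1)),\qquad \varphi(W)=\sum_{\gb\in\gD(\fg(1))}\ad(W)X_{-\gb}\wedge X_{\gb},
\]
and then checks by hand, using the root-vector normalizations and structure constants $N_{\ga,\gb}$, that $\varphi(X_\gamma)\neq 0$. Since $\fz(\fn)\cong V(\gamma)$ is irreducible, this exhibits $V(\gamma)$ directly as a submodule of $\wedge^2(\fg(1))$. Your approach instead uses the Lie bracket to produce a surjection $\wedge^2(\fg(1))\twoheadrightarrow \fg(2)=V(\gamma)$ and then invokes complete reducibility to split it. The key input on your side is the structural identification $[\fg(1),\fg(1)]=\fg(2)$, which follows cleanly from the two-step hypothesis and the irreducibility of $\fg(2)$; this lets you avoid any computation with structure constants. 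The paper's map $\varphi$ is essentially dual to your bracket map (it is the transpose under the Killing-form identifications), so the two arguments are closely related, but yours is the more economical of the two for the bare statement of the proposition. The paper's explicit formula does, however, fit the computational style used elsewhere in the section.
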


\begin{proof}
Define a linear map $\varphi:\fz(\fn) \to \wedge^2(\fg(1))$ by means of
\begin{equation*}
\varphi(W) = \sum_{\gb \in \gD(\fg(1))}\ad(W)X_{-\gb}\wedge X_\gb.
\end{equation*}
By using an argument similar to that for Lemma \ref{Lem6.5},
one can show that $\varphi$ is $L$-equivariant.
Then, since $\fz(\fn) \cong V(\gamma)$ as an irreducible $L$-module,
it suffices to show that $\varphi$ is a non-zero map.
Write
$\gD_\gamma(\fg(1)) = \{\gb \in \gD(\fg(1)) \; | \; \gamma-\gb \in \gD\}$.
By Lemma \ref{Lem4.3.3}, we have $\gamma - \mu \in \gD$.
Hence $\gD_\gamma(\fg(1)) \neq \emptyset$. 
By writing $\gb' = \gamma - \gb$ for $\gb \in \gD_\gamma(\fg(1))$,
$\varphi(X_\gamma)$ is given by
\begin{equation*}
\varphi(X_\gamma)
=\sum_{\gb \in \gD(\fg(1))}\ad(X_\gamma)X_{-\gb}\wedge X_\gb
=\sum_{\gb \in \gD_\gamma(\fg(1))}N_{\gamma, -\gb}X_{\gb'}\wedge X_\gb.
\end{equation*}
Observe that for each $\gb \in \gD_\gamma(\fg(1))$, we have 
$\gamma-\gb \in \gD_\gamma(\fg(1))$. 
Moreover, by the normalization (H6) of our normalizations in Section \ref{chap:O1},
it follows that $N_{\gamma, -\gb'} = -N_{\gamma, -\gb}$.
Therefore,
\begin{equation}\label{EqnPhi}
N_{\gamma, -\gb}X_{\gb'} \wedge X_{\gb}
+ N_{\gamma, -\gb'}X_{\gb} \wedge X_{\gb'}
=2N_{\gamma, -\gb}X_{\gb'} \wedge X_{\gb}.
\end{equation}
Since $N_{\gamma, -\gb} \neq 0$ for $\gb \in \gD_\gamma(\fg(1))$,
equation (\ref{EqnPhi}) is non-zero. On the other hand,
if $\gb \in \gD_\gamma(\fg(1))$ and $\eta \in \gD_\gamma(\fg(1))$
is so that $\eta \neq \gb, \gb'$ then 
$X_{\gb'} \wedge X_{\gb}$ and $X_{\eta} \wedge X_{\gb}$ are linearly independent.
Hence, $\varphi(X_\gamma) \neq 0$.
\end{proof}

\begin{Def}\label{Def5.2.1}
An irreducible constituent $V(\nu)$
of $\fl \otimes \fz(\fn)$ is called  \textbf{special} 
if $\nu \neq \gamma$ and there exists $\ge \in \gD(\fg(1))$ so that 
$\nu = \mu + \ge$, where $\mu$ and $\gamma$ are 
the highest weights for  $\fg(1)$ and $\fz(\fn)$, respectively.
\end{Def}

\begin{Prop}\label{Prop6.18}
Let $V(\nu)$ be an irreducible constituent of $\fl \otimes \fz(\fn)$.
Then $\ttau_2\big|_{V(\nu)^*}$ is not identically zero only if 
$V(\nu)$ is a special constituent of $\fl\otimes \fz(\fn)$.
\end{Prop}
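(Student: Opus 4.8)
The plan is to read the constraint on $\nu$ off the target space of $\ttau_2|_{V(\nu)^*}$. Since $V(\nu)^*$ is an irreducible $L$-module and $\ttau_2|_{V(\nu)^*} \in \Hom_L(V(\nu)^*, \Cal{P}^2(\fg(1)))$, Schur's lemma (together with complete reducibility for the reductive group $L$) shows that if this map is not identically zero, then $V(\nu)^*$ must occur as an irreducible constituent of $\Cal{P}^2(\fg(1))$. Using the $L$-equivariant isomorphism $\Cal{P}^2(\fg(1)) \cong \Sym^2(\fg(1))^*$ recorded above and dualizing with respect to the Killing form, this is the same as saying that $V(\nu)$ occurs in $\Sym^2(\fg(1))$.

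First I would pin down which highest weights can occur in $\Sym^2(\fg(1))$. Since $\Sym^2(\fg(1)) \subset \fg(1) \otimes \fg(1)$ and $\fg(1)$ is the irreducible $L$-module of highest weight $\mu$ (Proposition \ref{Prop3.3.1}), I would apply the standard fact that every irreducible constituent of $V(\mu) \otimes V(\mu)$ has highest weight of the form $\mu + \ge'$, where $\ge'$ is a weight of $\fg(1)$. As the weights of $\fg(1)$ are exactly the roots in $\gD(\fg(1))$, this forces $\nu = \mu + \ge$ for some $\ge \in \gD(\fg(1))$, which is one of the two conditions in the definition of a special constituent.

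The remaining point is to exclude $\nu = \gamma$. Here I would invoke Proposition \ref{Prop3.3.20}, which exhibits $V(\gamma)$ as a constituent of $\land^2(\fg(1))$. Because each weight space of $\fg(1)$ is one-dimensional (its weights being roots of $\fg$), the decomposition $\fg(1) \otimes \fg(1) = \Sym^2(\fg(1)) \oplus \land^2(\fg(1))$ is multiplicity free; hence $V(\gamma)$ occurs exactly once in $\fg(1) \otimes \fg(1)$, and that single copy lies in $\land^2(\fg(1))$. Consequently $V(\gamma)$ is \emph{not} a constituent of $\Sym^2(\fg(1))$. Combining the two steps, a nonzero $\ttau_2|_{V(\nu)^*}$ places $V(\nu)$ inside $\Sym^2(\fg(1))$, so simultaneously $\nu = \mu + \ge$ and $V(\nu) \not\cong V(\gamma)$, i.e.\ $\nu \neq \gamma$; that is precisely the assertion that $V(\nu)$ is special.

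I expect the only genuine point of care to be the $\land^2$-versus-$\Sym^2$ distinction. Note that $\gamma$ itself is of the admissible form $\mu + \ge$ (indeed $\gamma - \mu \in \gD(\fg(1))$ by Lemma \ref{Lem4.3.3}), so the form condition alone does not rule it out; it is exactly Proposition \ref{Prop3.3.20}, combined with the multiplicity-freeness of $\fg(1) \otimes \fg(1)$, that forces the copy of $V(\gamma)$ into the wrong symmetry type and thereby prevents a nonzero $\ttau_2|_{V(\gamma)^*}$. Everything else is routine representation theory.
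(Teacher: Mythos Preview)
Your proposal is correct and follows essentially the same approach as the paper: both arguments reduce the question to showing $V(\nu) \subset \Sym^2(\fg(1))$, extract the form $\nu = \mu + \ge$ from the standard tensor-product highest-weight fact, and then invoke Proposition \ref{Prop3.3.20} together with the multiplicity-freeness of $\fg(1) \otimes \fg(1)$ to exclude $\nu = \gamma$. Your write-up is slightly more explicit about the role of Schur's lemma and the multiplicity-free step, but the logic is identical.
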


\begin{proof}
At the beginning of this section we observed that 
if $\ttau_2|_{V(\nu)^*} \neq 0$ then $\nu$ must be of the form
$\nu = \mu + \ge$ for some $\ge \in \gD(\fg(1))$.
Then $V(\nu)$ is either a special constituent or $V(\gamma)$ 
(by Lemma \ref{Lem4.3.3}, $\gamma$ satisfies the form).
However, by Proposition \ref{Prop3.3.20}, it follows that 
$\ttau_2|_{V(\gamma)^*}$ is identically zero. Therefore,
$V(\nu)$ must be a special constituent.
\end{proof}

%We will show in Section \ref{chap:O2}
%that the converse of Proposition \ref{Prop6.18}
%also holds for certain special constituents (see Proposition \ref{Prop6.19}).

\vsp
%%%%%%%%%%%%%%%%%%%%%%%%%%%%%%%%%%%%%
%\subsection
%{Types of Special Constituents}
%\label{SS541}

Now we determine all the special constituents of $\fl \otimes \fz(\fn)$.
Since $\fl \otimes \fz(\fn) = 
(\C H_\fq \otimes \fz(\fn) ) \oplus ([\fl,\fl] \otimes \fz(\fn))$
and $\C H_\fq \otimes \fz(\fn) = V(\gamma)$,
it suffices to consider 
$[\fl,\fl] \otimes \fz(\fn) = (\flg \otimes \fz(\fn)) \oplus (\flng \otimes \fz(\fn))$. 
We start by observing that, by Proposition \ref{Lem5.3.1},
$\flng \otimes \fz(\fn) = V(\xing + \gamma)$.

\begin{Prop}\label{Prop5.4.1}
Suppose that $\flng \neq 0$. Then
the irreducible constituent $V(\xing + \gamma)$
is special.
\end{Prop}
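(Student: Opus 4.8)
The plan is to verify the two requirements of Definition \ref{Def5.2.1} directly for the weight $\nu = \xing + \gamma$: that $\nu \neq \gamma$, and that $\nu = \mu + \ge$ for some $\ge \in \gD(\fg(1))$, where $\mu$ and $\gamma$ denote the highest weights of $\fg(1)$ and $\fz(\fn)$. The first requirement is immediate: since $\flng \neq 0$ by hypothesis, its highest weight $\xing$ is a genuine (nonzero) root in $\gD^+$, so $\xing + \gamma \neq \gamma$.

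The substance is to produce a root $\ge \in \gD(\fg(1))$ with $\mu + \ge = \xing + \gamma$. The forced candidate is $\ge := \gamma - \mu + \xing$, as it is the only weight satisfying $\mu + \ge = \xing + \gamma$. Lemma \ref{Lem4.3.4}(1) already asserts that $\gamma - \mu + \xing \in \gD$, so $\ge$ is a root; what then remains is the single check that it lies in the correct graded component, i.e.\ that $\fg_\ge \subset \fg(1)$, equivalently $\ge(H_\fq) = 1$. For this I would read off the $\ad(H_\fq)$-degrees from the grading: $\gamma(H_\fq) = 2$ since $\fz(\fn) = \fg(2)$ by Proposition \ref{Prop3.3.1}(3), $\mu(H_\fq) = 1$ since $\fg_\mu \subset \fg(1)$, and $\xing(H_\fq) = 0$ since $\flng \subset \fl = \fg(0)$. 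Hence
\[
\ge(H_\fq) = \gamma(H_\fq) - \mu(H_\fq) + \xing(H_\fq) = 2 - 1 + 0 = 1,
\]
so that $\ge \in \gD(\fg(1))$ and $V(\xing + \gamma)$ is special.

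I expect no serious obstacle here: the whole content is the observation that Lemma \ref{Lem4.3.4}(1) supplies exactly the root $\gamma - \mu + \xing$ demanded by the definition, and the only independent work is the one-line $H_\fq$-degree computation confirming that this root sits in $\fg(1)$ rather than in another graded piece. The mild point worth stating carefully is why $\ge$ must equal $\gamma - \mu + \xing$ and not some other root realizing $\nu = \mu + \ge$; but this is forced, since $\ge$ is uniquely determined as $\nu - \mu$.
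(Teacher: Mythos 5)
Your proposal is correct and follows essentially the same route as the paper, which likewise reduces the claim to Lemma \ref{Lem4.3.4}(1); you merely add the (implicit in the paper) check that $\gamma-\mu+\xing$ has $H_\fq$-eigenvalue $1$ and the trivial verification that $\xing+\gamma\neq\gamma$, both of which are sound.
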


\begin{proof}
We need to show that $\xing + \gamma = \mu + \gb$ for some $\gb \in \gD(\fg(1))$.
This is precisely the statement (1) of Lemma \ref{Lem4.3.4}.
\end{proof}

We next consider
the constituent $V(\xig + \gamma)$ of 
$\flg \otimes \fz(\fn) = V(\xig) \otimes V(\gamma)$.

\begin{Lem}\label{Lem5.4.2}
The irreducible constituent $V(\xig +\gamma)$ of $\flg \otimes \fz(\fn)$ 
is not special.
\end{Lem}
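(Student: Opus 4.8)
The plan is to unwind the definition of \emph{special} (Definition~\ref{Def5.2.1}). Since $\mu$ is fixed as the highest weight of $\fg(1)$, the constituent $V(\xig + \gamma)$ is special precisely when the difference $\ge := (\xig + \gamma) - \mu$ lies in $\gD(\fg(1))$. Because $\gD(\fg(1)) \subseteq \gD$, it therefore suffices to show that $\xig + \gamma - \mu$ is not a root at all; then no admissible $\ge$ can exist, and $V(\xig+\gamma)$ fails to be special.

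First I would rewrite $\xig + \gamma - \mu = (\gamma - \mu) + \xig$ and recall that $\gamma - \mu \in \gD$ by Lemma~\ref{Lem4.3.3}(2), so the question is genuinely whether adding $\xig$ to the root $\gamma - \mu$ lands back inside $\gD$. The case where $\xig$ is a long root is then handled immediately by Lemma~\ref{Lem4.3.4}(3), which asserts exactly that $\gamma - \mu \pm \xig \notin \gD$ under that hypothesis; in particular $(\gamma - \mu) + \xig \notin \gD$.

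The only remaining situation is that $\xig$ is not a long root. By Remark~\ref{Rem4.21} this occurs exactly for $\fq$ of type $C_n(i)$, where one has the explicit data $\gamma = 2\varepsilon_1$, $\mu = \varepsilon_1 + \varepsilon_{i+1}$, and $\xig = \varepsilon_1 - \varepsilon_i$; hence $(\gamma - \mu) + \xig = 2\varepsilon_1 - \varepsilon_i - \varepsilon_{i+1}$, which has three nonzero coordinates and so is not a root of $C_n$. Indeed Remark~\ref{Rem4.21} already records this as $\gamma - \mu + \xig \notin \gD$. Combining the two cases gives $\xig + \gamma - \mu \notin \gD$ in every instance, which completes the argument.

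I do not expect a genuine obstacle here, since the real work has been front-loaded into Lemma~\ref{Lem4.3.4} and Remark~\ref{Rem4.21}. The only point demanding care is the bookkeeping: I must confirm that the \emph{long-root} hypothesis of Lemma~\ref{Lem4.3.4}(3) covers every type except $C_n(i)$, and that the leftover $C_n(i)$ case is exactly the one recorded in Remark~\ref{Rem4.21}. Matching these two dichotomies correctly is the crux, and once that is verified the non-root conclusion---and hence the failure of speciality---is immediate.
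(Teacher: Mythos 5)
Your argument is correct and is essentially the paper's own proof: both reduce speciality to whether $\xig+\gamma-\mu=(\gamma-\mu)+\xig$ lies in $\gD(\fg(1))$, dispose of the long-root case via Lemma \ref{Lem4.3.4}(3), and handle the one exception ($C_n(i)$, where $\xig$ is short) via the explicit computation recorded in Remark \ref{Rem4.21}. Your write-up just spells out the case split that the paper's one-line proof leaves implicit.
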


\begin{proof}
Lemma \ref{Lem4.3.4} and Remark \ref{Rem4.21} show that
$\xig + \gamma - \mu \notin \gD(\fg(1))$, which implies that
$\xig + \gamma \neq \mu + \gb$ for all $\gb \in \gD(\fg(1))$.
\end{proof}

We determine all the special constituents of $\flg \otimes \fz(\fn)$
in two steps. First we assume that $\fg$ is a classical algebra, and
then consider the case that $\fg$ is an exceptional algebra.

For classical cases the parabolic subalgebras $\fq$ under consideration
are of type 
$B_n(i)\; (3 \leq i \leq n)$, $C_n(i) \; (2 \leq i \leq n-1)$, or $D_n(i)\; (3\leq i \leq n-3)$.
It will be convenient to write $\gb \in \gD(\fg(1))$
in terms of the fundamental weights of $\flg$ and $\flng$.
It is clear from the deleted Dynkin diagrams
that, for each of the cases, 
$\Pi(\flg)$ and $\Pi(\flng)$ are given by
\begin{equation*}
\Pi(\flg) = \{\ga_r \; | \; 1 \leq r \leq i-1\}
\quad \text{and} \quad
\Pi(\flng) =\{\ga_{i+s} \; | \; 1 \leq s \leq n-i\}, 
\end{equation*}
where $\ga_j$ are the simple roots with the standard numbering.
By using the standard realizations of roots,
we have $\ga_r = \varepsilon_r - \varepsilon_{r+1}$ for $1 \leq r \leq i-1$,
$\ga_{i+s} = \varepsilon_{i+s} - \varepsilon_{i+s+1}$ for $1 \leq s \leq n-i-1$, and 
\begin{equation*}
\ga_n = 
\begin{cases}
\varepsilon_n & \text{if $\fg$ is of type $B_n$}\\
2\varepsilon_n & \text{if $\fg$ is of type $C_n$}\\
\varepsilon_{n-1} + \varepsilon_n & \text{if $\fg$ is of type $D_n$.}
\end{cases}
\end{equation*}
The data in Appendix \ref{chap:Data} shows that 
if $\fq$ is of type $B_n(i)$ then
\begin{equation*}
\gD(\fg(1)) =
\{\varepsilon_j \pm \varepsilon_k \; | \; 1 \leq j \leq i \text{ and } i+1 \leq k \leq n\}
 \cup \{\varepsilon_j \; | \; 1\leq j \leq i \}
\end{equation*}
and if $\fq$ is of type $C_n(i)$ or $D_n(i)$ then
\begin{equation*}
\gD(\fg(1)) =
\{\varepsilon_j \pm \varepsilon_k \; | \; 1 \leq j \leq i \text{ and } i+1 \leq k \leq n\}.
\end{equation*}

\noindent 
Since we have two simple algebras $\flg$ and $\flng$, we use the notation
$\varpi_r$ for the fundamental weights of $\ga_r \in \Pi(\flg)$
and $\tilde{\varpi}_s$ for those of $\ga_{i+s} \in \Pi(\flng)$.
Direct computation then shows that each $\gb \in \gD(\fg(1))$ is
exactly one of the following forms:
\begin{align}\label{Eqn5.17}
\gb = 
\begin{cases}
\varpi_1 + \sum_{s=1}^{n-i} \tilde{m}_s \tilde{\varpi}_s, \\
(-\varpi_r +\varpi_{r+1}) + \sum_{s=1}^{n-i} \tilde{m}_s \tilde{\varpi}_s\;
          \text{with $1 \leq r \leq i-2$, or}  \\
 -\varpi_{i-1} + \sum_{s=1}^{n-i} \tilde{m}_s\tilde{\varpi}_s 
\end{cases}
\end{align}
for some $\tilde{m}_s \in \Z$.

\begin{Prop}\label{Prop5.4.3}
Let $V(\nu)$ be an irreducible constituent of $\flg \otimes \cfn$.
\begin{enumerate}
\item[(1)] If $\fq$ is of type $B_n(i)\; (3 \leq i \leq n)$ or $D_n(i) \; (3\leq i \leq n-3)$ 
then $V(\nu)$ is a special constituent if and only if $\nu = 2\varepsilon_1$.
\item[(2)] If $\fq$ is of type $C_n(i) \; (2 \leq i \leq n-1)$ 
then $V(\nu)$ is a special constituent if and only if $\nu = \varepsilon_1 + \varepsilon_2$.
\end{enumerate}
\end{Prop}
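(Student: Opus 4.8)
The plan is to test each irreducible summand in the decomposition of Theorem~\ref{Thm5.1} against the criterion of Definition~\ref{Def5.2.1}: an irreducible constituent $V(\nu)$ of $\flg \otimes \fz(\fn)$ is special exactly when $\nu \neq \gamma$ and $\nu - \mu \in \gD(\fg(1))$. Of the summands, $V(\gamma)$ is excluded outright by the condition $\nu \neq \gamma$, and $V(\xig + \gamma)$ is not special by Lemma~\ref{Lem5.4.2}. Hence the whole question reduces to the remaining one or two summands in each family, and the entire argument becomes a computation of $\nu - \mu$ in standard $\varepsilon$-coordinates followed by a check of whether the result lies in $\gD(\fg(1))$.

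First I would record the relevant weights in standard coordinates from the data in Appendix~\ref{chap:Data}: in all three families $\flg$ is of type $A_{i-1}$ with highest root $\xig = \varepsilon_1 - \varepsilon_i$ and $\fg(1)$ has highest weight $\mu = \varepsilon_1 + \varepsilon_{i+1}$, while $\gamma = \varepsilon_1 + \varepsilon_2$ for $B_n$ and $D_n$ and $\gamma = 2\varepsilon_1$ for $C_n$; the sets $\gD(\fg(1))$ are as displayed before the statement. For the summand predicted to be special I would verify $\nu - \mu \in \gD(\fg(1))$ directly. For $B_n(i)$ and $D_n(i)$ the summand $V(\xig + (\varepsilon_1 + \varepsilon_i))$ (with $\varepsilon_i = \varepsilon_3$ in the boundary case $i=3$) has $\nu = \xig + \varepsilon_1 + \varepsilon_i = 2\varepsilon_1$, so $\nu - \mu = \varepsilon_1 - \varepsilon_{i+1} \in \gD(\fg(1))$; for $C_n(i)$ the summand $V(\xig + (\varepsilon_2 + \varepsilon_i))$ (reading as $V(\xig + 2\varepsilon_2)$ when $i = 2$) has $\nu = \varepsilon_1 + \varepsilon_2$, so $\nu - \mu = \varepsilon_2 - \varepsilon_{i+1} \in \gD(\fg(1))$. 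Since $2\varepsilon_1 \neq \gamma$ in the $B_n, D_n$ cases and $\varepsilon_1 + \varepsilon_2 \neq \gamma$ in the $C_n$ case, the condition $\nu \neq \gamma$ also holds, so these constituents are indeed special, establishing the ``if'' direction.

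It then remains to rule out the last summand present only past the boundary value, which gives the ``only if'' direction. For $B_n(i)$ and $D_n(i)$ with $i \geq 4$ the constituent $V(\xig + (\varepsilon_2 + \varepsilon_3))$ gives $\nu - \mu = \varepsilon_2 + \varepsilon_3 - \varepsilon_i - \varepsilon_{i+1}$, and for $C_n(i)$ with $i \geq 3$ the constituent $V(\xig + (\varepsilon_1 + \varepsilon_2))$ gives $\nu - \mu = \varepsilon_1 + \varepsilon_2 - \varepsilon_i - \varepsilon_{i+1}$. In each case the four indices involved are distinct, so $\nu - \mu$ has four nonzero coordinates and cannot be a root of a classical algebra of type $B$, $C$, or $D$, every root of which has at most two nonzero $\varepsilon$-coordinates; hence $\nu - \mu \notin \gD(\fg(1))$ and these constituents are not special.

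The computations themselves are routine, so the only real care is bookkeeping, and the main obstacle I anticipate is precisely the degenerate boundary cases $i = 3$ (for $B_n, D_n$) and $i = 2$ (for $C_n$), where Theorem~\ref{Thm5.1} lists one fewer summand. There the fourth would-be summand has highest weight $\xig + (\varepsilon_2 + \varepsilon_3) = \gamma$ (respectively $\xig + (\varepsilon_1 + \varepsilon_2) = \gamma$), so it is absorbed into $V(\gamma)$, and one must confirm that no special constituent is hidden in this merging; since $V(\gamma)$ is not special, nothing is lost, and the surviving summand still produces the asserted special weight. The negative direction is controlled cleanly by the ``at most two nonzero coordinates'' observation, which is the crux of the whole argument.
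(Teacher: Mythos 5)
Your proposal is correct in substance but takes a genuinely different route from the paper. The paper's proof never enumerates the summands of Theorem \ref{Thm5.1} until the very end: instead it expresses $\mu$ and the candidate highest weights $\nu$ in terms of the fundamental weights $\varpi_r$ of $\flg$ and $\tilde{\varpi}_s$ of $\flng$, uses the $\gD(\fl)$-dominance of $\nu$ to constrain the possible form of $\gb = \nu - \mu$, intersects that constraint with the list (\ref{Eqn5.17}) of shapes that roots of $\gD(\fg(1))$ can take in fundamental-weight coordinates, and concludes that $\nu$ can only be $2\varpi_1$ or $\varpi_2$; Theorem \ref{Thm5.1} is then invoked only to confirm that both $V(2\varepsilon_1)$ and $V(\varepsilon_1+\varepsilon_2)$ actually occur, after which the exclusion of $\gamma$ singles out the special one. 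You instead run through the explicit summands of Theorem \ref{Thm5.1} one by one and test the criterion $\nu - \mu \in \gD(\fg(1))$ in $\varepsilon$-coordinates, killing the non-special candidates with the observation that a weight with three or four distinct nonzero $\varepsilon$-coordinates cannot be a root of $B_n$, $C_n$, or $D_n$. Your version is more concrete and each individual check is a one-line computation, but it leans entirely on having the complete decomposition in hand; the paper's version yields the ``only if'' direction without needing the full list. Your treatment of the merged boundary cases $i=3$ and $i=2$ is correct and is a point the paper handles only implicitly.

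One bookkeeping slip: you assert that $\mu = \varepsilon_1 + \varepsilon_{i+1}$ ``in all three families,'' but for $B_n(n)$ (which the proposition covers) one has $\mu = \varepsilon_1$, and your formulas $\nu - \mu = \varepsilon_1 - \varepsilon_{i+1}$ and $\nu - \mu = \varepsilon_2 + \varepsilon_3 - \varepsilon_i - \varepsilon_{i+1}$ then refer to a nonexistent $\varepsilon_{n+1}$. The argument survives unchanged after the correction --- for $B_n(n)$ one gets $\nu - \mu = \varepsilon_1 \in \gD(\fg(1))$ for $V(2\varepsilon_1)$ and $\nu - \mu = \varepsilon_2 + \varepsilon_3 - \varepsilon_n$, which still has too many nonzero coordinates to be a root --- but the case should be stated separately, as the paper does in (\ref{Eqn5.19}) and (\ref{Eqn5.2.4}).
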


\begin{proof}
Suppose that $\fq$ is of type $B_n(i)$, $C_n(i)$, or $D_n(i)$.
By Definition \ref{Def5.2.1}, we need to find all $\nu$ 
of the form $\nu = \mu + \gb$ for some $\gb \in \gD(\fg(1))$.
Here $\mu$, the highest weight for $\fg(1)$, is 
\begin{equation*}
\mu = 
\begin{cases}
\varepsilon_1 + \varepsilon_{i+1} &\quad \text{if $\fq$ is of type $B_n(i)$ with $i \neq n$, $C_n(i)$, or $D_n(i)$} \\
\varepsilon_1 &\quad \text{if $\fq$ is of type $B_n(n)$}.
\end{cases}
\end{equation*}
\noindent 
%We assume that $\mu = \varepsilon_1 + \varepsilon_{i+1}$.
%The other case may be handled similarly.
We write $\mu$ in terms of the fundamental weights of $\flg$ and $\flng$; that is,
\begin{equation}\label{Eqn5.19}
\mu =
\begin{cases}
\varpi_1 + \tilde{\varpi}_1 &\quad \text{if $\fq$ is of type $B_n(i)$ 
                                        with $i \neq n$, $C_n(i)$, or $D_n(i)$} \\
\varpi_1 &\quad \text{if $\fq$ is of type $B_n(n)$},
\end{cases}
\end{equation}
where $\varpi_1$ and $\tilde{\varpi}_1$ are the fundamental weights of
$\ga_1 = \varepsilon_1-\varepsilon_2$ and $\ga_{i+1} = \varepsilon_{i+1} - \varepsilon_{i+2}$, respectively.
As $\flng$ acts trivially on both $\flg$ and $\fz(\fn)$,
the highest weight $\nu$ for a constituent $V(\nu) \subset \flg \otimes \fz(\fn)$
is of the form
\begin{equation}\label{Eqn5.21}
\nu = \sum_{j = 1}^{i-1} n_j \varpi_j \quad \text{for $n_j \in \Z_{\geq 0}$.}
\end{equation}
\noindent
If there exists $\gb \in \gD(\fg(1))$ so that $\nu = \mu + \gb$ then
(\ref{Eqn5.19}) and (\ref{Eqn5.21}) imply that $\gb = \nu-\mu$ is of the form
\begin{equation} \label{Eqn5.2.4}
%\gb = 
\begin{cases}
(n_1-1)\varpi_1 + \sum_{j = 2}^{i-1} n_j \varpi_j - \tilde{\varpi}_1 
& \text{if $\fq$ is of type $B_n(i)$ $i \neq n$, $C_n(i)$, or $D_n(i)$} \\
(n_1-1)\varpi_1 + \sum_{j = 2}^{i-1} n_j \varpi_j
& \text{if $\fq$ is of type $B_n(n)$}
\end{cases}
\end{equation}
for $n_j \in \Z_{\geq 0}$.
On the other hand, we observed that the root $\gb$ must be 
one of the forms in (\ref{Eqn5.17}).
Then observation shows that 
if $\gb$ satisfies both (\ref{Eqn5.17}) and (\ref{Eqn5.2.4}) then
$\gb$ must be
\begin{equation*}
%\gb = 
\begin{cases}
\varpi_1 - \tilde{\varpi}_1 \text{ or } (-\varpi_1 + \varpi_2) - \tilde{\varpi}_1
& \text{if $\fq$ is of type $B_n(i)$ $i \neq n$, $C_n(i)$, or $D_n(i)$} \\
\varpi_1 \text{ or } (-\varpi_1 + \varpi_2)
& \text{if $\fq$ is of type $B_n(n)$}.
\end{cases}
\end{equation*}
Therefore $\nu = \mu + \gb$ is 
$\nu = 2\varpi_1 \text{ or } \varpi_2$,
which shows that $\nu = 2\varepsilon_1$ or $\varepsilon_1 + \varepsilon_2$.
As $\xig = \varepsilon_1 - \varepsilon_{i}$ for $\fq$ 
of type $B_n(i)$, $C_n(i)$, or $D_n(i)$, 
Theorem \ref{Thm5.1} shows that
both $V(2\varepsilon_1)$ and $V(\varepsilon_1 + \varepsilon_2)$
occur in  $\flg \otimes \fz(\fn)$.
Now the assertions follow from the fact that
the highest root $\gamma$ of $\fg$ is 
$\gamma = \varepsilon_1 + \varepsilon_2$ if $\fg$ is of type $B_n$ or $D_n$, 
and $\gamma = 2\varepsilon_1$ if $\fg$ is of type $C_n$.
\end{proof}

If $\fg$ is an exceptional algebra
then the parabolic subalgebras $\fq$ 
under consideration are 
\begin{equation}\label{Eqn5.4.4}
E_6(3), E_6(5), E_7(2), E_7(6), E_8(1), \text{and } F_4(4).
\end{equation}

\begin{Lem}\label{Lem5.4.4}
If $\fq$ is of exceptional type as in (\ref{Eqn5.4.4}) then
$V(\xig + \gamma_0)$ in Theorem \ref{Thm5.1}
is a special constituent.
\end{Lem}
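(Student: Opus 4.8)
The plan is to verify directly, for each of the six exceptional parabolics listed in (\ref{Eqn5.4.4}), that $V(\xig+\gamma_0)$ meets the two requirements of Definition \ref{Def5.2.1}: that $\xig+\gamma_0\neq\gamma$, and that $\xig+\gamma_0=\mu+\ge$ for some $\ge\in\gD(\fg(1))$. The first requirement is immediate from Theorem \ref{Thm5.1}, where $V(\xig+\gamma)$, $V(\gamma)$, and $V(\xig+\gamma_0)$ are listed as distinct irreducible constituents of $\flg\otimes\cfn$. Since each such constituent $V(\nu)$ is determined by $\nu|_{\fh_\gamma}$, distinctness gives $(\xig+\gamma_0)|_{\fh_\gamma}\neq\gamma|_{\fh_\gamma}$, and in particular $\xig+\gamma_0\neq\gamma$.

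For the second requirement, set $\ge:=\xig+\gamma_0-\mu$. The key preliminary reduction is that it suffices to prove $\ge\in\gD$, since then $\ge$ automatically lies in $\gD(\fg(1))$. Indeed, by (\ref{Eqn2.1.14}) the multiplicity of $\ga_\fq$ in a root $\gb$ is $\gb(H_\fq)$, and we have $\xig(H_\fq)=0$ because $\xig\in\gD(\flg)\subset\gD(\fl)$, $\gamma_0(H_\fq)=2$ because $\gamma_0\in\gD(\cfn)=\gD(\fg(2))$, and $\mu(H_\fq)=1$ because $\mu\in\gD(\fg(1))$. Hence $\ge(H_\fq)=0+2-1=1$. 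A root with $\ga_\fq$-multiplicity equal to $1$ is positive and lies in the $1$-eigenspace $\fg(1)$, so $\ge\in\gD$ indeed forces $\ge\in\gD(\fg(1))$.

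It then remains to check case by case that $\ge=\xig+\gamma_0-\mu$ is a root. For this I would read off $\xig$ and $\gamma_0$ from Theorem \ref{Thm5.1} and $\mu$ from Appendix \ref{chap:Data}, form the difference $\ge$, and test membership in $\gD$. For the simply-laced types $E_6(3),E_6(5),E_7(2),E_7(6),E_8(1)$ the test is clean: as $\fg$ is simply-laced, a vector of the root lattice is a root exactly when it has squared length $2$, so one only computes $\IP{\ge}{\ge}=2$ from $\IP{\ga_i}{\ga_j}$. For $F_4(4)$ one checks directly that $\ge$ is a root contributing to $\fg(1)$; here in fact $\xig+\gamma_0=2\mu$, so $\ge=\mu$ and nothing further is needed.

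The main obstacle is that there is no uniform closed form for $\mu$, nor for $\ge$, across the six cases — for instance $\ge=\mu$ for $F_4(4)$ but $\ge\neq\mu$ for $E_6(3)$ and $E_7(6)$ — so the final step is an unavoidable finite computation rather than a single structural argument. What makes this manageable is the reduction of the second paragraph, which collapses the verification in each case to a one-line length (or incidence) calculation once the explicit weights $\xig,\gamma_0,\mu$ are substituted.
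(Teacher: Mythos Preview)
Your proposal is correct and follows essentially the same approach as the paper: the paper's proof simply states ``This is done by a direct computation'' and points to Table~\ref{T54}, where the roots $\geg\in\gD(\fg(1))$ with $\xig+\gamma_0=\mu+\geg$ are tabulated. Your write-up is in fact more informative than the paper's, since you isolate the reduction $\ge(H_\fq)=\xig(H_\fq)+\gamma_0(H_\fq)-\mu(H_\fq)=0+2-1=1$, which cleanly explains why it suffices to check that $\ge$ is a root at all.
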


\begin{proof}
This is done by a direct computation.
The roots $\geg$ in $\gD(\fg(1))$ so that 
$\xig+\gamma_0 = \mu+\geg$ are given in Table \ref{T54} below.
\end{proof}

%Now we conclude the following.

\begin{Prop}\label{Prop5.4.5}
There exists a unique special constituent in $\flg \otimes \fz(\fn)$. 
\end{Prop}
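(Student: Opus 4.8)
The plan is to assemble Proposition~\ref{Prop5.4.5} from the case-by-case results already in hand, reducing it to a bookkeeping exercise over the decomposition in Theorem~\ref{Thm5.1}. In every case that theorem produces the summands $V(\gamma)$ and $V(\xig + \gamma)$, together with either one further summand (types $B_n(3)$, $C_n(2)$, $D_n(3)$, and all exceptional types) or two further summands (the remaining classical types). First I would discard the two summands common to all cases: $V(\gamma)$ is excluded by the defining requirement $\nu \neq \gamma$ in Definition~\ref{Def5.2.1}, and $V(\xig + \gamma)$ is shown to be non-special in Lemma~\ref{Lem5.4.2}. Hence any special constituent must occur among the remaining one or two summands, and it suffices to show that exactly one of them is special.

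For the classical types I would invoke Proposition~\ref{Prop5.4.3}, which states that $V(\nu)$ is special precisely when $\nu = 2\vep_1$ (types $B_n(i)$, $D_n(i)$) or $\nu = \vep_1 + \vep_2$ (type $C_n(i)$). Writing $\xig = \vep_1 - \vep_i$, I would match this criterion against the further summands listed in Theorem~\ref{Thm5.1}. When there is a single further summand the match is immediate; for example $\xig + (\vep_1 + \vep_3) = 2\vep_1$ in type $B_n(3)$. When there are two further summands, exactly one matches: in type $B_n(i)$ or $D_n(i)$ one has $\xig + (\vep_1 + \vep_i) = 2\vep_1$ whereas $\xig + (\vep_2 + \vep_3) = \vep_1 + \vep_2 + \vep_3 - \vep_i \neq 2\vep_1$, and in type $C_n(i)$ one has $\xig + (\vep_2 + \vep_i) = \vep_1 + \vep_2$ whereas $\xig + (\vep_1 + \vep_2) = 2\vep_1 + \vep_2 - \vep_i$ is not of the required form. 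Thus exactly one summand carries the special highest weight, which gives both existence and uniqueness in the classical cases.

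For the exceptional types I would instead quote Lemma~\ref{Lem5.4.4}, which exhibits $V(\xig + \gamma_0)$ as a special constituent; since the only other summands are $V(\gamma)$ and $V(\xig + \gamma)$, both already ruled out, it is the unique special constituent. The one mildly delicate point in the argument is the matching step for the two-summand classical cases, namely confirming that precisely one of the two further summands reduces to the special weight of Proposition~\ref{Prop5.4.3}; but this is just the short weight computation carried out above in the $\vep_i$ basis and presents no genuine obstacle, since all the substantive work has been done in Lemmas~\ref{Lem5.4.2} and~\ref{Lem5.4.4} and Proposition~\ref{Prop5.4.3}.
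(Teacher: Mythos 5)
Your proposal is correct and follows essentially the same route as the paper: the paper likewise reduces the classical cases to Proposition~\ref{Prop5.4.3} and handles the exceptional cases via the decomposition in Theorem~\ref{Thm5.1} together with Lemma~\ref{Lem5.4.2} and Lemma~\ref{Lem5.4.4}. Your explicit matching of the remaining summands against the special weights in the $\vep_i$ basis merely spells out what the paper delegates to Proposition~\ref{Prop5.4.3}, and the computations check out.
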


\begin{proof}
If $\fq$ is of classical type then this proposition follows from  
Proposition \ref{Prop5.4.3}.
For $\fq$ of exceptional type,
by Theorem \ref{Thm5.1},
the tensor product $\flg \otimes \fz(\fn)$ decomposes into
\begin{equation*}
\flg \otimes \fz(\fn) = 
V( \xig + \gamma) \oplus
V(\gamma) \oplus
V( \xig + \gamma_0)
\end{equation*}
with $\gamma_0 \in \gD(\fn)$ as in Theorem \ref{Thm5.1}.
Then Lemma \ref{Lem5.4.2} and Lemma \ref{Lem5.4.4} show
that $V(\xig + \gamma_0)$ is the unique special constituent.  
\end{proof}

Since the weight $\ge \in \gD(\fg(1))$ so that $\mu + \ge$ 
is the highest weight of a special constituent will play a role later,
we introduce the notation related to $\ge$.

\begin{Def}\label{Def5.2.7}
We denote by $\geg$ the root contributing to $\fg(1)$ 
so that $V(\mu + \ge_\gamma)$ 
is the special constituent of $\flg \otimes \cfn$. 
Similarly, we denote by $\geng$ the root for $\fg(1)$ so that
$V(\mu +\geng) = \flng \otimes \cfn$.
\end{Def}

We summarize data on the special constituents 
in Table \ref{T51}, Table \ref{T52}, Table \ref{T53}, and Table \ref{T54} below.
A dash indicates that no special constituent of the type exists for the case.

By Proposition \ref{Prop6.18},
only special constituents could contribute to 
the construction of the $\Omega_2$ systems. 
Next we want to show that 
$\ttau_{2}|_{V^*} \neq 0$ when $V$ is a special constituent.
An observation on the highest weights for the
special constituents will simplify the argument. 
We classify them
by their highest weights and call them type 1a, type 1b, type 2, and type 3.

\begin{Def}\label{Def5.2.8}
Let $\mu$ be the highest weight for $\fg(1)$, and let 
$\ge = \geg$ or $\ge = \geng$. (See Definition \ref{Def5.2.7}.)
We say that a special constituent $V(\mu+\ge)$ of $\fl \otimes \fz(\fn)$ is of \\
%\begin{enumerate}
\noindent (1) \textbf{type 1a} if $\mu + \ge$ is not a root with $\ge \neq \mu$ and both 
$\mu$ and $\ge$ are long roots,\\
\noindent (2) \textbf{type 1b} if $\mu + \ge$ is not a root with $\ge \neq \mu$ and either
$\mu$ or $\ge$ is a short root,\\
\noindent (3) \textbf{type 2} if $\mu + \ge = 2\mu$ is not a root, or\\
\noindent (4) \textbf{type 3} if $\mu + \ge$ is a root.
%\end{enumerate}
\end{Def}

\begin{table}[h]
\caption{Highest Weights for Special Constituents (Classical Cases)}
\begin{center}
\begin{tabular}{ccccc} 
\hline
Type        &$V(\mu + \geg)$ &$V(\mu + \geng)$ \\
\hline
$B_n(i),\;  3\leq i \leq n-2$ &$2\varepsilon_1$&$\varepsilon_1 + \varepsilon_2 + \varepsilon_{i+1} + \varepsilon_{i+2}$\\
$B_n(n-1)$ &$2\varepsilon_1$&$\varepsilon_1 + \varepsilon_2 + \varepsilon_n$\\
$B_n(n)$ &$2\varepsilon_1$&$-$\\
$C_n(i), \; 2\leq i \leq n-1$ &$\varepsilon_1 + \varepsilon_2$&$2\varepsilon_1 + 2\varepsilon_{i+1}$\\
$D_n(i), \; 3\leq i \leq n-3$ &$2\varepsilon_1$&$\varepsilon_1 +\varepsilon_2 + \varepsilon_{i+1} + \varepsilon_{i+2}$\\
\hline
\end{tabular} \label{T51}
\end{center}
\end{table}

\begin{table}[t]
\caption{Highest Weights for Special Constituents (Exceptional Cases)}
\begin{center}
\makebox[0.93 \width][r]{ 
\begin{tabular}{cc} 
\hline
Type        &$V(\mu + \geg)$ \\
\hline
$E_6(3)$ &$\ga_1 + 2\ga_2 + 2\ga_3 + 4\ga_4 + 3\ga_5 + 2\ga_6$\\
$E_6(5)$ &$2\ga_1 + 2\ga_2 + 3\ga_3 + 4\ga_4 + 2\ga_5 + \ga_6$\\
$E_7(2)$ &$2\ga_1 + 2\ga_2 + 4\ga_3 + 5\ga_4 + 4\ga_5 + 3\ga_6 + 2\ga_7$\\
$E_7(6)$ &$2\ga_1 + 3\ga_2 + 4\ga_3 + 6\ga_4 + 4\ga_5 + 2\ga_6 + \ga_7$ \\
$E_8(1)$ &$2\ga_1 + 4\ga_2  + 5\ga_3 + 8\ga_4 + 7\ga_5 + 6\ga_6 + 4\ga_7 + 2\ga_8$\\
$F_4(4)$ & $2\ga_1+ 4\ga_2 + 6\ga_3 + 2\ga_4$\\
\hline
%              & \\
%              & \\
%\hline              
Type       &$V(\mu + \geng)$ \\
\hline
$E_6(3)$ &$2\ga_1 + 2\ga_2 + 2\ga_3 + 3\ga_4 + 2\ga_5 + \ga_6$\\
$E_6(5)$ &$\ga_1 + 2\ga_2 + 2\ga_3 + 3\ga_4 + 2\ga_5 + 2\ga_6$ \\
$E_7(2)$ &$-$\\
$E_7(6)$ &$2\ga_1 + 2\ga_2 + 3\ga_3 + 4\ga_4 + 3\ga_5 + 2\ga_6 + 2\ga_7$ \\
$E_8(1)$ &$-$\\
$F_4(4)$ &$-$\\
\hline
\end{tabular} \label{T52}
}
\end{center}
\end{table}

\begin{table}[h]
\caption{Roots $\mu$, $\geg$, and $\geng$ (Classical Cases)}
\begin{center}
\begin{tabular}{ccccc} 
\hline
Type    &$\mu$    &$\geg$ &$\geng$ \\
\hline
$B_n(i),\;  3\leq i \leq n-2$ &$\varepsilon_1 + \varepsilon_{i+1}$&$\varepsilon_1 - \varepsilon_{i+1}$&$\varepsilon_2 + \varepsilon_{i+2}$\\
$B_n(n-1)$                       &$\varepsilon_1 + \varepsilon_n$     &$\varepsilon_1-\varepsilon_n$      &$\varepsilon_2$\\
$B_n(n)$                          &$\varepsilon_1$              &$\varepsilon_1$&$-$\\
$C_n(i), \; 2\leq i \leq n-1$  &$\varepsilon_1 + \varepsilon_{i+1}$&$\varepsilon_2-\varepsilon_{i+1}$&$\varepsilon_1 + \varepsilon_{i+1}$\\
$D_n(i), \; 3\leq i \leq n-3$  &$\varepsilon_1 + \varepsilon_{i+1}$&$\varepsilon_1 - \varepsilon_{i+1}$&$\varepsilon_2 + \varepsilon_{i+2}$\\
\hline
\end{tabular} \label{T53}
\end{center}
\end{table}

\begin{table}[!]
\caption{Roots $\mu$, $\geg$, and $\geng$ (Exceptional Cases)}
\begin{center}
\begin{tabular}{cc} 
\hline
Type        &$\mu$ \\
\hline
$E_6(3)$ &$\ga_1+ \ga_2 + \ga_3 + 2\ga_4 +2\ga_5 + \ga_6$\\
$E_6(5)$ &$\ga_1+ \ga_2 + 2\ga_3 + 2\ga_4 +\ga_5 + \ga_6$\\
$E_7(2)$ &$\ga_1+ \ga_2 + 2\ga_3 + 3\ga_4 +3\ga_5 + 2\ga_6 + \ga_7$\\
$E_7(6)$ &$\ga_1+ 2\ga_2 + 2\ga_3 + 3\ga_4 +2\ga_5 + \ga_6 + \ga_7$\\
$E_8(1)$ &$\ga_1+ 3\ga_2 + 3\ga_3 + 5\ga_4 +4\ga_5 + 3\ga_6 + 2\ga_7+\ga_8$\\
$F_4(4)$ &$\ga_1+ 2\ga_2 + 3\ga_3 + \ga_4$\\
\hline
% &\\
% &\\
%\hline
Type        &$\geg$ \\
\hline
$E_6(3)$ &$\ga_2 + \ga_3+2\ga_4 + \ga_5 + \ga_6$\\
$E_6(5)$ &$\ga_1 + \ga_2 + \ga_3 + 2\ga_4 + \ga_5$\\
$E_7(2)$ &$\ga_1 + \ga_2 + 2\ga_3 + 2\ga_4 +\ga_5 + \ga_6 + \ga_7$\\
$E_7(6)$ &$\ga_1 + \ga_2 + 2\ga_3 + 3\ga_4 + 2\ga_5 + \ga_6$\\
$E_8(1)$ &$\ga_1 + \ga_2 + 2\ga_3 + 3\ga_4 + 3\ga_5 + 3\ga_6 + 2\ga_7 +\ga_8$\\
$F_4(4)$ &$\ga_1+ 2\ga_2 + 3\ga_3 + \ga_4$\\
\hline
% &\\
% &\\
%\hline
Type       &$\geng$ \\
\hline
$E_6(3)$ &$\ga_1 + \ga_2 + \ga_3 + \ga_4$\\
$E_6(5)$ &$\ga_2 + \ga_4 + \ga_5 + \ga_6$\\
$E_7(2)$ &$-$\\
$E_7(6)$ &$\ga_1 + \ga_3 + \ga_4 + \ga_5 + \ga_6 + \ga_7$\\
$E_8(1)$ &$-$\\
$F_4(4)$ &$-$\\
\hline
\end{tabular} \label{T54}
\end{center}
\end{table}

Table \ref{T55} summarizes the types of special constituents for each parabolic 
subaglebra $\fq$. One may want to observe that almost all the special constituents 
are of type 1a. We regard any roots as long roots, when $\fg$ is simply laced.
A dash indicates that no special constituent of the type exists in the case.

\begin{table}[h]
\caption{Types of Special Constituents}
\begin{center}
\begin{tabular}{ccccc} 
\hline
Type        &$V(\mu+\geg)$ &$V(\mu+\geng)$ \\
\hline
$B_n(i),\;  3\leq i \leq n-2$ &Type 1a&Type 1a\\
$B_n(n-1)$                      &Type 1a&Type 1b\\
$B_n(n)$                         &Type 2&$-$\\
$C_n(i), \; 2\leq i \leq n-1$ &Type 3&Type 2\\
$D_n(i), \; 3\leq i \leq n-3$ &Type 1a&Type 1a\\
$E_6(3)$                          &Type 1a&Type 1a\\
$E_6(5)$                          &Type 1a&Type 1a\\
$E_7(2)$                          &Type 1a&$-$\\
$E_7(6)$                          &Type 1a&Type 1a\\
$E_8(1)$                          &Type 1a&$-$\\
$F_4(4)$                          &Type 2&$-$\\
\hline
\end{tabular} \label{T55}
\end{center}
\end{table}

%\newpage

%\vskip 2.7in
\begin{Rem}\label{Rem5.4.6}
It is observed from Table \ref{T53} and Table \ref{T54}% in \ref{chap:Data2}
that we have 
$\mu \pm \ge \notin \gD$, unless $V(\mu + \ge)$ is of type 3.
In particular, if $V(\mu+\ge)$ is of type 1a then $\IP{\mu}{\ge} = 0$.
\end{Rem}

\begin{Rem}\label{Rem5.2.11}
Table \ref{T55} shows that 
when $V(\mu+\ge)$ is a special constituent of type 1a,
$\fq$ is of type 
$B_n(i)$ $(3 \leq i \leq n-1)$, $D_n(i)$, $E_6(3)$, 
$E_6(5)$, $E_7(2)$, $E_7(6)$, or $E_8(1)$.
The data in Appendix \ref{chap:Data} shows that 
when $\fq$ is of type $B_n(i)$ for $3 \leq i \leq n-1$,
the simple root $\ga_\fq = \varepsilon_i - \varepsilon_{i+1}$ 
that parametrizes $\fq$ is a long root
and that the set $\gD(\fz(\fn))$ contains solely long roots.
Since we regard any roots as long roots for $\fg$ simply laced,
it follows that when $V(\mu+\ge)$ is of type 1a,
the simple root $\ga_\fq$ and any root $\gamma_j \in \gD(\fz(\fn))$ 
are all long roots. 
\end{Rem}

%%%%%%%%%%%%%%%%%%%%%%%%%%%%%%%%%%%%%
%\vskip 0.1in
\subsection{Computations for structure constants}\label{SS542}

For the rest of this section,
we collect technical results on the special constituents and 
structure constatnts,
so that certain arguments will go smoothly when 
we find the special values for the $\Omega_2$ systems.
The root vectors $X_\ga$ and the structure constants 
$N_{\ga, \gb}$ are normalized as in Section \ref{chap:O1}.

\begin{Lem}\label{Lem5.25}
Let $V(\mu+\ge$) be a special constituent $\fl \otimes \fz(\fn)$ of type 1a,
and $\ga \in \gD^+(\fl)$.
If $\ge + \ga \in \gD$ then $\mu - \ga \in \gD$.
\end{Lem}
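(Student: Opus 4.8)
The plan is to reduce everything to the single inequality $\IP{\mu}{\cga} \geq 1$. Once this is established, formula (\ref{Eqn3.1.2}) gives $p_{\ga,\mu} = \IP{\mu}{\cga} + q_{\ga,\mu} \geq 1$, and by the definition (\ref{Eqn3.1.1}) of $p_{\ga,\mu}$ this says precisely that $\mu - \ga \in \gD$, which is the assertion. So the whole proof comes down to producing the strict inequality $\IP{\mu}{\ga} > 0$.

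First I would extract sign information about $\IP{\ge}{\ga}$ from the hypothesis $\ge + \ga \in \gD$. Since $V(\mu + \ge)$ is of type 1a, Definition \ref{Def5.2.8} guarantees that $\ge$ is a long root. Applying Lemma \ref{Lem3.1.3}(2), with the long root taken to be $\ge$ and the second root taken to be $\ga$, the relation $\ga + \ge \in \gD$ forces $\IP{\ga}{\cge} = -1$, that is,
\[
\IP{\ga}{\ge} = -\tfrac{1}{2}||\ge||^2 < 0.
\]

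The crucial step is to feed in positivity coming from the special constituent itself. Because $V(\mu + \ge)$ is a special constituent, $\mu + \ge$ is its $\fl$-highest weight (Definition \ref{Def5.2.1}), hence $\fl$-dominant; therefore $\IP{\mu + \ge}{\ga} \geq 0$ for every $\ga \in \gD^+(\fl)$ (dominance on the simple roots of $\fl$ propagates to all of $\gD^+(\fl)$, since any such $\ga$ is a non-negative integer combination of simple roots of $\fl$). Combining this with the previous step yields
\[
\IP{\mu}{\ga} \geq -\IP{\ge}{\ga} = \tfrac{1}{2}||\ge||^2 > 0.
\]
As $\mu$ and $\ga$ are both roots, $\IP{\mu}{\cga} = 2\IP{\mu}{\ga}/||\ga||^2$ is an integer, and it is positive by the displayed inequality, so $\IP{\mu}{\cga} \geq 1$; the reduction of the first paragraph then closes the argument.

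The step I expect to be the main obstacle — or rather the one place where a careless approach stalls — is recognizing that one must invoke the dominance of $\mu + \ge$ and not of $\mu$ alone: dominance of $\mu$ by itself only gives $\IP{\mu}{\ga} \geq 0$, which is insufficient to force $\mu - \ga \in \gD$. It is the extra negative contribution $\IP{\ge}{\ga} < 0$, cancelled against the dominance of the full highest weight $\mu + \ge$, that upgrades this to a strict inequality. Note that the type 1a hypothesis is used only to ensure $\ge$ is long (so that Lemma \ref{Lem3.1.3} applies); the orthogonality $\IP{\mu}{\ge} = 0$ and the non-root property $\mu \pm \ge \notin \gD$ from Remark \ref{Rem5.4.6} are not needed here.
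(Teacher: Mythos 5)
Your argument is correct and coincides with the paper's own proof: both establish $\IP{\mu}{\ga}>0$ by combining the dominance of the highest weight $\mu+\ge$ with $\IP{\ga}{\cge}=-1$, the latter coming from Lemma \ref{Lem3.1.3} and the fact that $\ge$ is long in type 1a. The only difference is that you spell out the final step ($\IP{\mu}{\cga}\geq 1$ implies $\mu-\ga\in\gD$) via the $p,q$ string formula, which the paper leaves implicit.
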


\begin{proof}
We show that $\IP{\mu}{\ga} > 0$.
Since $\mu+\ge$ is the highest weight of an irreducible $\fl$-module, 
it is $\gD(\fl)$-dominant. Thus,
\begin{equation}\label{Eqn5.261}
\IP{\mu+\ge}{\ga} = \IP{\mu}{\ga}+\IP{\ge}{\ga} \geq 0.
\end{equation}
\noindent Observe that,
as $\mu+\ge$ is of type 1a,  
$\ge$ is a long root of $\fg$.
Since $\ga+ \ge$ is assumed to be a root, 
Lemma \ref{Lem3.1.3} implies that $\IP{\ga}{\ge^{\vee}} = -1$;
in particular, $\IP{\ge}{\ga} < 0$. Now, by (\ref{Eqn5.261}), we have 
\beu
\IP{\mu}{\ga} \geq - \IP{\ge}{\ga} > 0.
\end{equation*} 
\end{proof}

\begin{Lem}\label{Lem5.26}
Let $V(\mu+\ge)$ be a special constituent of $\fl \otimes \fz(\fn)$ of type 1a.
If $\ga \in \gD^+(\fl)$ with $\ga + \ge \in \gD$ then, 
for all $\gamma_j \in \gD(\cfn)$, we have
\begin{equation*}
\emph{\ad}(X_\mu)\emph{\ad}(X_{\ga + \ge})X_{-\gamma_j} = 0.
\end{equation*}
\end{Lem}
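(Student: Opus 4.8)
The plan is to track the $\ad(H_\fq)$-grading and reduce the vanishing to a single length estimate. Since $X_\mu, X_{\ga+\ge}\in\fg(1)$ while $X_{-\gamma_j}\in\fg(-2)$, the inner bracket $[X_{\ga+\ge},X_{-\gamma_j}]$ lies in $\fg(-1)$ and the full expression $\ad(X_\mu)\ad(X_{\ga+\ge})X_{-\gamma_j}$ lies in $\fg(0)=\fl$, carrying the weight $\lambda:=\mu+\ga+\ge-\gamma_j$. I would argue that in every case either the inner bracket already vanishes, or $\lambda$ fails to be a weight of $\fg$, so that the whole expression is forced to be zero.

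First I would dispose of the trivial case. By the normalization (H5) we have $[X_{\ga+\ge},X_{-\gamma_j}]=N_{\ga+\ge,\,-\gamma_j}X_{\ga+\ge-\gamma_j}$, which is $0$ unless $\ga+\ge-\gamma_j\in\gD$. It therefore remains only to treat the case $\ga+\ge-\gamma_j\in\gD$, where the inner bracket is a nonzero multiple of $X_{\ga+\ge-\gamma_j}\in\fg(-1)$, and the task becomes showing $\ad(X_\mu)X_{\ga+\ge-\gamma_j}=0$, i.e.\ that $\lambda$ is neither a root of $\fg$ nor $0$.

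The heart of the proof is a norm computation for $\lambda$, exploiting that type 1a forces the relevant roots to be long. I would assemble the following: $\mu$ and $\ge$ are long with $\IP{\mu}{\ge}=0$ (Remark \ref{Rem5.4.6}), and $\gamma_j$ is long (Remark \ref{Rem5.2.11}), so $||\mu||^2=||\ge||^2=||\gamma_j||^2=:m$ is the maximal root length. The hypothesis $\ga+\ge\in\gD$ lets me invoke Lemma \ref{Lem5.25} to get $\mu-\ga\in\gD$, whence Lemma \ref{Lem3.1.3}(1) (long root $\mu$, applied to $\ga-\mu\in\gD$) gives $\IP{\mu}{\ga}=\tfrac12 m$; Lemma \ref{Lem3.1.3}(2) (long root $\ge$) applied to $\ga+\ge\in\gD$ gives $\IP{\ga}{\ge}=-\tfrac12 m$; and Lemma \ref{Lem3.1.3}(2) (long root $\gamma_j$) applied to $\gamma_j-(\ga+\ge)\in\gD$ gives $\IP{\ga+\ge}{\gamma_j}=\tfrac12 m$. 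Expanding the square and substituting these six pairings collapses everything to $||\lambda||^2=2m+||\ga||^2-2\IP{\mu}{\gamma_j}$. Since $\mu\neq\pm\gamma_j$ (they lie in $\fg(1)$ and $\fg(2)$), two distinct long roots satisfy $\IP{\mu}{\gamma_j^\vee}\leq 1$, so $2\IP{\mu}{\gamma_j}\leq m$ and hence $||\lambda||^2\geq m+||\ga||^2>m$. As $m$ is the maximal root length and $||\lambda||^2>0$, the weight $\lambda$ is neither a root nor $0$; therefore $X_\lambda=0$ in $\fg$ and $\ad(X_\mu)X_{\ga+\ge-\gamma_j}=0$, completing the argument.

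The main obstacle is keeping the length estimate tight, since the whole conclusion rests on the single strict inequality $||\lambda||^2>m$, which requires $\mu$, $\ge$, and $\gamma_j$ to be genuinely long. The long-ness of $\gamma_j$ is exactly what Remark \ref{Rem5.2.11} supplies (via the Appendix data, and using that type 1a excludes the problematic $C_n$ and $F_4$ cases), and that is where I would be most careful; the bound $\IP{\mu}{\gamma_j^\vee}\leq 1$ similarly relies on $\mu$ and $\gamma_j$ being distinct long roots, which is guaranteed by their lying in different graded pieces.
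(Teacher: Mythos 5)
Your proof is correct, but it takes a genuinely different route from the paper's. You reduce to the case $\ga+\ge-\gamma_j\in\gD$ and then kill the weight $\lambda=\mu+\ga+\ge-\gamma_j$ by a length computation: using $\IP{\mu}{\ge}=0$, $\IP{\ga}{\cmu}=1$ (from Lemma \ref{Lem5.25} and Lemma \ref{Lem3.1.3}), $\IP{\ga}{\cge}=-1$, and $\IP{\ga+\ge}{\cgamma}_j=1$, you get $||\lambda||^2\geq ||\mu||^2+||\ga||^2$, which exceeds the maximal root length, so $\lambda$ is neither a root nor zero. The paper instead argues by contradiction entirely through Cartan integers against $\cmu$: assuming $\mu+(\ga+\ge)-\gamma_j\in\gD$ forces $\IP{\ga}{\cmu}-\IP{\gamma_j}{\cmu}=-1$, and since $\mu+\ga$ and $\mu+\gamma_j$ are not roots (by the grading and the fact that $\mu$ is the highest weight of $\fg(1)$) this pins down $\IP{\ga}{\cmu}=0$, contradicting $\IP{\ga}{\cmu}=1$ from Lemma \ref{Lem5.25}. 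Both arguments hinge on the same input from Lemma \ref{Lem5.25}; the trade-off is that your norm estimate needs $\gamma_j$ to be long (Remark \ref{Rem5.2.11}), and you are right to flag that as the delicate point, whereas the paper's pairing argument gets by with only $\mu$ long in the final step and never touches the length of $\gamma_j$. In exchange, your version avoids the small case analysis of which Cartan integers can equal $0$ or $1$ and gives a single clean inequality.
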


\begin{proof}

If $(\ga+\ge) -\gamma_j \notin \gD$ then there is nothing to prove. 
So we assume that $(\ga+\ge)-\gamma_j \in \gD$ and 
$\mu + (\ga+\ge)- \gamma_j \in \gD$. 
Since $\mu+\ge$ is assumed to be of type 1a, the root $\mu$ is long.
Lemma \ref{Lem3.1.3} then implies that 
\begin{equation}\label{Eqn5.26}
\IP{(\ga+\ge)-\gamma_j}{\cmu} = -1.
\end{equation}
By Remark \ref{Rem5.4.6}, we have $\IP{\ge}{\cmu} = 0$. 
Thus (\ref{Eqn5.26}) becomes
\begin{equation}\label{Eqn5.27}
\IP{\ga}{\cmu} - \IP{\gamma_j}{\cmu} = -1.
\end{equation}
Since $\mu$ is the highest weight for $\fg(1)$,
$\gamma_j \in \gD(\cfn)$, and $\ga \in \gD^+(\fl)$,
neither $\mu +\ga$ nor  $\gamma_j + \mu$ is a root.
Then, as $\mu$ is a long root, (\ref{Eqn5.27}) holds if and only if
$\IP{\ga}{\cmu} = 0$ and $\IP{\gamma_j}{\cmu} = 1$. 
On the other hand, since $\ga+\ge$ is a root by hypothesis
and by Lemma \ref{Lem5.25}, $\mu-\ga$ is a root.
In particular, by Lemma \ref{Lem3.1.3}, 
$\IP{\ga}{\cmu} = 1$.
Now we have $\IP{\ga}{\cmu} = 1$ and 
$\IP{\ga}{\cmu} = 0$, which is a contradiction.
\end{proof}

\vsp

For any $\ad(\fh)$-invariant subspace
$W \subset \fg$ and any weight $\nu \in \fh^*$, we write 
\begin{equation*}
\gD_{\nu}(W) = \{ \ga \in \gD(W) \; | \; \nu - \ga \in \gD\}.
\end{equation*}
In Section \ref{chap:O2}, we will construct the 
$\Omega_2|_{V(\mu+\ge)^*}$ systems and find 
their special values,
when $V(\mu+\ge)$ is of either type 1a or type 2.
When we do so,
the roots $\gb \in \gD_{\mu+\ge}(\fg(1))$ and 
$\gamma_j \in \gD_{\mu+\ge}(\fz(\fn))$ will play a role.
Therefore, for the rest of this section, 
we shall show several technical results about
those roots, so that certain argument will become simple.

First of all, we need check that
$\gD_{\mu+\ge}(\fg(1))$ and $\gD_{\mu+\ge}(\fz(\fn))$ 
are not empty.
It is clear that $\gD_{\mu+\ge}(\fg(1)) \neq \emptyset$,
since $\mu$, $\ge \in \gD_{\mu+\ge}(\fg(1))$.
Moreover, Lemma \ref{Lem5.2.141} below shows that 
when $V(\mu+\ge)$ is of type 2,
we have $\gD_{\mu+\ge}(\fg(1))=\{\mu\}$.

\begin{Lem}\label{Lem5.2.141}
If $V(\mu+\ge)$ is a special constituent of $\fl \otimes \fz(\fn)$
of type 2 then $\gD_{\mu+\ge}(\fg(1)) = \{\mu\}$.
\end{Lem}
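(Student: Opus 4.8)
The plan is to unwind the hypothesis and reduce the claim to a statement about the weights of the irreducible module $\fg(1)$. By Definition~\ref{Def5.2.8}(3), saying that $V(\mu+\ge)$ is of type 2 means precisely that $\mu+\ge = 2\mu$ (equivalently $\ge=\mu$) and that $2\mu \notin \gD$. Thus the set in question is $\gD_{2\mu}(\fg(1)) = \{\gb \in \gD(\fg(1)) \mid 2\mu - \gb \in \gD\}$, and since $2\mu - \mu = \mu \in \gD(\fg(1))$ the containment $\mu \in \gD_{2\mu}(\fg(1))$ is immediate. So it remains only to prove that every $\gb \in \gD_{2\mu}(\fg(1))$ equals $\mu$.

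First I would introduce the partner root $\gb' := 2\mu - \gb$ and check that it again lies in $\fg(1)$. Indeed, because $\mu, \gb \in \gD(\fg(1))$ both have $\ad(H_\fq)$-eigenvalue $1$, the element $\gb' = 2\mu - \gb$ has $\ad(H_\fq)$-eigenvalue $2\cdot 1 - 1 = 1$; since $2\mu - \gb$ is assumed to be a root, this forces $\gb' \in \gD(\fg(1))$. Hence both $\gb$ and $\gb'$ are weights of the irreducible $\fl$-module $\fg(1)$, which by Proposition~\ref{Prop3.3.1}(1) has highest weight $\mu$.

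The key step is then highest-weight theory for $\fg(1)$: every weight of an irreducible highest-weight module differs from the highest weight by a non-negative integral combination of simple roots, so $\mu - \gb \in \Z_{\geq 0}\Pi(\fl)$ and $\mu - \gb' \in \Z_{\geq 0}\Pi(\fl)$. Adding these two relations gives $2\mu - (\gb + \gb') = 0$, now written as a non-negative integral combination of the linearly independent simple roots in $\Pi(\fl)$; hence every coefficient vanishes and $\gb = \gb' = \mu$, which is exactly the assertion $\gD_{\mu+\ge}(\fg(1)) = \{\mu\}$. The only point requiring care --- the main obstacle, though a routine one --- is that the highest-weight statement is a priori about $L_{ss}$-weights restricted to $\fh_{ss}$, so I would lift it to honest $\fh^*$-weights by noting that $\fh = \fh_{ss} \oplus \C H_\fq$ and that $\mu - \gb$, $\mu - \gb'$, and each $\ga \in \Pi(\fl)$ all take the value $0$ on $H_\fq$; thus agreement on $\fh_{ss}$ together with the common value $0$ on $H_\fq$ upgrades to equality across all of $\fh^*$.
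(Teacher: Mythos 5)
Your proof is correct and follows essentially the same route as the paper: both arguments rest on the fact that every weight $\gb$ of the irreducible $\fl$-module $\fg(1)$ satisfies $\mu-\gb\in\Z_{\geq 0}\Pi(\fl)$, applied to both $\gb$ and its partner $2\mu-\gb$. The paper phrases the final contradiction in terms of heights ($\Ht(\mu)\geq\Ht(2\mu-\gb)=\Ht(\mu)+\sum n_\ga$ forces $\sum n_\ga=0$), while you add the two relations and invoke linear independence of the simple roots; these are the same argument, and your explicit checks that $2\mu-\gb\in\gD(\fg(1))$ and that the $\fh_{ss}$-weight statement lifts to $\fh^*$ are points the paper leaves implicit.
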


\begin{proof}
First we claim that $\mu$ has the maximum height among the roots 
$\gb \in \gD(\fg(1))$. As $\fg(1)$ is the irreducible $L$-module with highest weight $\mu$,
any root $\gb \in \gD(\fg(1))$ is of the form 
$\gb = \mu - \sum_{\ga \in \Pi(\fl)}n_\ga \ga$ with $n_\ga \in \Z_{\geq 0}$.
Then if $\Ht(\mu)$ and $\Ht(\gb)$ denote the heights of $\mu$ and $\gb$,
respectively, then 
\begin{equation*}
\Ht(\mu) = \Ht(\gb) + \sum_{\ga \in \Pi(\fl)} n_\ga \geq \Ht(\gb).
\end{equation*}
%Since $\gb = \mu - \sum_{\ga \in \Pi(\fl)} n_\ga \ga$,
%the equality holds if and only if $\gb = \mu$.

Now as $V(\mu+\ge)$ is of type 2, by definition,
we have $\mu+\ge = 2\mu$. If $\gb \in \gD_{2\mu}(\fg(1))$
then $2\mu-\gb \in \gD(\fg(1))$. In particular, 
the height $\Ht(2\mu-\gb)$ satisfies $\Ht(\mu) \geq \Ht(2\mu-\gb)$.
If $\gb = \mu - \sum_{\ga \in \Pi(\fl)}n_\ga \ga$ with $n_\ga \in \Z_{\geq 0}$
then 
\begin{equation*}
\Ht(\mu) \geq \Ht(2\mu-\gb) = 2\Ht(\mu) - \Ht(\gb)
%=2\Ht(\mu) - \Ht(\mu) +\sum_{\ga \in \Pi(\fl)}n_\ga
=\Ht(\mu)  + \sum_{\ga \in \Pi(\fl)}n_\ga.
\end{equation*}
This forces that $\sum_{\ga \in \Pi(\fl)}n_\ga = 0$. Therefore $\gb = \mu$.
\end{proof}

\begin{Lem}\label{Lem5.2.13}
If $V(\mu+\ge)$ is a special constituent of $\fl \otimes \fz(\fn)$ then
$\gD_{\mu+\ge}(\fz(\fn)) \neq \emptyset$.
\end{Lem}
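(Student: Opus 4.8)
The plan is to produce, for the given special constituent $V(\mu+\ge)$, at least one root $\gamma_j \in \gD(\fz(\fn))$ with $(\mu+\ge)-\gamma_j \in \gD$ by examining a highest weight vector of $V(\mu+\ge)$ inside $\fl \otimes \fz(\fn)$. Since the $L$-action on $\fl \otimes \fz(\fn)$ is completely reducible, $V(\mu+\ge)$ is a genuine $L$-submodule, so it contains a nonzero vector $v$ of weight $\mu+\ge$ annihilated by all raising operators $\ad(X_\ga)$ with $\ga \in \Pi(\fl)$. As the weight spaces of $\fz(\fn)$ are one-dimensional with weights exactly the roots $\gamma_j \in \gD(\fz(\fn))$, I would write $v = \sum_{\gamma_j \in \gD(\fz(\fn))} u_{\gamma_j} \otimes X_{\gamma_j}$ uniquely, where each $u_{\gamma_j} \in \fl$ lies in the $((\mu+\ge)-\gamma_j)$-weight space of $\fl$.

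Because the nonzero weights of the adjoint module $\fl = \fg(0)$ are precisely the roots in $\gD(\fl)$, each weight $(\mu+\ge)-\gamma_j$ attached to a nonzero $u_{\gamma_j}$ is either $0$ or a root. If some $u_{\gamma_j} \neq 0$ has $(\mu+\ge)-\gamma_j \neq 0$, then $(\mu+\ge)-\gamma_j \in \gD(\fl) \subset \gD$, whence $\gamma_j \in \gD_{\mu+\ge}(\fz(\fn))$ and the lemma follows at once. So I would reduce to the remaining case in which every nonzero term of $v$ has $(\mu+\ge)-\gamma_j = 0$; the only such index is $\gamma_j = \mu+\ge$, forcing $v = H \otimes X_{\mu+\ge}$ for some nonzero $H \in \fh$ and, in particular, $\mu+\ge \in \gD(\fz(\fn))$.

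The main point is then to rule this degenerate case out. Since $V(\mu+\ge)$ is special, its highest weight satisfies $\mu+\ge \neq \gamma$ (Definition \ref{Def5.2.1}), while $\gamma$ is the highest weight of the $\flg$-irreducible module $\fz(\fn)$ (Proposition \ref{Prop3.3.1} and the proof of Proposition \ref{Lem5.3.1}). Standard weight theory then provides a simple root $\ga \in \Pi(\flg)$ raising the non-highest weight $\mu+\ge$ inside $\fz(\fn)$, i.e. $\mu+\ge+\ga \in \gD(\fz(\fn))$, so $N_{\ga,\mu+\ge} \neq 0$. Applying this raising operator gives
\begin{equation*}
\ad(X_\ga) \cdot v = -\ga(H)\, X_\ga \otimes X_{\mu+\ge} + N_{\ga,\mu+\ge}\, H \otimes X_{\mu+\ge+\ga}.
\end{equation*}
The two summands live in the distinct bigraded pieces $\fg_\ga \otimes \fg_{\mu+\ge}$ and $\fh \otimes \fg_{\mu+\ge+\ga}$ of $\fl \otimes \fz(\fn)$, hence are linearly independent; since $N_{\ga,\mu+\ge}H \neq 0$, the second summand is nonzero, so $\ad(X_\ga)\cdot v \neq 0$. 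This contradicts $v$ being a highest weight vector and eliminates the degenerate case, completing the argument.

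The step I expect to be the main obstacle is this last one: showing cleanly that $v$ cannot lie entirely in $\fh \otimes \fg_{\mu+\ge}$. The delicate ingredients are the one-dimensionality of the weight spaces of $\fz(\fn)$ (so that the decomposition of $v$ is unambiguous) and the fact that a non-highest weight of an irreducible module can always be raised by some simple root; both are standard but must be invoked with care to guarantee the raising root $\ga$ can be chosen in $\Pi(\flg)$ and that $N_{\ga,\mu+\ge} \neq 0$.
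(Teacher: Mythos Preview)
Your argument is correct, but it follows a different route from the paper's. The paper's proof is a two-line observation: since a special constituent $V(\mu+\ge)$ has $\mu+\ge\neq\gamma$, it lies in $\flg\otimes\fz(\fn)$ or in $\flng\otimes\fz(\fn)$, and by the standard fact about highest weights in tensor products its highest weight must have the form $\xig+\gamma'$ (respectively $\xing+\gamma''$) for some $\gamma',\gamma''\in\gD(\fz(\fn))$. Then $(\mu+\ge)-\gamma'=\xig\in\gD$ (respectively $(\mu+\ge)-\gamma''=\xing\in\gD$), so $\gamma'$ (respectively $\gamma''$) lies in $\gD_{\mu+\ge}(\fz(\fn))$.

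Your approach instead works directly with a highest weight vector in $\fl\otimes\fz(\fn)$ and analyzes its components over the basis $\{X_{\gamma_j}\}$ of $\fz(\fn)$. This is more self-contained: you never invoke the splitting $[\fl,\fl]=\flg\oplus\flng$ or the explicit decompositions from Theorem~\ref{Thm5.1}, and you reprove in situ the piece of the tensor product highest-weight fact that is actually needed. The price is length, and the need to handle the degenerate case $v\in\fh\otimes\fg_{\mu+\ge}$, which you dispatch correctly by raising with a simple root of $\flg$ (this case is genuinely relevant, since for the type~3 constituent $V(\mu+\geg)$ in $C_n(i)$ one does have $\mu+\ge\in\gD(\fz(\fn))$). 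The paper's argument is shorter precisely because it leans on the structural groundwork already laid in Section~\ref{chap:Sp}; your argument would work even without that groundwork.
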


\begin{proof}
Observe that
the highest weight $\mu+\ge$ of $V(\mu+\ge) \subset \fl \otimes \fz(\fn)$
must be of the form
\begin{equation*}
\mu + \ge =
\begin{cases}
\xig + \gamma' &\text{if $V(\mu+\ge) \subset \flg \otimes \fz(\fn)$}\\
\xing+\gamma'' &  \text{if $V(\mu+\ge) = \flng \otimes \fz(\fn)$}
\end{cases}
\end{equation*}
for some $\gamma',\gamma'' \in \gD(\fz(\fn))$, where $\xig$ and $\xing$ 
are the highest weights for $\flg$ and $\flng$, respectively.
Then we have $\gamma',\gamma'' \in \gD_{\mu+\ge}(\fz(\fn))$.
\end{proof}

The following simple technical lemma will simplify 
an argument in later proofs.

\begin{Lem}\label{Lem2.6}
Let $\ga, \gb, \gd \in \gD$ with $\ga$, $\gb \neq \gd$.
If $\ga + \gb \notin \gD$ and $\ga + \gb - \gd \in \gD$
then the following hold:
\begin{enumerate}
\item[(1)] $\ga - \gd,\; \gb - \gd \in \gD$, and 
\item[(2)] $N_{\gb, \ga-\gd}N_{\ga, -\gd} = N_{\ga, \gb-\gd}N_{\gb, -\gd}$.
\end{enumerate}
\end{Lem}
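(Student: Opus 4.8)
The plan is to extract both statements from a single application of the Jacobi identity to the triple $X_\ga, X_\gb, X_{-\gd}$. Since $\ga + \gb \notin \gD$, we have $[X_\ga, X_\gb] = 0$ by (H5), so the cyclic Jacobi identity collapses to
\[
[[X_\gb, X_{-\gd}], X_\ga] + [[X_{-\gd}, X_\ga], X_\gb] = 0.
\]
Writing $\gs = \ga + \gb - \gd$, which is a root by hypothesis, and expanding each double bracket with (H5), both surviving terms are multiples of $X_\gs$, so the identity becomes the scalar relation
\begin{equation}
N_{\gb,-\gd}N_{\gb-\gd,\ga} + N_{-\gd,\ga}N_{\ga-\gd,\gb} = 0, \tag{$\ast$}
\end{equation}
with the convention that a product is read as $0$ whenever the relevant difference of roots fails to be a root. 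Granting part (1), part (2) is then immediate: rewriting $(\ast)$ with the anticommutativity $N_{\phi,\psi} = -N_{\psi,\phi}$ turns it into $N_{\ga,-\gd}N_{\gb,\ga-\gd} = N_{\gb,-\gd}N_{\ga,\gb-\gd}$, which is exactly the claimed equality, and part (1) guarantees that all four constants appearing are genuine structure constants rather than spurious zeros.

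The substance is therefore part (1), and I would prove it by contradiction, exploiting the symmetry of the hypotheses in $\ga$ and $\gb$. Suppose one of $\ga - \gd,\ \gb - \gd$ fails to be a root; without loss of generality say $\gb - \gd \notin \gD$, so $N_{\gb,-\gd} = 0$. Then $(\ast)$ forces $N_{-\gd,\ga}N_{\ga-\gd,\gb} = 0$. But if $\ga - \gd$ were a root, then $N_{-\gd,\ga} \neq 0$ by (H7) since $(-\gd)+\ga = \ga - \gd \in \gD$, and $N_{\ga-\gd,\gb} \neq 0$ by (H7) as well since $(\ga-\gd)+\gb = \gs \in \gD$; the product would be nonzero, a contradiction. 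Hence $\ga - \gd \notin \gD$ too, and we are reduced to the situation where neither $\ga - \gd$ nor $\gb - \gd$ is a root.

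To rule this out I would turn to the geometry of the root system. First I dispose of the degenerate coincidences: if $\ga = -\gd$ then $\gs = \gb - 2\gd$, and an unbroken $\gd$-root string forces $\gb - \gd \in \gD$ whenever $\gb - 2\gd \in \gD$, contradicting $\gb - \gd \notin \gD$ (the case $\gb = -\gd$ is symmetric). In the remaining case $\ga,\gb \neq \pm\gd$, the standard fact that $\IP{\phi}{\psi} > 0$ implies $\phi - \psi \in \gD$ gives $\IP{\ga}{\gd} \leq 0$ and $\IP{\gb}{\gd} \leq 0$; hence
\[
\IP{\gs}{\gd} = \IP{\ga}{\gd} + \IP{\gb}{\gd} - ||\gd||^2 < 0.
\]
Provided $\gs \neq -\gd$, equivalently $\ga + \gb \neq 0$, this forces $\gs + \gd = \ga + \gb \in \gD$, contradicting the hypothesis $\ga + \gb \notin \gD$. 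This completes part (1), and with it part (2).

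The main obstacle is precisely this last step: the contradiction rests on $\ga + \gb \neq 0$, which the literal statement does not assume (and indeed, taking $\gb = -\ga$ shows the conclusion can fail). I would note that in every application of this lemma the roots $\ga$ and $\gb$ lie in $\gD(\fg(1)) \cup \gD(\fz(\fn))$, so $\ga + \gb$ has strictly positive $\ad(H_\fq)$-eigenvalue and can never vanish; thus the nondegeneracy $\ga + \gb \neq 0$ is harmless to assume and should be recorded explicitly. The only other delicate point is the careful bookkeeping of which structure constants are genuinely nonzero, so that $(\ast)$ may be manipulated and the products in part (2) are honest constants — this is exactly what (H7) together with part (1) supplies.
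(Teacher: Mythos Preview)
Your proof is correct and is in fact the standard argument; the paper itself gives no proof beyond the single sentence ``These simply follow from the structure of the complex simple Lie algebras.'' Your approach via the Jacobi identity on $X_\ga, X_\gb, X_{-\gd}$ is exactly the natural route, and the reduction of part~(2) to the scalar identity $(\ast)$ once part~(1) is in hand is clean.

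Your observation about the missing hypothesis $\ga + \gb \neq 0$ is accurate and worth flagging: as stated, the lemma fails (e.g.\ in type $A$ take $\gb = -\ga$ and $\gd$ any root with $\ga - \gd \in \gD$ but $-\ga - \gd \notin \gD$). Your diagnosis that every invocation in the paper has $\ga, \gb$ lying in $\gD(\fg(1)) \cup \gD(\fz(\fn))$, so that $\ga + \gb$ has strictly positive $\ad(H_\fq)$-grading and hence cannot vanish, is correct; in practice the lemma is always applied with $\ga = \mu$, $\gb = \ge$ (both in $\gD(\fg(1))$) and $\gd$ a root in $\fg(1)$ or $\fz(\fn)$. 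So the gap is in the statement rather than in the paper's use of it, and your suggested fix of recording $\ga + \gb \neq 0$ explicitly is the right one.
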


\begin{proof}
These simply follow from the structure of the complex simple Lie algebras.
\end{proof}

\begin{Lem}\label{Lem5.2.14}
Let $W$ be any $\ad(\fh)$-invariant subspace of $\fg$
with the condition $\gD_{\mu+\ge}(W)\backslash\{\mu,\ge\} \neq \emptyset$. 
If $V(\mu+\ge)$ is a special constituent of $\fl \otimes \fz(\fn)$ of 
type 1a, type 1b, or type 2 then, 
for any $\gd \in \gD_{\mu+\ge}(W) \backslash \{\mu,\ge\}$,
we have $\gd -\mu$, $\gd-\ge \in \gD$.
\end{Lem}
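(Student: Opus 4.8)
The plan is to reduce the statement immediately to Lemma \ref{Lem2.6} by taking the pair of roots there to be $\mu$ and $\ge$. First I would record that both $\mu$, the highest weight of $\fg(1)$, and $\ge$ (which is either $\geg$ or $\geng$ by Definition \ref{Def5.2.7}) are genuine roots in $\gD(\fg(1))$; this is exactly the content of the remark in Subsection \ref{SS44} that all the relevant highest weights are roots, together with the wording of Definition \ref{Def5.2.7}.

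The key observation is that the hypothesis ``type 1a, type 1b, or type 2'' is precisely what guarantees $\mu+\ge\notin\gD$. Indeed, Definition \ref{Def5.2.8} stipulates in case 1a and case 1b that $\mu+\ge$ is not a root, and in case 2 that $\mu+\ge=2\mu$ is not a root. (This is exactly where the excluded case, type 3, would fail, since there $\mu+\ge$ is by definition a root, so it is natural that type 3 is omitted from the statement.) Thus in all three admissible cases the non-root condition $\mu+\ge\notin\gD$ holds.

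Next I would unwind the membership $\gd\in\gD_{\mu+\ge}(W)\backslash\{\mu,\ge\}$. By the definition of $\gD_{\mu+\ge}(W)$ this means $\gd\in\gD$ with $(\mu+\ge)-\gd\in\gD$, together with $\gd\neq\mu$ and $\gd\neq\ge$. These are exactly the hypotheses of Lemma \ref{Lem2.6} applied with $(\ga,\gb)=(\mu,\ge)$: we have $\mu,\ge,\gd\in\gD$ with $\mu\neq\gd$ and $\ge\neq\gd$, the non-root condition $\mu+\ge\notin\gD$ from the previous paragraph, and $\mu+\ge-\gd\in\gD$. Applying part (1) of Lemma \ref{Lem2.6} then yields $\mu-\gd\in\gD$ and $\ge-\gd\in\gD$. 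Since the negative of a root is a root, i.e.\ $\gD=-\gD$, this gives $\gd-\mu\in\gD$ and $\gd-\ge\in\gD$, which is the desired conclusion.

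I expect no substantial obstacle here: the lemma is an essentially immediate corollary of Lemma \ref{Lem2.6} once one recognizes that the type classification in Definition \ref{Def5.2.8} was designed so that the non-root condition $\mu+\ge\notin\gD$ holds in exactly cases 1a, 1b, and 2. The only care required is to confirm that every hypothesis of Lemma \ref{Lem2.6} is met verbatim, in particular that $\mu$ and $\ge$ are roots and are distinct from $\gd$, both of which are built into the setup.
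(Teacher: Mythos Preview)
Your proposal is correct and follows essentially the same approach as the paper: both observe that in types 1a, 1b, and 2 the weight $\mu+\ge$ is not a root, and then invoke Lemma \ref{Lem2.6} with $(\ga,\gb,\gd)=(\mu,\ge,\gd)$ to conclude. Your write-up is simply more explicit in checking the hypotheses and in noting that $\gD=-\gD$ to pass from $\mu-\gd,\ge-\gd\in\gD$ to $\gd-\mu,\gd-\ge\in\gD$.
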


\begin{proof}
If $V(\mu+\ge)$ is of type 1a, type 1b, or type 2 then, by definition,
$\mu+\ge$ is not a root. Then this lemma simply follows from 
Lemma \ref{Lem2.6}
\end{proof}

\begin{Rem}\label{Rem736}
A direct observation shows that if $V(\mu+\ge)$ is a special constituent
of type 1a then $\gD_{\mu+\ge}(\fg(1))\backslash \{\mu,\ge\} \neq \emptyset$.
\end{Rem}

\begin{Lem}\label{Lem5.2.15}
If $V(\mu+\ge)$ is a special constituent of $\fl \otimes \fz(\fn)$ of type 1a
then, for any $\ga \in \gD_{\mu+\ge}(\fg(1))$ and 
any $\gamma_j \in \gD_{\mu+\ge}(\fz(\fn))$, we have 
$\gamma_j - \ga \in \gD$.
\end{Lem}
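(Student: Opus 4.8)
The plan is to reduce the statement to a single application of Lemma \ref{Lem2.6}. Fix $\ga \in \gD_{\mu+\ge}(\fg(1))$ and $\gamma_j \in \gD_{\mu+\ge}(\fz(\fn))$, and set $\gb := \mu + \ge - \ga$. By the very definition of $\gD_{\mu+\ge}(\fg(1))$, the weight $\gb$ lies in $\gD$, so I have three genuine roots $\ga$, $\gb$, $\gamma_j$ to feed into Lemma \ref{Lem2.6}, playing the roles of $\ga$, $\gb$, $\gd$ there. The point is that the combinatorial shape demanded by that lemma is exactly the data encoded in the two sets $\gD_{\mu+\ge}(\fg(1))$ and $\gD_{\mu+\ge}(\fz(\fn))$.

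Next I would verify the two hypotheses of Lemma \ref{Lem2.6}. The relevant sum is $\ga + \gb = \mu + \ge$, which is \emph{not} a root because $V(\mu+\ge)$ is of type 1a (Definition \ref{Def5.2.8}, cf. Remark \ref{Rem5.4.6}); this is the single place where the hypothesis ``type 1a'' is used. On the other hand $\ga + \gb - \gamma_j = \mu + \ge - \gamma_j$ is a root precisely because $\gamma_j \in \gD_{\mu+\ge}(\fz(\fn))$. The side condition $\ga, \gb \neq \gamma_j$ is immediate from the grading: both $\ga$ and $\gb = \mu+\ge-\ga$ have $\ad(H_\fq)$-eigenvalue $1$, whereas $\gamma_j \in \gD(\fz(\fn))$ has eigenvalue $2$. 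With both hypotheses in hand, part (1) of Lemma \ref{Lem2.6} yields $\ga - \gamma_j \in \gD$, and since $\gD = -\gD$ this is equivalent to $\gamma_j - \ga \in \gD$, which is the desired conclusion.

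I do not expect a genuine obstacle here: all the content is in recognizing that the hypotheses repackage into the form required by Lemma \ref{Lem2.6}, the only delicate point being the confirmation that $\mu+\ge \notin \gD$ (the type 1a input) together with the eigenvalue bookkeeping that separates $\ga,\gb$ from $\gamma_j$. As a consistency check, a more hands-on route is available: in the type 1a case every $\gamma_j \in \gD(\fz(\fn))$ is long (Remark \ref{Rem5.2.11}) and $\ga + \gamma_j$ is never a root (its eigenvalue would be $3$), so Lemma \ref{Lem3.1.3} forces $\IP{\ga}{\gamma_j^\vee} = p_{\gamma_j,\ga} \in \{0,1\}$, whence the claim amounts to excluding $\IP{\ga}{\gamma_j} = 0$. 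Since the Lemma \ref{Lem2.6} argument sidesteps this case analysis, I would present that version as the proof and relegate the long-root computation to a remark.
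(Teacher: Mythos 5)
Your proof is correct, but it is a genuinely different and noticeably shorter route than the paper's. The paper applies Lemma \ref{Lem2.6} only to the triple $(\mu,\ge,\gamma_j)$, which settles the cases $\ga=\mu$ and $\ga=\ge$, and then for $\ga\in\gD_{\mu+\ge}(\fg(1))\setminus\{\mu,\ge\}$ it argues by contradiction: assuming $\IP{\gamma_j}{\ga}=0$, it shows $(\mu-\ga)+(\ge-\gamma_j)$ would be a root of nonpositive squared length, using the full strength of type 1a (both $\mu$ and $\ge$ long, $\IP{\mu}{\ge}=0$ from Remark \ref{Rem5.4.6}, and the long-root facts of Lemma \ref{Lem3.1.3} and Remark \ref{Rem5.2.11}). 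You instead apply Lemma \ref{Lem2.6} once to the triple $(\ga,\gt(\ga),\gamma_j)$, and all hypotheses check out exactly as you say: $\gt(\ga)=\mu+\ge-\ga\in\gD$ by definition of $\gD_{\mu+\ge}(\fg(1))$, the sum $\ga+\gt(\ga)=\mu+\ge$ is not a root (and is nonzero, which is the implicit hypothesis under which Lemma \ref{Lem2.6} holds --- here guaranteed since $\mu+\ge$ has $\ad(H_\fq)$-eigenvalue $2$), $\mu+\ge-\gamma_j\in\gD$ by choice of $\gamma_j$, and the eigenvalue bookkeeping rules out $\ga=\gamma_j$ or $\gt(\ga)=\gamma_j$. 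What your approach buys is economy and generality: the only input from ``type 1a'' is $\mu+\ge\notin\gD$, so the same one-line argument covers types 1b and 2 as well, whereas the paper's norm computation is specific to type 1a. The paper's argument, on the other hand, produces the quantitative fact $\IP{\gamma_j}{\ga}>0$ directly, which is the form in which the statement is reused elsewhere, but that also follows from your conclusion via Lemma \ref{Lem3.1.3}. Your choice to present the Lemma \ref{Lem2.6} version and relegate the hands-on inner-product computation to a remark is sound.
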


\begin{proof}
By Lemma \ref{Lem2.6}, we have $\gamma_j -\mu, \gamma_j -\ge \in \gD$.
So, let $\ga \neq \mu,\ge$.
We show that $\IP{\gamma_j}{\ga} > 0$. Observe that since
$\ga \in \gD(\fg(1))$ and $\gamma_j \in \gD(\fz(\fn))$, we have 
$\gamma_j + \ga \notin \gD$. Thus $\IP{\gamma_j}{\ga} \geq 0$.
Since $\ga \in \gD_{\mu+\ge}(\fg(1))\backslash \{\mu,\ge\}$ 
and $\gamma_j \in \gD_{\mu+\ge}(\fz(\fn))$, 
by Lemma \ref{Lem5.2.14}, we have 
$\mu-\ga$, $\ge - \gamma_j \in \gD$. 
Then we first claim that if 
$\IP{\gamma_j}{\ga}=0$ then 
$(\mu-\ga) + (\ge-\gamma_j) \in \gD$.
Since $V(\mu+\ge)$ is assumed to be of type 1a, 
both $\mu$ and $\ge$ are long roots.
Thus, by Lemma \ref{Lem3.1.3}, 
$\IP{\gamma_j}{\cmu} = \IP{\ga}{\cge} = 1$; in particular, 
$\IP{\gamma_j}{\mu}$, $\IP{\ga}{\ge} > 0$.
By Remark \ref{Rem5.4.6}, we have $\IP{\mu}{\ge} = 0$.
Then,
\begin{equation*}
\IP{\mu-\ga}{\ge-\gamma_j} = -\IP{\mu}{\gamma_j} - \IP{\ga}{\ge} < 0.
\end{equation*}
Therefore, as $\mu-\ga, \ge-\gamma_j \in \gD$,
it follows that $(\mu-\ga)+(\ge-\gamma_j) \in \gD$.
On the other hand, since $\IP{\mu}{\ge}=0$
and $\IP{\gamma_j}{\ga}$ is assumed to be 0,
we have
\begin{align*}
&||(\mu-\ga) + (\ge-\gamma_j)||^2\\
&=||\mu||^2 + ||\ga||^2 + ||\ge||^2 + ||\gamma_j||^2
-2\IP{\ga}{\mu} - 2\IP{\ga}{\ge} - 2\IP{\gamma_j}{\mu}
-2\IP{\gamma_j}{\ge}.
\end{align*}
For $\nu=\ga, \gamma_j$ and 
$\zeta = \mu, \ge$, by Lemma \ref{Lem3.1.3}, we have 
$\IP{\nu}{\zeta^\vee} = 2\IP{\nu}{\zeta}/||\zeta||^2 = 1$,
as $\mu$ and $\ge$ are long roots.
Therefore,
$2\IP{\nu}{\zeta} = ||\zeta||^2$, and so, 
\begin{equation*}
||(\mu-\ga) + (\ge-\gamma_j)||^2
= ||\ga||^2 + ||\gamma_j||^2 - ||\mu||^2 - ||\ge||^2.
\end{equation*}
Since $\mu$ and $\ge$ are assumed to be long roots, this shows that
$||(\mu-\ga) + (\ge-\gamma_j)||^2 \leq 0$, which contradicts that
$(\mu-\ga) + (\ge-\gamma_j)$ is a root.
Hence, $\IP{\gamma_j}{\ga} >0$.
\end{proof}

\begin{Lem}\label{Lem5.2.16}
If $V(\mu+\ge)$ is a special constituent of $\fl \otimes \fz(\fn)$ of type 1a
or type 2 then, for any $\gamma_j \in \gD_{\mu+\ge}(\fz(\fn))$,
\begin{equation*}
\gD_{\mu+\ge}(\fg(1)) \subset \gD_{\gamma_j}(\fg(1)).
\end{equation*}
In particular, $\gD_{\gamma_j}(\fg(1)) \neq \emptyset$
for any $\gamma_j \in \gD_{\mu+\ge}(\fz(\fn))$.
\end{Lem}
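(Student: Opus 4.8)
The plan is to split according to the two cases permitted by the hypothesis, \textbf{type 1a} and \textbf{type 2}, and in each case to reduce the statement to a result already in hand. Unwinding the definition $\gD_\nu(W) = \{\ga \in \gD(W) \mid \nu - \ga \in \gD\}$, the inclusion to be proved reads: for every $\ga \in \gD(\fg(1))$ with $(\mu+\ge)-\ga \in \gD$ and every fixed $\gamma_j \in \gD_{\mu+\ge}(\fz(\fn))$, one has $\gamma_j - \ga \in \gD$. I would keep $\gamma_j$ arbitrary in $\gD_{\mu+\ge}(\fz(\fn))$ throughout.

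For the type 1a case the inclusion is essentially a restatement of an earlier lemma. Lemma \ref{Lem5.2.15} asserts precisely that, for $V(\mu+\ge)$ of type 1a, every $\ga \in \gD_{\mu+\ge}(\fg(1))$ and every $\gamma_j \in \gD_{\mu+\ge}(\fz(\fn))$ satisfy $\gamma_j - \ga \in \gD$. This is exactly the containment $\gD_{\mu+\ge}(\fg(1)) \subset \gD_{\gamma_j}(\fg(1))$, so no further work is needed here beyond the citation.

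For the type 2 case I would first invoke Lemma \ref{Lem5.2.141}, which collapses $\gD_{\mu+\ge}(\fg(1))$ to the single root $\{\mu\}$; it then suffices to check $\gamma_j - \mu \in \gD$. The key observation is that in type 2 one has $\ge = \mu$, so $\mu + \ge = 2\mu$, and by the very definition of type 2 this is \emph{not} a root, i.e. $2\mu \notin \gD$. I would then apply Lemma \ref{Lem2.6} with $\ga = \gb = \mu$ and $\gd = \gamma_j$: the hypotheses $\mu \neq \gamma_j$ (they lie in the distinct eigenspaces $\fg(1)$ and $\fg(2)$ of $\ad(H_\fq)$), $\mu+\mu = 2\mu \notin \gD$, and $2\mu - \gamma_j \in \gD$ (this last because $\gamma_j \in \gD_{\mu+\ge}(\fz(\fn)) = \gD_{2\mu}(\fz(\fn))$) all hold. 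The conclusion of Lemma \ref{Lem2.6} gives $\mu - \gamma_j \in \gD$, whence $\gamma_j - \mu \in \gD$, so that $\mu \in \gD_{\gamma_j}(\fg(1))$ and the inclusion follows.

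For the final "in particular" assertion I would argue that $\gD_{\mu+\ge}(\fg(1))$ is never empty, since $(\mu+\ge)-\mu = \ge \in \gD(\fg(1)) \subset \gD$ shows $\mu \in \gD_{\mu+\ge}(\fg(1))$, while Lemma \ref{Lem5.2.13} guarantees that there is at least one $\gamma_j \in \gD_{\mu+\ge}(\fz(\fn))$; the inclusion just established then forces $\gD_{\gamma_j}(\fg(1)) \neq \emptyset$ for each such $\gamma_j$. The only step requiring a genuine (if brief) argument is the type 2 case, and its sole subtlety is recognizing that $2\mu \notin \gD$ is exactly what licenses the use of Lemma \ref{Lem2.6}; everything else is a direct appeal to Lemmas \ref{Lem5.2.15}, \ref{Lem5.2.141}, and \ref{Lem5.2.13}.
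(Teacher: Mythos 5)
Your proposal is correct and follows essentially the same route as the paper: the type 1a case is a direct citation of Lemma \ref{Lem5.2.15}, and the type 2 case combines Lemma \ref{Lem5.2.141} with the observation that $2\mu\notin\gD$ and $2\mu-\gamma_j\in\gD$ force $\gamma_j-\mu\in\gD$. The only cosmetic difference is that you apply Lemma \ref{Lem2.6} directly (with $\ga=\gb=\mu$) where the paper cites Lemma \ref{Lem5.2.14}, which is itself just that application of Lemma \ref{Lem2.6}.
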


\begin{proof}
It is clear that the type 1a case follows from Lemma \ref{Lem5.2.15}.
The type 2 case follows from Lemma \ref{Lem5.2.141} and Lemma \ref{Lem5.2.14}.
\end{proof}

If $V(\mu+\ge)$ is a special constituent of $\fl \otimes \fz(\fn)$
then, for $\gb \in \gD$, we write
\begin{equation*}
\gt(\gb) = (\mu+\ge)-\gb.
\end{equation*}

\begin{Lem}\label{Lem5.2.191}
If $V(\mu+\ge)$ is a special constituent of $\fl \otimes \fz(\fn)$ of type 1a or type 2
then, for any $\gamma_j \in \gD_{\mu+\ge}(\fz(\fn))$, 
\begin{equation*}
\gD_{\gt(\gamma_j)}(\fg(1)) \neq \emptyset.
\end{equation*}
\end{Lem}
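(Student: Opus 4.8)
The plan is to exhibit an explicit element of $\gD_{\gt(\gamma_j)}(\fg(1))$, namely the root $\ge$ itself. First recall that $\gt(\gamma_j) = (\mu+\ge) - \gamma_j$ is indeed a root: the hypothesis $\gamma_j \in \gD_{\mu+\ge}(\fz(\fn))$ means exactly that $(\mu+\ge)-\gamma_j \in \gD$. To conclude $\ge \in \gD_{\gt(\gamma_j)}(\fg(1))$ I must verify the two defining conditions of that set: that $\ge \in \gD(\fg(1))$, and that $\gt(\gamma_j) - \ge \in \gD$. The first holds by Definition \ref{Def5.2.7}, since $\ge$ is by convention either $\geg$ or $\geng$, each a root contributing to $\fg(1)$. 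For the second, a direct computation gives $\gt(\gamma_j) - \ge = \mu - \gamma_j$, so it suffices to show $\mu - \gamma_j \in \gD$, equivalently $\gamma_j - \mu \in \gD$.

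The key step is to obtain $\gamma_j - \mu \in \gD$ from Lemma \ref{Lem5.2.14} applied with $W = \fz(\fn)$. I would first check that the standing hypothesis $\gD_{\mu+\ge}(\fz(\fn)) \backslash \{\mu,\ge\} \neq \emptyset$ is met: Lemma \ref{Lem5.2.13} gives $\gD_{\mu+\ge}(\fz(\fn)) \neq \emptyset$, while every root of $\fz(\fn) = \fg(2)$ lies in grade $2$ and $\mu,\ge$ lie in grade $1$, so $\mu,\ge \notin \gD(\fz(\fn))$ and deleting them changes nothing. For the same grading reason the given $\gamma_j$ automatically lies in $\gD_{\mu+\ge}(\fz(\fn)) \backslash \{\mu,\ge\}$. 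Since $V(\mu+\ge)$ is of type 1a or type 2, and both of these are among the types covered by Lemma \ref{Lem5.2.14}, that lemma yields $\gamma_j - \mu, \gamma_j - \ge \in \gD$. In particular $\mu - \gamma_j \in \gD$, which is precisely what the second condition above requires.

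There is essentially no serious obstacle here. The only points needing care are the bookkeeping that ensures the hypotheses of Lemma \ref{Lem5.2.14} hold, namely invoking Lemma \ref{Lem5.2.13} for non-emptiness and using the $\Z$-grading to see that $\gamma_j$, a grade-$2$ root, is distinct from $\mu$ and $\ge$, which are grade-$1$ roots. Once that is in place, the element $\ge$ witnesses $\gD_{\gt(\gamma_j)}(\fg(1)) \neq \emptyset$, completing the proof.
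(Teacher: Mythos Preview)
Your argument is correct and essentially matches the paper's own proof. The paper simply cites Lemma~\ref{Lem2.6} directly (applied with $\ga=\mu$, $\gb=\ge$, $\gd=\gamma_j$), whereas you route the same reasoning through Lemma~\ref{Lem5.2.14}, whose proof is itself an immediate application of Lemma~\ref{Lem2.6}; either way one obtains $\mu-\gamma_j\in\gD$ and hence $\ge\in\gD_{\gt(\gamma_j)}(\fg(1))$.
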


\begin{proof}
This simply follows from Lemma \ref{Lem2.6}.
\end{proof}

\begin{Lem}\label{Lem5.2.17}
If $V(\mu+\ge)$ is a special constituent of type 1a or type 2 then 
\begin{equation*}
\sum_{\gamma_j \in \gD_{\mu+\ge}(\fz(\fn))}
N_{\mu, \ge-\gamma_j} N_{-\mu, \gamma_j-\ge}
N_{\ge,-\gamma_j}N_{-\ge,\gamma_j}
> 0,
\end{equation*}
where $N_{\ga,\gb}$ are the structure constants for $\ga,\gb \in \gD$
defined in Section \ref{chap:O1}.
\end{Lem}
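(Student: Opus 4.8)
The plan is to show that the index set is nonempty and that \emph{every} summand is strictly positive, so that the whole sum is positive. First I would regroup the four structure constants in each summand into two factors, each having the shape $N_{\ga,\gb}N_{-\ga,-\gb}$ to which the normalization (H7) directly applies:
\begin{equation*}
\big( N_{\mu,\,\ge-\gamma_j}\, N_{-\mu,\,\gamma_j-\ge} \big)
\big( N_{\ge,\,-\gamma_j}\, N_{-\ge,\,\gamma_j} \big).
\end{equation*}
Here the first factor is $N_{\ga,\gb}N_{-\ga,-\gb}$ with $\ga=\mu$ and $\gb=\ge-\gamma_j$, while the second is $N_{\ga,\gb}N_{-\ga,-\gb}$ with $\ga=\ge$ and $\gb=-\gamma_j$.

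Before applying (H7) I would verify that all the roots in sight are defined and that the relevant sums are roots. Since $\gamma_j \in \gD(\fz(\fn))$ while $\mu,\ge \in \gD(\fg(1))$ lie in a different graded piece, one has $\gamma_j \neq \mu,\ge$ automatically, so $\gamma_j \in \gD_{\mu+\ge}(\fz(\fn)) \setminus \{\mu,\ge\}$. Applying Lemma \ref{Lem5.2.14} with $W=\fz(\fn)$ then yields $\gamma_j-\mu,\ \gamma_j-\ge \in \gD$, hence $\ge-\gamma_j \in \gD$ as well. Combining this with $\gt(\gamma_j)=(\mu+\ge)-\gamma_j \in \gD$, which holds by the very definition of $\gD_{\mu+\ge}(\fz(\fn))$, shows that the two pertinent sums $\mu+(\ge-\gamma_j)=\gt(\gamma_j)$ and $\ge+(-\gamma_j)=\ge-\gamma_j$ are both roots; by the last clause of (H7) all four structure constants are therefore non-zero.

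Finally I would invoke (H7) itself: whenever $\ga+\gb \in \gD$ one has $N_{\ga,\gb}N_{-\ga,-\gb} = -\tfrac{1}{2}\,q_{\ga,\gb}(1+p_{\ga,\gb})\,||\ga||^2$, which is strictly negative since $q_{\ga,\gb}\geq 1$ and $||\ga||^2>0$. Thus both factors above are strictly negative, so their product, i.e.\ each individual summand, is strictly positive. By Lemma \ref{Lem5.2.13} the index set $\gD_{\mu+\ge}(\fz(\fn))$ is nonempty, and a sum of strictly positive reals over a nonempty set is strictly positive, which is the claim. The one point requiring care — and the only real obstacle — is ensuring that $\ge-\gamma_j$ is always a root, so that the second factor genuinely has the $N_{\ga,\gb}N_{-\ga,-\gb}$ form covered by (H7); this is precisely what Lemma \ref{Lem5.2.14} supplies, and it is the reason the hypothesis that $V(\mu+\ge)$ be of type 1a or type 2 (equivalently $\mu+\ge \notin \gD$) is needed.
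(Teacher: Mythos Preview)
Your proof is correct and follows essentially the same route as the paper: group the four constants into the two products $N_{\mu,\ge-\gamma_j}N_{-\mu,\gamma_j-\ge}$ and $N_{\ge,-\gamma_j}N_{-\ge,\gamma_j}$, use Lemma~\ref{Lem5.2.14} to ensure $\ge-\gamma_j\in\gD$ so that (H7) applies to both, conclude each factor is strictly negative, and invoke Lemma~\ref{Lem5.2.13} for nonemptiness of the index set. Your write-up is in fact slightly more explicit than the paper's in checking the hypotheses of (H7).
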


\begin{proof}
It follows from the normalization (H7) in Section \ref{chap:O1}
that
\begin{equation*}
N_{\mu, \ge-\gamma_j} N_{-\mu, \gamma_j-\ge}=
-\frac{q_{\mu, \ge-\gamma_j}(1+p_{\mu,\ge-\gamma_j})}{2}||\mu||^2
\end{equation*}
and
\begin{equation*}
N_{\ge,-\gamma_j}N_{-\ge,\gamma_j}=
-\frac{q_{\ge, -\gamma_j}(1+p_{\ge,-\gamma_j})}{2}||\ge||^2.
\end{equation*}
In particular, by (\ref{Eqn3.1.1}) in Section \ref{chap:O1},
$N_{\mu, \ge-\gamma_j} N_{-\mu, \gamma_j-\ge} \leq 0$ and  
$N_{\ge,-\gamma_j}N_{-\ge,\gamma_j} \leq 0$.
By Lemma \ref{Lem5.2.13} and Lemma \ref{Lem5.2.14},
$\gD_{\mu+\ge}(\fz(\fn)) \neq \emptyset$ and 
$\gamma_j -\ge \in \gD$ for any $\gamma_j \in \gD_{\mu+\ge}(\fz(\fn))$.
Therefore,  for all $\gamma_j \in \gD_{\mu+\ge}(\fz(\fn))$, we have
\begin{equation*}
N_{\mu, \ge-\gamma_j} N_{-\mu, \gamma_j-\ge}
N_{\ge,-\gamma_j}N_{-\ge,\gamma_j} > 0.
\end{equation*}
\end{proof}

\begin{Lem}\label{Lem5.2.231}
If $V(\mu+\ge)$ is a special constituent of type 1a then,
for any $\ga \in \gD_{\mu+\ge}(\fg(1))\backslash \{\mu,\ge\}$ 
and any $\gamma_j \in \gD_{\mu+\ge}(\fz(\fn))$, we have the following:
\begin{enumerate}
\item[(1)] $[X_{-\gamma_j}, X_{\ga-\mu}] = [X_{\gt(\gamma_j)}, X_{\ga-\mu}] = 0$.
\item[(2)] $N_{\mu-\gamma_j, \ga-\mu} N_{-(\mu-\gamma_j), -(\ga-\mu)}
=-\frac{||\mu-\gamma_j||^2}{2}$.
\end{enumerate}
\end{Lem}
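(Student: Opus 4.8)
The plan is to reduce the whole statement to the fact that, for a type 1a constituent, the weights $\mu$, $\ge$, and $\gamma_j$ are all long roots (by Definition \ref{Def5.2.8} and Remark \ref{Rem5.2.11}), so that the sharp root-string control for long roots in Lemma \ref{Lem3.1.3} applies. The one point to keep in mind throughout is that $\ga$ itself need not be long, so every estimate must be routed through $\mu$, $\ge$, $\gamma_j$. First I would collect the roots supplied by Lemmas \ref{Lem5.2.14} and \ref{Lem5.2.15}: namely $\ga-\mu$, $\ga-\ge \in \gD$ and $\gamma_j-\mu$, $\gamma_j-\ge$, $\gamma_j-\ga \in \gD$ (hence also their negatives); in particular $X_{\ga-\mu}$ and $X_{\gt(\gamma_j)}$ are honest root vectors, since $\gt(\gamma_j)=(\mu+\ge)-\gamma_j\in\gD$ by definition of $\gD_{\mu+\ge}(\fz(\fn))$.

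For part (1) the strategy is to show that each bracket vanishes by verifying that the relevant sum of weights is neither $0$ nor a root. Since $\mu-\gamma_j, \ga-\gamma_j\in\gD$ and $\gamma_j$ is long, Lemma \ref{Lem3.1.3}(1) gives $\IP{\mu}{\gamma_j^\vee}=\IP{\ga}{\gamma_j^\vee}=1$, so $\IP{\ga-\mu}{\gamma_j^\vee}=0$; were $(\ga-\mu)-\gamma_j$ a root, Lemma \ref{Lem3.1.3}(1) would force this inner product to be $1$, a contradiction, and the weight is nonzero on grading grounds, giving $[X_{-\gamma_j},X_{\ga-\mu}]=0$. For the second bracket I would rewrite $\gt(\gamma_j)+(\ga-\mu)=(\ga-\gamma_j)+\ge$; here $\ga-\gamma_j\in\gD$, and since $\ga-\ge, \gamma_j-\ge\in\gD$ with $\ge$ long, Lemma \ref{Lem3.1.3}(1) yields $\IP{\ga}{\ge^\vee}=\IP{\gamma_j}{\ge^\vee}=1$, so $\IP{\ga-\gamma_j}{\ge^\vee}=0$. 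Lemma \ref{Lem3.1.3}(2) then rules out $(\ga-\gamma_j)+\ge$ being a root; and since $\IP{\ga-\gamma_j}{\ge^\vee}=0\neq -2$, this weight cannot be $0$ (which would force $\ga-\gamma_j=-\ge$). Hence $[X_{\gt(\gamma_j)},X_{\ga-\mu}]=0$.

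For part (2) I would apply normalization (H7) to the pair $A=\mu-\gamma_j$ and $B=\ga-\mu$, whose sum $A+B=\ga-\gamma_j$ is a root; this reduces the claim to showing $q_{A,B}=1$ and $p_{A,B}=0$, which makes the right-hand side exactly $-\tfrac12 ||\mu-\gamma_j||^2$. The key preliminary computation is that $\mu-\gamma_j$ is long: expanding $||\gamma_j-\mu||^2$ and using $\IP{\mu}{\gamma_j^\vee}=1$ together with $||\gamma_j||=||\mu||$ gives $||\mu-\gamma_j||^2=||\mu||^2$. With $A$ long, $p_{A,B}=0$ is immediate from Lemma \ref{Lem3.1.3}(3) applied to $B+A=\ga-\gamma_j\in\gD$, and a short computation of $\IP{B}{A^\vee}$ (using $\IP{\ga}{\mu^\vee}=\IP{\ga}{\gamma_j^\vee}=\IP{\mu}{\gamma_j^\vee}=1$, which give $\IP{\ga-\mu}{\mu-\gamma_j}=-\tfrac12||\mu||^2$) yields $\IP{B}{A^\vee}=-1$; by the relation $\IP{B}{A^\vee}=p_{A,B}-q_{A,B}$ this forces $q_{A,B}=p_{A,B}+1=1$. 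Substituting $p_{A,B}=0$, $q_{A,B}=1$, and $||A||^2=||\mu-\gamma_j||^2$ into (H7) gives the asserted value.

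The main obstacle is bookkeeping rather than depth: one must consistently avoid assuming $\ga$ is long, so that the length $||\ga||^2$ never enters and all comparisons are made against the long roots $\mu$, $\ge$, $\gamma_j$. The only genuinely substantive step is recognizing that $\mu-\gamma_j$ is long, since the exact constant in (2) hinges on both $p_{A,B}=0$ (which is Lemma \ref{Lem3.1.3}(3) for this long root) and $q_{A,B}=1$.
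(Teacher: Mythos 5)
Your proposal is correct and follows essentially the same route as the paper: part (1) by showing the weights $(\ga-\mu)-\gamma_j$ and $\gt(\gamma_j)+(\ga-\mu)$ are neither zero nor roots via Lemma \ref{Lem3.1.3} applied to the long roots (the paper pairs against $\cmu$ and uses that no root plus $2\mu$ is a root, while you pair against $\gamma_j^\vee$ and $\ge^\vee$ — a cosmetic difference), and part (2) identically by noting $\mu-\gamma_j$ is long and reducing to $p_{\mu-\gamma_j,\ga-\mu}=0$, $q_{\mu-\gamma_j,\ga-\mu}=1$ in (H7). Your explicit care that $\ga$ need not be long is exactly the right bookkeeping discipline, and all the inner-product computations check out.
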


\begin{proof}
To prove (1), we show that $-\gamma_j + \ga -\mu$ and 
$\gt(\gamma_j) + \ga-\mu$ are neither zero nor roots.
First of all, if $-\gamma_j + \ga -\mu = 0$ then 
$\gamma_j = \mu-\ga \in \gD(\fl)$, which contradicts
that $\gamma_j \in \gD(\fz(\fn))$.
Next, if $\gt(\gamma_j) + \ga -\mu = 0$ then since 
$\gt(\gamma_j) + \ga - \mu = \ge + \ga -\gamma_j$,
we would have
$\ga+\ge=\gamma_j \in \gD$. On the other hand,
as $V(\mu+\ge)$ is assumed to be of type 1a, $\ge$ is a long root. 
As $\ga \in \gD_{\mu+\ge}(\fg(1))\backslash \{\mu, \ge\}$, 
by Lemma \ref{Lem5.2.14}, we have $\ga-\ge \in \gD$.
Then, by Lemma \ref{Lem3.1.3}, it follows that $\ga+\ge \not \in \gD$,
which is a contradiction.
To show $\gamma_j + \ga - \mu$ is not a root,
observe that, by Lemma \ref{Lem3.1.3}, we have
\begin{equation*}
\IP{-\gamma_j + \ga-\mu}{\cmu}
=-1 + 1 - 2 = -2.
\end{equation*}
Thus, if $-\gamma_j +\ga -\mu \in \gD$ then
$(-\gamma_j + \ga -\mu) + 2\mu$ would be a root.
However, since $\mu$ is a long root, 
it is impossible.
The fact that $\gt(\gamma_j) + \ga-\mu \notin \gD$
can be shown in a similar manner.

By the normalization (H7) in Section \ref{chap:O1},
to show (2), 
it suffices to show that 
$p_{\mu-\gamma_j, \ga-\mu} = 0$ and  
$q_{\mu-\gamma_j, \ga-\mu} = 1$.
Observe that,
by Lemma \ref{Lem5.2.15},
$(\ga-\mu) + (\mu-\gamma_j) = \gamma_j - \ga$ is a root.
As $V(\mu+\ge)$ is assumed to be of type 1a,
$\mu$ is a long root.
By Remark \ref{Rem5.2.11},
the root $\gamma_j$ is also a long root.
Therefore  $\mu-\gamma_j$ is a long root.
Now the proposed equality follows from Lemma \ref{Lem3.1.3}.
\end{proof}

\begin{Lem}\label{Lem5.2.21}
If $V(\mu+\ge)$ is a special constituent of type 1a then,
for any $\ga \in \gD_{\mu +\ge}(\fg(1))\backslash\{\mu,\ge\}$ and 
any $\gamma_j \in \gD_{\mu+\ge}(\fz(\fn))$, we have the following:
\begin{enumerate}
\item[(1)] $N_{\ga, -\gamma_j} N_{\mu, -\ga}
= N_{\mu, -\gamma_j}N_{\ga-\mu, \mu-\gamma_j}$, and
\item[(2)] $N_{-\gt(\gamma_j), \gt(\ga)}N_{-\gt(\ga), \gt(\mu)}
=N_{-\gt(\gamma_j), \gt(\mu)}N_{-(\mu-\gamma_j), -(\ga-\mu)}$.
\end{enumerate}
\end{Lem}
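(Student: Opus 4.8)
The plan is to derive both identities as instances of the Jacobi identity applied to a carefully chosen triple of root vectors, in each case arranging for exactly one of the three Jacobi terms to vanish by Lemma \ref{Lem5.2.231}(1), and then rewriting the surviving two-term relation into the stated shape using the antisymmetry $N_{\gb,\gd}=-N_{\gd,\gb}$ of the structure constants (immediate from (H5) and $[X_\gb,X_\gd]=-[X_\gd,X_\gb]$) together with the cocycle relation (H6), which reads $N_{\ga_1,\ga_2}=N_{\ga_2,\ga_3}=N_{\ga_3,\ga_1}$ whenever $\ga_1+\ga_2+\ga_3=0$.

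For (1), first I would apply the Jacobi identity to the triple $X_{\ga-\mu},X_\mu,X_{-\gamma_j}$. Here $\ga-\mu\in\gD$ by Lemma \ref{Lem5.2.14} and $\ga-\gamma_j\in\gD$ by Lemma \ref{Lem5.2.15}, so every bracket occurring is meaningful. Expanding, the three terms are governed by $N_{\ga-\mu,\mu}N_{\ga,-\gamma_j}$, by $N_{\mu,-\gamma_j}N_{\mu-\gamma_j,\ga-\mu}$, and by a term containing $[X_{-\gamma_j},X_{\ga-\mu}]$. That last bracket vanishes by Lemma \ref{Lem5.2.231}(1), so the Jacobi identity collapses to a two-term relation. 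Applying antisymmetry $N_{\mu-\gamma_j,\ga-\mu}=-N_{\ga-\mu,\mu-\gamma_j}$, and then (H6) to the triple $(\ga-\mu,\mu,-\ga)$ (whose sum is zero), which yields $N_{\ga-\mu,\mu}=N_{\mu,-\ga}$, gives precisely $N_{\ga,-\gamma_j}N_{\mu,-\ga}=N_{\mu,-\gamma_j}N_{\ga-\mu,\mu-\gamma_j}$.

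For (2), the key first observation is that $\gt(\mu)=(\mu+\ge)-\mu=\ge$, while $\gt(\ga)=\mu+\ge-\ga$ again lies in $\gD(\fg(1))$ and $-\gt(\gamma_j)=\gamma_j-\mu-\ge$ lies in $\gD(\fl)$. I would then apply the Jacobi identity to the triple $X_{-\gt(\gamma_j)},X_\ge,X_{\mu-\ga}$. Two of the resulting terms reproduce $N_{-\gt(\gamma_j),\gt(\mu)}N_{\gamma_j-\mu,\mu-\ga}$ and, after using antisymmetry $N_{\gt(\ga),-\gt(\gamma_j)}=-N_{-\gt(\gamma_j),\gt(\ga)}$ and (H6) on the zero-sum triple $(\ge,\mu-\ga,\ga-\mu-\ge)$ to identify $N_{\ge,\mu-\ga}=N_{-\gt(\ga),\gt(\mu)}$, the product $N_{-\gt(\gamma_j),\gt(\ga)}N_{-\gt(\ga),\gt(\mu)}$. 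The remaining term contains $[X_{\mu-\ga},X_{-\gt(\gamma_j)}]$, whose root would be $\gamma_j-\ga-\ge=-(\gt(\gamma_j)+(\ga-\mu))$; but $\gt(\gamma_j)+(\ga-\mu)$ fails to be a root precisely because $[X_{\gt(\gamma_j)},X_{\ga-\mu}]=0$ by Lemma \ref{Lem5.2.231}(1), so this term vanishes. The two survivors then give (2) after noting $N_{\gamma_j-\mu,\mu-\ga}=N_{-(\mu-\gamma_j),-(\ga-\mu)}$.

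The routine part is verifying that every intermediate sum entering a bracket is genuinely a root (so the structure constants are the actual ones), which is covered by Lemmas \ref{Lem5.2.14} and \ref{Lem5.2.15}. I expect the only real obstacle to be the choice of triple in (2): unlike the symmetric-looking triple for (1), the correct triple $X_{-\gt(\gamma_j)},X_\ge,X_{\mu-\ga}$ is not the obvious guess, and the computation only closes up once one recognizes that the unwanted cross term is exactly the vanishing bracket $[X_{\gt(\gamma_j)},X_{\ga-\mu}]$ furnished by Lemma \ref{Lem5.2.231}(1).
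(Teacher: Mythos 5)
Your proposal is correct and takes essentially the same route as the paper: express $X_\ga$ as $(1/N_{\ga-\mu,\mu})[X_{\ga-\mu},X_\mu]$, apply the Jacobi identity, kill the cross term via Lemma \ref{Lem5.2.231}(1), and finish with antisymmetry and (H6), with Lemmas \ref{Lem5.2.14} and \ref{Lem5.2.15} guaranteeing the relevant sums are roots. For part (2), where the paper only says ``similarly,'' your explicit triple $X_{-\gt(\gamma_j)},X_\ge,X_{\mu-\ga}$ is a valid instantiation of that same scheme.
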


\begin{proof}
It follows from Lemma \ref{Lem5.2.14} that $\ga -\mu \in \gD$.
Therefore, we have $X_\ga = (1/N_{\ga-\mu,\mu})[X_{\ga-\mu}, X_\mu]$.
Now the assertion (1) follows from the Jacobi identity and the normalization (H6)
with Lemma \ref{Lem5.2.15} and Lemma \ref{Lem5.2.231} (1).
The assertion (2) can be shown similarly.
\end{proof}

\begin{Lem}\label{Lem5.2.22}
Let $\fq$ be a parabolic subalgebra
of quasi-Heisenberg type, listed in (\ref{Eqn4.0.1}) or (\ref{Eqn4.0.2}),
and $\ga_\fq$ be the simple root that parametrizes the parabolic subalgebra $\fq$.
If $V(\mu+\ge)$ is a special constituent of type 1a
then, for any $\ga \in \gD_{\mu +\ge}(\fg(1))\backslash\{\mu,\ge\}$ and 
any $\gamma_j \in \gD_{\mu+\ge}(\fz(\fn))$,
\begin{equation}\label{Eqn9.3.3}
N_{\ga, -\gamma_j}N_{\mu, -\ga}N_{-\gt(\gamma_j), \gt(\ga)}N_{-\gt(\ga), \gt(\mu)}
=N_{\mu, \ge-\gamma_j}N_{\ge, -\gamma_j} \frac{||\ga_\fq||^2}{2}.
\end{equation}
\end{Lem}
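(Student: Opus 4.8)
The plan is to reduce both sides of (\ref{Eqn9.3.3}) to a common expression of the form $N_{\mu,-\gamma_j}\,N_{\ge,\mu-\gamma_j}$ times half the square of a root length, and then to compare the two root lengths. Throughout I use that, since $\ga\in\gD_{\mu+\ge}(\fg(1))\backslash\{\mu,\ge\}$ and $\gamma_j\in\gD_{\mu+\ge}(\cfn)$, Lemma \ref{Lem5.2.14} and Lemma \ref{Lem5.2.15} guarantee that $\ga-\mu$, $\gamma_j-\mu$, $\gamma_j-\ge$, and $\gamma_j-\ga$ are all roots, while $\gt(\gamma_j)=(\mu+\ge)-\gamma_j$, $\gt(\ga)=(\mu+\ge)-\ga$, and $\gt(\mu)=\ge$ are roots by the very definition of $\gD_{\mu+\ge}$; hence every structure constant written below is attached to an honest root, and the relevant ones are nonzero by (H7).

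For the left-hand side I would first apply the two identities of Lemma \ref{Lem5.2.21}, rewriting $N_{\ga,-\gamma_j}N_{\mu,-\ga}$ by part (1) and $N_{-\gt(\gamma_j),\gt(\ga)}N_{-\gt(\ga),\gt(\mu)}$ by part (2), to get
\[
N_{\ga,-\gamma_j}N_{\mu,-\ga}N_{-\gt(\gamma_j),\gt(\ga)}N_{-\gt(\ga),\gt(\mu)}
=N_{\mu,-\gamma_j}\,N_{-\gt(\gamma_j),\gt(\mu)}\,\big(N_{\ga-\mu,\mu-\gamma_j}\,N_{-(\mu-\gamma_j),-(\ga-\mu)}\big).
\]
The bracketed factor is computed from Lemma \ref{Lem5.2.231}(2): using $N_{\ga-\mu,\mu-\gamma_j}=-N_{\mu-\gamma_j,\ga-\mu}$, that lemma yields $N_{\ga-\mu,\mu-\gamma_j}N_{-(\mu-\gamma_j),-(\ga-\mu)}=\tfrac12||\mu-\gamma_j||^2$. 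Next, since $\gt(\mu)=\ge$, the factor $N_{-\gt(\gamma_j),\gt(\mu)}=N_{\gamma_j-\mu-\ge,\,\ge}$ is rewritten by the relation (H6) applied to the triple $(\gamma_j-\mu-\ge,\ \ge,\ \mu-\gamma_j)$, whose sum is $0$, giving $N_{-\gt(\gamma_j),\gt(\mu)}=N_{\ge,\mu-\gamma_j}$. Thus the left-hand side equals $N_{\mu,-\gamma_j}\,N_{\ge,\mu-\gamma_j}\cdot\tfrac12||\mu-\gamma_j||^2$.

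For the right-hand side I would use the Jacobi identity on $X_\mu$, $X_\ge$, $X_{-\gamma_j}$. Because $V(\mu+\ge)$ is of type 1a, the sum $\mu+\ge$ is not a root, so $[X_\mu,X_\ge]=0$ and Jacobi collapses to
\[
N_{\ge,-\gamma_j}\,N_{\mu,\ge-\gamma_j}+N_{-\gamma_j,\mu}\,N_{\ge,\mu-\gamma_j}=0,
\]
that is, $N_{\mu,\ge-\gamma_j}N_{\ge,-\gamma_j}=N_{\mu,-\gamma_j}N_{\ge,\mu-\gamma_j}$ after using $N_{-\gamma_j,\mu}=-N_{\mu,-\gamma_j}$. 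Consequently the right-hand side of (\ref{Eqn9.3.3}) equals $N_{\mu,-\gamma_j}\,N_{\ge,\mu-\gamma_j}\cdot\tfrac12||\ga_\fq||^2$.

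Comparing the two reduced expressions, (\ref{Eqn9.3.3}) comes down to the length identity $||\mu-\gamma_j||^2=||\ga_\fq||^2$; the common prefactor $N_{\mu,-\gamma_j}N_{\ge,\mu-\gamma_j}$ is nonzero since $\mu-\gamma_j$ and $\gt(\gamma_j)=\ge+(\mu-\gamma_j)$ are roots, so no degenerate case arises. For the length identity I would invoke Remark \ref{Rem5.2.11}: in the type 1a situation $\ga_\fq$ and every $\gamma_j\in\gD(\cfn)$ are long and $\mu$ is long, whence $\mu-\gamma_j$ is again long (as already recorded in the proof of Lemma \ref{Lem5.2.231}); since all long roots of the simple algebra $\fg$ share one length, $||\mu-\gamma_j||^2=||\ga_\fq||^2$. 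I expect the main obstacle to be bookkeeping rather than conceptual: the antisymmetry $N_{\ga,\gb}=-N_{\gb,\ga}$, the relation (H6), and Lemma \ref{Lem5.2.231} must be tracked carefully so that both sides land on the identical prefactor, and one must verify that the reduction ends precisely at the comparison $||\mu-\gamma_j||^2=||\ga_\fq||^2$ and not at some other pair of roots.
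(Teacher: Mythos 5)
Your proof is correct and follows essentially the same route as the paper's: Lemma \ref{Lem5.2.21} to factor the left-hand side, Lemma \ref{Lem5.2.231}(2) (with the antisymmetry sign flip) to produce the factor $\tfrac{1}{2}||\mu-\gamma_j||^2$, (H6) to identify $N_{-\gt(\gamma_j),\gt(\mu)}$ with $N_{\ge,\mu-\gamma_j}$, and the long-root length comparison via Remark \ref{Rem5.2.11} to conclude $||\mu-\gamma_j||^2=||\ga_\fq||^2$. The only cosmetic difference is that you re-derive the identity $N_{\mu,\ge-\gamma_j}N_{\ge,-\gamma_j}=N_{\mu,-\gamma_j}N_{\ge,\mu-\gamma_j}$ from the Jacobi identity using $[X_\mu,X_\ge]=0$, where the paper simply cites Lemma \ref{Lem2.6}(2).
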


\begin{proof}
By Lemma \ref{Lem5.2.21}, we have
\begin{align*}
N_{\ga, -\gamma_j}N_{\mu, -\ga}
N_{-\gt(\gamma_j), \gt(\ga)}N_{-\gt(\ga), \gt(\mu)}\nonumber
&=N_{\mu, -\gamma_j}N_{\ga-\mu, \mu-\gamma_j}
    N_{-\gt(\gamma_j), \gt(\mu)}N_{-(\mu-\gamma_j), -(\ga-\mu)}\\
&=N_{\mu, -\gamma_j}N_{-\gt(\gamma_j), \gt(\mu)}
    N_{\ga-\mu, \mu-\gamma_j} N_{-(\mu-\gamma_j), -(\ga-\mu)}\\
&=N_{\mu, -\gamma_j}N_{-\gt(\gamma_j), \gt(\mu)} \frac{||\mu-\gamma_j||^2}{2}.
\end{align*}
Note that Lemma \ref{Lem5.2.231} (2) is applied from line two to line three.
Since $-\gt(\gamma_j) + \gt(\mu) + (\mu-\gamma_j) = 0$ 
with $\gt(\mu)=(\mu+\ge)-\mu =\ge$,
by the normalization (H6),
we have $N_{-\gt(\gamma_j), \gt(\mu)} = N_{\ge, \mu-\gamma_j}$.
By Lemma \ref{Lem2.6} with $\ga = \mu$, $\gb = \ge$, and $\gd = \gamma_j$, 
it follows that
$N_{\ge, \mu-\gamma_j}N_{\mu, -\gamma_j}
 =N_{\mu, \ge-\gamma_j}N_{\ge, -\gamma_j}$.
Therefore,
\begin{equation*}
N_{\mu, -\gamma_j}N_{-\gt(\gamma_j), \gt(\mu)}
= N_{\mu, -\gamma_j}N_{\ge, \mu-\gamma_j}
 =N_{\mu, \ge-\gamma_j}N_{\ge, -\gamma_j}.
\end{equation*}
Remark \ref{Rem5.2.11} shows that 
$\gamma_j$ and $\ga_\fq$ are long roots, when $V(\mu+\ge)$ is of type 1a.
Since $\mu$ is assumed to be a long root, the root $\mu-\gamma_j$ 
is a long root. Thus $||\mu-\gamma_j||^2 = ||\ga_\fq||^2$. Hence,
\begin{align*}
N_{\ga, -\gamma_j}N_{\mu, -\ga}
N_{-\gt(\gamma_j), \gt(\ga)}N_{-\gt(\ga), \gt(\mu)}
&=N_{\mu, -\gamma_j}N_{-\gt(\gamma_j), \gt(\mu)} 
\frac{||\mu-\gamma_j||^2}{2}\\
&=N_{\mu, \ge-\gamma_j}N_{\ge, -\gamma_j} \frac{||\ga_\fq||^2}{2}.
\end{align*}

\end{proof}
\section
{The $\Omega_2$ Systems}\label{chap:O2}

We continue with $\fq = \fl \oplus \fg(1) \oplus \fz(\fn)$
a maximal parabolic subalgebra of quasi-Heisenberg type,
listed in (\ref{Eqn4.0.1}) or (\ref{Eqn4.0.2}).
In this section, by using the preliminary results 
from Section \ref{chap:Sp},
we shall determine the complex parameter $s_2 \in \C$ for the line bundle $\Cal{L}_{s}$
so that the $\Omega_2$ systems are conformally invariant on $\Cal{L}_{s_2}$.
This is done in Theorem \ref{Thm8.3.1}.

%%%%%%%%%%%%%%%%%%%%%%%%%%%%%%%%%%%%%%%%%
\subsection{Covariant map $\tau_2$}\label{SS811}

As we have observed in Subsection \ref{SS7211}, 
%and as we have done in Section \ref{SSO1} for the $\Omega_1$ system, 
to construct the $\Omega_2|_{V^*}$ system,
we use the covariant map $\tau_2$ and the associated $L$-intertwining 
operator $\ttau_2|_{V^*}$, where $V^*$ is an irreducible constituents
of $\fl^* \otimes \fz(\fn)^*=\fg(0)^* \otimes \fg(2)^*$. 
We first show that the covariant map $\tau_2$
is not identically zero, and also that 
the $L$-intertwining operators $\ttau_2|_{V^*}$
are not identically zero for certain irreducible constituents $V$.
We keep using the normalizations from Section \ref{chap:O1}.

We start by showing that $\tau_2$ is not identically zero.
The covariant map $\tau_2$ is given by 
\begin{align*}
\tau_2 : \fg(1) &\to \fl\otimes \fz(\fn)\\
X &\mapsto \frac{1}{2} \; \big(\ad(X)^2 \otimes \text{Id} \big) \omega
\end{align*}
with $\omega = \sum_{\gamma_j \in \gD(\fz(\fn))}X_{-\gamma_j} \otimes X_{\gamma_j}$.
The following technical lemma 
will make a certain argument simpler in later proofs.

\begin{Lem}\label{Lem6.11}
If $V(\mu + \ge)$ is a special constituent of 
type 1a or type 2 then
\begin{equation}\label{Eqn6.1.3}
\tau_2(X_\mu + X_\ge) 
=a_{\mu,\ge}\; \emph{\ad}(X_{\mu})\;\emph{\ad}(X_\ge)\;\omega,
\end{equation}
\noindent
where $a_{\mu,\ge} = 1 + \gd_{\mu, \ge}$ with $\gd_{\mu, \ge}$ the Kronecker delta.
\end{Lem}

\begin{proof}
It is clear that (\ref{Eqn6.1.3}) holds if $\mu+\ge$ is of type 2.
Indeed, if $\ge=\mu$ then we have
\begin{equation*}
\tau_2(2X_\mu)
= 4\tau_2(X_\mu) = 2 \ad(X_\mu)^2\omega.
\end{equation*}
If $\mu+\ge$ is of type 1a then,
by definition, $\mu+\ge \notin \gD$ and 
both $\mu$ and $\ge$ are long roots.
Thus, in the case, $\ad(X_\mu)\;\ad(X_\ge)\; = \ad(X_\ge)\;\ad(X_\mu)$.
Moreover, by Lemma \ref{Lem3.1.3}, we have
$\ad(X_\mu)^2X_{-\gamma_j} = \ad(X_\ge)^2X_{-\gamma_j} = 0$
for any $\gamma_j \in \gD(\fz(\fn))$.
Hence,
\begin{equation*}
\tau_2(X_\mu + X_\ge) 
=(1/2)(2\; \ad(X_\mu)\;\ad(X_\ge))\;\omega
=\ad(X_\mu)\;\ad(X_\ge)\;\omega.
\end{equation*}
\end{proof}

\begin{Prop}\label{Prop6.12}
Let $\fq$ be a maximal parabolic subalgebra of 
quasi-Heisenberg type listed in (\ref{Eqn4.0.1}) or (\ref{Eqn4.0.2}).
Then the covariant map $\tau_2$ is not identically zero.
\end{Prop}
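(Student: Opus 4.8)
The plan is to exhibit a single element of $\fg(1)$ on which $\tau_2$ does not vanish, using the special constituents already identified. The first observation is that, by Table \ref{T55}, every parabolic subalgebra listed in (\ref{Eqn4.0.1}) and (\ref{Eqn4.0.2}) carries at least one special constituent $V(\mu+\ge)$ of either type 1a or type 2: for $B_n(i)$ with $3\leq i \leq n-1$, for $D_n(i)$, and for the exceptional cases $E_6(3)$, $E_6(5)$, $E_7(2)$, $E_7(6)$, $E_8(1)$ a type 1a constituent occurs, while $B_n(n)$, $C_n(i)$, and $F_4(4)$ carry a type 2 constituent. I would fix such a constituent $V(\mu+\ge)$ and take as test vector $X = X_\mu + X_\ge \in \fg(1)$.

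Since $V(\mu+\ge)$ is of type 1a or type 2, Lemma \ref{Lem6.11} applies and gives
\begin{equation*}
\tau_2(X_\mu + X_\ge) = a_{\mu,\ge}\,\ad(X_\mu)\,\ad(X_\ge)\,\omega
= a_{\mu,\ge}\sum_{\gamma_j \in \gD(\fz(\fn))} \ad(X_\mu)\,\ad(X_\ge)X_{-\gamma_j}\otimes X_{\gamma_j},
\end{equation*}
with $a_{\mu,\ge} = 1 + \gd_{\mu,\ge} \neq 0$. I would then isolate the terms indexed by $\gamma_j \in \gD_{\mu+\ge}(\fz(\fn))$. By Lemma \ref{Lem5.2.13} this set is non-empty, and since it is contained in $\gD(\fz(\fn))$ it avoids $\mu$ and $\ge$; hence Lemma \ref{Lem5.2.14} yields $\gamma_j - \ge \in \gD$, i.e. $\ge - \gamma_j \in \gD$, while $(\mu+\ge) - \gamma_j \in \gD$ holds by definition of $\gD_{\mu+\ge}(\fz(\fn))$. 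Consequently
\begin{equation*}
\ad(X_\mu)\,\ad(X_\ge)X_{-\gamma_j} = N_{\ge,-\gamma_j}\,N_{\mu,\ge-\gamma_j}\,X_{\mu+\ge-\gamma_j},
\end{equation*}
and both structure constants are non-zero by the normalizations (H5) and (H7).

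Finally I would observe that no cancellation can occur among these contributions: the second tensor factors $X_{\gamma_j}$ are root vectors for the distinct roots $\gamma_j \in \gD(\fz(\fn))$, hence are linearly independent, so each term with $\gamma_j \in \gD_{\mu+\ge}(\fz(\fn))$ survives on its own. Therefore $\tau_2(X_\mu+X_\ge) \neq 0$, and $\tau_2$ is not identically zero. The only genuine work is the preliminary bookkeeping of the first paragraph, namely verifying that every case in (\ref{Eqn4.0.1}) and (\ref{Eqn4.0.2}) does contain a type 1a or type 2 constituent so that Lemma \ref{Lem6.11} is applicable; once such a constituent is in hand, the non-vanishing is forced by the linear independence of the $X_{\gamma_j}$ together with the non-degeneracy of the structure constants.
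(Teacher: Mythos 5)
Your proposal is correct and follows essentially the same route as the paper: pick a special constituent $V(\mu+\ge)$ of type 1a or type 2 (which Table \ref{T55} guarantees exists in every case), apply Lemma \ref{Lem6.11} to the test vector $X_\mu + X_\ge$, reduce the sum to $\gamma_j \in \gD_{\mu+\ge}(\fz(\fn))$ via Lemma \ref{Lem5.2.14}, and conclude by linear independence of the surviving tensors. Your explicit appeal to Lemma \ref{Lem5.2.13} for the non-emptiness of $\gD_{\mu+\ge}(\fz(\fn))$ is a point the paper leaves implicit, but otherwise the arguments coincide.
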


\begin{proof}
To prove that $\tau_2$ is not identically zero,
it suffices to show that there exists a vector $X \in \fg(1)$
so that $\tau_2(X) \neq 0$.
Observe that, for each $\fq$ under consideration, 
$\fl \otimes \cfn$ has at least one special constituent $V(\mu+\ge)$ of type 1a or type 2
(see Table \ref{T55} in Subsection \ref{SS540}). Therefore, $\gD(\fg(1))$ always contains 
a root $\ge$ so that $V(\mu+\ge)$ is such a special constituent.
Then, to prove this proposition,
we show that $\tau_2(X_\mu + X_\ge) \neq 0$, 
where $X_\mu$ and $X_\ge$ are root vectors for $\mu$ and $\ge$,
respectively, with $\mu+\ge$ the highest weight for a special constituent
of type 1a or type 2.

Let $\mu + \ge$ be the highest weight of a special constituent of type 1a or type 2. 
By Lemma \ref{Lem6.11} we have
\begin{equation*}\label{Eqn6.1.4}
\tau_2(X_\mu + X_\ge)
=a_{\mu,\ge}\; \ad(X_\mu)\;\ad(X_\ge)\;\omega \nonumber
=a_{\mu,\ge} \sum_{\gamma_j \in \gD(\fz(\fn))}
\ad(X_\mu)\;\ad(X_\ge)\;X_{-\gamma_j}\otimes X_{\gamma_j}
\end{equation*}
with $a_{\mu,\ge} = 1+\gd_{\mu,\ge}$.
If there were a root $\gamma_j \in \gD(\fz(\fn))$ such that 
$\ge - \gamma_j = -\mu$ then $\mu+\ge = \gamma_j \in \gD$,
which contradicts the assumption that $\mu+\ge$ is of type 1a or type 2.
By Lemma \ref{Lem5.2.14},
if $\mu+\ge - \gamma_j \in \gD$
then $\ge-\gamma_j \in \gD$.
Then, for all $\gamma_j \in \gD(\fz(\fn))$,
\begin{equation*}
\ad(X_\mu)\;\ad(X_\ge)\;X_{-\gamma_j}=
\begin{cases}
N_{\mu,\ge-\gamma_j}N_{\ge, -\gamma_j}X_{\mu+\ge-\gamma_j}
& \text{if $\mu+\ge-\gamma_j \in \gD$}\\
0 &\text{otherwise.}
\end{cases}
\end{equation*}
Therefore, we have
\begin{align*}\label{Eqn6.1.5}
\tau_2(X_\mu + X_\ge)
&=a_{\mu,\ge} \sum_{\gamma_j \in \gD(\fz(\fn))}
\ad(X_\mu)\;\ad(X_\ge)\;X_{-\gamma_j}\otimes X_{\gamma_j} \nonumber\\
&=a_{\mu,\ge}\sum_{\gamma_j \in \gD_{\mu+\ge}(\fz(\fn))}
N_{\mu,\ge-\gamma_j}N_{\ge, -\gamma_j}X_{\mu+\ge-\gamma_j}
\otimes X_{\gamma_j}.
\end{align*}
Since $\{ X_{\mu+\ge-\gamma_j} \otimes X_{\gamma_j}  \; | \; 
\gamma_j \in \gD_{\mu+\ge}(\fz(\fn)) \}$ is a linearly independent set,
this shows that $\tau_2(X_\mu + X_\ge) \neq 0$.
\end{proof}

\vsp

Next we identify irreducible constituent $V(\nu)^*$
so that $\ttau_2|_{V(\nu)^*}$ is not identically zero.
In Subsection \ref{SS7211}, we observed that,
given an irreducible constituent $V(\nu)^*$, 
the $L$-intertwining operator 
$\ttau_2|_{V(\nu)^*} \in \Hom_{L}(V(\nu)^*, \Cal{P}^2(\fg(1)))$
is given by
\begin{equation}
\ttau_2|_{V(\nu)^*}(Y^*)(X) = Y^*(\tau_2(X)),
\end{equation}
where $\Cal{P}^2(\fg(1))$ is the space of polynomials on $\fg(1)$ of degree 2.
By Proposition \ref{Prop6.18}, we know that if $\ttau_2|_{V(\nu)^*}$
is not identically zero then $V(\nu)$ is a special constituent of $\fl\otimes \fz(\fn)$.
We now show that the converse of Proposition \ref{Prop6.18} also holds
for special constituents $V(\nu)$ of type 1a or type 2.
If $l \in L$ and $Z \in \fl$ then we denote the action of the group
and its Lie algebra on $X_\ga \otimes X_{\gamma_j}$ by 
$l \cdot (X_\ga \otimes X_{\gamma_j})$ and 
$Z \cdot (X_\ga \otimes X_{\gamma_j})$, respectively.

\begin{Prop}\label{Prop6.19} 
If $V(\mu+\ge)$ is a special constituent of $\fl \otimes \fz(\fn)$
of type 1a or type 2 then the following hold:
\begin{enumerate}
\item[(1)] The vector $\tau_2(X_\mu + X_\ge)$ 
is a highest weight vector for $V(\mu+\ge)$.
\item[(2)] The $L$-intertwining operator
$\ttau_2|_{V(\mu+\ge)^*}$ is not identically zero.
\end{enumerate}
\end{Prop}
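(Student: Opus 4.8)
The plan is to prove (1) first and then deduce (2) from it almost formally. For (1), I would start from the explicit formula for $\tau_2(X_\mu+X_\ge)$ established in the proof of Proposition \ref{Prop6.12}: it is nonzero, and since each summand $X_{\mu+\ge-\gamma_j}\otimes X_{\gamma_j}$ has weight $\mu+\ge$, the vector $\tau_2(X_\mu+X_\ge)$ is a nonzero weight vector of weight $\mu+\ge$ in $\fl\otimes\fz(\fn)$. It therefore suffices to show that it is annihilated by every raising operator, i.e.\ that $X_\ga\cdot\tau_2(X_\mu+X_\ge)=0$ for all $\ga\in\gD^+(\fl)$. A nonzero weight vector of weight $\mu+\ge$ killed by $\fu_\fl$ is a sum of highest weight vectors of weight $\mu+\ge$, hence lies in the $V(\mu+\ge)$-isotypic component; since Theorem \ref{Thm5.1} shows $V(\mu+\ge)$ occurs in $\fl\otimes\fz(\fn)$ with multiplicity one, this identifies $\tau_2(X_\mu+X_\ge)$ as a highest weight vector for $V(\mu+\ge)$.

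To verify the annihilation I would exploit the $L$-equivariance of $\tau_2$ (Lemma \ref{Lem6.5}): differentiating gives $X_\ga\cdot\tau_2(X)=d\tau_2|_{X}(\ad(X_\ga)X)$, and at $X=X_\mu+X_\ge$ the bracket $[X_\ga,X_\mu]$ vanishes because $\mu$ is the highest weight of $\fg(1)$, so only $[X_\ga,X_\ge]$ survives. In the type~2 case $\ge=\mu$, so $\ad(X_\ga)X=0$ and there is nothing further to check. In the type~1a case the contribution is nonzero only when $\gb:=\ga+\ge\in\gD(\fg(1))$, in which case $X_\ga\cdot\tau_2(X_\mu+X_\ge)$ is a scalar multiple of $d\tau_2|_{X_\mu+X_\ge}(X_\gb)=\frac{1}{2}(\{\ad(X_\gb),\ad(X_\mu)+\ad(X_\ge)\}\otimes\text{Id})\omega$. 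I would kill this term by term against $\omega=\sum_j X_{-\gamma_j}\otimes X_{\gamma_j}$. For the $\{\ad(X_\gb),\ad(X_\mu)\}$ part, Lemma \ref{Lem5.26} gives $\ad(X_\mu)\ad(X_\gb)X_{-\gamma_j}=0$, and together with the Jacobi identity and the fact that $\mu+\gb\notin\gD$ this forces the whole part to vanish; here $\mu+\gb\notin\gD$ follows because $\IP{\gb}{\mu}>0$ (using $\IP{\mu}{\ge}=0$ from Remark \ref{Rem5.4.6}, $\mu-\ga\in\gD$ from Lemma \ref{Lem5.25}, and Lemma \ref{Lem3.1.3}), which contradicts the sign $\IP{\gb}{\cmu}=-1$ that $\mu+\gb\in\gD$ would impose. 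For the $\{\ad(X_\gb),\ad(X_\ge)\}$ part I would first observe $\ge+\gb\notin\gD$, so the two operators commute, reducing the task to $(\ad(X_\gb)\ad(X_\ge)\otimes\text{Id})\omega=0$; this follows from a length/root-string argument showing $\gb+\ge-\gamma_j\notin\gD$ whenever $\ge-\gamma_j\in\gD$, where I would use that $\mu$, $\ge$, $\ga_\fq$ and the $\gamma_j$ are all long (Remark \ref{Rem5.2.11}) together with Lemma \ref{Lem3.1.3} to pin down the relevant inner products.

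For (2) I would note that $\ttau_2|_{V(\mu+\ge)^*}$ is identically zero precisely when the projection of $\tau_2(\fg(1))$ onto the summand $V(\mu+\ge)$ of $\fl\otimes\fz(\fn)$ vanishes: indeed $\ttau_2|_{V(\mu+\ge)^*}(Y^*)(X)=Y^*(\tau_2(X))$, and under the Killing-form duality $V(\mu+\ge)^*$ is exactly the annihilator of the complementary constituents. Part (1) exhibits the explicit nonzero element $\tau_2(X_\mu+X_\ge)$ of that summand, so the projection is nonzero; concretely, choosing $Y^*\in V(\mu+\ge)^*$ with $Y^*(\tau_2(X_\mu+X_\ge))\neq0$ yields $\ttau_2|_{V(\mu+\ge)^*}(Y^*)(X_\mu+X_\ge)\neq0$.

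The main obstacle is the type~1a computation in (1): showing that the raising operators annihilate $\tau_2(X_\mu+X_\ge)$ is where essentially all the structural lemmas of Section \ref{chap:Sp} enter, and the delicate point is to confirm that each of the several bracket terms produced by $\{\ad(X_\gb),\ad(X_\mu)+\ad(X_\ge)\}$ vanishes outright rather than merely cancelling in pairs. The type~2 case and the deduction of (2) from (1) are by comparison routine.
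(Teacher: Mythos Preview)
Your approach is correct and structurally the same as the paper's: show $\tau_2(X_\mu+X_\ge)$ is a nonzero weight vector of weight $\mu+\ge$, verify it is killed by $X_\ga$ for $\ga\in\gD^+(\fl)$, and then deduce (2) by pairing with a lowest weight vector of $V(\mu+\ge)^*$.

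The one place you diverge is in the type~1a annihilation step. You differentiate the polynomial formula $\tau_2(X)=\tfrac{1}{2}(\ad(X)^2\otimes\text{Id})\omega$ directly, obtaining the anticommutator $\tfrac{1}{2}\{\ad(X_\gb),\ad(X_\mu)+\ad(X_\ge)\}\omega$ with $\gb=\ga+\ge$, and then dispose of four separate terms via Lemma~\ref{Lem5.26} together with the extra root-string checks $\mu+\gb\notin\gD$, $\ge+\gb\notin\gD$, and $\gb+\ge-\gamma_j\notin\gD$. These checks do go through as you outline. The paper instead first invokes Lemma~\ref{Lem6.11} to rewrite $\tau_2(X_\mu+X_\ge)=a_{\mu,\ge}\,\ad(X_\mu)\ad(X_\ge)\omega$ and then computes the $\fl$-action on \emph{that} product using the $L$-invariance of $\omega$: differentiating gives just $a_{\mu,\ge}\,\ad(X_\mu)\ad([X_\ga,X_\ge])\omega$, which Lemma~\ref{Lem5.26} kills in one stroke. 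So the paper's route is shorter and avoids your three auxiliary root computations, but both arguments are valid.
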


\begin{proof}
We have shown that in the proof for Proposition \ref{Prop6.12}
that $\tau_2(X_\mu + X_\ge) \neq 0$.
Moreover, Lemma \ref{Lem6.11} gives that 
$\tau_2(X_\mu + X_\ge) =a_{\mu,\ge}\; \ad(X_{\mu})\;\ad(X_\ge)\omega$
with $a_{\mu,\ge} = 1 + \gd_{\mu,\ge}$.
For $l \in L$, we have $l \cdot \omega = \omega$ (see Lemma \ref{Cor6.5}) 
and so
\begin{equation*}
l \cdot \tau_2(X_\mu + X_\ge)
= a_{\mu,\ge}\; \big(\ad(\Ad(l)X_\mu)\;\ad(\Ad(l)X_{\ge}) 
\otimes \text{Id} \big)\;\omega.
\end{equation*}
By replacing $l$ by $\exp(tZ)$ with $Z \in \fl$, differentiating, and setting $t = 0$,
we obtain
\begin{equation}\label{Eqn714}
Z \cdot \tau_2(X_\mu + X_\ge)
= a_{\mu,\ge}\;
\big(\big(\ad([Z,X_\mu])\;\ad(X_{\ge})
+\ad(X_\mu)\;\ad([Z,X_{\ge}])\big) \otimes \text{Id} \big)\;\omega.
\end{equation}
In particular, if $Z = H \in \fh$ in (\ref{Eqn714}) then
\begin{equation*}
H \cdot \tau_2(X_\mu + X_\ge) = (\mu + \ge)(H)\tau_2(X_\mu + X_\ge).
\end{equation*}
Therefore $\tau_2(X_\mu + X_\ge)$ is a weight vector 
with weight $\mu+\ge$. 
To show that $\tau_2(X_\mu + X_\ge)$ is a highest weight vector,
we replace $Z$ in (\ref{Eqn714}) by $X_\ga$ with $\ga \in \gD^+(\fl)$.
Since $\mu$ is the highest weight for $\fg(1)$, we have
\begin{equation*}
X_\ga \cdot \tau_2(X_\mu + X_\ge)
=a_{\mu,\ge}\; 
\big(\ad(X_\mu)\; \ad([X_\ga, X_\ge]) \otimes \text{Id}\big)\; \omega.
\end{equation*}
If $\mu+\ge$ is of type 2 then, as $\ge = \mu$ in the case,
clearly $X_\ga \cdot \tau_2(X_\mu + X_\ge) = 0$. 
The case that $\mu+\ge$ is of type 1a follows from Lemma \ref{Lem5.26}.

To prove the second statement, it is enough to show that
there exist $Y^* \in V(\mu+\ge)^*$ and $X \in \fg(1)$
so that $\ttau_2(Y^*)(X) \neq 0$.
Let $Y^*_l$ be a lowest weight vector for $V(\mu+\ge)^*$.
Observe that if $Y_h$ is a highest weight vector for $V(\mu+\ge)$ 
then $Y_l^*(Y_h) \neq 0$.
%(see for instance \cite[Theorem 3.2.13]{GW09}). 
Since $\tau_2(X_\mu +X_\ge)$ is a highest weight vector for $V(\mu + \ge)$,
we have
\begin{equation*}
\ttau_2|_{V(\mu+\ge)^*}(Y^*_l)(X_\mu+X_\ge) 
= Y^*_l(\tau_2(X_\mu + X_\ge)) \neq 0.
\end{equation*}
\end{proof}

%%%%%%%%%%%%%%%%%%%%%%%%%%%%%%%%%%%%%
\subsection{The $\Omega_2|_{V(\mu+\ge)^*}$ systems}\label{SS83}

Proposition \ref{Prop6.19} shows that 
the $L$-intertwining operator $\ttau_2|_{V(\mu+\ge)^*}$ 
is not identically zero, when $V(\mu +\ge)$ is a special constituent of 
$\fl \otimes \fz(\fn)$ of type 1a or type 2.
We thus construct the $\Omega_2|_{V(\mu+\ge)^*}$ system
corresponding to irreducible constituents $V(\mu+\ge)$ of type 1a or type 2.
Here it may be helpful to recall
some notation introduced in Subsection \ref{SS542}.
For any $\ad(\fh)$-invariant subspace
$W \subset \fg$ and any weight $\nu \in \fh^*$, we write 
\begin{equation*}
\gD_{\nu}(W) = \{ \ga \in \gD(W) \; | \; \nu - \ga \in \gD\}.
\end{equation*}
When $V(\mu+\ge)$ is a special constituent of $\fl \otimes \fz(\fn)$,
we write
\begin{equation*}
\gt(\gb) = (\mu+\ge)-\gb.
\end{equation*}

As indicated in Subsection \ref{SS7211},
the $L$-intertwining operator $\ttau_2|_{V(\mu+\ge)^*}$ 
yields a system of differential operators.
We have denoted such operators by $\Omega_2(Y^*)=\Omega_2|_{V(\mu+\ge)^*}(Y^*)$
with $Y^* \in V(\mu+\ge)^*$, where 
$\Omega_2|_{V(\mu+\ge)^*}: V(\mu+\ge)^* \to \mathbb{D}(\Cal{L}_{s})^{\bar\fn}$
is $\Cal{U}(\fl)$-equivariant. Because of such equivariance,
the system is totally determined, once $\Omega_2(Y^*_l)$ 
is constructed, where $Y^*_l$ is a lowest weight vector in $V(\mu+\ge)^*$. 

The first step is to explicitly describe $Y^*_l \in V(\mu+\ge)^*$.
Observe that we have a non-zero map
\begin{align*}
\btau_2:\fg(-1) &\to \fl \otimes \fz(\bar \fn)\\
       \bar{X} & \mapsto \frac{1}{2} \big(\ad(\bar{X})^2 \otimes \text{Id}\big) \bar{\omega}  
\end{align*}
with $\bar{\omega} = \sum_{\gamma_t \in \gD(\fz(\fn))}X_{\gamma_t} \otimes X_{-\gamma_t}$.
One checks, as in the proofs for Lemma \ref{Lem6.5} and Proposition \ref{Prop6.12}, 
that $\btau_2$ is a non-zero $L$-equivariant map. Moreover, if 
$V(\mu+\ge)$ is a special constituent of type 1a or type 2 then
%as in Lemma \ref{Lem6.11},
\begin{equation*}
\btau_2(X_{-\mu}+X_{-\ge}) = a_{\mu, \ge}\;
 \big( \ad(X_{-\mu})\; \ad(X_{-\ge})\otimes \text{Id} \big) \bar{\omega}
\end{equation*}
with $a_{\mu,\ge} = 1 + \gd_{\mu,\ge}$.
Arguing as in Proposition \ref{Prop6.19}, we can show that $\btau_2(X_{-\mu} + X_{-\ge})$
is a lowest weight vector for $V(\mu+\ge)^*$ with lowest weight $-\mu-\ge$.
Thus,
\begin{equation}\label{Eqn8.2.1}
Y^*_l 
= \big( \ad(X_{-\mu})\; \ad(X_{-\ge})\otimes \text{Id} \big) \bar{\omega} 
=\sum_{\gamma_t \in \gD_{\mu+\ge}(\fz(\fn))}
N_{-\mu, \gamma_t - \ge}N_{-\ge, \gamma_t}
X_{-\gt(\gamma_t)} \otimes X_{-\gamma_t}
\end{equation}
is a lowest weight vector for $V(\mu+\ge)^*$.
Observe that, by Lemma \ref{Lem5.2.14},
we have $\gamma_t - \ge \in \gD$ for $\gamma_t \in \gD_{\mu+\ge}(\fz(\fn))$.
Then, by (\ref{Eqn8.2.1}), we have 
\begin{align}\label{Eqn8.2.2}
Y^*_l(\tau_2(X)) &=\frac{1}{2}\sum_{
\substack{\gamma_t \in \gD_{\mu+\ge}(\fz(\fn))\\\gamma_j \in \gD(\fz(\fn))}}
N_{-\mu,\gamma_t - \ge}N_{-\ge, \gamma_t}
\kappa(X_{-\gt(\gamma_t)}, \ad(X)^2X_{-\gamma_j})\kappa(X_{-\gamma_t},X_{\gamma_j})
\nonumber\\
&=\frac{1}{2}\sum_{\gamma_t \in \gD_{\mu+\ge}(\fz(\fn))}
N_{-\mu,\gamma_t - \ge}N_{-\ge, \gamma_t}
\kappa(X_{-\gt(\gamma_t)},\ad(X)^2X_{-\gamma_t}).
\end{align}
Write $X= \sum_{\ga \in \gD(\fg(1))}\eta_\ga X_\ga$
and let $\gamma_t \in \gD_{\mu+\ge}(\fz(\fn))$. Then,
\begin{align*}
\kappa(X_{-\gt(\gamma_t)},\ad(X)^2X_{-\gamma_t})
&=\sum_{\ga,\gb\in\gD(\fg(1))}\eta_\ga \eta_\gb 
\kappa(X_{-\gt(\gamma_t)}, [X_\gb,[X_\ga, X_{-\gamma_t}]]) \nonumber\\
&=\sum_{\ga,\gb\in\gD(\fg(1))}\eta_\ga \eta_\gb 
\kappa([X_{-\gt(\gamma_t)},X_\gb],[X_\ga, X_{-\gamma_t}])\nonumber\\
&=\sum_{\substack{\ga \in \gD_{\gamma_t}(\fg(1))\\ \gb\in\gD_{\gt(\gamma_t)}(\fg(1))}}
\eta_\ga \eta_\gb N_{\ga, -\gamma_t}N_{-\gt(\gamma_t),\gb}
\kappa(X_{\gb-\gt(\gamma_t)},X_{\ga-\gamma_t}).
\end{align*}

\noindent Observe that, by Lemma \ref{Lem5.2.16} and Lemma \ref{Lem5.2.191}, 
the sets $\gD_{\gamma_t}(\fg(1))$ 
and $\gD_{\gt(\gamma_t)}(\fg(1))$ are non-empty.
By the normalization (H3) in Section \ref{chap:O1},
 if $\kappa(X_{\gb - \gt(\gamma_t)}, X_{\ga-\gamma_t}) \neq 0$
then $\gb - \gt(\gamma_t) = \gamma_t -\ga$.
Thus $\kappa(X_{\gb - \gt(\gamma_t)}, X_{\ga-\gamma_t})=0$
unless $\gb =(\mu+\ge)-\ga= \gt(\ga)$. Therefore,
\begin{align}\label{Eqn9.2.1}
\kappa(X_{-\gt(\gamma_t)},\ad(X)^2X_{-\gamma_t})
&=\sum_{\substack{\ga \in \gD_{\gamma_t}(\fg(1))\\ \gb\in\gD_{\gt(\gamma_t)}(\fg(1))}}
\eta_\ga \eta_\gb N_{\ga, -\gamma_t}N_{-\gt(\gamma_t),\gb}
\kappa(X_{\gb-\gt(\gamma_t)},X_{\ga-\gamma_t}) \nonumber \\
&= \sum_{\ga \in \gD_{\gamma_t}(\fg(1)) \cap \gD_{\mu+\ge}(\fg(1))}
N_{\ga, -\gamma_t} N_{-\gt(\gamma_t), \gt(\ga)}\eta_{\ga}\eta_{\gt(\ga)} \nonumber\\
&= \sum_{\ga \in \gD_{\mu+\ge}(\fg(1))}
N_{\ga, -\gamma_t} N_{-\gt(\gamma_t), \gt(\ga)}
\kappa(X, X_{-\ga})\kappa(X,X_{-\gt(\ga)}).
\end{align}

\noindent Lemma \ref{Lem5.2.16} is used in line three to show that 
$\gD_{\gamma_t}(\fg(1)) \cap \gD_{\mu+\ge}(\fg(1)) =\gD_{\mu+\ge}(\fg(1))$.
Hence, by (\ref{Eqn8.2.2}) and (\ref{Eqn9.2.1}),
$\ttau_2|_{V(\mu+\ge)^*}(Y^*_l)(X) = Y_l^*(\tau_2(X))$ is
\begin{align*}
&\ttau_2|_{V(\mu+\ge)^*}(Y^*_l)(X) \\
&=\frac{1}{2}
\sum_{\substack{\ga \in \gD_{\mu+\ge}(\fg(1))\\
\gamma_t \in \gD_{\mu+\ge}(\fz(\fn))}}
(N_{-\mu,\gamma_t - \ge}N_{-\ge, \gamma_t})
(N_{\ga, -\gamma_t} N_{-\gt(\gamma_t), \gt(\ga)})
\kappa(X, X_{-\ga})\kappa(X,X_{-\gt(\ga)}).
\end{align*}
Now, via the composition of maps
\begin{equation*}
V(\mu+\ge)^*\stackrel{\ttau_2|_{V(\mu+\ge)^*}}{\to} \Cal{P}^2(\fg(1))
\to \Sym^2(\fg(-1)) \stackrel{\gs}{\hookrightarrow} 
\Cal{U}(\bar \fn) \stackrel{R}{\to} \mathbb{D}(\Cal{L}_{s})^{\bar \fn},
\end{equation*}
for $Y^*_l \in V(\mu+\ge)^*$,
the second-order differential operator 
$\Omega_2(Y^*_l) \in \mathbb{D}(\Cal{L}_{s})^{\bar \fn}$
is given by
\begin{equation*}
\Omega_2(Y^*_l) =\frac{1}{2}
\sum_{\substack{\ga \in \gD_{\mu+\ge}(\fg(1))\\
\gamma_t \in \gD_{\mu+\ge}(\fz(\fn))}}
(N_{-\mu,\gamma_t - \ge}N_{-\ge, \gamma_t})
(N_{\ga, -\gamma_t} N_{-\gt(\gamma_t), \gt(\ga)})
\gs\big( R(X_{-\ga})R(X_{-\gt(\ga)})\big),
%\ulcorner R(X_{-\ga})R(X_{-\gt(\ga)}) \urcorner,
\end{equation*}
where $\gs(ab) = (1/2)(ab+ba)$. 
%$\ulcorner ab \urcorner = (1/2)(ab+ba)$.
As the special constituent $V(\mu+\ge)$ is assumed to be 
of type 1a or type 2, we have 
$-\ga-\gt(\ga) = -(\mu+\ge) \not \in \gD$.
Thus symmetrization is unnecessary.
%By Lemma \ref{Lem5.2.231A}, no symmetrization is needed.
Therefore we obtain
\begin{equation} \label{Eqn82}
\Omega_2(Y^*_l) =\frac{1}{2}
\sum_{\substack{\ga \in \gD_{\mu+\ge}(\fg(1))\\
\gamma_t \in \gD_{\mu+\ge}(\fz(\fn))}}
(N_{-\mu,\gamma_t - \ge}N_{-\ge, \gamma_t})
(N_{\ga, -\gamma_t} N_{-\gt(\gamma_t), \gt(\ga)})
R(X_{-\ga})R(X_{-\gt(\ga)}).
\end{equation}

%%%%%%%%%%%%%%%%%%%%%%%%%%%%%%%%%%%%%%%%%
\subsection[Special values]
{Special Values of the $\Omega_2|_{V(\mu+\ge)^*}$ Systems}\label{SS92}

Now we determine the special values of the line bundle $\Cal{L}_{s}$
for which the $\Omega_2|_{V(\mu+\ge)^*}$ system is conformally invariant,
under the assumption that $V(\mu+\ge)$ is a special constituent of type 1a or type 2.

Choose a basis $\{Y^*_1, \ldots, Y^*_n\}$ for $V(\mu+\ge)^*$.
To show that the list of differential operators
$\Omega_2(Y^*_1)$, $\ldots$, $\Omega_2(Y^*_n)$ 
is conformally invariant
on the bundle $\Cal{L}_{s}$, we need to prove that
in $\mathbb{D}(\Cal{L}_{s})^{\bar \fn}$,
\begin{equation}\label{EqnCIS1}
[\pi_{s}(X), \Omega_2(Y^*_i)]
\in \text{span}_{C^{\infty}(\bar{N}_0)}
\{\Omega_2(Y^*_1), \ldots, \Omega_2(Y^*_n) \}
\end{equation}
for all $X\in \fg$ and all $i$. By Proposition \ref{Prop7.2.3},
(\ref{EqnCIS1}) holds if 
\begin{equation}\label{EqnCIS2}
[\pi_{s}(X), \Omega_2(Y^*_i)]_e
\in \text{span}_{\C}
\{\Omega_2(Y^*_1)_e, \cdots, \Omega_2(Y^*_n)_e \}
\end{equation}
holds for all $X \in \fg$ and all $i$.
Here, for $D \in \mathbb{D}(\Cal{L}_{s})$, $D_{\nbar}$
denotes the linear functional $f \mapsto (D\acts f)(\nbar)$ for 
$f \in C^\infty(\bar{N}_0, \C_{\chi^{s}})$. 
We show that a simplification of (\ref{EqnCIS2})
implies (\ref{EqnCIS1}).

\begin{Prop}\label{Prop7.1.2}
Let $V(\mu+\ge)^*$ be the dual module of a special constituent $V(\mu+\ge)$
of $\fl\otimes \fz(\fn)$ with respect to the Killing form. Suppose that the operator 
$\Omega_2|_{V(\mu+\ge)^*}: V(\mu+\ge)^* \to \mathbb{D}(\Cal{L}_{s})^{\bar \fn}$ 
is non-zero. If $X_h$ is a highest weight vector for $\fg(1)$ and if we have
\begin{equation*}
[\pi_s(X_h), \Omega_2(Y^*_l)]_e \in 
\emph{\text{span}}_\C
\{\Omega_2(Y^*_1)_e, \cdots, \Omega_2(Y^*_n)_e \}
\end{equation*}
for a lowest weight vector $Y_l^*$
and a basis $\{Y^*_1, \ldots, Y^*_n\}$ for $V(\mu+\ge)^*$ 
then the $\Omega_2|_{V(\mu+\ge)^*}$ system 
is a conformally invariant system.
\end{Prop}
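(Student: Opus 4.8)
The plan is to reduce condition (S2) to a single check at the identity $e$ and then verify that check degree by degree along the grading $\fg = \fg(-2) \oplus \fg(-1) \oplus \fl \oplus \fg(1) \oplus \fg(2)$, using the two reduction lemmas of Subsection \ref{SSLem}. First I would note that (S1) is automatic: the non-vanishing hypothesis makes the $L_0$-intertwining operator $\Omega_2|_{V(\mu+\ge)^*}$ injective, so by Remark \ref{Rem2.3.6}(1) the list $\Omega_2(Y^*_1), \ldots, \Omega_2(Y^*_n)$ is linearly independent at $e$. Hence, by Proposition \ref{Prop7.2.3}, it suffices to show that
\begin{equation*}
[\pi_s(Y), \Omega_2(Y^*_i)]_e \in E := \text{span}_{\C}\{\Omega_2(Y^*_1)_e, \ldots, \Omega_2(Y^*_n)_e\}
\end{equation*}
for every $Y \in \fg$ and every $i$. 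The coefficients expressing the left-hand side in the (linearly independent) spanning set are then unique and depend linearly on $Y$, so they assemble into the map $b : \fg \to \f{gl}(n, \C)$ demanded by Proposition \ref{Prop7.2.3}.

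Next I would dispose of the three accessible graded pieces. For $Y \in \bar\fn = \fg(-1) \oplus \fg(-2)$ the bracket vanishes identically, since each $\Omega_2(Y^*_i) \in \mathbb{D}(\Cal{L}_{s})^{\bar\fn}$. For $Y \in \fl = \fg(0)$ I would invoke the $\fl$-equivariance of $\Omega_2|_{V(\mu+\ge)^*}$ recorded in Subsection \ref{SS7211}, which gives $[\pi_s(Y), \Omega_2(Y^*_i)] = \Omega_2(Y \cdot Y^*_i)$ as operators; since $Y \cdot Y^*_i \in V(\mu+\ge)^*$ expands in the basis $\{Y^*_j\}$, this is a constant linear combination of the $\Omega_2(Y^*_j)$, hence lies in $E$ at $e$. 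For $Y \in \fg(1)$ the hypothesis is exactly the input of Lemma \ref{Lem831}: $\fg(1)$ is $L$-irreducible by Proposition \ref{Prop3.3.1}(1), $V(\mu+\ge)^*$ is an irreducible constituent of $\fl^* \otimes \fz(\fn)^*$, and $Y^*_l$ is a lowest weight vector, so Lemma \ref{Lem831} upgrades the single assumed membership into $[\pi_s(X), \Omega_2(Y^*)]_e \in E$ for every $X \in \fg(1)$ and every $Y^* \in V(\mu+\ge)^*$.

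The remaining and hardest case is $Y \in \fg(2)$, which I expect to be the main obstacle since it is not covered by the hypothesis and must be generated from $\fg(1)$ through brackets. Here I would use that $\fn$ is quasi-Heisenberg with $\fz(\fn) = \fg(2)$ by Proposition \ref{Prop3.3.1}(3): as $[\fg(1), \fg(2)] \subset \fg(3) = 0$, two-step nilpotency forces $\fg(2) = [\fg(1), \fg(1)]$, so it suffices to treat $Y = [X_1, X_2]$ with $X_1, X_2 \in \fg(1)$. Writing $\pi_s([X_1,X_2]) = [\pi_s(X_1), \pi_s(X_2)]$ and expanding by the Jacobi identity,
\begin{equation*}
[\pi_s([X_1,X_2]), \Omega_2(Y^*_i)] = [\pi_s(X_1), [\pi_s(X_2), \Omega_2(Y^*_i)]] - [\pi_s(X_2), [\pi_s(X_1), \Omega_2(Y^*_i)]].
\end{equation*}
The enabling observation is that $\pi_s(X)_e = 0$ for every $X \in \fg(1)$, which follows from (\ref{Eqn7.2.2}) together with $\fg(1) \subset \fn$ and the vanishing of $\gl_\fq$ on $\fn$. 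Since the inner brackets $[\pi_s(X_j), \Omega_2(Y^*_i)]_e$ already lie in $E$ by the $\fg(1)$ case, Lemma \ref{Lem832} applies to each double bracket and places it in $E$ at $e$, so the whole expression lies in $E$ at $e$. Having thereby verified the displayed membership for $Y$ in every graded summand, and hence for all $Y \in \fg$ by linearity, the map $b$ exists and Proposition \ref{Prop7.2.3} completes the proof; the essential point is that $\pi_s$ vanishes at $e$ on $\fg(1)$, which is what lets Lemma \ref{Lem832} convert the first-order information from Lemma \ref{Lem831} into the second-order control needed on $\fg(2)$.
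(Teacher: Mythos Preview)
Your proof is correct and follows essentially the same route as the paper: reduce to the identity via Proposition~\ref{Prop7.2.3}, handle $\bar\fn$ by $\bar\fn$-invariance, $\fl$ by the $\fl$-equivariance of $\Omega_2$, $\fg(1)$ by Lemma~\ref{Lem831}, and then bootstrap to $\fg(2)=[\fg(1),\fg(1)]$ using the Jacobi identity together with $\pi_s(X)_e=0$ for $X\in\fg(1)$ and Lemma~\ref{Lem832}. Your write-up is in fact slightly more explicit than the paper's in justifying $\fg(2)=[\fg(1),\fg(1)]$ and in noting why $\pi_s(X)_e=0$ on $\fg(1)$.
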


\begin{proof}
By Remark \ref{Rem2.3.6}, the $\Omega_k|_{V(\mu+\ge)^*}$ system
satisfies the condition (S1) of Definition \ref{Def1.1.3}.
We need to prove that (\ref{EqnCIS2}) holds for all 
$X \in \fg = \bar \fn \oplus \fl \oplus \fn$.
Note that, by definition, we have 
$\Omega_2(Y^*_i) \in \mathbb{D}(\Cal{L}_{s})^{\bar \fn}$. 
Hence (\ref{EqnCIS2}) holds for $X \in \bar \fn$ trivially.
The $L_0$-equivariance of $\Omega_2|_{V(\mu+\ge)^*}$ shows that 
(\ref{EqnCIS2}) holds for $X \in \fl$. 
Furthermore, Lemma \ref{Lem831} established (\ref{EqnCIS2})
when $X \in \fg(1)$. Now we handle the case when $X \in \fz(\fn)$.

If $X \in \fz(\fn)$ then, since $\fz(\fn) = [\fg(1), \fg(1)]$, it is of the form
$X = [X_1, X_2]$ for some $X_1, X_2 \in \fg(1)$. Then, by the Jacobi identity,
we have 
\begin{equation*}
[\spi(X), \Omega_2(Y^*_i)]
=[\spi(X_1), [\spi(X_2), \Omega_2(Y^*_i)]]
-[\spi(X_2), [\spi(X_1), \Omega_2(Y^*_i)]].
\end{equation*}
By (\ref{Eqn7.2.2}), we have $\spi(X_j)_e = 0$ for $j=1,2$.
It follows from Lemma \ref{Lem831} that for $j=1,2$ and all $i$, we have
\begin{equation*}
\big [\spi(X_j), \Omega_2(Y^*)]_e \in 
\text{span}_\C\{\Omega_2(Y^*_1)_e, 
\ldots, \Omega_2(Y^*_n)_e\}.
\end{equation*}
Therefore, by Lemma \ref{Lem832},
\begin{align*}
[\spi(X), \Omega_2(Y^*_i)]_e
&=[\spi(X_1), [\spi(X_2), \Omega_2(Y^*_i)]]_e
-[\spi(X_2), [\spi(X_1), \Omega_2(Y^*_i)]]_e\\
&\in \text{span}_\C\{\Omega_2(Y^*_1)_e,
\ldots, \Omega_2(Y^*_k)_e \}.
\end{align*}
\end{proof}

\begin{Prop}\label{Prop83}
If $\mu$ is the highest weight for $\fg(1)$
and $\ga, \gb \in \gD(\fg(1))$ then
\begin{align*}
&[\pi_s(X_\mu), R(X_{-\ga})R(X_{-\gb})]_e\\
&= R([[X_\mu, X_{-\ga}], X_{-\gb}]])_e
-s\gl_\fq([X_\mu, X_{-\ga}])R(X_{-\gb})_e
-s\gl_\fq([X_\mu, X_{-\gb}])R(X_{-\ga})_e.
\end{align*}
\end{Prop}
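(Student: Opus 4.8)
The plan is to obtain this identity as a direct specialization of the second formula in Proposition~\ref{Prop7.2.5}. Concretely, I would set $Y = X_\mu$, $X_1 = X_{-\ga}$, and $X_2 = X_{-\gb}$ in that formula, and then evaluate both sides at $\nbar = e$. Since $\Ad(e^{-1}) = \mathrm{Id}$, every occurrence of $\Ad(\nbar^{-1})Y$ collapses to $X_\mu$, each factor $\big(R(X_{-\ga})\acts f\big)(e)$ becomes the functional $R(X_{-\ga})_e$, and $f(e)$ becomes the evaluation functional. This reduces the proof to deciding which of the six resulting terms survive, using only the structure of the $2$-grading.

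The key input is the grading from Section~\ref{chap:TwoStep}, namely $\fg = \fg(-2)\oplus\fg(-1)\oplus\fl\oplus\fg(1)\oplus\cfn$ with $\fq = \fl\oplus\fg(1)\oplus\cfn$ and $\bar\fn = \fg(-1)\oplus\fg(-2)$. First I would record that $X_\mu \in \fg(1) \subset \fq$, so $(X_\mu)_\fq = X_\mu$. Next, for $\ga \in \gD(\fg(1))$ the bracket $[X_\mu, X_{-\ga}]$ lies in $[\fg(1),\fg(-1)] \subset \fg(0) = \fl$; since $\fl \subset \fq$, its $\bar\fn$-component vanishes. This kills the first two terms of Proposition~\ref{Prop7.2.5} (those built from $[(X_\mu)_\fq, X_{-\ga}]_{\bar\fn}$ and $[(X_\mu)_\fq, X_{-\gb}]_{\bar\fn}$). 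The same observation gives $[X_\mu,X_{-\ga}]_\fq = [X_\mu,X_{-\ga}]$ and $[X_\mu,X_{-\gb}]_\fq = [X_\mu,X_{-\gb}]$, so the fourth and fifth terms become exactly $-s\gl_\fq([X_\mu,X_{-\ga}])R(X_{-\gb})_e$ and $-s\gl_\fq([X_\mu,X_{-\gb}])R(X_{-\ga})_e$.

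For the remaining two terms I would use that $[[X_\mu, X_{-\ga}], X_{-\gb}] \in [\fg(0),\fg(-1)] \subset \fg(-1) \subset \bar\fn$. Hence its $\bar\fn$-component equals itself, so the third term evaluates to $R([[X_\mu, X_{-\ga}], X_{-\gb}])_e$, while its $\fq$-component is zero, so the sixth term $-s\gl_\fq([[X_\mu,X_{-\ga}],X_{-\gb}]_\fq)$ drops out. Collecting the three surviving contributions yields precisely the asserted formula. The computation is entirely bookkeeping of grading degrees, so the only point requiring care—and the closest thing to an obstacle—is tracking exactly which bracket in Proposition~\ref{Prop7.2.5} is projected onto $\fq$ versus $\bar\fn$ before the outer bracket is taken; at $\nbar = e$ these subtleties are harmless, since every $\Ad(\nbar^{-1})$ is the identity and the grading placement of each term is unambiguous.
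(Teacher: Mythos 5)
Your proposal is correct and follows exactly the route the paper takes: the paper's proof is precisely the substitution $Y=X_\mu$, $X_1=X_{-\ga}$, $X_2=X_{-\gb}$ into the second formula of Proposition~\ref{Prop7.2.5} evaluated at $\nbar=e$. Your accounting of which of the six terms survive (via $[\fg(1),\fg(-1)]\subset\fg(0)\subset\fq$ and $[\fg(0),\fg(-1)]\subset\fg(-1)\subset\bar\fn$) is accurate and simply makes explicit what the paper leaves to the reader.
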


\begin{proof}
This simply follows by substituting $Y=X_\mu$, $X_1=X_{-\ga}$,
and $X_2=X_{-\gb}$ in
Proposition \ref{Prop7.2.5}, and evaluating 
at $\nbar = e$.
\end{proof}

If $V(\mu+\ge)$ is a special constituent of $\fl \otimes \fz(\fn)$
of type 1a or type 2 then we write
\begin{equation}\label{Eqn:Constant}
C(\mu,\ge) 
:= \sum_{\gamma_t \in \gD_{\mu+\ge}(\fz(\fn))}
N_{\mu, \ge-\gamma_t} N_{-\mu, \gamma_t-\ge}
N_{\ge,-\gamma_t}N_{-\ge,\gamma_t}.
\end{equation}
By Lemma \ref{Lem5.2.17}, we have $C(\mu,\ge) \neq 0$.

\begin{Thm}\label{Thm8.3.1}
Let $\fg$ be a complex simple Lie algebra and 
$\fq$ be a maximal parabolic subalgebra of
quasi-Heisenberg type, listed in (\ref{Eqn4.0.1}) or (\ref{Eqn4.0.2}).
If $Y^*_l$ is the lowest weight vector defined in (\ref{Eqn8.2.1})
for the dual module $V(\mu+\ge)^*$ of
a special constituent $V(\mu+\ge)$ of type 1a or type 2,
%with respect to the Killing form,
and if $\ga_\fq$ is the simple root that determines $\fq$ 
then the following hold:
\begin{enumerate}
\item[(1)] If $V(\mu+\ge)$ is of type 1a then
\begin{equation}\label{Eqn9.2.3}
[\pi_s(X_\mu), \Omega_2(Y^*_l )]_e 
= -\frac{||\ga_\fq||^2}{2}C(\mu,\ge)(s-s_2)R(X_{-\ge})_e,
\end{equation}
\noindent with $\displaystyle{s_2 = \frac{|\gD_{\mu+\ge}(\fg(1))|}{2}-1}$,
where $|\gD_{\mu+\ge}(\fg(1))|$ is the number of elements in $\gD_{\mu+\ge}(\fg(1))$.\\

\item[(2)] If $V(\mu+\ge)$ is of type 2 then
\begin{equation}\label{Eqn924}
[\pi_s(X_\mu), \Omega_2(Y^*_l )]_e 
= -\frac{||\ga_\fq||^2}{2}C(\mu,\mu)(s+1)R(X_{-\mu})_e.
\end{equation}
\end{enumerate}
\end{Thm}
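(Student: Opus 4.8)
The plan is to compute the brackets in (\ref{Eqn9.2.3}) and (\ref{Eqn924}) directly from the explicit formula (\ref{Eqn82}) for $\Omega_2(Y^*_l)$ together with Proposition \ref{Prop83}. Applying Proposition \ref{Prop83} term-by-term to (\ref{Eqn82}), with the pair $(\ga,\gt(\ga))$ playing the roles of $(\ga,\gb)$ there, produces for each $(\ga,\gamma_t)\in\gD_{\mu+\ge}(\fg(1))\times\gD_{\mu+\ge}(\fz(\fn))$ one ``double-bracket'' term $R([[X_\mu,X_{-\ga}],X_{-\gt(\ga)}])_e$ together with two terms carrying the scalar $s$ and a factor $\gl_\fq$. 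Since $[[X_\mu,X_{-\ga}],X_{-\gt(\ga)}]$ has $\fh$-weight $\mu-\ga-\gt(\ga)=-\ge$ and the surviving $\gl_\fq$-terms are multiples of $R(X_{-\ge})_e$, the entire bracket is a scalar multiple of $R(X_{-\ge})_e$; by Proposition \ref{Prop7.1.2} and Lemma \ref{Lem831} this already secures conformal invariance, and it remains only to pin down that scalar.

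For part (1) ($V(\mu+\ge)$ of type 1a) I would treat the $s$-linear part first. Since $\gl_\fq$ annihilates $[\fl,\fl]$ and vanishes off $\C H_\fq$, the factor $\gl_\fq([X_\mu,X_{-\ga}])$ is nonzero only for $\ga=\mu$ and $\gl_\fq([X_\mu,X_{-\gt(\ga)}])$ only for $\ga=\ge$; in either case the bracket is $H_\mu$, and $\gl_\fq(H_\mu)=\IP{\mu}{\gl_\fq}=\tfrac{\|\ga_\fq\|^2}{2}$ because $\mu(H_\fq)=1$. Collapsing the coefficient of $R(X_{-\ga})R(X_{-\gt(\ga)})$ in (\ref{Eqn82}) at $\ga=\mu$ and at $\ga=\ge$ by the cyclic identity (H6) and by Lemma \ref{Lem2.6} (applied with $\ga=\mu$, $\gb=\ge$, $\gd=\gamma_t$) reduces each of the two $\gamma_t$-sums to $\tfrac12 C(\mu,\ge)$, so that the $s$-linear part equals $-s\,\tfrac{\|\ga_\fq\|^2}{2}C(\mu,\ge)\,R(X_{-\ge})_e$.

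For the $s$-independent (double-bracket) part I would use Remark \ref{Rem5.4.6} to discard the terms $\ga=\mu$ and $\ga=\ge$: there $[H_\mu,X_{-\ge}]=-\IP{\mu}{\ge}X_{-\ge}=0$ and $[X_\mu,X_{-\ge}]=0$ since $\mu-\ge\notin\gD$. For $\ga\in\gD_{\mu+\ge}(\fg(1))\setminus\{\mu,\ge\}$, Lemma \ref{Lem5.2.14} gives $\mu-\ga\in\gD$, so the double bracket equals $N_{\mu,-\ga}N_{\mu-\ga,-\gt(\ga)}X_{-\ge}$, and (H6) rewrites $N_{\mu-\ga,-\gt(\ga)}=N_{-\gt(\ga),\gt(\mu)}$. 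The resulting four-fold product of structure constants is precisely the left-hand side of Lemma \ref{Lem5.2.22}, which evaluates it to $N_{\mu,\ge-\gamma_t}N_{\ge,-\gamma_t}\tfrac{\|\ga_\fq\|^2}{2}$; hence each $\gamma_t$-sum collapses to $\tfrac{\|\ga_\fq\|^2}{4}C(\mu,\ge)$, independent of $\ga$, and summing over the $|\gD_{\mu+\ge}(\fg(1))|-2$ remaining $\ga$ gives $(|\gD_{\mu+\ge}(\fg(1))|-2)\tfrac{\|\ga_\fq\|^2}{4}C(\mu,\ge)\,R(X_{-\ge})_e$. Adding the two parts and factoring out $-\tfrac{\|\ga_\fq\|^2}{2}C(\mu,\ge)$ produces the factor $(s-s_2)$ with $s_2=\tfrac{|\gD_{\mu+\ge}(\fg(1))|}{2}-1$. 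This last bookkeeping — reducing every $\ga$-summand to a common multiple of $C(\mu,\ge)$ through (H6) and Lemma \ref{Lem5.2.22}, and counting the surviving terms so that the $-1$ and the $\tfrac12|\gD_{\mu+\ge}(\fg(1))|$ in $s_2$ emerge correctly — is the main obstacle.

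Part (2) ($V(\mu+\ge)$ of type 2, so $\ge=\mu$) is simpler: Lemma \ref{Lem5.2.141} gives $\gD_{\mu+\ge}(\fg(1))=\{\mu\}$, so after an (H6) rewrite of $N_{-\gt(\gamma_t),\mu}$ the formula (\ref{Eqn82}) collapses to $\Omega_2(Y^*_l)=\tfrac12 C(\mu,\mu)\,R(X_{-\mu})R(X_{-\mu})$. Applying Proposition \ref{Prop83} with $\ga=\gb=\mu$, using $[X_\mu,X_{-\mu}]=H_\mu$ and $[H_\mu,X_{-\mu}]=-\|\mu\|^2X_{-\mu}$, together with $\gl_\fq(H_\mu)=\tfrac{\|\ga_\fq\|^2}{2}$ and the equality $\|\mu\|^2=\|\ga_\fq\|^2$ — valid because $\mu$ and $\ga_\fq$ are the highest and lowest weights of the irreducible $\fl$-module $\fg(1)$ (Proposition \ref{Prop3.3.1}) and hence lie in a single Weyl-group orbit — yields exactly $-\tfrac{\|\ga_\fq\|^2}{2}C(\mu,\mu)(s+1)R(X_{-\mu})_e$, which is (\ref{Eqn924}).
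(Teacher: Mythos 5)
Your computation follows the paper's proof essentially step for step --- the same splitting via Proposition \ref{Prop83} into the double-bracket term and the two $s$-linear terms, the same collapsing of each $\gamma_t$-sum to a multiple of $C(\mu,\ge)$ via (H6), Lemma \ref{Lem2.6} and Lemma \ref{Lem5.2.22}, the same count of the $|\gD_{\mu+\ge}(\fg(1))|-2$ surviving roots $\ga$, and the same reduction through Lemma \ref{Lem5.2.141} in the type 2 case --- so the argument is correct (though note that a nonzero multiple of the first-order operator $R(X_{-\ge})_e$ does not lie in $\text{span}_\C\{\Omega_2(Y^*_j)_e\}$, so conformal invariance is secured only when the scalar vanishes, i.e.\ at $s=s_2$, not ``already''). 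The one genuine divergence is your justification of $\|\mu\|^2=\|\ga_\fq\|^2$ in part (2): the paper checks this from the data in Appendix \ref{chap:Data} for the three types $B_n(n)$, $C_n(i)$, $F_4(4)$ where type 2 constituents occur, whereas your observation that the lowest weight $\ga_\fq$ of the irreducible $\fl$-module $\fg(1)$ is the image of the highest weight $\mu$ under the longest element of the Weyl group of $\fl$ (an isometry) gives the equality uniformly --- a small but genuine improvement.
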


\begin{proof}
We start by showing that (\ref{Eqn9.2.3}) holds.
It follows from (\ref{Eqn82}) that
\begin{align} \label{EqnCF}
&[\pi_s(X_\mu), \Omega_2(Y^*_l)]_e\nonumber\\
&=\frac{1}{2}\sum_{\substack{
\ga \in\gD_{\mu+\ge}(\fg(1))\\
\gamma_t \in \gD_{\mu+\ge}(\fz(\fn))}}
(N_{-\mu,\gamma_t - \ge}N_{-\ge, \gamma_t})
(N_{\ga, -\gamma_t} N_{-\gt(\gamma_t), \gt(\ga)})
 [\pi_s(X_\mu), R(X_{-\ga})R(X_{-\gt(\ga)})]_e. %\nonumber
\end{align}

\noindent We use Proposition \ref{Prop83} to compute
$[\pi_s(X_\mu), R(X_{-\ga})R(X_{-\gt(\ga)})]_e$. This is
\begin{align*}\label{Eqn9.4.01}
&[\pi_s(X_\mu), R(X_{-\ga})R(X_{-\gt(\ga)})]_e= \nonumber\\
&R([[X_\mu, X_{-\ga}], X_{-\gt(\ga)}])_e
-s\gl_\fq([X_\mu, X_{-\ga}])R(X_{-\gt(\ga)})_e
-s\gl_\fq([X_\mu, X_{-\gt(\ga)}])R(X_{-\ga})_e.
\end{align*}
We consider the contributions from each term in (\ref{EqnCF}), separately.
Recall here that, as we defined in Section \ref{SS41},
 our parabolic subalgebra $\fq$ is parametrized by 
the simple root $\ga_\fq \in \Pi$ and that $\gl_\fq$ is the fundamental weight for $\ga_\fq$.

First we study the contribution from the second term. It is
\begin{equation*}
T_2=-\frac{s}{2}
\sum_{\substack{
\ga \in\gD_{\mu+\ge}(\fg(1))\\
\gamma_t \in \gD_{\mu+\ge}(\fz(\fn))}}
(N_{-\mu,\gamma_t - \ge}N_{-\ge, \gamma_t})
(N_{\ga, -\gamma_t} N_{-\gt(\gamma_t), \gt(\ga)})
\gl_\fq([X_\mu, X_{-\ga}])R(X_{-\gt(\ga)})_e.
\end{equation*}
As $\fg(1)$ is the 1-eigenspace of $\ad(H_\fq)$ with $H_\fq$ defined in (\ref{Eqn2.1.14}),
the set $\gD(\fg(1))$ is
%\begin{equation}
$\gD(\fg(1))=\big\{\gb \in \gD \; | \; 2\IP{\gl_\fq}{\gb}/||\ga_\fq||^2 =1 \big\}$.
%\end{equation}
Therefore, by the normalization (H4) in Section \ref{chap:O1},
for $\gb \in \gD(\fg(1))$, we have 
%\begin{equation*}
$\gl_\fq(H_\gb) = \IP{\gl_\fq}{\gb} = ||\ga_\fq||^2/2$.
%\end{equation*}
Thus, 
\begin{equation}\label{EqnLambda}
\gl_\fq([X_\mu, X_{-\ga}]) = \frac{||\ga_\fq||^2}{2} \gd_{\ga, \mu}
\end{equation}
with $\gd_{\ga, \mu}$ the Kronecker delta. So 
the contribution from this term is
\begin{align*}
T_2 &= -\frac{s}{2}
\sum_{\substack{
\ga \in\gD_{\mu+\ge}(\fg(1))\\
\gamma_t \in \gD_{\mu+\ge}(\fz(\fn))}}
(N_{-\mu,\gamma_t - \ge}N_{-\ge, \gamma_t})
(N_{\ga, -\gamma_t} N_{-\gt(\gamma_t), \gt(\ga)})
\gl_\fq([X_\mu, X_{-\ga}])R(X_{-\gt(\ga)})_e\\
&=-\frac{s||\ga_\fq||^2}{4}
\sum_{\gamma_t \in \gD_{\mu+\ge}(\fz(\fn))}
(N_{-\mu,\gamma_t - \ge}N_{-\ge, \gamma_t})
(N_{\mu, -\gamma_t} N_{-\gt(\gamma_t), \gt(\mu)})
R(X_{-\gt(\mu)})_e\\
&=-\frac{s||\ga_\fq||^2}{4}
\sum_{\gamma_t \in \gD_{\mu+\ge}(\fz(\fn))}
(N_{-\mu,\gamma_t - \ge}N_{-\ge, \gamma_t})
(N_{\mu, -\gamma_t} N_{\ge,\mu-\gamma_t})
R(X_{-\ge})_e.
\end{align*}
We showed in Lemma \ref{Lem2.6} that
$N_{\mu, -\gamma_t} N_{\ge,\mu-\gamma_t} 
=N_{\mu, \ge-\gamma_t}N_{\ge, -\gamma_t}$.
Hence,
\begin{align*}
T_2 &=
-\frac{s||\ga_\fq||^2}{4}
\sum_{\gamma_t \in \gD_{\mu+\ge}(\fz(\fn))}
(N_{-\mu,\gamma_t - \ge}N_{-\ge, \gamma_t})
(N_{\mu, -\gamma_t} N_{\ge,\mu-\gamma_t})
R(X_{-\ge})_e\\
&=-\frac{s||\ga_\fq||^2}{4}
\sum_{\gamma_t \in \gD_{\mu+\ge}(\fz(\fn))}
(N_{-\mu,\gamma_t - \ge}N_{-\ge, \gamma_t})
(N_{\mu, \ge-\gamma_t}N_{\ge, -\gamma_t})
R(X_{-\ge})_e\\
&=-\frac{s||\ga_\fq||^2}{4}
\sum_{\gamma_t \in \gD_{\mu+\ge}(\fz(\fn))}
(N_{\mu, \ge-\gamma_t} N_{-\mu, \gamma_t-\ge}
N_{\ge,-\gamma_t}N_{-\ge,\gamma_t})
R(X_{-\ge})_e\\
&=-\frac{s||\ga_\fq||^2}{4}C(\mu,\ge)R(X_{-\ge})_e.
\end{align*}
The same argument shows that the contribution from the third term is
\begin{align*}
T_3 &= -\frac{s}{2}
\sum_{\substack{
\ga \in\gD_{\mu+\ge}(\fg(1))\\
\gamma_t \in \gD_{\mu+\ge}(\fz(\fn))}}
(N_{-\mu,\gamma_t - \ge}N_{-\ge, \gamma_t})
(N_{\ga, -\gamma_t} N_{-\gt(\gamma_t), \gt(\ga)})
\gl_\fq([X_\mu, X_{-\gt(\ga)}])R(X_{-\ga})_e\\
&=-\frac{s||\ga_\fq||^2}{4}C(\mu,\ge)R(X_{-\ge})_e.
\end{align*}

Now we consider the contribution from the first term. It is
\begin{equation*}
T_1 =\frac{1}{2}
\sum_{\substack{\ga \in \gD_{\mu+\ge}(\fg(1))\\
\gamma_t \in \gD_{\mu+\ge}(\fz(\fn))}}
(N_{-\mu,\gamma_t - \ge}N_{-\ge, \gamma_t})
(N_{\ga, -\gamma_t} N_{-\gt(\gamma_t), \gt(\ga)})
R([[X_\mu, X_{-\ga}], X_{-\gt(\ga)}])_e.
\end{equation*}
We claim that if $\ga=\ge$ or $\mu$ then $[[X_\mu, X_{-\ga}], X_{-\gt(\ga)}] = 0$,
where $\gt(\ga)$ denotes $\gt(\ga) = (\mu+\ge) - \ga$.
If $\ga=\ge$ then, by Remark \ref{Rem5.4.6},
$[X_\mu, X_{-\ga}]=[X_\mu, X_{-\ge}]=0$.
If $\ga = \mu$ then
\begin{equation*}
[[X_\mu, X_{-\mu}],X_{-\gt(\mu)}]
=[[X_\mu, X_{-\mu}], X_{-\ge}]
=\ge(H_\mu)X_{-\ge}
=0.
\end{equation*}
Note that Remark \ref{Rem5.4.6} is applied to obtain 
$\ge(H_\mu) = \IP{\ge}{\mu} = 0$.
Moreover, by Remark \ref{Rem736}, we have
$\gD_{\mu+\ge}(\fg(1))\backslash \{\mu,\ge\} \neq \emptyset$.
The contribution from $T_1$ is
\begin{align*}
T_1 &=\frac{1}{2}
\sum_{\substack{\ga \in \gD_{\mu+\ge}(\fg(1))\\
\gamma_t \in \gD_{\mu+\ge}(\fz(\fn))}}
(N_{-\mu,\gamma_t - \ge}N_{-\ge, \gamma_t})
(N_{\ga, -\gamma_t} N_{-\gt(\gamma_t), \gt(\ga)})
R([[X_\mu, X_{-\ga}], X_{-\gt(\ga)}])_e\\
&=\frac{1}{2}
\sum_{\substack{\ga \in \gD_{\mu+\ge}(\fg(1))\backslash \{\mu,\ge\}\\
\gamma_t \in \gD_{\mu+\ge}(\fz(\fn))}}
(N_{-\mu,\gamma_t - \ge}N_{-\ge, \gamma_t})
(N_{\ga, -\gamma_t} N_{-\gt(\gamma_t), \gt(\ga)})
R([[X_\mu, X_{-\ga}], X_{-\gt(\ga)}])_e\\
&=\frac{1}{2}
\sum_{\substack{\ga \in \gD_{\mu+\ge}(\fg(1))\backslash \{\mu,\ge\}\\
\gamma_t \in \gD_{\mu+\ge}(\fz(\fn))}}
(N_{-\mu,\gamma_t - \ge}N_{-\ge, \gamma_t})
(N_{\ga, -\gamma_t} N_{-\gt(\gamma_t), \gt(\ga)})
(N_{\mu,-\ga}N_{\mu-\ga, -\gt(\ga)})
R(X_{-\ge})_e\\
&=\frac{1}{2}
\sum_{\substack{\ga \in \gD_{\mu+\ge}(\fg(1))\backslash \{\mu,\ge\}\\
\gamma_t \in \gD_{\mu+\ge}(\fz(\fn))}}
(N_{-\mu,\gamma_t - \ge}N_{-\ge, \gamma_t})
(N_{\ga, -\gamma_t} N_{-\gt(\gamma_t), \gt(\ga)})
(N_{\mu,-\ga}N_{-\gt(\ga),\gt(\mu)})
R(X_{-\ge})_e\\
&=\frac{1}{2}
\sum_{\substack{\ga \in \gD_{\mu+\ge}(\fg(1))\backslash \{\mu,\ge\}\\
\gamma_t \in \gD_{\mu+\ge}(\fz(\fn))}}
(N_{-\mu,\gamma_t - \ge}N_{-\ge, \gamma_t})
(N_{\ga, -\gamma_t} N_{\mu,-\ga}
N_{-\gt(\gamma_t), \gt(\ga)}N_{-\gt(\ga),\gt(\mu)})
R(X_{-\ge})_e.
\end{align*}
Note that, from line three to line four,
we use that $N_{\mu-\ga, -\gt(\ga)}=N_{-\gt(\ga),\gt(\mu)}$,
as $(\mu-\ga) + (-\gt(\ga)) + \gt(\mu) = 0$.
(See the normalization (H6) in Section \ref{chap:O1}.)
By Lemma \ref{Lem5.2.22}, we have
\begin{equation*}
N_{\ga, -\gamma_t}N_{\mu, -\ga}N_{-\gt(\gamma_t), \gt(\ga)}N_{-\gt(\ga), \gt(\mu)}
=N_{\mu, \ge-\gamma_t}N_{\ge, -\gamma_t} \frac{||\ga_\fq||^2}{2}.
\end{equation*}
Therefore,
\begin{align*}
T_1 &=\frac{1}{2}
\sum_{\substack{\ga \in \gD_{\mu+\ge}(\fg(1))\backslash \{\mu,\ge\}\\
\gamma_t \in \gD_{\mu+\ge}(\fz(\fn))}}
(N_{-\mu,\gamma_t - \ge}N_{-\ge, \gamma_t})
(N_{\ga, -\gamma_t} N_{\mu,-\ga}
N_{-\gt(\gamma_t), \gt(\ga)}N_{-\gt(\ga),\gt(\mu)})
R(X_{-\ge})_e\\
&=\frac{||\ga_\fq||^2}{4}
\sum_{\substack{\ga \in \gD_{\mu+\ge}(\fg(1))\backslash \{\mu,\ge\}\\
\gamma_t \in \gD_{\mu+\ge}(\fz(\fn))}}
(N_{-\mu,\gamma_t - \ge}N_{-\ge, \gamma_t})
(N_{\mu, \ge-\gamma_t}N_{\ge, -\gamma_t})
R(X_{-\ge})_e\\
&=\frac{||\ga_\fq||^2}{4}
\sum_{\substack{\ga \in \gD_{\mu+\ge}(\fg(1))\backslash \{\mu,\ge\}\\
\gamma_t \in \gD_{\mu+\ge}(\fz(\fn))}}
(N_{\mu, \ge-\gamma_t} N_{-\mu, \gamma_t-\ge}
N_{\ge,-\gamma_t}N_{-\ge,\gamma_t})
R(X_{-\ge})_e\\
&=\frac{||\ga_\fq||^2}{4}C(\mu,\ge) 
\sum_{\ga \in \gD_{\mu+\ge}(\fg(1)) \backslash \{\mu,\ge\}} R(X_{-\ge})_e\\
&=\frac{||\ga_\fq||^2}{4}C(\mu,\ge) 
(|\gD_{\mu+\ge}(\fg(1))| - 2) R(X_{-\ge})_e.
\end{align*}
Hence, we obtain 
\begin{align*}
[\pi_s(X_\mu), \Omega_2(Y^*_l)]_e
&=T_1 + T_2 + T_3\\
%&=\frac{||\ga_\fq||^2}{4}C(\mu,\ge) 
%(|\gD_{\mu+\ge}(\fg(1))| - 2) R(X_{-\ge})_e
%+\frac{s||\ga_\fq||^2}{4}C(\mu,\ge)R(X_{-\ge})_e
%+\frac{s||\ga_\fq||^2}{4}C(\mu,\ge)R(X_{-\ge})_e\\
&=-\frac{||\ga_\fq||^2}{2}C(\mu,\ge)
\bigg(s - \bigg( \frac{|\gD_{\mu+\ge}(\fg(1))|}{2}-1 \bigg) \bigg)R(X_{-\ge})_e.
\end{align*}

Now we are going to prove the equation (\ref{Eqn924}).
If $V(\mu+\ge)$ is of type 2 then $\mu+\ge = 2\mu$; 
in particular, $\gt(\mu) = (2\mu)-\mu = \mu$.
By Lemma \ref{Lem5.2.141}, $\gD_{2\mu}(\fg(1)) = \{\mu\}$.
Thus, (\ref{Eqn82}) becomes
\begin{align} \label{Eqn:Type2}
\Omega_2(Y^*_l) 
&=\frac{1}{2}
\sum_{\substack{\ga \in \gD_{2\mu}(\fg(1))\\
\gamma_t \in \gD_{2\mu}(\fz(\fn))}}
(N_{-\mu,\gamma_t - \ge}N_{-\ge, \gamma_t})
(N_{\ga, -\gamma_t} N_{-\gt(\gamma_t), \gt(\ga)})
R(X_{-\ga})R(X_{-\gt(\ga)})\nonumber\\
&=\frac{1}{2}\sum_{\gamma_t \in \gD_{2\mu}(\fz(\fn))}
(N_{-\mu,\gamma_t - \mu}N_{-\mu, \gamma_t})
(N_{\mu, -\gamma_t} N_{-\gt(\gamma_t), \gt(\mu)})
R(X_{-\mu})R(X_{-\gt(\mu)})\nonumber\\
&=\frac{1}{2}\sum_{\gamma_t \in \gD_{2\mu}(\fz(\fn))}
(N_{-\mu,\gamma_t - \mu}N_{-\mu, \gamma_t})
(N_{\mu, -\gamma_t} N_{-\gt(\gamma_t), \mu})R(X_{-\mu})^2.
\end{align}

\noindent Since $(-\gt(\gamma_t)) + \mu + (\mu-\gamma_t) = 0$,
we have $N_{-\gt(\gamma_t), \mu} = N_{\mu, \mu-\gamma_t}$.
Thus,

\begin{align}\label{Eqn:Type21}
&\frac{1}{2}\sum_{\gamma_t \in \gD_{2\mu}(\fz(\fn))}
(N_{-\mu,\gamma_t - \mu}N_{-\mu, \gamma_t})
(N_{\mu, -\gamma_t} N_{-\gt(\gamma_t), \mu})R(X_{-\mu})^2 \nonumber\\
&=\frac{1}{2}\sum_{\gamma_t \in \gD_{2\mu}(\fz(\fn))}
(N_{-\mu,\gamma_t - \mu}N_{-\mu, \gamma_t})
(N_{\mu, -\gamma_t} N_{\mu, \mu-\gamma_t})R(X_{-\mu})^2 \nonumber\\
&=\frac{1}{2}\sum_{\gamma_t \in \gD_{2\mu}(\fz(\fn))}
(N_{\mu, \mu-\gamma_t} N_{-\mu, \gamma_t-\mu}
N_{\mu,-\gamma_t}N_{-\mu,\gamma_t})
R(X_{-\mu})^2\nonumber\\
&=\frac{1}{2}C(\mu,\mu)R(X_{-\mu})^2.
\end{align}
Therefore,
\begin{equation*}
[\pi_s(X_\mu), \Omega_2(Y^*_l)]_e
=\frac{1}{2}C(\mu,\mu)[\pi_s(X_\mu), R(X_{-\mu})^2]_e.
\end{equation*} 
It follows from (\ref{EqnLambda}) that
$\gl_\fq([X_\mu, X_{-\mu}]) = ||\ga_\fq||^2/2$.
Then, by Proposition \ref{Prop83} with $\ga = \gb = \mu$, we have
\begin{align*}
[\pi_s(X_\mu), R(X_{-\mu})^2]_e
&=R([[X_\mu, X_{-\mu}],X_{-\mu}])_e 
-2s\gl_\fq([X_\mu, X_{-\mu}])R(X_{-\mu})_e\\
&=-\mu(H_\mu)R(X_{-\mu})_e - 2s \cdot \frac{||\ga_\fq||^2}{2} R(X_{-\mu})_e\\
&=-(s||\ga_\fq||^2 + ||\mu||^2) R(X_{-\mu})_e.
\end{align*}
Observe that Table \ref{T55} in Subsection \ref{SS540} shows that a special constituent of type 2 occurs
only when $\fq$ is of type $B_n(n)$, type $C_n(i)$ or $F_4(4)$. 
Appendix \ref{chap:Data}
shows that when $\fq$ is of these types,
we have $||\mu||^2 = ||\ga_\fq||^2$. Therefore,
\begin{equation*}
[\pi_s(X_\mu), R(X_{-\mu})^2]_e
=-(s||\ga_\fq||^2 + ||\mu||^2) R(X_{-\mu})_e
=-||\ga_\fq||^2(s+1)R(X_{-\mu})_e.
\end{equation*}
Hence, we obtain
\begin{align*}
[\pi_s(X_\mu), \Omega_2(Y^*_l)]_e
&=\frac{1}{2}C(\mu,\mu)[\pi_s(X_\mu), R(X_{-\mu})^2]_e\\
&=-\frac{||\ga_\fq||^2}{2}C(\mu,\mu)(s+1)R(X_{-\mu})_e.
\end{align*}
\end{proof}

To emphasize the fundamental weight $\gl_\fq$,
we write $\Cal{L}(s\gl_\fq)$ for the line bundle $\Cal{L}_{s}$.
Now, by combining Proposition \ref{Prop7.1.2} and Theorem \ref{Thm8.3.1},
we conclude the following.

\begin{Cor}
Under the same hypotheses in Theorem \ref{Thm8.3.1}, we have:
\begin{enumerate}
\item[(1)] If $V(\mu+\ge)^*$ is of type 1a then 
the $\Omega_2|_{V(\mu+\ge)^*}$ system is conformally invariant
on the line bundle $\Cal{L}(s_2\gl_\fq)$,
where $s_2$ is the constant given in Theorem \ref{Thm8.3.1}.
\item[(2)] If $V(\mu+\ge)^*$ is of type 2 then 
the $\Omega_2|_{V(\mu+\ge)^*}$ system is conformally invariant
on the line bundle $\Cal{L}(-\gl_\fq)$.
\end{enumerate}
\end{Cor}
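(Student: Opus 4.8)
The plan is to deduce the statement directly by combining the two results immediately preceding it: the explicit bracket computation of Theorem~\ref{Thm8.3.1} and the reduction criterion of Proposition~\ref{Prop7.1.2}. The substantive work has already been done—Proposition~\ref{Prop7.1.2} reduces conformal invariance of the $\Omega_2|_{V(\mu+\ge)^*}$ system to the single condition that $[\pi_s(X_\mu), \Omega_2(Y^*_l)]_e$ lie in $\text{span}_\C\{\Omega_2(Y^*_1)_e, \ldots, \Omega_2(Y^*_n)_e\}$, and Theorem~\ref{Thm8.3.1} evaluates this bracket in closed form—so the Corollary is essentially the payoff of specializing the parameter $s$ to the root of the linear factor appearing in that bracket.

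Concretely, for the type 1a case I would first recall that $X_\mu$ is a highest weight vector for $\fg(1)$ (since $\mu$ is the highest weight of the irreducible $L$-module $\fg(1)$ by Proposition~\ref{Prop3.3.1}) and that $Y^*_l$, defined in (\ref{Eqn8.2.1}), is a lowest weight vector for $V(\mu+\ge)^*$; these are exactly the vectors to which Proposition~\ref{Prop7.1.2} applies. Setting $s = s_2$ in formula (\ref{Eqn9.2.3}) of Theorem~\ref{Thm8.3.1}(1) makes the factor $(s - s_2)$ vanish, so $[\pi_{s_2}(X_\mu), \Omega_2(Y^*_l)]_e = 0$, which lies trivially in the required span. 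Proposition~\ref{Prop7.1.2} then yields conformal invariance on $\Cal{L}_{s_2} = \Cal{L}(s_2\gl_\fq)$. The type 2 case is structurally identical: specializing $s = -1$ in (\ref{Eqn924}) annihilates the factor $(s+1)$, giving $[\pi_{-1}(X_\mu), \Omega_2(Y^*_l)]_e = 0$, and Proposition~\ref{Prop7.1.2} again furnishes conformal invariance, now on $\Cal{L}(-\gl_\fq)$.

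I expect no genuine obstacle, since the hard computations are finished; the only conceptual point worth emphasizing is why checking a single bracket at the identity suffices. This rests on the reduction in Proposition~\ref{Prop7.1.2}, whose proof propagates the span condition from the highest-weight/lowest-weight pair to all of $\fg(1)$, $\fl$, and $\fz(\fn)$ via Lemma~\ref{Lem831} and Lemma~\ref{Lem832}. One subtlety I would flag, though it is not needed for the stated inclusion, is that the nonvanishing of $C(\mu,\ge)$ from Lemma~\ref{Lem5.2.17}, together with the fact that $R(X_{-\ge})_e$ is a genuinely first-order functional and hence cannot lie in the span of the second-order functionals $\Omega_2(Y^*_j)_e$, shows that $s_2$ (respectively $-1$) is the \emph{unique} parameter giving conformal invariance. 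For the Corollary as stated, only the vanishing of the bracket at the claimed value is required, so this uniqueness remark can be left implicit.
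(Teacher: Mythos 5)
Your proposal is correct and follows exactly the paper's route: the paper's own proof is the one-line observation that the corollary follows from Proposition \ref{Prop7.1.2} and Theorem \ref{Thm8.3.1}, and you have simply spelled out the substitution $s=s_2$ (resp.\ $s=-1$) that kills the linear factor in the bracket formula so that the hypothesis of Proposition \ref{Prop7.1.2} holds trivially. Your additional remark on uniqueness of the special value is a correct and worthwhile observation, but, as you note, it is not needed for the statement.
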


\begin{proof}
This corollary follows from Proposition \ref{Prop7.1.2} and 
Theorem \ref{Thm8.3.1}.
\end{proof}

As we defined in Definition \ref{Def5.2.7},
we denote by $V(\mu+\geg)$ the special constituent of $\fl \otimes \fz(\fn)$
so that $V(\mu+\geg) \subset \flg \otimes \fz(\fn)$,
and denote by $V(\mu+\geng)$
the special constituent so that $V(\mu+\geng) =\flng \otimes \fz(\fn)$.
See Table \ref{T55} in Subsection \ref{SS540} for the types of 
$V(\mu+\geg)$ and $V(\mu+\geng)$ for each case. 
Table \ref{T93} below summarizes
the line bundles $\Cal{L}(s_0\gl_\fq)$
on which the $\Omega_2$ systems are conformally invariant.
\begin{table}[h]
\caption{Line Bundles with Special Values}
\begin{center}
\begin{tabular}{c|c|c}
\hline
Parabolic subalgebra $\fq$
&$\Omega_2|_{V(\mu+\geg)^*}$ & $\Omega_2|_{V(\mu+\geng)^*}$\\
\hline
$B_n(i), 3\leq i \leq n-2$ & $\Cal{L}\big( (n- i - \frac{1}{2})\gl_i\big)$ & $\Cal{L}(\gl_i)$  \\
$B_n(n-1)$ & $\Cal{L}\big( \frac{1}{2}\gl_{n-1} \big)$ & $?$  \\
$B_n(n)$ & $\Cal{L}(-\gl_n)$ & $-$  \\
$C_n(i), 2 \leq i \leq n-1$  &$?$ & $\Cal{L}(-\gl_i)$ \\
$D_n(i), 3 \leq i \leq n-3$ & $\Cal{L}\big((n - i - 1)\gl_i\big)$& $\Cal{L}(\gl_i)$\\
$E_6(3)$ & $\Cal{L}(\gl_3)$ & $\Cal{L}(2\gl_3)$  \\
$E_6(5)$ & $\Cal{L}(\gl_5)$ & $\Cal{L}(2\gl_5)$  \\
$E_7(2)$ & $\Cal{L}(2\gl_2)$ & $-$ \\
$E_7(6)$ & $\Cal{L}(\gl_6)$ & $\Cal{L}(3\gl_6)$  \\
$E_8(1)$ & $\Cal{L}(3\gl_1)$ & $-$ \\
$F_4(4)$ & $\Cal{L}(-\gl_4)$ & $-$ \\
\hline
\end{tabular}\label{T93}
\end{center}
\end{table}
Here, a dash indicates that there does not exist the special constituent $V(\mu+\geng)$.
When $\fq$ is of type $B_n(n-1)$,
the special constituent $V(\mu+\geng)$ is of type 1b,
and when $\fq$ is of type $C_n(i)$,
the special constituent $V(\mu+\geg)$ is of type 3. 
Therefore, we put a question mark for these cases in the table.

\appendix

%%%%%%%%%%%%%%%%%%%%%%%%%%%%%%%%%%%%%%%%%%
\section{Miscellaneous Data}
\label{chap:Data}

This appendix summarizes the miscellenious data for 
the maximal parabolic subalgebras 
$\fq=\fl \oplus \fg(1) \oplus \fz(\fn)$ of quasi-Heisenberg type shown in
(\ref{Eqn4.0.1}) and (\ref{Eqn4.0.2}) in Section \ref{chap:TwoStep}.
For each case we give the deleted Dynkin diagram of $\fq$,
the subgraphs for $\fl_\gamma$ and $\fl_{n\gamma}$,
the simple root $\ga_\gamma$ that is not orthogonal to the highest root for $\fg$,
the highest weights for $\fg(1)$ and $\fz(\fn)$, and the highest roots for
$\fl_\gamma$ and $\fl_{n\gamma}$.
For the definition for the deleted Dynkin diagram
see Subsection \ref{SS23}.
Subsection \ref{SS41} describes about the subspaces $\fg(1)$ and $\fz(\fn)$.
The definitions for the simple ideals $\flg$ and $\flng$ of $\fl$
are given in Subsection \ref{SS42}.
For classical algebras
the sets of roots contributing to 
$\fg(1)$, $\fz(\fn)$, $\fl_{\gamma}$, and $\fl_{n\gamma}$ 
are given in the standard realization of the roots.
 
%For exceptional algebras one can easily read off such roots from the lists of 
%positive roots in Appendix \ref{chap:Roots}.
%If $\fq$ is determined by $\ga_\fq \in \Pi$
%then the roots contributing for $\fl$, $\fg(1)$, and $\fz(\fn)$
%are the positive roots whose coefficients for $\ga_\fq$ are $0$, $1$, and $2$,
%respectively.
%For instance, the roots for $\fg(1)$ for $F_4(4)$ are given as follows:
%\vskip 0.1in 

%\begin{align*}
%&\ga_4\\
%\ga_3 + &\ga_4\\
%\ga_2 + \ga_3 + &\ga_4\\
%\ga_1 + \ga_2 + \ga_3 + &\ga_4\\
%\ga_2 + 2\ga_3 + &\ga_4\\
%\ga_1 + \ga_2 + 2\ga_3 + &\ga_4\\
%\ga_1 + 2\ga_2 + 2\ga_3 + &\ga_4\\
%\ga_1 + 2\ga_2 + 3\ga_3 + &\ga_4.
%\end{align*}

\vsp

%%%%%%%%%%%%%%%%%%%%%%%%%%%%%%%%%%%

\begin{center}
\S $\mathrm{B}_n(i)$, $3\leq i \leq n-2$ 
\end{center}

\begin{enumerate}

\item The deleted Dynkin diagram:
\begin{equation*}
\xymatrix{\belowwnode{\ga_1}\single[r]&
\belowwnode{\ga_2}\single[r]&\dots\single[r]
&\belowwnode{\ga_{i-1}}\single[r]&\belowcnode{\ga_i}\single[r]&
\belowwnode{\ga_{i+1}}\single[r]&\cdots \single[r]
&\belowwnode{\ga_{n-1}}\rdouble[r]&\belowwnode{\ga_n}}
\end{equation*}
%\vspace{3pt}

\item The subgraph for $\flg$:
\begin{equation*}
\xymatrix{\belowwnode{\ga_1}\single[r]&
\belowwnode{\ga_2}\single[r]&
\belowwnode{\ga_3}\single[r]&\dots\single[r]
&\belowwnode{\ga_{i-1}}}
%\single[r]&\belowcnode{\ga_i}\single[r]&
%\belowwnode{\ga_{i+1}}\single[r]&\cdots \single[r]
%&\belowwnode{\ga_{n-1}}\rdouble[r]&\belowwnode{\ga_n}}
\end{equation*}
%\vspace{3pt}

\item The subgraph for $\flng$:
\begin{equation*}
\xymatrix{
\belowwnode{\ga_{i+1}}\single[r]&\cdots \single[r]
&\belowwnode{\ga_{n-1}}\rdouble[r]&\belowwnode{\ga_n}}
\end{equation*}
\vspace{1pt}

\end{enumerate}

We have $\ga_\gamma = \ga_2$.
The highest weight $\mu$ and the set of roots $\gD(\fg(1))$ for 
$\fg(1)$ are $\mu = \vep_1+\vep_{i+1}$ and 
$\gD(\fg(1)) = \{\vep_j \pm \vep_k \; | \; 1 \leq j \leq i \text{ and } i+1 \leq k \leq n\}
 \cup \{\vep_j \; | \; 1\leq j \leq i \}$.
The highest weight $\gamma$ 
and the set of roots $\gD(\fz(\fn))$ for $\fz(\fn)$
are  $\gamma = \vep_1 + \vep_2$
and $\gD(\fz(\fn)) = \{ \vep_j + \vep_k \; | \; 1\leq j<k\leq i\}$.
The highest root $\xig$ and the set of positive roots 
$\gD^+(\flg)$ for $\flg$ are $\xig = \vep_1 - \vep_i$ and 
$\gD^+(\flg) = \{ \vep_j - \vep_k \; | \; 1\leq j < k\leq i\}$.
The highest root $\xing$ and the set of positive roots
$\gD^+(\flng)$ for $\flng$ are $\xing = \vep_{i+1} + \vep_{i+2}$
and $\gD^+(\flng) = \{ \vep_j \pm \vep_k \; | \; i+1 \leq j < k \leq n \}
\cup \{ \vep_j \; | \; i+1 \leq j \leq n\}$.

\vsp

%%%%%%%%%%%%%%%%%%%%%%%%%%%%%%
%\noindent $\bullet \; \mathrm{B}_n(n-1):$ 
\begin{center}
\S $\mathrm{B}_n(n-1)$
\end{center}

\begin{enumerate}

\item The deleted Dynkin diagram:
\begin{equation*}
\xymatrix{\belowwnode{\ga_1}\single[r]&
\belowwnode{\ga_2}\single[r]&\dots\single[r]
&\belowwnode{\ga_{n-2}}\single[r]
&\belowcnode{\ga_{n-1}}\rdouble[r]&\belowwnode{\ga_n}}
\end{equation*}
%\vspace{10pt}

\item The subgraph for $\flg$:
\begin{equation*}
\xymatrix{\belowwnode{\ga_1}\single[r]&
\belowwnode{\ga_2}\single[r]&
\belowwnode{\ga_3}\single[r]&\dots\single[r]
&\belowwnode{\ga_{n-2}}}
%\single[r]&\belowcnode{\ga_i}\single[r]&
%\belowwnode{\ga_{i+1}}\single[r]&\cdots \single[r]
%&\belowwnode{\ga_{n-1}}\rdouble[r]&\belowwnode{\ga_n}}
\end{equation*}
%\vspace{10pt}

\item The subgraph for $\flng$:
\begin{equation*}
\xymatrix{\belowwnode{\ga_n}}
\end{equation*}
\vspace{1pt}

\end{enumerate}

We have $\ga_\gamma = \ga_2$.
The highest weight $\mu$ and the set of weights $\gD(\fg(1))$ 
for $\fg(1)$ are $\mu = \vep_1+\vep_{n}$ and 
$\gD(\fg(1)) = \{\vep_j \pm \vep_n \; | \; 1 \leq j \leq n-1 \}
 \cup \{\vep_j \; | \; 1\leq j \leq n-1 \}$.
The highest weight $\gamma$ and the set of weights $\fg(\fz(\fn))$
for $\fz(\fn))$ are  
$\gamma = \vep_1 + \vep_2$ and 
$\gD(\fz(\fn)) = \{ \vep_j + \vep_k \; | \; 1\leq j<k\leq n-1\}$.
The highest root $\xig$ and the set of positive roots 
$\gD^+(\flg)$ for $\flg$ are 
$\xig = \vep_1 - \vep_{n-1}$ and 
$\gD^+(\flg) = \{ \vep_j - \vep_k \; | \; 1\leq j < k\leq n-1\}$.
The highest root $\xing$ and the set of positive roots 
$\gD^+(\flng)$ for $\flng$ are
$\xing = \vep_n$ and 
$\gD^+(\flng) = \{ \vep_n \}$.

\vsp

%%%%%%%%%%%%%%%%%%%%%%%%%%%%%%
%\noindent $\bullet \; \mathrm{B}_n(n) :$ \\

\begin{center}
\S $\mathrm{B}_n(n)$
\end{center}

\begin{enumerate}

\item The deleted Dynkin diagram:
\begin{equation*}
\xymatrix{\belowwnode{\ga_1}\single[r]&
\belowwnode{\ga_2}\single[r]&\dots\single[r]
%&\belowwnode{\ga_{i-1}}\single[r]&\belowcnode{\ga_i}\single[r]&
%\belowwnode{\ga_{i+1}}\single[r]&\cdots \single[r]
&\belowwnode{\ga_{n-1}}\rdouble[r]&\belowcnode{\ga_n}}
\end{equation*}
%\vspace{10pt}

\item The subgraph for $\flg$:
\begin{equation*}
\xymatrix{\belowwnode{\ga_1}\single[r]&
\belowwnode{\ga_2}\single[r]&
\belowwnode{\ga_3}\single[r]&\dots\single[r]
&\belowwnode{\ga_{n-1}}}
%\single[r]&\belowcnode{\ga_i}\single[r]&
%\belowwnode{\ga_{i+1}}\single[r]&\cdots \single[r]
%&\belowwnode{\ga_{n-1}}\rdouble[r]&\belowwnode{\ga_n}}
\end{equation*}

\item No subgraph for $\flng$ ($\flng = \{0 \}$)
\vspace{2pt}

\end{enumerate}

We have $\ga_\gamma = \ga_2$.
The highest weight $\mu$ and the set of weights
$\gD(\fg(1))$ are $\mu = \vep_1$ and 
$\gD(\fg(1)) =  \{\vep_j \; | \; 1\leq j \leq n \}$.
The highest weight $\gamma$ and the set of weights
$\gD(\fz(\fn))$ for $\fz(\fn)$ are 
$\gamma = \vep_1 + \vep_2$ and 
$\gD(\fz(\fn)) = \{ \vep_j + \vep_k \; | \; 1\leq j<k\leq n\}$.
The highest root $\xig$ and the set of positive roots 
for $\flg$ are 
$\xig = \vep_1 - \vep_n$ and 
$\gD^+(\flg) = \{ \vep_j - \vep_k \; | \; 1\leq j < k\leq n\}$.

\vsp

%%%%%%%%%%%%%%%%%%%%%%%%%%%%%%
%\noindent $\bullet \; \mathrm{C}_n(i)$, $2\leq i \leq n-1 :$ \\

\begin{center}
\S $\mathrm{C}_n(i)$, $2\leq i \leq n-1$
\end{center}

\begin{enumerate}

\item The deleted Dynkin diagram:
\begin{equation*}
\xymatrix{\belowwnode{\ga_1}\single[r]&\dots\single[r]
&\belowwnode{\ga_{i-1}}\single[r]&\belowcnode{\ga_i}\single[r]&
\belowwnode{\ga_{i+1}}\single[r]&\cdots \single[r]
&\belowwnode{\ga_{n-1}}\ldouble[r]&\belowwnode{\ga_n}}
\end{equation*}
%\vspace{10pt}

\item The subgraph for $\flg$:
\begin{equation*}
\xymatrix{\belowwnode{\ga_1}\single[r]&
\belowwnode{\ga_2}\single[r]&
\belowwnode{\ga_3}\single[r]&\dots\single[r]
&\belowwnode{\ga_{i-1}}}
%\single[r]&\belowcnode{\ga_i}\single[r]&
%\belowwnode{\ga_{i+1}}\single[r]&\cdots \single[r]
%&\belowwnode{\ga_{n-1}}\rdouble[r]&\belowwnode{\ga_n}}
\end{equation*}
%\vspace{10pt}

\item The subgraph for $\flng$:
\begin{equation*}
\xymatrix{
\belowwnode{\ga_{i+1}}\single[r]&\cdots \single[r]
&\belowwnode{\ga_{n-1}}\ldouble[r]&\belowwnode{\ga_n}}
\end{equation*}
\vspace{1pt}

\end{enumerate}

We have $\ga_\gamma = \ga_1$.
The highest weight $\mu$ and the set of weights 
$\gD(\fg(1))$ for $\fg(1)$ are 
$\mu = \vep_1+\vep_{i+1}$ and 
$\gD(\fg(1)) = \{\vep_j \pm \vep_k \; | \; 1 \leq j \leq i \text{ and } i+1 \leq k \leq n\}$.
The highest weight $\gamma$ and the set of weights $\gD(\fz(\fn))$ 
for $\fz(\fn)$ are 
$\gamma = 2\vep_1$
$\gD(\fz(\fn)) = \{ \vep_j + \vep_k \; | \; 1\leq j<k\leq i\} 
\cup \{2\vep_j \; | \; 1 \leq j \leq i\}$.
The highest root $\xig$ and the set of positive roots 
$\gD^+(\flg)$ for $\flg$ are 
$\xig = \vep_1 - \vep_i$ and 
$\gD^+(\flg) = \{ \vep_j - \vep_k \; | \; 1\leq j < k\leq i\}$
The highest root $\xing$ and the set of positive roots $\gD(\flng)$
for $\flng$ are 
$\xing = 2\vep_{i+1}$ and 
$\gD^+(\flng) = \{ \vep_j \pm \vep_k \; | \; i+1 \leq j < k \leq n \}
\cup \{ 2\vep_j \; | \; i+1 \leq j \leq n\}$.

\vsp

%%%%%%%%%%%%%%%%%%%%%%%%%%%%%%
%\noindent $\bullet \; \mathrm{D}_n(i)$, $3\leq i \leq n-3 :$ \\

\begin{center}
\S $\mathrm{D}_n(i)$, $3\leq i \leq n-3$ 
\end{center}

\begin{enumerate}

\item The deleted Dynkin diagram:
\begin{equation*}
\xymatrix{&&&&&&&&\abovewnode{\ga_{n-1}}\\
\belowwnode{\ga_1}\single[r]&
\belowwnode{\ga_2}\single[r]&\dots\single[r]&\belowwnode{\ga_{i-1}}\single[r]&
\belowcnode{\ga_i}\single[r]&\belowwnode{\ga_{i+1}}\single[r]&\cdots \single[r]&
\wnode\save []+<20pt,0pt>*\txt{$\ga_{n-2}$} \restore\single[ur]\single[dr]& \\
&&&&&&&&\belowwnode{\ga_n} }
\end{equation*}
%\vspace{10pt}

\item The subgraph for $\flg$:
\begin{equation*}
\xymatrix{
\belowwnode{\ga_1}\single[r]&
\belowwnode{\ga_2}\single[r]&
\belowwnode{\ga_3}\single[r]&\dots\single[r]
&\belowwnode{\ga_{i-1}}}
\end{equation*}
%\vspace{10pt}

\item The subgraph for $\flng$:
\begin{equation*}
\xymatrix{&&&\abovewnode{\ga_{n-1}}\\
\belowwnode{\ga_{i+1}}\single[r]&\cdots \single[r]&
\wnode\save []+<20pt,0pt>*\txt{$\ga_{n-2}$} \restore\single[ur]\single[dr]& \\
&&&\belowwnode{\ga_n} }
\end{equation*}
\vspace{1pt}

\end{enumerate}

We have $\ga_\gamma = \ga_2$.
The highest weight $\mu$ and the set of weights $\gD(\fg(1))$
for $\fg(1)$ are 
$\mu = \vep_1+\vep_{i+1}$ and 
$\gD(\fg(1)) = \{\vep_j \pm \vep_k \; | \; 1 \leq j \leq i \text{ and } i+1 \leq k \leq n\}$.
The highest weight $\gamma$ and the set of weights $\gD(\fz(\fn))$ 
for $\fz(\fn))$ are 
 $\gamma = \vep_1 + \vep_2$ and 
$\gD(\fz(\fn)) = \{ \vep_j + \vep_k \; | \; 1\leq j<k\leq i\}$.
The highest root $\xig$ and the set of positive roots $\gD^+(\flg)$
for $\flg$ are 
$\xig = \vep_1 - \vep_i$ and 
$\gD^+(\flg) = \{ \vep_j - \vep_k \; | \; 1\leq j < k\leq i\}$.
The highest root $\xing$ and the set of positive roots $\gD^+(\flng)$
for $\flng$ are 
$\xing = \vep_{i+1} + \vep_{i+2}$
$\gD^+(\flng) = \{ \vep_j \pm \vep_k \; | \; i+1 \leq j < k \leq n \}$.

\vsp

%%%%%%%%%%%%%%%%%%%%%%%%%%%%%%
%\noindent $\bullet \; \mathrm{E}_6(3) :$ 

\begin{center}
\S $\mathrm{E}_6(3)$
\end{center}

\begin{enumerate}

\item The deleted Dynkin diagram:
\begin{equation*}
\xymatrix{
&&\abovewnode{\ga_2}\single[d]&&\\
\belowwnode{\ga_1}\single[r]&\belowcnode{\ga_3}\single[r]&\belowwnode{\ga_4}\single[r]
&\belowwnode{\ga_5}\single[r]&\belowwnode{\ga_6} }
\end{equation*}
%\vspace{10pt}

\item The subgraph for $\flg$:
\begin{equation*}
\xymatrix{
\belowwnode{\ga_2}\single[r]&
\belowwnode{\ga_4}\single[r]&
\belowwnode{\ga_5}\single[r]&
\belowwnode{\ga_{6}}}
\end{equation*}
%\vspace{10pt}

\item The subgraph for $\flng$:
\begin{equation*}
\xymatrix{
\belowwnode{\ga_{1}}}
\end{equation*}
\vspace{1pt}

\end{enumerate}

We have $\ga_\gamma = \ga_2$.
The highest weight $\mu$ for $\fg(1)$ is 
$\mu = \ga_1+ \ga_2 + \ga_3 + 2\ga_4 +2\ga_5 + \ga_6$.
The highest weight $\gamma$ for $\fz(\fn)$
is $\gamma = \ga_1 + 2\ga_2 + 2\ga_3 + 3\ga_4 + 2\ga_5 + \ga_6$.
The highest root $\xig$ for $\flg$ 
is $\xig = \ga_2 + \ga_4 + \ga_5 + \ga_6$.
The highest root $\xing$ for $\flng$
is $\xing = \ga_1$.

\vsp

%%%%%%%%%%%%%%%%%%%%%%%%%%%%%%%%%%%%%%%%%
%\noindent $\bullet \; \mathrm{E}_6(5) :$ 

\begin{center}
\S $\mathrm{E}_6(5)$
\end{center}

\begin{enumerate}

\item The deleted Dynkin diagram:
\begin{equation*}
\xymatrix{
&&\abovewnode{\ga_2}\single[d]&&\\
\belowwnode{\ga_1}\single[r]&\belowwnode{\ga_3}\single[r]&\belowwnode{\ga_4}\single[r]
&\belowcnode{\ga_5}\single[r]&\belowwnode{\ga_6} }
\end{equation*}
%\vspace{10pt}

\item The subgraph for $\flg$:
\begin{equation*}
\xymatrix{
\belowwnode{\ga_1}\single[r]&
\belowwnode{\ga_3}\single[r]&
\belowwnode{\ga_4}\single[r]&
\belowwnode{\ga_2}}
\end{equation*}
%\vspace{10pt}

\item The subgraph for $\flng$:
\begin{equation*}
\xymatrix{
\belowwnode{\ga_6}}
\end{equation*}
\vspace{1pt}

\end{enumerate}

We have $\ga_\gamma = \ga_2$.
The highest weight $\mu$ for $\fg(1)$ is 
$\mu = \ga_1+ \ga_2 + 2\ga_3 + 2\ga_4 +\ga_5 + \ga_6$.
The highest weight $\gamma$ for $\fz(\fn)$ is 
$\gamma = \ga_1 + 2\ga_2 + 2\ga_3 + 3\ga_4 + 2\ga_5 + \ga_6$.
The highest weight $\xig$ for $\flg$ is
$\xig = \ga_1 + \ga_2 + \ga_3 + \ga_4$.
The highest weight $\xing$ for $\flng$ is 
$\xing = \ga_6$.

\vsp

%%%%%%%%%%%%%%%%%%%%%%%%%%%%%%%%%%%%%%%%%
%\noindent $\bullet \; \mathrm{E}_7(2) :$ 

\begin{center}
\S $\mathrm{E}_7(2)$
\end{center}

\begin{enumerate}

\item The deleted Dynkin diagram:
\begin{equation*}
\xymatrix{
&&\abovecnode{\ga_2}\single[d]&&&\\
\belowwnode{\ga_1}\single[r]&\belowwnode{\ga_3}\single[r]&\belowwnode{\ga_4}\single[r]
&\belowwnode{\ga_5}\single[r]&\belowwnode{\ga_6}\single[r]&\belowwnode{\ga_7}}
\end{equation*}
\vspace{2pt}

\item The subgraph for $\flg$:
\begin{equation*}
\xymatrix{
\belowwnode{\ga_1}\single[r]&
\belowwnode{\ga_3}\single[r]&
\belowwnode{\ga_4}\single[r]&
\belowwnode{\ga_5}\single[r]&
\belowwnode{\ga_6}\single[r]&
\belowwnode{\ga_7}}
\end{equation*}
%\vspace{10pt}

\item No subgraph for $\flng$ ($\flng = \{ 0 \}$)
\vspace{2pt}
\end{enumerate}

We have $\ga_\gamma = \ga_1$.
The highest weight $\mu$ for $\fg(1)$ is
$\mu = \ga_1+ \ga_2 + 2\ga_3 + 3\ga_4 +3\ga_5 + 2\ga_6 + \ga_7$.
The highest weight $\gamma$ for $\fz(\fn)$ is
$\gamma = 2\ga_1 + 2\ga_2 + 3\ga_3 + 4\ga_4 + 3\ga_5 + 2\ga_6 + \ga_7$.
The highest root $\xig$  for $\flg$ is 
$\xig = \ga_1 + \ga_3 + \ga_4 + \ga_5+ \ga_6 + \ga_7$.

\vsp

%%%%%%%%%%%%%%%%%%%%%%%%%%%%%%%%%%%%%%%%%
%\noindent $\bullet \; \mathrm{E}_7(6) :$ 

\begin{center}
\S $\mathrm{E}_7(6)$
\end{center}

\begin{enumerate}

\item The deleted Dynkin diagram:
\begin{equation*}
\xymatrix{
&&\abovewnode{\ga_2}\single[d]&&&\\
\belowwnode{\ga_1}\single[r]&\belowwnode{\ga_3}\single[r]&\belowwnode{\ga_4}\single[r]
&\belowwnode{\ga_5}\single[r]&\belowcnode{\ga_6}\single[r]&\belowwnode{\ga_7}}
\end{equation*}
%\vspace{10pt}

\item The subgraph for $\flg$:
\begin{equation*}
\xymatrix{&&&\abovewnode{\ga_{2}}\\
\belowwnode{\ga_1}\single[r]&\belowwnode{\ga_{3}}\single[r]&
\wnode\save []+<20pt,0pt>*\txt{$\ga_{4}$} \restore\single[ur]\single[dr]& \\
&&&\belowwnode{\ga_5} }
\end{equation*}
%\vspace{10pt}

\item The subgraph for $\flng$:
\begin{equation*}
\xymatrix{\belowwnode{\ga_7}}
\end{equation*}
\vspace{1pt}

\end{enumerate}

We have $\ga_\gamma = \ga_1$.
The highest weight $\mu$ for $\fg(1)$ is
$\mu = \ga_1+ 2\ga_2 + 2\ga_3 + 3\ga_4 +2\ga_5 + \ga_6 + \ga_7$.
The highest weight $\gamma$ for $\fz(\fn)$ is 
$\gamma = 2\ga_1 + 2\ga_2 + 3\ga_3 + 4\ga_4 + 3\ga_5 + 2\ga_6 + \ga_7$.
The highest root $\xig$ for $\flg$ is 
$\xig = \ga_1 + \ga_2 + 2\ga_3 + 2\ga_4 + \ga_5$.
The highest root $\xing$ for $\flng$ is 
$\xing = \ga_7$.

\vsp

%%%%%%%%%%%%%%%%%%%%%%%%%%%%%%%%%%%%%%%%%
%\noindent $\bullet \; \mathrm{E}_8(1) :$

\begin{center}
\S $\mathrm{E}_8(1)$
\end{center}

\begin{enumerate}

\item The deleted Dynkin diagram:
\begin{equation*}
\xymatrix{
&&\abovewnode{\ga_2}\single[d]&&&&\\
\belowcnode{\ga_1}\single[r]&\belowwnode{\ga_3}\single[r]&\belowwnode{\ga_4}\single[r]
&\belowwnode{\ga_5}\single[r]&\belowwnode{\ga_6}\single[r]&\belowwnode{\ga_7}\single[r]&
\belowwnode{\ga_8} \\}
\end{equation*}
%\vspace{10pt}

\item The subgraph for $\flg$:
\begin{equation*}
\xymatrix{&&&&&\abovewnode{\ga_{2}}\\
\belowwnode{\ga_8}\single[r]&\belowwnode{\ga_{7}}\single[r]&
\belowwnode{\ga_{6}}\single[r]&\belowwnode{\ga_{5}}\single[r]&
\wnode\save []+<20pt,0pt>*\txt{$\ga_{4}$} \restore\single[ur]\single[dr]& \\
&&&&&\belowwnode{\ga_3} }
\end{equation*}
%\vspace{10pt}

\item No subgraph for $\flng$ ($\flng = \{0\}$)
\vspace{2pt}

\end{enumerate}

We have $\ga_\gamma = \ga_8$.
The highest weight $\mu$ for $\fg(1)$ is 
$\mu = \ga_1+ 3\ga_2 + 3\ga_3 + 5\ga_4 +4\ga_5 + 3\ga_6 + 2\ga_7+\ga_8$.
The highest weight $\gamma$ for $\fz(\fn)$ is 
$\gamma = 2\ga_1 + 3\ga_2 + 4\ga_3 + 6\ga_4 + 5\ga_5 + 4\ga_6 + 3\ga_7+2\ga_8$.
The highest root $\xig$ for $\flg$ is 
$\xig = \ga_2+\ga_3+2\ga_4+2\ga_5+2\ga_6+2\ga_7+\ga_8$.

\vsp

%%%%%%%%%%%%%%%%%%%%%%%%%%%%%%%%%%%%%%%%%
%\noindent $\bullet \; \mathrm{F}_4(4) :$

\begin{center}
\S $\mathrm{F}_4(4)$
\end{center}

\begin{enumerate}

\item The deleted Dynkin diagram:
\begin{equation*}
\xymatrix{
\belowwnode{\ga_1}\single[r]&\belowwnode{\ga_2}\rdouble[r]&
\belowwnode{\ga_3}\single[r]&\belowcnode{\ga_4}}
\end{equation*}
%\vspace{10pt}

\item The subgraph for $\flg$:
\begin{equation*}
\xymatrix{
\belowwnode{\ga_1}\single[r]&\belowwnode{\ga_2}\rdouble[r]&
\belowwnode{\ga_3}}
\end{equation*}
%\vspace{10pt}

\item No subgraph for $\flng$ ($\flng = \{0\}$)
\vspace{2pt}

\end{enumerate}

We have $\ga_\gamma = \ga_1$.
The highest weight $\mu$ for $\fg(1)$ is 
$\mu = \ga_1+ 2\ga_2 + 3\ga_3 + \ga_4$.
The highest weight $\gamma$ for $\fz(\fn)$ is 
$\gamma = 2\ga_1 + 3\ga_2 + 4\ga_3 + 2\ga_4$.
The highest root for $\xig$ for $\flg$ is
$\xig = \ga_1 + 2\ga_2 + 2\ga_3$.

%%%%%%%%%%%%%%%%%%%%%%%%%%%%%%%%%%%%%%%%%%
\vskip 0.2in
\noindent \textbf{Acknowledgments.}
This work is part of author's Ph.D. thesis at Oklahoma State University.
The author would like to thank his advisor, Leticia Barchini, 
for her generous guidance. 
He would also like to thank Anthony Kable and  
Roger Zierau for their valuable comments on this work.
%Finally, the author is thankful for Professor Vladimir Dobrev
%to inform to him his project on constructing 
%invariant differential operators between degenerate principal 
%series representations.

%%%%%%%%%%%%%%%%%%%%%%%%%%%%%%%%%%%%%%%%%%
%% References

\bibliographystyle{amsplain}
\bibliography{main}

\end{document}